\newcommand{\Figref}[1]{Figure \,\ref{#1}}
\newtheorem{theorem}{Theorem}
\newtheorem{proposition}[theorem]{Proposition}
\newtheorem{lemma}[theorem]{Lemma}
\newtheorem{corollary}[theorem]{Corollary}
\newdefinition{remark}[theorem]{Remark}
\renewenvironment{proof}{\noindent \newline \textbf{Proof.}}{\hfill \mbox{\fbox{} } \newline}
\numberwithin{equation}{section}
\numberwithin{figure}{section}
\numberwithin{table}{section}
\numberwithin{theorem}{section}
\newcommand{\R}{\mathbb{R}}
\newcommand{\RR}{\mathbb{R}}
\newcommand{\NN}{\mathbb{N}}
\newcommand{\foralls}{\forall\,}
\newcommand{\dx}{\,\mathrm{d}x}
\newcommand{\ds}{\,\mathrm{d}s}
\DeclareMathOperator{\diam}{diam}
\DeclareMathOperator{\meas}{meas}
\DeclareMathOperator{\RE}{Re}
\DeclareMathOperator{\CFL}{CFL}
\renewcommand{\div}{\nabla \cdot}
\newcommand{\grad}{\nabla}
\newcommand{\rot}{\nabla \times}
\newcommand{\restr}[2]{ \left. #1 \right|_{#2}}
\newcommand{\tnorm}[1]{{\vert\kern-0.25ex\vert\kern-0.25ex\vert #1 
    \vert\kern-0.25ex\vert\kern-0.25ex\vert}} 
\newcommand{\jump}[1]{\llbracket #1 \rrbracket}
\newcommand{\norm}[1]{\| #1 \|}
\newcommand{\normLtwo}[2]{\| #1 \|_{#2}}
\newcommand{\onehalf}{\frac{1}{2}}
\newcommand{\apriori}{\emph{a~priori}}
\newcommand\bsnote[2][]{\todo[inline, caption={2do}, color=green!40 #1]{
\begin{minipage}{\textwidth-4pt}\underline{BS:} #2\end{minipage}}}
\newcommand{\bfepsilon}{{\pmb\epsilon}}
\newcommand{\bfu}{\boldsymbol{u}}
\newcommand{\bff}{\boldsymbol{f}}
\newcommand{\bfg}{\boldsymbol{g}}
\newcommand{\bfh}{\boldsymbol{h}}
\newcommand{\bfv}{\boldsymbol{v}}
\newcommand{\bfc}{\boldsymbol{c}}
\newcommand{\bfz}{\boldsymbol{z}}
\newcommand{\bfn}{\boldsymbol{n}}
\newcommand{\bfx}{\boldsymbol{x}}
\newcommand{\bfzero}{\boldsymbol{0}}
\newcommand{\bfbeta}{\boldsymbol{\beta}}
\newcommand{\bft}{\boldsymbol{t}}
\newcommand{\GammaIn}{\Gamma_{\mathrm{in}}}
\newcommand{\Oast}{\Omega^{\ast}}
\newcommand{\Oasth}{\Omega^{\ast}_h}
\newcommand{\pO}{\Gamma}
\newcommand{\Fast}{\mathcal{F}_{\Gamma}}
\newcommand{\piast}{\pi^{\ast}_h}
\newcommand{\bfpiast}{\boldsymbol{\pi}^{\ast}_h}
\newcommand{\Piast}{\Pi^{\ast}_h}
\newcommand{\mcF}{\mathcal{F}}
\newcommand{\mcT}{\mathcal{T}}
\newcommand{\mcV}{\mathcal{V}}
\newcommand{\mcW}{\mathcal{W}}
\newcommand{\mcX}{\mathcal{X}}
\newcommand{\mcQ}{\mathcal{Q}}
\newcommand{\mcO}{\mathcal{O}}
\newcommand{\mcP}{\mathcal{P}}
\newcommand{\tn}{|\mspace{-1mu}|\mspace{-1mu}|}
\newcommand{\bfw}{\boldsymbol{w}}
\newcommand{\nablan}{\partial_{\bfn}}
\newcommand{\nuhalf}{\nu^\frac{1}{2}}
\newcommand{\sigmahalf}{\sigma^{\frac{1}{2}}}
\newcommand{\GammaGNBC}{\Gamma}
\journal{\phantom{journal}}
\begin{document}

\begin{frontmatter}

\title{A Nitsche cut finite element method for the Oseen problem\\with general Navier boundary conditions}

\author[lnm]
{M.~Winter}
\ead{winter@lnm.mw.tum.de}
\ead[url]{www.lnm.mw.tum.de/staff/magnus-winter}

\author[lnm]
{B.~Schott\corref{cor1}}
\ead{schott@lnm.mw.tum.de}
\ead[url]{www.lnm.mw.tum.de/staff/benedikt-schott}

\author[umit]
{A.~Massing}
\ead{andre.massing@umu.se}
\ead[url]{www.andremassing.com}

\author[lnm]
{W.A.~Wall}
\ead{wall@lnm.mw.tum.de}
\ead[url]{www.lnm.mw.tum.de/staff/wall}

\cortext[cor1]{Corresponding author}

\address[lnm]{Institute for Computational Mechanics, Technical University of Munich, Boltzmannstraße 15, 85747 Garching, Germany}
\address[umit]{Department of Mathematics and Mathematical Statistics, Ume\aa{} University, SE-901 87 Ume\aa{}, Sweden}

\begin{abstract}
In this work a Nitsche-based imposition of generalized Navier conditions on cut meshes for the Oseen problem is presented. 
Other methods from literature dealing with the generalized Navier condition impose this condition by means of substituting the tangential Robin condition in a classical Galerkin way. 
These methods work fine for a large slip length coefficient but lead to conditioning and stability issues when it approaches zero. 
We introduce a novel method for the weak imposition of the generalized Navier condition which remains well-posed and stable for arbitrary choice of slip length, including zero.
The method proposed here builds on the formulation done by \citet{JuntunenStenberg2009a}. 
They impose a Robin condition for the Poisson problem by means of Nitsche's method for an arbitrary combination of the Dirichlet and Neumann parts of the condition.
The analysis conducted for the proposed method is done in a similar fashion as in \citet{MassingSchottWall2016_CMAME_Arxiv_submit}, but 
is done here for a more general type of boundary condition. 
The analysis proves stability for all flow regimes and all choices of slip lengths.
Also an $L^2$-optimal estimate for the velocity error is shown, which was not conducted in the previously mentioned work.
A numerical example is carried out for varying slip lengths to verify the robustness and stability of the method with respect to the choice of slip length.
Even though proofs and formulations are presented for the more general case of
an unfitted grid method,
they can easily be reduced to the simpler case of a boundary-fitted grid with the removal of the ghost-penalty stabilization terms.
\end{abstract}

\begin{keyword}
Oseen problem
\sep
general Navier boundary condition
\sep 
cut finite element method
\sep
Nitsche's method
\sep
slip boundary condition
\sep
Navier-Stokes equations
\end{keyword}

\end{frontmatter}



\section{Introduction}
\label{sec:introduction}

For an incompressible Newtonian fluid in $\Omega \subset \RR^d$, $d=2,3$, which is a bounded, connected domain with the boundary $\partial \Omega$, the following Navier-Stokes equations with a general Navier boundary condition are valid,

\begin{alignat}{2}
  \label{eq:navier-stokes-problem-momentum}
  \partial_t \bfu + \bfu\cdot\nabla\bfu - \nabla\cdot(2\nu\bfepsilon(\bfu)) + \nabla p 
&= \bff \quad &&\text{ in } \Omega,
  \\
  \div\bfu &= 0 \quad &&\text{ in } \Omega,
  \label{eq:navier-stokes-problem-compressible}
  \\
  \left( \bfu - \bfg \right) \boldsymbol{P}^n &= \bfzero
  \quad &&\text{ on } \partial\Omega,
    \label{eq:navier-stokes-problem-boundary-GNBC-normal}\\
  \left( \varepsilon  (2 \nu \bfepsilon(\bfu) \bfn - \bfh) + \nu (\bfu -\bfg) \right) \boldsymbol{P}^t &= \bfzero
  \quad &&\text{ on } \partial\Omega,
  \label{eq:navier-stokes-problem-boundary-GNBC-tangential}
\end{alignat}
where $\bfepsilon(\bfu) := \frac{1}{2} ( \nabla \bfu + (\nabla \bfu)^T )$ and $\bfu, \bfg, \bfh : \left[ 0,T \right] \times \Omega \rightarrow \RR^d$ and $\nu, p:\left[ 0,T \right] \times \Omega \rightarrow \RR$. Here $\bfu$ is the velocity and $p$~and $\nu$ the pressure and kinematic viscosity of the fluid respectively. The terms $\bfg$ and $\bfh$ are the velocity and traction prescribed at the boundary $\partial \Omega$.
The projection matrices ($\boldsymbol{P}^n,\boldsymbol{P}^t\in\mathbb{R}^{d\times d}$) are constructed from the outward pointing unit normal $\bfn$ of the boundary $\partial \Omega$.
The projection in normal direction is defined as $\boldsymbol{P}^n := \bfn \otimes \bfn$ and the projection onto the tangential plane of the surface as $\boldsymbol{P}^t := \boldsymbol{I} - \bfn \otimes \bfn$, where  $\boldsymbol{I}$ is the $d\times d$ identity matrix.

The boundary condition applied to the problem \eqref{eq:navier-stokes-problem-momentum}-\eqref{eq:navier-stokes-problem-compressible} is the general Navier boundary condition.
It prescribes a Dirichlet condition in the normal direction \eqref{eq:navier-stokes-problem-boundary-GNBC-normal}, where the normal velocity of the fluid needs to be the same as the normal velocity prescribed at the boundary. In the tangential plane \eqref{eq:navier-stokes-problem-boundary-GNBC-tangential}, a Robin condition is imposed, which is a condition comprised of a linear combination  of a Dirichlet and Neumann condition.
The slip length parameter $\varepsilon: \partial \Omega \rightarrow \mathbb{R}_0^+\cup\{\infty\}$ determines the influence of the 
two
parts in the tangential direction. 
In the limiting case where $\varepsilon \rightarrow 0$, the formulation reduces to the classic Dirichlet condition $\bfu = \bfg \text{ on } \partial \Omega$; and in the case where $\varepsilon \rightarrow \infty$, a combination of a Dirichlet condition in normal direction $\bfu \boldsymbol{P}^n = \bfg \boldsymbol{P}^n$ and a Neumann condition in tangential direction $2 \nu \bfepsilon(\bfu) \bfn \boldsymbol{P}^t = \bfh \boldsymbol{P}^t$ is enforced. 

This type of boundary condition was first proposed by Navier \cite{Navier1823} with $\bfg = \bfh = \bf0$ as a boundary condition for incompressible fluids at solid walls.
However, in the majority of cases, it has been verified that the slip length is negligibly small and as such the no-slip condition, which is a pure Dirichlet type condition with $\bfu = \bfg$, is normally used to describe the motion of fluids.
Nevertheless, there are some cases where a Navier slip type model has merit, such as for flow over super-hydrophobic surfaces \cite{OuPerotRothstein2004_PoF} and in the modeling of rough surfaces \cite{Mikelic2009_QP_review}.
It can also be shown that for contact between smooth rigid bodies in an incompressible fluid, the no-slip condition is not a feasible boundary condition \cite{HillairetTakahashi2009_SIAM}. One option to alleviate this issue is to employ a Navier slip boundary condition at the boundaries of the rigid bodies \cite{NeustupaPenel2010_AMFM, GerardHillairetWang2015_JMPA}.
Furthermore, in modeling the motion of contact lines for multiphase flows, it is well known that the use of the no-slip boundary condition yields non-physical infinite dissipation in the vicinity of the contact line \cite{HuhScriven1971_JCIS}. To overcome this issue, an alternative is to model the contact line motion by using a Navier slip type boundary condition \cite{QianWangSheng2006_JFM, GerbeauLelievre2009_CMAME}.

The work done on how to impose the Navier boundary condition can be divided based on how it enforces the no-penetration condition \eqref{eq:navier-stokes-problem-boundary-GNBC-normal}, i.e. the condition in normal direction. In the work done by Verf\"urth, a strong imposition is proposed in \cite{Verfuerth1985_RAIRO}, and by means of a weak imposition through Lagrange multipliers in \cite{Verfuerth1986_NM, Verfuerth1991}. The constraint in normal direction can also be enforced weakly by means of Nitsche's method \cite{UrquizaGaronFarinas2014} or by a penalty method \cite{John2002_JCAM}.
In all the previously mentioned works, the tangential condition \eqref{eq:navier-stokes-problem-boundary-GNBC-tangential} is treated by substituting the traction in the variational formulation at the boundary with the following,
\begin{equation}
  2 \nu \bfepsilon(\bfu) \bfn \boldsymbol{P}^t = \bfh \boldsymbol{P}^t  -\frac{\nu}{\varepsilon}\left( \bfu -\bfg \right) \boldsymbol{P}^t 
  \quad \text{ on } \partial\Omega.
  \label{eq:navier-stokes-problem-boundary-GNBC-tangential-substitution}
\end{equation}
This method of imposing the boundary condition leads to numerical difficulties when $\varepsilon \rightarrow 0$ and is not defined for a Dirichlet condition $\varepsilon = 0$.
Thus, the slip length can not be chosen arbitrarily, and it is difficult to predict at what value numerical issues will arise. 
The same problem can be observed in the Poisson equation with Robin boundary conditions, for which a solution was presented by \citet{JuntunenStenberg2009a}.
In their formulation, the Robin condition is imposed weakly by means of Nitsche's method, which permits any choice of $\varepsilon$. 
Based on this method, the present work proposes a formulation to extend these results to the Navier-Stokes equations.




A few examples have already been mentioned, such as contact between rigid bodies in an incompressible fluid and moving contact lines in multi-phase flows, where large deformation and topological changes occur and where the Navier slip boundary condition has merit.
To simulate these types of problems, an unfitted mesh instead of a boundary-fitted mesh method is advantageous.
This is due to the fact that the boundary-fitted mesh methods, which require Arbitrary-Lagrangian-Eulerian (ALE) based mesh moving algorithms, will break down and necessitate expensive remeshing \cite{Wall2008} under those conditions.


Unfitted mesh methods have already successfully been used in simulating a variety of different problems. 
These include single-phase \cite{SchottWall2014} and multi-phase flows \cite{ChessaBelytschko2003, GrossReusken2007, SchottRasthoferGravemeierWall2015, HansboLarsonZahedi2014}, which were conducted by using an implicit level set to describe the interface between the fluids and an unfitted finite element approach for solving the governing equations. 
For fluid-structure interaction problems with large deformations of the structure \cite{Gerstenberger2008, Legay2006, CourtFournie2015, Burman2014a}, and in the case of contact of submerged bodies \cite{Mayer2010}, a fixed-grid Eulerian approach for the fluid and a Lagrangian description for the structure have been applied successfully.
Unfitted mesh methods also offer the possibility to use embedding meshes \cite{Hansbo2003, Shahmiri2011, MassingLarsonLoggEtAl2015, SchottShahmiriKruseWall2015} to simplify meshing or to improve accuracy in certain regions of the mesh.

The Cut Finite Element Method (CutFEM) is the unfitted mesh method of choice in this work. 
This method stems from the eXtended Finite Element Method \cite{Moes1999,Belytschko1999}; for an overview of CutFEM see, e.g., \cite{BurmanClausHansboEtAl2014}.
To prescribe boundary conditions on non-fitted domain boundaries, a common choice is to impose them weakly. This can be done for instance with Nitsche's method \cite{Nitsche1971}. Stability and \apriori~error estimates have been derived for this setup for the Poisson \cite{BurmanHansbo2012}, Stokes \cite{Burman2014b} and recently for the Oseen problem \cite{MassingSchottWall2016_CMAME_Arxiv_submit} with weakly imposed Dirichlet boundary conditions. Another advantage of Nitsche's method is its flexibility and straightforward extensibility to coupled multiphysics problems \cite{Burman2014a}, where the strength of the imposition of the boundary conditions is regulated by the resolution of the physical variables of the computational mesh.


The Oseen equations, which are addressed in this work, can be viewed as a linearized version of the Navier-Stokes equations when time-stepping methods are applied. 
Since this work is also investigating the stability and \apriori~error properties of an unfitted Nitsche's method for the Oseen problem, but with a more general type of boundary condition, a large portion of the analysis relies on the work done in \cite{MassingSchottWall2016_CMAME_Arxiv_submit}. As the proposed method uses equal order continuous finite elements for the velocity and pressure, stabilization of the discretized equations is a necessity. The method chosen here is the continuous interior penalty (CIP) method by \citet{BurmanFernandezHansbo2006}; see \cite{BraackBurmanJohnEtAl2007} for an overview of different stabilization methods applicable to the Oseen problem.
Further stabilization is necessary at the boundaries as these are cut out from the background mesh. If such stabilization is not applied, the method is not necessarily stable for pathological cut cases. Our choice of stabilization at the boundary is the ghost-penalty method introduced by \citet{Burman2010} for the Poisson equation and adapted to the Oseen and Navier-Stokes equations by \cite{MassingSchottWall2016_CMAME_Arxiv_submit} and \cite{SchottWall2014}.


The contributions of this work can be summarized as follows:
A weakly imposed general Navier boundary condition is introduced. 
It consists of a normal and a tangential component, which are separated by the use of projection operators at the boundary.
In the normal direction a Dirichlet condition is imposed by means of Nitsche's method. In the tangential plane a Robin condition is also imposed by a Nitsche's method, which has the advantage over the classical Galerkin substitution \eqref{eq:navier-stokes-problem-boundary-GNBC-tangential-substitution} in that it remains stable and well-posed for all slip lengths $\varepsilon$. The formulation in the tangential plane is inspired by the work of \cite{JuntunenStenberg2009a}, where a similar problem was solved for the Robin boundary condition for the Poisson equation.
A numerical analysis is conducted to prove that our formulation is stable and has optimal convergence behavior. 
The analysis builds upon the work done in \cite{MassingSchottWall2016_CMAME_Arxiv_submit}, where stability and optimality of the error estimates for the Oseen equations with CutFEM was done for a weak imposition of a Dirichlet boundary condition. 
Here, however, the analysis is carried out for a more general case of the boundary condition. 




%
The paper is organized as follows.
Section~\ref{sec:oseen-problem} introduces the Oseen problem, and the necessary spaces and assumptions are specified;
also the variational formulation of the problem is presented.
In Section~\ref{sec:cut-finite-element} the definitions for the cut finite element spaces are introduced.
The general Navier boundary condition imposed by Nitsche's method is presented and its differences and
advantages are highlighted against alternative methods.
The stabilized weak form of the Oseen equations, discretized by means of the CutFEM and the necessary stabilizations of the domain through the CIP method and the cut boundaries by means of ghost-penalty stabilization, are also explained in this section.
In Section~\ref{sec:interpolation-est} basic approximation properties and interpolation operators and norms are introduced.
Sections~\ref{sec:stability-properties} and~\ref{sec:apriori-analysis} are dedicated to proving the inf-sup stability and the optimal \apriori~error of the proposed method.
In Section~\ref{sec:numexamples} the method is applied to a numerical example and a convergence study is conducted to verify the theoretical results.
The final Section~\ref{sec:conclusions} gives a summary and conclusion of this work.

\section{The Oseen Problem with General Navier Boundary Conditions}
\label{sec:oseen-problem}

\subsection{Basic Notation on Function Spaces}
\label{ssec:notation}
We define our domain as $\Omega \subset \RR^d$, $d \in \{2,3\}$, where $\Omega$ is either a bounded convex domain or a plane bounded domain with Lipschitz and piecewise $C^2$ boundary with convex angles.
The standard Sobolev spaces are denoted by $W^{m,q}(U)$ for $U \in \{\Omega, \Gamma \}$ where
$ 0\leqslant m < \infty$ and $1 \leqslant q \leqslant \infty$ with associated norms $\|\cdot \|_{m,q,U}$. 
We write $H^m(U) = W^{m,2}(U)$ with norm $\|\cdot\|_{m,U}$.
For the associated inner products we write $(\cdot,\cdot)_{m,U}$ for measurable subsets $U\subseteq\RR^d$ and $\langle \cdot,\cdot \rangle_{m,U}$
for subsets $U\subseteq\RR^{d-1}$.
Occasionally, we write $(\cdot,\cdot)_{U}$, $\langle\cdot,\cdot\rangle_{U}$ and $\|\cdot \|_{U}$ if $m=0$.
Fractional Sobolev trace spaces $[H^{s-\onehalf}(Y)]^d$ at subsets $Y\subset \RR^{d-1}$,
which in practice will be a part of the domain boundary $\Gamma$,
denote the set of boundary traces of all $\RR^d$-valued functions in $[H^{s}(\Omega)]^d$.
For the Oseen problem, we make use of the specific function spaces
$H_0(\div;\Omega) \subset [L^2(U)]^d$, which denotes the space of divergence-free
functions, and $L^2_0(\Omega)$, which denotes the
function space consisting of functions in $L^2(\Omega)$ with zero average on $\Omega$.
To shorten the presentation, for broken norms and inner products which involve a collection of geometric entities $\mcP_h$,
i.e. finite elements or facets, we write $\|\cdot\|_{\mcP_h}^2 =
\sum_{P\in\mcP_h} \|\cdot\|_P^2$ whenever $\|\cdot\|_P$ is well-defined.

\subsection{Problem Formulation}
\label{ssec:problem_formulation}

Considering the non-linear Navier-Stokes equations \eqref{eq:navier-stokes-problem-momentum}--\eqref{eq:navier-stokes-problem-compressible},
after applying a time discretization method and a linearization step, 
many solution algorithms can be reduced to solving a sequence of auxiliary problems of Oseen type
for the velocity field $\bfu:\Omega\rightarrow \mathbb{R}^d$ and the pressure field $p:\Omega\rightarrow \mathbb{R}$:
\begin{alignat}{2}
  \label{eq:oseen-problem-momentum}
  \sigma \bfu + \bfbeta\cdot\nabla\bfu - \nabla\cdot(2\nu\bfepsilon(\bfu)) + \nabla p 
&= \bff \quad &&\text{ in } \Omega,
  \\
  \div\bfu &= 0 \quad &&\text{ in } \Omega,
  \label{eq:oseen-problem-compressible}
  \\
  \left( \bfu - \bfg \right) \boldsymbol{P}^n &= \bfzero
  \quad &&\text{ on } \GammaGNBC,
    \label{eq:oseen-problem-boundary-GNBC-normal}\\
  \left( \varepsilon  (2 \nu \bfepsilon(\bfu) \bfn - \bfh) + \nu (\bfu -\bfg) \right) \boldsymbol{P}^t &= \bfzero
  \quad &&\text{ on } \GammaGNBC,
  \label{eq:oseen-problem-boundary-GNBC-tangential}
\end{alignat}
where $\bfepsilon(\bfu) := \frac{1}{2}(\nabla \bfu + (\nabla \bfu)^T)$
denotes the strain rate tensor,
$\bfbeta \in [W^{1,\infty}(\Omega)]^d \cap H_0(\div;\Omega)$
the given divergence-free advective velocity field and
$\bff\in [L^2(\Omega)]^d$ the body force.
The assumption is made that the reaction coefficient $\sigma$ and the viscosity $\nu$ are positive real-valued constants.
For the boundary conditions \eqref{eq:oseen-problem-boundary-GNBC-normal}--\eqref{eq:oseen-problem-boundary-GNBC-tangential}
the functions are defined as
\begin{equation}
\bfg\in [H^{3/2}(\GammaGNBC)]^d 
\qquad\text{ and }\qquad 
\bfh\in [H^{1/2}(\GammaGNBC)]^d,
\end{equation}
and the normal and tangential projection matrices are constructed from the outward pointing unit normal $\bfn$ of the boundary $\GammaGNBC$ as
$\boldsymbol{P}^n := \bfn \otimes \bfn$ and $\boldsymbol{P}^t := \boldsymbol{I} - \bfn \otimes \bfn$, where $\boldsymbol{I}$ is the $d\times d$ identity matrix.
Utilizing a non-negative slip length function $\varepsilon: \GammaGNBC\rightarrow \mathbb{R}_0^+\cup\{\infty\}$,
the classical full Dirichlet boundary conditions can be recovered by setting $\varepsilon=0$,
which for $\nu > 0$ states that $\bfu= \bfg \text{ on } \Gamma_D:=\{\bfx\in\GammaGNBC \text{ with } \varepsilon=0\}\subseteq{\GammaGNBC}$;
however, the tangential condition \eqref{eq:oseen-problem-boundary-GNBC-tangential} does not contribute in the Darcy limit, i.e. for $\nu =0$, 
and as such, the normal condition \eqref{eq:oseen-problem-boundary-GNBC-normal} is sufficient to define a Dirichlet boundary condition.
By choosing $\varepsilon=\infty$ and $\bfh = \bf0$ the boundary condition reduces to a full-slip condition
as the tangential velocity on $\GammaGNBC$ is not constrained anymore.
The part of $\GammaGNBC$ with $0\leqslant \varepsilon < \infty$ is denoted by $\GammaGNBC^\varepsilon$
and is assumed being non-vanishing, i.e. $\meas(\GammaGNBC^\varepsilon)>0$,
to avoid not well-posed pure slip-boundary problems.
It is further assumed that the inflow boundary $\Gamma_{\mathrm{in}}:=\{\bfx\in\Gamma~|~\bfbeta\cdot\bfn<0\}$
is part of the Dirichlet boundary, i.e. $\Gamma_{\mathrm{in}}\subseteq \Gamma_D$.
In the discrete setting, the constant pressure mode needs to be filtered out during the solution procedure, as it is determined only up to a constant.

\subsection{Variational Formulation}
\label{ssec:variational_formulation}

Let us denote the velocity and pressure space by
\begin{align}
\mcV_{\bfg}&:=\{\bfu\in [H^1(\Omega)]^d ~|~(\bfu-\bfg)\boldsymbol{P}^n=\bfzero \text{ on } \GammaGNBC \wedge \bfu=\bfg \text{ on } \Gamma_D\},
\\
  \mcV^n_{\bfzero}&:=\{\bfu\in [H^1(\Omega)]^d ~|~\bfu\boldsymbol{P}^n=\bfzero \text{ on } \GammaGNBC \wedge \bfu=\bfzero \text{ on } \Gamma_D\},
\\
  \mcQ &:= L_0^2(\Omega).
\end{align}
The corresponding weak formulation of the Oseen
problem~\eqref{eq:oseen-problem-momentum}--\eqref{eq:oseen-problem-boundary-GNBC-tangential} is to find the velocity and the pressure field
 $(\bfu,p) \in \mcV_{\bfg} \times \mcQ$
such that
\begin{align}
  a(\bfu, \bfv) + b(p, \bfv) - b(q, \bfu) + \langle \frac{\nu}{\epsilon} \bfu \boldsymbol{P}^t , \bfv \rangle_\Gamma = l(\bfv) + \langle (\bfh + \frac{\nu}{\epsilon} \bfg) \boldsymbol{P}^t , \bfv \rangle_\Gamma\quad \foralls
  (\bfv,q) \in \mcV^n_{\bfzero} \times \mcQ,
  \label{eq:oseen_weak}
\end{align}
where
\begin{align}
  \label{eq:a-form-def}
  a(\bfu, \bfv) &:= (\sigma \bfu, \bfv)_{\Omega} +
  (\bfbeta\cdot\nabla\bfu, \bfv)_{\Omega} + (2\nu\bfepsilon(\bfu),
  \bfepsilon(\bfv))_{\Omega},    \\
  \label{eq:b-form-def}
  b(p, \bfv) &:= - (p, \div\bfv)_{\Omega}, 
  \\
  l(\bfv) &:= (\bff, \bfv)_{\Omega}.
  \label{eq:l-form-def}
\end{align}
For the well-posedness and solvability of the continuous Oseen
problem~\eqref{eq:oseen_weak}, we refer the reader to, e.g. \cite{RaviartGirault1986,Solonnikov1983, Verfuerth1991}.

\section{A Stabilized Nitsche-type Cut Finite Element Method for the Oseen Problem}
\label{sec:cut-finite-element}

This section is devoted to the presentation of our cut finite element method.
After defining suitable cut finite element function spaces, we address the weak imposition of
general Navier boundary conditions for the Oseen problem by a Nitsche-type method. A possible stabilization technique
to overcome issues of classical finite element approximations is recalled and we discuss how the resulting discrete formulation can be extended to non-boundary-fitted
approximation with the help of boundary-zone ghost-penalty stabilizations.
Finally, we summarize our cut finite element formulation and introduce suitable norms for the numerical stability and \apriori~error analysis.

\subsection{Computational Meshes and Cut Finite Element Spaces}
\label{ssec:cutfem-spaces}

While for standard finite element methods the computational mesh is fitted to the boundary
or an interpolatory approximation is given, in cut finite element methods
the boundary is allowed to intersect the mesh.
For the sake of simplicity, in this work, we assume a sequence of quasi-uniform meshes $\widehat{\mcT}_h = \{T\}$,
each consisting of shape-regular finite elements $T$ with mesh size parameter~$h$
and covering the physical domain~$\Omega$.
For each background mesh $\widehat{\mcT}_h$, the finite element solution
is then approximated on an \emph{active} part of the background mesh
\begin{align}
  \mcT_h := \{ T \in \widehat{\mcT}_h : T \cap \Omega \neq \emptyset \},
\end{align}
consisting of all elements in $\widehat{\mcT}_h$ which intersect the physical domain $\Omega$.
The possibly enlarged domain which is covered by the union of all elements $T \in \mcT_h$ is denoted by $\Oasth$.
A mesh $\mcT_h$ is called a \emph{fitted mesh} if $\overline{\Omega} = \overline{\Oast_h}$
and an \emph{unfitted mesh} if $\overline{\Omega} \subsetneq \overline{\Oast_h}$. 
The subset of all elements of the respective active mesh which are located in the vicinity of the boundary $\Gamma$
are denoted as
\begin{equation}
  \label{eq:define-cutting-cell-mesh}
  \mcT_{\pO} := \{T \in \mcT_h: T \cap \pO \neq \emptyset \}.
\end{equation}

%
\begin{figure}[tb]
  \begin{center}
    \includegraphics[width=0.42\textwidth]{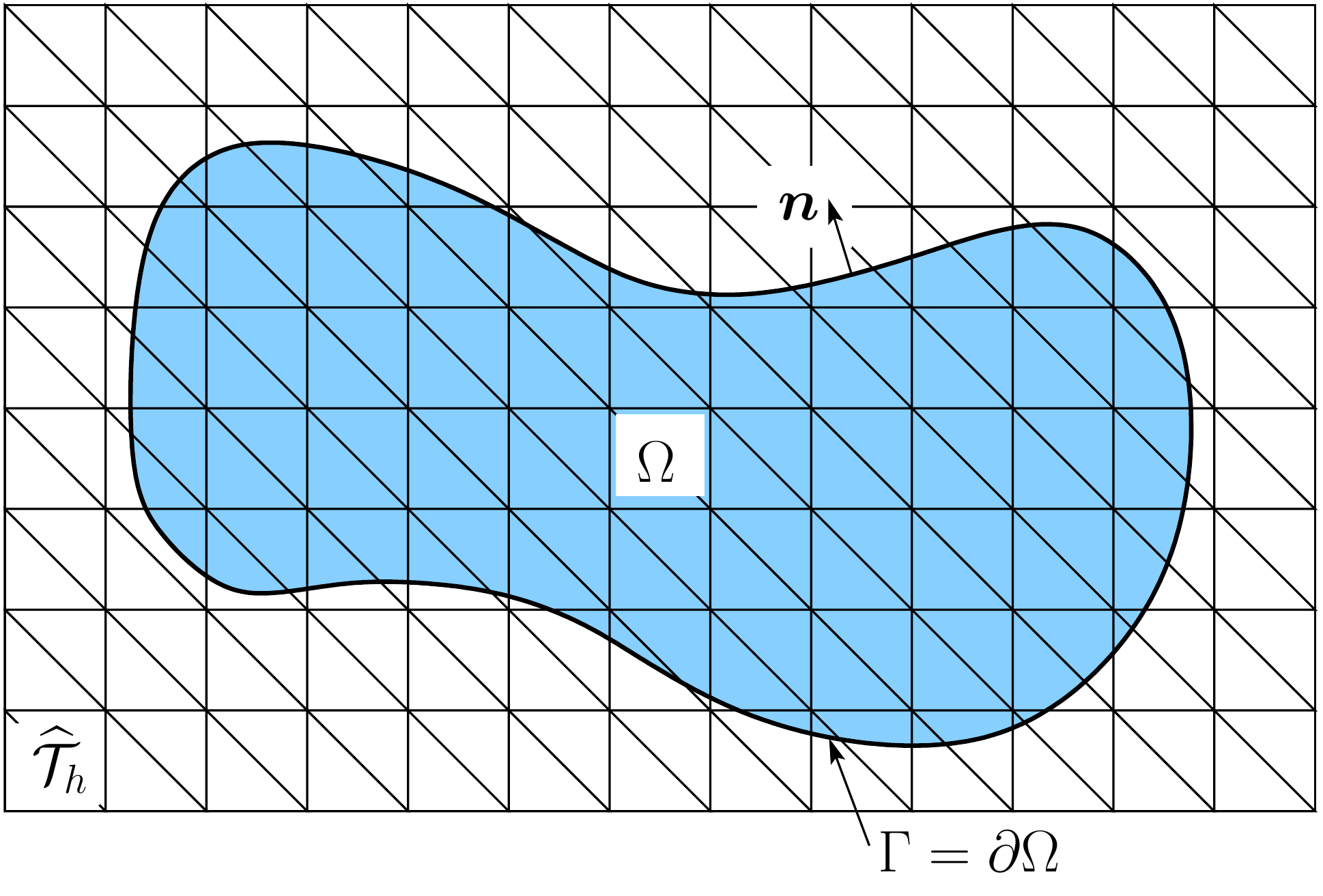} 
    \qquad\quad
    \includegraphics[width=0.42\textwidth]{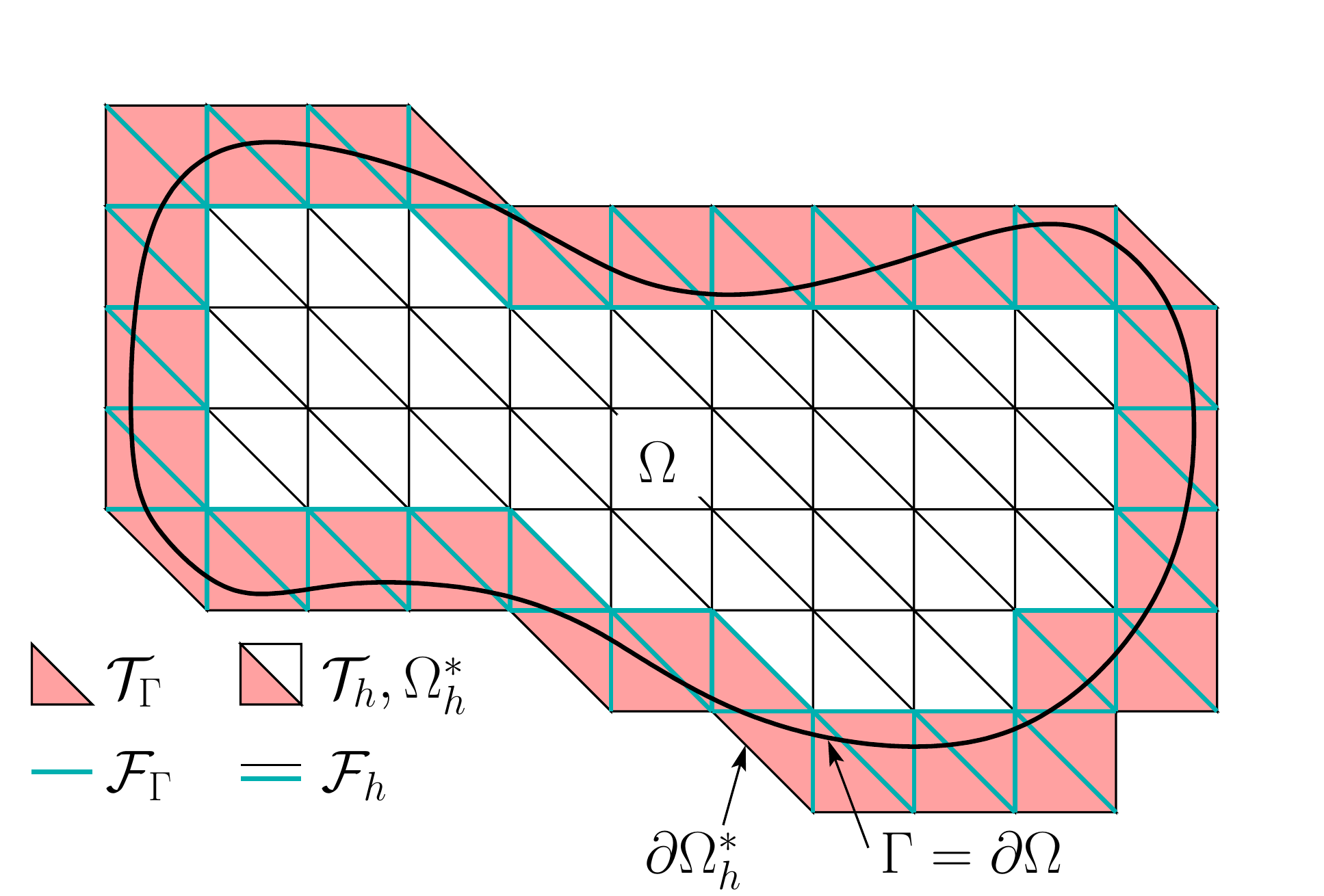}
  \end{center}
  \caption{Left: The physical domain $\Omega$ is defined as the
    inside of a given boundary $\pO$ with outward pointing unit normal~$\bfn$ embedded into a fixed
    background mesh $\widehat{\mcT}_h$. Right: The fictitious
    domain~$\Oast_h$ is the union of the minimal subset $\mcT_h
    \subset \widehat{\mcT}_h$ covering $\Omega$ and defines the active computational mesh.
Sets of elements $\mcT_{\Gamma}$ and facets $\mcF_{\Gamma}$ are indicated.}
  \label{fig:computational-domain}
\end{figure}
%

\noindent For stabilization purposes we need to define the set of all facets by $\mcF_h$,
where $\mcF_i$ is the set of all \emph{interior facets} $F$ which are
shared by exactly two elements, denoted by $T^+_F$ and $T^-_F$.
The notation $\mcF_{\Gamma}$ is used for the set of all interior facets
belonging to elements intersected by the boundary $\pO$,
\begin{equation}
  \label{eq:ghost-penalty-facets}
  \mcF_{\Gamma} := \{ F \in \mcF_i :\;
  T^+_F \cap \pO \neq  \emptyset
  \vee
  T^-_F \cap \pO \neq  \emptyset
  \}.
\end{equation}
\Figref{fig:computational-domain} summarizes the notation.

By analogy to other preceding works on cut finite element methods \cite{HansboHansbo2002,BurmanHansbo2012,MassingLarsonLoggEtAl2014,BurmanClausMassing2015,MassingSchottWall2016_CMAME_Arxiv_submit},
we assume that the domain and the boundary are reasonably resolved by each quasi-uniform mesh $\mcT_h$.
This entails that the intersection between the facet $F \in \mcF_i$ and the boundary $\Gamma$ is simply connected.
Furthermore, there needs to exist a plane $S_T$ with a piecewise smooth parametrization $\Phi: S_T \cap T \rightarrow \Gamma \cap T$ for each element $T$ intersected by $\Gamma$. 
Finally an assumption is made that for each element $T \in \mcT_{\pO}$ there exists $T' \in \mcT_h \setminus \mcT_{\pO}$ such that the sequence $\{T\}_{j=1}^N$ with $T_1 = T,\,T_N = T'$ and $T_j \cap T_{j+1}  \in \mcF_i,\; j = 1,\ldots, N-1$ is at most $N$ elements.

Let $\mcX_h$ be the finite element spaces consisting of continuous 
piecewise polynomials of order $k$ for a given mesh $\mcT_h$
\begin{align}
  \mcX_h &= \left\{ v_h \in C^0(\Oast_h): \restr{v_h}{T} \in \mcP^k(T)\, \foralls T \in \mcT_h \right\}.
\end{align}
For the discrete cut finite element approximation of the solution to the Oseen problem,
we use equal-order interpolations for velocity and pressure, where
\begin{align}
  \mcV_h = [\mcX_h]^d, \quad \mcQ_h = \mcX_h, \quad \mcW_h = \mcV_h
  \times \mcQ_h
  \label{eq:approximation_spaces}
\end{align}
are the discrete velocity space, the discrete pressure space and the total approximation space.

\subsection{Weak Imposition of Generalized Navier Boundary Conditions}
\label{ssec:weak_imposition_bcs_oseen}

In this section, we present the imposition of generalized Navier boundary conditions for the Oseen problem.
As previously mentioned, this boundary condition consists of a Dirichlet condition in normal direction to the boundary and a Robin condition in the tangential plane of the boundary.
%
Since the boundary-normal constraint \eqref{eq:oseen-problem-boundary-GNBC-normal} is of Dirichlet-type, either a strong or a weak imposition can be used. Nevertheless, in the case of an unfitted mesh, a weak imposition is recommended.
In the case when the slip-length coefficient is sufficiently large,
the tangential constraint
\eqref{eq:oseen-problem-boundary-GNBC-tangential} can be imposed by direct substitution of the boundary terms,
which stems from integrating the viscous bulk term in \eqref{eq:oseen-problem-momentum} by parts,
i.e. by substituting
\begin{equation}
\label{eq:substitution_method}
  -\langle (2\nu\bfepsilon(\bfu_h)\bfn)\boldsymbol{P}^t, \bfv_h \rangle_{\GammaGNBC} = \langle ( -\bfh +  \varepsilon^{-1} \nu (\bfu -\bfg) ) \boldsymbol{P}^t, \bfv_h\rangle_{\GammaGNBC}.
\end{equation}
Such a formulation provides a reasonable and stable method for large slip-length coefficients $\varepsilon$.
However, it is well known that for $\varepsilon\rightarrow 0$, 
i.e. when we approach a Dirichlet condition in the tangential direction,
the conditioning of the formulation deteriorates and, as a result, does not provide an accurate and stable formulation,
see discussion by \citet{JuntunenStenberg2009a}.
To overcome this issue in the case of the Poisson problem,
Juntunen and Stenberg
proposed a 
Nitsche's method of imposing a Robin boundary condition, 
which provides stability, optimal \apriori~error estimates and a bounded conditioning w.r.t. small choices of $\varepsilon \in [0, \infty]$.

Next, we present our novel Nitsche-type method for the Oseen problem with general Navier boundary conditions.
It combines the advantages of the Nitsche-type method by \citet{JuntunenStenberg2009a} for generalized Robin-type conditions
with the Nitsche formulations developed by \citet{BurmanFernandezHansbo2006}
and \citet{MassingSchottWall2016_CMAME_Arxiv_submit} for incompressible low- and high-Reynolds-number flow with Dirichlet-type constraints.

The unfitted Nitsche-type finite element formulation for 
the Oseen problem is to find $U_h=(\bfu_h, p_h) \in \mcW_h$ such that
for all $V_h=(\bfv_h, q_h) \in \mcW_h$
\begin{align}
 A_h(U_h,V_h) + S_h(U_h, V_h) + G_h(U_h, V_h)
&= L_h(V_h),
 \label{eq:oseen-discrete-fitted-stabilized}
\end{align}
where
\begin{align}
A_h(U_h,V_h)
  &:= a_h(\bfu_h, \bfv_h)+b_h(p_h, \bfv_h)-b_h(q_h,\bfu_h),
 \label{eq:Ah-form-def}
\intertext{with}
  \label{eq:ah-form-def}
  a_h(\bfu_h, \bfv_h)
  &:= a(\bfu_h, \bfv_h)   - \langle 2\nu\bfepsilon(\bfu_h)\bfn, \bfv_h \rangle_{\GammaGNBC}
  \\
  &\quad\quad
  - \zeta_u \langle \bfu_h \cdot \bfn, (2 \nu \bfepsilon(\bfv_h)\bfn) \cdot \bfn \rangle_{\GammaGNBC}
  + \langle
    \frac{\nu}{\gamma^n h} \bfu_h \cdot \bfn,\bfv_h \cdot \bfn
  \rangle_{\GammaGNBC}
  \label{eq:ah-form-def_normal_adj_consistency_penalty}
  \\
  &\quad\quad
  + \langle  \frac{\phi_u}{\gamma^n h} \bfu_h\cdot\bfn, \bfv_h\cdot\bfn \rangle_{\GammaGNBC}
  - \langle (\bfbeta\cdot\bfn)\bfu_h, \bfv_h \rangle_{\GammaIn}
  \label{eq:ah-form-def_normal_regime_penalty}
  \\
  &\quad\quad
  + \langle \frac{1}{\varepsilon + \gamma^t h}  \varepsilon (2 \nu \bfepsilon(\bfu_h)\bfn )  \boldsymbol{P}^t
  , \bfv_h \rangle_{\GammaGNBC}
  +\langle \frac{1}{\varepsilon + \gamma^t h} \nu \bfu_h \boldsymbol{P}^t
   , \bfv_h \rangle_{\GammaGNBC}
  \label{eq:ah-form-def_tangential_1}
  \\
  &\quad\quad  
  - \zeta_u \langle \frac{ \gamma^t h }{\varepsilon + \gamma^t h} \varepsilon (2 \nu \bfepsilon(\bfu_h) \bfn)  \boldsymbol{P}^t
  , 2 \bfepsilon(\bfv_h) \bfn \rangle_{\GammaGNBC}
  - \zeta_u \langle \frac{  \gamma^t h}{\varepsilon + \gamma^t h} \nu \bfu_h  \boldsymbol{P}^t
  , 2 \bfepsilon(\bfv_h) \bfn \rangle_{\GammaGNBC},
  \label{eq:ah-form-def_tangential_2}
  \\
  \label{eq:bh-form-def}
  b_h(p_h, \bfv_h) &:= b(p_h, \bfv_h)+
  \langle p_h,
  \bfv_h\cdot\bfn
  \rangle_{\GammaGNBC},
  \\
  \label{eq:Lh-form-def}
  L_h(V_h)
  &:=
  l(\bfv_h)
\\
&\quad\quad
  - \zeta_u \langle \bfg \cdot \bfn, ( 2 \nu \bfepsilon(\bfv_h)\bfn ) \cdot \bfn \rangle_{\GammaGNBC}
  + \langle \frac{\nu}{\gamma^n h}  \bfg \cdot \bfn,\bfv_h \cdot \bfn \rangle_{\GammaGNBC}
\label{eq:lh-form-def_normal_adj_consistency_penalty}
\\
&\quad\quad
  - \langle \bfg \cdot \bfn, q_h \rangle_{\GammaGNBC}
  + \langle  \frac{\phi_u}{\gamma^n h} \bfg\cdot\bfn, \bfv_h\cdot\bfn \rangle_\Gamma
  - \langle (\bfbeta\cdot\bfn)\bfg, \bfv_h \rangle_{\GammaIn}
\label{eq:lh-form-def_normal_regime_consistency_penalty}
\\
&\quad\quad
  + \langle \frac{1}{\varepsilon + \gamma^t h} \varepsilon \bfh \boldsymbol{P}^t
  , \bfv_h \rangle_{\GammaGNBC}
  +\langle \frac{1}{\varepsilon + \gamma^t h} \nu \bfg \boldsymbol{P}^t
   , \bfv_h \rangle_{\GammaGNBC} 
\label{eq:lh-form-def_tangential_1}
\\
&\quad\quad
  - \zeta_u \langle \frac{  \gamma^t h  }{\varepsilon + \gamma^t h} \varepsilon \bfh  \boldsymbol{P}^t
  , 2 \bfepsilon(\bfv_h) \bfn \rangle_{\GammaGNBC}
  - \zeta_u \langle \frac{ \gamma^t h }{\varepsilon + \gamma^t h} \nu \bfg  \boldsymbol{P}^t
  , 2 \bfepsilon(\bfv_h) \bfn \rangle_{\GammaGNBC},
\label{eq:lh-form-def_tangential_2}
\end{align}
where $\phi_{u}: \Omega_h^* \rightarrow \RR$ is defined on each element as
\begin{equation}
\label{eq:cip-Nitsche_scaling}
  \phi_{u,T} := \nu +\|\bfbeta\|_{0,\infty,T} h + \sigma h^2.
\end{equation}
The stabilization operators $S_h, G_h$ will be specified later in Sections~\ref{ssec:stabilized-oseen} and \ref{ssec:cutfem-oseen}.
Starting from the weak formulation \eqref{eq:oseen_weak},
it can be seen that due to the weak constraint enforcement the standard consistency boundary terms remain in the momentum equation \eqref{eq:ah-form-def} and \eqref{eq:bh-form-def}.
Equivalent to a Nitsche formulation for a pure Dirichlet boundary condition, these standard consistency terms on the boundary $\Gamma$ are potential sources for instabilities and are analyzed further in Sections~\ref{sec:stability-properties} and \ref{sec:apriori-analysis}.
The constraints being imposed in wall-normal and tangential directions, i.e. \eqref{eq:oseen-problem-boundary-GNBC-normal} and
\eqref{eq:oseen-problem-boundary-GNBC-tangential},
are enforced by adding additional consistent boundary terms.




\noindent\textbf{Wall-normal constraint:}
The enforcement of the boundary-normal constraint follows the standard Nitsche-technique
by adding an adjoint (in-)consistent viscous term (depending on $\zeta_u\in\{-1,1\})$ and
a consistent and optimal convergent viscous symmetric penalty term \eqref{eq:ah-form-def_normal_adj_consistency_penalty}
with appropriate right-hand-side terms \eqref{eq:lh-form-def_normal_adj_consistency_penalty}.
Choosing an adjoint-consistent method ($\zeta_u=1$), the discrete formulation requires the penalty parameter $0 <\gamma^n \leqslant C$
to be chosen small enough, where the constant~$C$ depends on the shape and polynomial order of the finite element, cf. the trace estimate \eqref{eq:inverse-estimates} in Section~\ref{sec:interpolation-est}.
Furthermore, this choice allows to deduce optimal convergence for the velocity $L^2$-error as stated in Theorem~\ref{thm:apriori-estimate-L2}.
Even though an adjoint-inconsistent formulation ($\zeta_u=-1$) enjoys improved inf-sup stability
for any $0<\gamma^n< \infty$ and thereby ensures optimal convergence w.r.t. an energy norm,
optimality for the velocity $L^2$-error is not guaranteed in this case anymore.
For an analysis of penalty-free Nitsche methods, i.e. for $\gamma_n=\infty$, the interested reader is referred to, e.g. \cite{Burman2012a,Boiveau2014}.
Note that even for vanishing viscosity the wall-normal constraints needs to be enforced.
Thus, to ensure inf-sup stability and optimal convergence for all flow regimes, 
an adjoint-consistent pressure term \eqref{eq:bh-form-def} and
a symmetric penalty term \eqref{eq:ah-form-def_normal_regime_penalty},
which accounts for the different flow regimes
as reflected by the definition of the piecewise constant stabilization scaling function $\phi_u$, are added to the formulation.
The respective right-hand-side terms \eqref{eq:lh-form-def_normal_regime_consistency_penalty} again guarantee consistency of the method.
Note that additionally at inflow boundaries $\Gamma_{\mathrm{in}} \subseteq \Gamma_D$ where it holds that $\varepsilon=0$ and $\bfbeta\cdot\bfn<0$,
Dirichlet boundary conditions need to be imposed in all spatial directions as stated by the additional advective inflow stabilization terms in
\eqref{eq:ah-form-def_normal_regime_penalty} and \eqref{eq:lh-form-def_normal_regime_consistency_penalty}.

\noindent\textbf{Wall-tangential constraint:}
The enforcement of the tangential boundary condition follows the technique introduced by \citet{JuntunenStenberg2009a}.
By adding terms where the tangential condition \eqref{eq:oseen-problem-boundary-GNBC-tangential} is tested with $\bfv_h$ and $\bfepsilon(\bfv_h)$, consistency is ensured.
Thereby, choosing the weights of these added terms as presented, guarantees coercivity, optimal \apriori~error estimates and bounded system conditioning w.r.t. the
choice of the slip-length coefficient $\varepsilon\in[0,\infty]$.
Similar to the viscous wall-normal constraint enforcement,
the choice between an adjoint-consistent ($\zeta_u=1$) and an adjoint-inconsistent ($\zeta_u=-1$)
formulation poses equivalent restrictions to $\gamma^t$, i.e. $0<\gamma^t\leqslant C$ for $\zeta_u=1$
(with $C$ stemming from estimate~\eqref{eq:inverse-estimates}) and $0<\gamma^t < \infty$ for $\zeta_u=-1$.

\begin{remark}
 In the limiting case of $\varepsilon \rightarrow 0$, it can easily be seen that the formulation \eqref{eq:oseen-discrete-fitted-stabilized} ends up the same as already presented in \cite{MassingSchottWall2016_CMAME_Arxiv_submit} for the imposition of weak Dirichlet boundary conditions by means of Nitsche's method.
\end{remark}

\begin{remark}
\label{remark:oseen_weak_formulation_large_eps}
  For the case where $\varepsilon \rightarrow \infty$ and $\gamma^t h \ll \varepsilon$ the formulation does not return the familiar imposition of a Neumann condition in the tangential plane.
For clarity, we explicitly state the terms $a_h(\bfu_h, \bfv_h)$ and $L_h(V_h)$ here,
\begin{align}
  a_h(\bfu_h, \bfv_h)
  &:= a(\bfu_h, \bfv_h)   - \langle 2\nu\bfepsilon(\bfu_h)\bfn, \bfv_h \rangle_{\GammaGNBC}
    + \langle  (2 \nu \bfepsilon(\bfu_h)\bfn )  \boldsymbol{P}^t , \bfv_h \rangle_{\GammaGNBC}
  \\
  &\quad\quad
  - \zeta_u \langle \bfu_h \cdot \bfn, (2 \nu \bfepsilon(\bfv_h)\bfn) \cdot \bfn \rangle_{\GammaGNBC}
  + \langle
    \frac{\nu}{\gamma^n h} \bfu_h \cdot \bfn,\bfv_h \cdot \bfn
  \rangle_{\GammaGNBC}
  \label{eq:ah-form-def_normal_adj_consistency_penalty_fullNeumann}
  \\
  &\quad\quad
  + \langle  \frac{\phi_u}{\gamma^n h} \bfu_h\cdot\bfn, \bfv_h\cdot\bfn \rangle_{\GammaGNBC}
  - \langle (\bfbeta\cdot\bfn)\bfu_h, \bfv_h \rangle_{\GammaIn}
  \label{eq:ah-form-def_normal_regime_penalty_fullNeumann}
  \\
  &\quad\quad
  - \zeta_u \langle \gamma^t h (2 \nu \bfepsilon(\bfu_h) \bfn)  \boldsymbol{P}^t
  , 2 \bfepsilon(\bfv_h) \bfn \rangle_{\GammaGNBC},
  \label{eq:ah-form-def_tangential_1_fullNeumann}
  \\
    L_h(V_h)
  &:=
  l(\bfv_h)   + \langle \bfh \boldsymbol{P}^t, \bfv_h \rangle_{\GammaGNBC}
\\
&\quad\quad
  - \zeta_u \langle \bfg \cdot \bfn, ( 2 \nu \bfepsilon(\bfv_h)\bfn ) \cdot \bfn \rangle_{\GammaGNBC}
  + \langle \frac{\nu}{\gamma^n h}  \bfg \cdot \bfn,\bfv_h \cdot \bfn \rangle_{\GammaGNBC}
\label{eq:lh-form-def_normal_adj_consistency_penalty_fullNeumann}
\\
&\quad\quad
  - \langle \bfg \cdot \bfn, q_h \rangle_{\GammaGNBC}
  + \langle  \frac{\phi_u}{\gamma^n h} \bfg\cdot\bfn, \bfv_h\cdot\bfn \rangle_\Gamma
  - \langle (\bfbeta\cdot\bfn)\bfg, \bfv_h \rangle_{\GammaIn}
\label{eq:lh-form-def_normal_regime_consistency_penalty_fullNeumann}
\\
&\quad\quad
    - \zeta_u \langle \gamma^t h \bfh  \boldsymbol{P}^t
  , 2 \bfepsilon(\bfv_h) \bfn \rangle_{\GammaGNBC}.
\label{eq:lh-form-def_tangential_1_fullNeumann}
\end{align}
This limiting case gives a standard Neumann condition as in \eqref{eq:lh-form-def_normal_adj_consistency_penalty_fullNeumann} with the addition of the terms \eqref{eq:ah-form-def_tangential_1_fullNeumann} and \eqref{eq:lh-form-def_tangential_1_fullNeumann}.
These terms are, however, added consistently to the formulation and as such do not ruin its validity.
Nevertheless, these terms are of importance as they impose limitations on the choice of~$\gamma^t$.
The conditioning of this formulation becomes increasingly bad as $\gamma^t$ is increasing.
From this it is clear that the parameter is effectively limited from above even in the case of an adjoint-inconsistent formulation, i.e. $\zeta_u=-1$,
which is proven to be inf-sup stable for $0\leqslant \gamma^t < \infty$ in Section~\ref{sec:stability-properties}.
Another observation is that in the other limiting case where $\gamma^t \rightarrow 0$ we end up with a classical standard Galerkin imposition of a Neumann boundary condition in the tangential plane,
as the terms \eqref{eq:ah-form-def_tangential_1_fullNeumann} and \eqref{eq:lh-form-def_tangential_1_fullNeumann} disappear.
\end{remark}

\subsection{The Continuous Interior Penalty (CIP) Stabilizations}
\label{ssec:stabilized-oseen}

It is well-known that a finite element based
discretization of formulation \eqref{eq:oseen_weak} needs to be stabilized
to allow for equal-order interpolation spaces $\mcW_h = \mcV_h \times \mcQ_h$
due to its saddle-point structure, see e.g. \cite{BrezziFortin1991}, and 
to suppress spurious oscillations in the numerical
solution in the case of convection-dominant flow.
For a detailed overview of different stabilization techniques, see e.g. the overview article \cite{BraackBurmanJohnEtAl2007}.

Following the presentation of the cut finite element formulation from \cite{MassingSchottWall2016_CMAME_Arxiv_submit},
we apply continuous interior penalty (CIP) stabilization operators originally introduced by \citet{BurmanFernandezHansbo2006}
and adapted by \citet{MassingSchottWall2016_CMAME_Arxiv_submit} such that
\begin{equation}
\label{eq:Sh-form-def}
 S_h(U_h,W_h) :=  s_{\beta}(\bfu_h, \bfv_h) +  s_{u}(\bfu_h, \bfv_h) +  s_p(p_h, q_h)
\end{equation}
consists of symmetric stabilization terms, which penalize the jump of the velocity and pressure gradients over interior element facets~$F\in\mcF_i$.
The stabilization operators are defined by
\begin{align}
  \label{eq:cip-s_beta}
  s_{\beta}(\bfu_h, \bfv_h)
  &:=
  \gamma_{\beta}
  \sum_{F\in\mathcal{F}_i}
  \phi_{\beta,F} h
  \langle\jump{\bfbeta \cdot\grad \bfu_h},\jump{\bfbeta \cdot\grad\bfv_h}\rangle_F,
  \\
  \label{eq:cip-s_u}
  s_{u}(\bfu_h, \bfv_h)
  &:=
  \gamma_u
  \sum_{F\in\mathcal{F}_i}
  \phi_{u,F} h
  \langle\jump{\div\bfu_h},\jump{\div\bfv_h}\rangle_F,
  \\
  \label{eq:cip-s_p}
  s_p(p_h, q_h) &:=
  \gamma_p 
  \sum_{F\in\mathcal{F}_i}
  \phi_{p,F} h
  \langle \jump{\bfn_F \cdot \nabla p_h},\jump{\bfn_F \cdot \nabla q_h}\rangle_F,
\end{align}
where for any, possibly vector-valued, piecewise discontinuous function $\phi$
on the computational mesh $\mcT_h$, we denote the jump and average over an interior facet $F\in\mcF_i$
with $\jump{\phi}:= (\phi_{F}^+ - \phi_{F}^-)$ and $\phi_F:=\tfrac{1}{2}(\phi_{F}^+ + \phi_{F}^-)$
where $\phi^{\pm}(\bfx) = \lim_{t\to0^+} \phi(\bfx \pm t \bfn_F)$
for some chosen  normal unit vector $\bfn_F$ on $F$.
The element-wise constant stabilization parameter $\phi_u$ is as defined in \eqref{eq:cip-Nitsche_scaling}
and $\phi_{\beta},\phi_p$ are given as
\begin{align}
  \label{eq:cip-s_scalings}
  \phi_{\beta,T} = \phi_{p,T} =  h^2 \phi_{u,T}^{-1}.
\end{align}

Throughout this work, we use the notation $a \lesssim b$ for
$a\leqslant C b$ for some positive generic constant $C$ which varies with the context,
however, is always independent of the mesh size $h$ and the intersection of the mesh $\mcT_h$ by $\Gamma$.

For a (Lipschitz)-continuous $\bfbeta \in [W^{1,\infty}(\Omega)]^d \subseteq [C^{0,1}(\Omega)]^d$,
in the forthcoming numerical analysis we assume a piecewise constant approximation satisfying
\begin{equation}
\|\bfbeta - \bfbeta_h^0\|_{0,\infty,T} 
\lesssim h
\|\bfbeta\|_{1,\infty,T}
\quad
\text{and}
\quad
\| \bfbeta_h^0 \|_{0,\infty,T} \lesssim  \| \bfbeta \|_{0,\infty,T}
\quad \foralls T\in\mcT_h.
\label{eq:beta-approximation}
\end{equation}
%
Furthermore, in the unfitted mesh case we assume that there exists an extension $\bfbeta^\ast \in [W^{1,\infty}(\Oast)]^d$
from $\Omega$ to $\Oast$ satisfying
  $\| \bfbeta^{\ast} \|_{1,\infty,\Oast}
\lesssim
  \| \bfbeta \|_{1,\infty,\Omega}
$.
Similar to the preceding works by \cite{MassingSchottWall2016_CMAME_Arxiv_submit, BurmanFernandezHansbo2006},
we assume that the flow field $\bfbeta$ is sufficiently resolved by the mesh such that $\foralls T\in\mcT_h$
\begin{align}
\| \bfbeta \|_{0,\infty,T'} 
\lesssim
\| \bfbeta \|_{0,\infty,T} 
\lesssim
\| \bfbeta \|_{0,\infty,T'}
\quad \foralls T'\in\omega(T),
  \label{eq:beta-resolution-I}
\end{align}
where $\omega(T)$ denotes a local patch of elements neighboring~$T$.
As a result, the piecewise constant stabilization parameters are comparable locally in a neighborhood of elements, i.e.
\begin{align}
\label{eq:local_equivalence_stab_param_patch}
 \phi_T \sim \phi_{T'} \quad\forall T'  \in \omega(T) \quad\text{and}\quad
 \phi_F \sim \phi_{T} \quad \forall T \in \omega(F) \quad \text{ for } \phi \in \{\phi_u, \phi_{\beta}, \phi_p \}.
\end{align}

\begin{remark}
 Note that for facets $F\in\mcF_i$, which are intersected by the boundary $\Gamma$, the inner products
of all CIP stabilization operators \eqref{eq:cip-s_beta}--\eqref{eq:cip-s_p} have to be evaluated along the entire cut facets. 
\end{remark}

\subsection{Stabilizing Cut Elements -- The Role of Ghost Penalties}
\label{ssec:cutfem-oseen}

To strengthen the stability properties of the discrete formulation in the boundary zone for non-boundary-fitted meshes $\mcT_h$,
additional measures are required. So-called ghost-penalty stabilizations, comprised in the operator $G_h$,
are active in the boundary zone and augment the stabilized bilinear form $A_h+S_h$ to
account for small cut elements $|T \cap \Omega| \ll |T|,\; T\in \mcT_h$, in the vicinity of the
boundary $\pO$. For detailed elaborations on this concept see, e.g.,
the works \cite{Burman2010,BurmanHansbo2012,MassingLarsonLoggEtAl2014,MassingSchottWall2016_CMAME_Arxiv_submit}.

Ghost-penalty stabilizations extend the stability and approximation properties of the discrete scheme
to the entire active background mesh, i.e. from $\Omega$ to $\Oast_h$,
and thus give control of the discrete velocity and pressure solutions,
where they have no physical significance.
As a major advantage, the resulting scheme has improved optimality properties
with highly reduced sensitivity of the errors and guaranteed uniformly bounded conditioning
irrespective of how the boundary $\Gamma$ intersects the underlying mesh $\mcT_h$.
For this purpose, the different terms need to be designed such that inf-sup stability is guaranteed for all flow regimes
and (weak) consistency and optimality of the numerical scheme is maintained.

In the preceding work by \citet{MassingSchottWall2016_CMAME_Arxiv_submit}, a set of different ghost-penalty terms have been developed to stabilize the Oseen equations.
The suggested terms consist of CIP-type jump penalties of polynomial order~$k$ for velocity and pressure and are recalled in the following
\begin{align}
\label{eq:Gh-form-def}
  G_h(U_h,V_h) &= g_{\sigma}(\bfu_h, \bfv_h) + g_{\nu}(\bfu_h, \bfv_h) +  g_{\beta}(\bfu_h, \bfv_h) + g_{u}(\bfu_h, \bfv_h) + g_p(p_h, q_h)
\end{align}
with
\begin{align}
  \label{eq:ghost-penalty-sigma}
  g_{\sigma}(\bfu_h, \bfv_h)
  :=&
  \gamma_{\sigma}
  \sum_{F\in\Fast}
  \sum_{1\leqslant j \leqslant k}
  \sigma
  h^{2j+1}\langle\jump{\nablan^j \bfu_h},\jump{\nablan^j \bfv_h}\rangle_F,
  \\
  \label{eq:ghost-penalty-nu}
  g_{\nu}(\bfu_h, \bfv_h)
  :=&
  \gamma_{\nu}
  \sum_{F\in\Fast}
  \sum_{1\leqslant j \leqslant k}
  \nu
  h^{2j-1}\langle\jump{\nablan^j \bfu_h},\jump{\nablan^j \bfv_h}\rangle_F,
\\
  \label{eq:ghost-penalty-beta}
  g_{\beta}(\bfu_h, \bfv_h)
  :=&
  \gamma_{\beta}
  \sum_{F\in\Fast}
  \sum_{0\leqslant j \leqslant k-1}
   \phi_{\beta,F}
      h^{2j+1}
  \langle
  \jump{\bfbeta\cdot \nabla\nablan^j\bfu_h}
  ,\jump{\bfbeta\cdot \nabla\nablan^j\bfv_h}
      \rangle_F,
  \\
  \label{eq:ghost-penalty-u}
  g_{u}(\bfu_h, \bfv_h)
  :=&
  \gamma_{u}
  \sum_{F\in\Fast}
  \sum_{0\leqslant j \leqslant k-1}
 \phi_{u,F} h^{2j+1}
  \langle
  \jump{\div \nablan^j \bfu_h},
  \jump{\div \nablan^j \bfv_h}
  \rangle_F,
  \\
  \label{eq:ghost-penalty-p}
  g_p(p_h, q_h) :=&
  \gamma_{p}
  \sum_{F\in\Fast}
  \sum_{1\leqslant j \leqslant k}
 \phi_{p,F} h^{2j-1}
\langle\jump{\nablan^j p_h}, \jump{\nablan^j q_h}\rangle_F,
\end{align}
where the $j$-th normal derivative $\nablan^j v$ is given by 
$\nablan^j v = \sum_{| \alpha | = j}D^{\alpha} v(\bfx)
  \bfn^{\alpha}$ for multi-index \mbox{$\alpha = (\alpha_1, \ldots,
  \alpha_d)$}, $|\alpha| = \sum_{i} \alpha_i$ and $\bfn^{\alpha} =
  n_1^{\alpha_1} n_2^{\alpha_2} \cdots n_d^{\alpha_d}$.
In contrast to the related CIP stabilizations, these terms need to
control the entire discrete polynomial, this entails all the higher-order normal derivatives contained in the adjacent elements polynomials.
However, they are only evaluated along facets in the vicinity of the boundary, i.e. $\foralls F\in \mcF_\Gamma$.
The ghost-penalty terms \eqref{eq:ghost-penalty-beta}--\eqref{eq:ghost-penalty-p} extend control over the respective instabilities to the enlarged domain $\Omega_h^{\ast}$,
whereas the reactive and viscous ghost-penalties \eqref{eq:ghost-penalty-sigma}--\eqref{eq:ghost-penalty-nu} extend control over the scaled $L^2$- and $H^1$-norms
in the following fashion:
\begin{proposition}
 \label{prop:ghost-penalty-norm-equivalence}
   Let $\Omega$, $\Oast_h$ and $\mcF_{\Gamma}$ be defined as in
   Section~\ref{ssec:cutfem-spaces}
and the ghost penalty operators be given as in \eqref{eq:ghost-penalty-sigma}--\eqref{eq:ghost-penalty-p}. Then
  for scalar functions \mbox{$p_h\in\mcQ_h$} as well as for vector-valued equivalents \mbox{$\bfu_h \in \mcV_h$} 
  the following estimates hold
  \begin{alignat}{1}
    \label{eq:ghost_penalty:norm_equivalence_L2_reaction}
    \| \sigma^{\onehalf} \bfu_h \|_{\Oast_h}^2
    &\lesssim
    \| \sigma^{\onehalf}\bfu_h \|_{\Omega}^2
    +
    g_{\sigma}(\bfu_h,\bfu_h)
    \lesssim
    \| \sigma^{\onehalf}\bfu_h \|_{\Oast_h}^2,
\\
    \label{eq:ghost_penalty:norm_equivalence_H1_viscous}
    \| \nu^{\onehalf}\nabla \bfu_h \|_{\Oast_h}^2
    &\lesssim
    \| \nu^{\onehalf} \nabla \bfu_h
    \|_{\Omega}^2
    +
    g_{\nu}(\bfu_h,\bfu_h)
    \lesssim
    \| \nu^{\onehalf} \nabla \bfu_h \|_{\Oast_h}^2.
\end{alignat}
Furthermore, let the scaling functions~$\phi_\beta, \phi_u$ and $\phi_p$ be defined as in
\eqref{eq:cip-Nitsche_scaling} and \eqref{eq:cip-s_scalings}
and let \mbox{$\bfbeta_h^0\in[\mcX_h^{\mathrm{dc},0}]^d$} be a piecewise constant approximation
to~$\bfbeta$ on~$\mcT_h$, which satisfies the approximation properties specified in \eqref{eq:beta-approximation}.
Then the following estimates hold
\begin{alignat}{1}
    \label{eq:norm_equivalence_rsb_pressure}
    \Phi \| p_h \|_{\Oast_h}^2
    &\lesssim
    \Phi \|  p_h \|_{\Omega}^2
    + g_{p}(p_h,p_h),
\\
    \label{eq:norm_equivalence_rsb_div}
    \| \phi_u^{\onehalf} \nabla \cdot \bfu_h \|_{\Oast_h}^2
    &\lesssim
    \| \phi_u^{\onehalf} \nabla \cdot \bfu_h \|_{\Omega}^2
    + g_{u}(\bfu_h,\bfu_h),
\\
    \|
    {\phi}_{\beta}^{\onehalf}
    (\bfbeta_h^0 - \bfbeta) \cdot \nabla \bfu_h
    \|_{\Oast_h}^2
    &\lesssim
    \omega_h
    \bigl(
    \| \nu^{\onehalf}\nabla \bfu_h \|_{\Oast_h}^2
    +
    \| \sigma^{\onehalf} \bfu_h \|_{\Oast_h}^2
    \bigr),
    \label{eq:norm_rsb_beta_diff_const}
\\
    \| 
    \phi_{\beta}^{\onehalf} 
    (
    \bfbeta_h^0 \cdot \nabla \bfu_h + \nabla p_h
    )
    \|_{\Oast_h}^2
    &\lesssim
    \| 
    \phi_{\beta}^{\onehalf} 
    (
    \bfbeta \cdot \nabla \bfu_h + \nabla p_h
    )
    \|_{\Omega}^2
    + 
    g_{\beta}(\bfu_h, \bfu_h)
    +
    g_{p}(p_h, p_h)
+ 
    \omega_h
    \bigl(
    \| \nu^{\onehalf}\nabla \bfu_h \|_{\Oast_h}^2
    +
    \| \sigma^{\onehalf} \bfu_h \|_{\Oast_h}^2
    \bigr),
    \label{eq:norm_equivalence_rsb_beta}
\end{alignat}
with the non-dimensional scaling functions $\omega_h$ and $\Phi$ given as in \cite{MassingSchottWall2016_CMAME_Arxiv_submit} by
\begin{align}
 \label{eq:phi_p_definition}
 \omega_h &:= \frac{ h^2 |\bfbeta|_{1,\infty,\Omega}}{\nu + \sigma h^2},
\qquad\qquad
  \Phi^{-1} 
:= \sigma C_P^2 + \norm{\bfbeta}_{0,\infty,\Omega}C_P 
  + \nu 
  + \left(\frac{\norm{\bfbeta}_{0,\infty,\Omega}C_P}{\sqrt{\nu + \sigma C_P^2}}\right)^2.
\end{align}
Note that the hidden constants in \eqref{eq:norm_equivalence_rsb_pressure}--\eqref{eq:norm_equivalence_rsb_beta} depend only on the shape-regularity and the polynomial order, but not on the mesh or the location of~$\Gamma$
within~$\mcT_h$.
\end{proposition}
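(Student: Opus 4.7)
The plan is to establish each estimate by a combination of the classical ``fat crust'' or ghost-penalty chain argument, polynomial inverse/trace inequalities on cut elements, and in the case of the last two estimates, a controlled approximation of the convective velocity $\bfbeta$ by its piecewise constant representative $\bfbeta_h^0$. Since the statement collects results that are (mostly) already derived in the reference work by Massing, Schott and Wall, I would proceed by explaining the structural reason why each estimate holds and what is genuinely new or needs adaptation in the Navier slip setting (namely, nothing in $G_h$ itself — the same operators carry over — so only a verification of the quoted estimates is required).

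The upper bound direction, i.e. $\|\cdot\|_\Omega^2 + g_\bullet(\bfu_h,\bfu_h) \lesssim \|\cdot\|_{\Oast_h}^2$ in \eqref{eq:ghost_penalty:norm_equivalence_L2_reaction}--\eqref{eq:ghost_penalty:norm_equivalence_H1_viscous} and the corresponding direction in \eqref{eq:norm_equivalence_rsb_pressure}--\eqref{eq:norm_equivalence_rsb_div}, is the straightforward one. Because $\Omega \subseteq \Oast_h$, the volume term is trivially controlled; for the ghost-penalty contribution I would apply a discrete trace/inverse inequality
$h \|\jump{\nablan^j v_h}\|_F^2 \lesssim h^{-2j}(\|v_h\|_{T_F^+}^2 + \|v_h\|_{T_F^-}^2)$
facet-by-facet, together with the finite-overlap property of $\mcF_\Gamma$, to absorb the sum into the bulk norm on $\Oast_h$. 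The scalings $h^{2j\pm 1}$ in \eqref{eq:ghost-penalty-sigma}--\eqref{eq:ghost-penalty-p} are precisely chosen so that this absorption works uniformly in~$h$.

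The hard direction is the lower bound. Here the plan is the standard chain argument enabled by the geometric assumption on $\mcT_\Gamma$ (each $T\in\mcT_\Gamma$ is reachable from some uncut $T'\in\mcT_h\setminus\mcT_\Gamma$ through at most $N$ interior facets). The workhorse lemma, proved in Burman--Hansbo and adapted in the cited reference, states that for two polynomials $v_h^\pm$ of degree $k$ on adjacent elements $T^\pm$ sharing a facet $F$,
\begin{equation*}
\|v_h^+\|_{T^-}^2 \lesssim \|v_h^-\|_{T^-}^2 + \sum_{1\leqslant j \leqslant k} h^{2j+1}\|\jump{\nablan^j v_h}\|_F^2,
\end{equation*}
where $v_h^+$ on the left denotes the polynomial extension from $T^+$. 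Iterating this along the at-most-$N$-long chain yields $\|v_h\|_T^2 \lesssim \|v_h\|_{T'}^2 + \sum_{F \text{ in chain}} (\text{ghost-penalty contribution})$, and since $T' \subset \Omega$ gives $\|v_h\|_{T'} \leqslant \|v_h\|_\Omega$, summing over $T \in \mcT_\Gamma$ and combining with the uncut part of $\mcT_h$ produces \eqref{eq:ghost_penalty:norm_equivalence_L2_reaction}, \eqref{eq:ghost_penalty:norm_equivalence_H1_viscous}, \eqref{eq:norm_equivalence_rsb_pressure} and \eqref{eq:norm_equivalence_rsb_div} — the only difference between these four is the weighting function $\sigma$, $\nu$, $\Phi$ or $\phi_u$, which by \eqref{eq:local_equivalence_stab_param_patch} is comparable across the patch and can be pulled through the chain.

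The main obstacle will be the convective estimates \eqref{eq:norm_rsb_beta_diff_const}--\eqref{eq:norm_equivalence_rsb_beta}, since $\bfbeta$ is not a discrete polynomial and $g_\beta$ only controls jumps of $\bfbeta_h^0\cdot\nabla(\cdot)$-type expressions if $\bfbeta_h^0$ is constant per element. For \eqref{eq:norm_rsb_beta_diff_const} I would use the approximation property \eqref{eq:beta-approximation}: elementwise, $\|\phi_\beta^{1/2}(\bfbeta_h^0-\bfbeta)\cdot\nabla \bfu_h\|_T^2 \lesssim \phi_{\beta,T} h^2 |\bfbeta|_{1,\infty,T}^2 \|\nabla \bfu_h\|_T^2$; using $\phi_\beta = h^2 \phi_u^{-1}$ and $\phi_u \geqslant \nu + \sigma h^2$ produces exactly the factor $\omega_h$ in front of the viscous and reactive norms on $\Oast_h$, to which the already-established \eqref{eq:ghost_penalty:norm_equivalence_L2_reaction}--\eqref{eq:ghost_penalty:norm_equivalence_H1_viscous} can be applied. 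For the combined residual bound \eqref{eq:norm_equivalence_rsb_beta}, I would first replace $\bfbeta$ by $\bfbeta_h^0$ in the left-hand side norm, paying the penalty $\omega_h(\|\nu^{1/2}\nabla\bfu_h\|_{\Oast_h}^2 + \|\sigma^{1/2}\bfu_h\|_{\Oast_h}^2)$ from the previous step, then apply the chain argument on $\mcT_\Gamma$ to the piecewise-polynomial quantity $\bfbeta_h^0\cdot\nabla\bfu_h + \nabla p_h$, whose jumps across $F\in\Fast$ split via triangle inequality into the jumps controlled by $g_\beta$ and $g_p$ respectively; finally reversing the $\bfbeta\leftrightarrow\bfbeta_h^0$ swap on the right-hand side with a second application of \eqref{eq:beta-approximation} concludes the estimate.
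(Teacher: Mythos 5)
Your proposal is correct and follows essentially the same route as the paper, which for this proposition gives no argument of its own but defers to Lemma 5.4, Corollaries 5.5 and 5.7 and Lemma 5.8 of the cited work of Massing, Schott and Wall: those proofs rest on exactly the Burman--Hansbo facet-chain lemma, the cut-element inverse and trace inequalities, and the piecewise-constant approximation of $\bfbeta$ via \eqref{eq:beta-approximation} that you describe. The only places your sketch is thinner than the source are the distribution of the factor $\omega_h$ between the viscous and reactive norms in \eqref{eq:norm_rsb_beta_diff_const} (which requires an inverse inequality and a Young-type splitting rather than following "exactly" from $\phi_u \geqslant \nu + \sigma h^2$) and the contribution of the jump of $\bfbeta_h^0$ itself across facets when chaining $\bfbeta_h^0\cdot\nabla\bfu_h + \nabla p_h$ in \eqref{eq:norm_equivalence_rsb_beta}, both of which are routine and handled by a second appeal to \eqref{eq:beta-approximation}.
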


\begin{proof}
For detailed proofs of these estimates, the reader is referred to the preceding work by \citet{MassingSchottWall2016_CMAME_Arxiv_submit} (cf.~Lemma 5.4, Corollary 5.5 and 5.7 and Lemma 5.8)
and the references therein.
\end{proof}

\begin{remark}
 Note that the 
 function
 $\bfbeta$ occurring in the stabilization operators $s_\beta$ \eqref{eq:cip-s_beta} and $g_\beta$ \eqref{eq:ghost-penalty-beta}
 can be replaced by
a proper continuous interpolation $\bfbeta_h$ satisfying the assumption specified in \eqref{eq:beta-approximation}, 
without changing the final results of the stability and \apriori~estimates.
When solving the incompressible Navier-Stokes equations,
$\bfbeta_h$ is the finite element approximation of $\bfu$ from 
a previous time or iteration step.
\end{remark}

\begin{remark}
\label{rem:gbeta-simple}  
As suggested in \cite{MassingSchottWall2016_CMAME_Arxiv_submit}, the convection and the incompressibility related
stabilization forms $s_\beta, s_u $ and $g_\beta, g_u$, i.e.~\eqref{eq:cip-s_beta}, \eqref{eq:cip-s_u} and \eqref{eq:ghost-penalty-beta}, \eqref{eq:ghost-penalty-u},
can be replaced by single operators
  \begin{align}
    \overline{s}_{\beta}(\bfu_h, \bfv_h)
    &:= \gamma_{\beta} \sum_{F \in \mcF_i} 
      \overline{\phi}_{\beta} h (\jump{\nablan \bfu_h}, \jump{\nablan \bfv_h})_{F},
      \label{eq:sbeta-simple-def}
    \\
    \overline{g}_{\beta}(\bfu_h, \bfv_h)
    &:= \gamma_{\beta} \sum_{F \in \Fast} \sum_{1\leqslant j\leqslant k}
    \overline{\phi}_{\beta} h^{2j -1} (\jump{\nablan^j \bfu_h}, \jump{\nablan^j \bfv_h})_{F},
  \label{eq:gbeta-simple-def}
  \end{align}
  with
  $\overline{\phi}_{\beta} = \|\bfbeta \|_{0,\infty, F}^2\phi_{\beta}$,
which simplifies the implementation of the purposed method.
\end{remark}

\subsection{Final Discrete Formulation and Norms}
\label{ssec:discrete_form_and_norms}
The full stabilized cut finite element method for the Oseen problem with generalized Navier boundary conditions
then reads: find
$U_h=(\bfu_h, p_h) \in \mcW_h $ such that $\foralls V_h=(\bfv_h, q_h)\in \mcW_h$
\begin{align}
 A_h(U_h,V_h) + S_h(U_h,V_h) + G_h(U_h,V_h) = L_h(V_h),
 \label{eq:oseen-discrete-unfitted}
\end{align}
where $A_h,L_h$ are the discrete operators including the weak constraint enforcement of the boundary conditions
(see Section~\ref{ssec:weak_imposition_bcs_oseen}, \eqref{eq:Ah-form-def}--\eqref{eq:lh-form-def_tangential_2}),
$S_h(\cdot,\cdot)$ is the CIP operator to balance instabilities in the interior of the physical domain
(see Section~\ref{ssec:stabilized-oseen}, \eqref{eq:Sh-form-def}--\eqref{eq:cip-s_p})
and $G_h(\cdot, \cdot)$ the ghost-penalty operator, which extends stability control to the boundary zone when non-boundary-fitted meshes
are used for the approximation (see Section~\ref{ssec:cutfem-oseen}, \eqref{eq:Gh-form-def}--\eqref{eq:ghost-penalty-p}).

Following the works by \citet{MassingSchottWall2016_CMAME_Arxiv_submit} on a related cut finite element method with Dirichlet boundary conditions
and the work by \citet{JuntunenStenberg2009a} on a Nitsche-type method for Robin-type constraints,
for the subsequent numerical stability and convergence analysis we introduce the following (semi-)norms
according to our cut finite element method \eqref{eq:oseen-discrete-unfitted}.
For functions $U=(\bfu,p)$ with $\bfu\in H^1(\Omega)$ and $p\in L^2(\Omega)$ we define the mesh-dependent energy norms related to
our Nitsche-type formulation
\begin{align}
\label{eq:oseen-norm-u}
  \tn \bfu  \tn^2 &:=
 \| \sigma^{\onehalf} \bfu \|_{\Omega}^2
 + \| \nu^{\onehalf} \grad \bfu\|_{\Omega}^2
 + \| (\nu/(\gamma^n h))^{\onehalf} \bfu \cdot \bfn \|^2_{\GammaGNBC} 
 + \| ( \nu/(\varepsilon + \gamma^t h) )^{\onehalf} \bfu \boldsymbol{P}^t \|^2_{\GammaGNBC} 
\nonumber
 \\
   & \quad
+ \| |\bfbeta \cdot \bfn |^{\onehalf} \bfu \|_{\Gamma}^2
+ \| (\phi_u/(\gamma^n h) )^{\onehalf} \bfu \cdot \bfn \|^2_{\GammaGNBC}
,
\\
\label{eq:oseen-norm-p}
\tn  p \tn_{\phi}^2
&:=
\| \phi^{-\onehalf} p \|_{\Omega}^2
.
\intertext{\noindent Throughout the stability analysis control over discrete functions is required on unfitted meshes $\mcT_h$,
which can be achieved thanks to the continuous interior and ghost penalty operators
for velocity and pressure.
For discrete functions $\bfu_h\in\mcV_h\subset H^1(\mcT_h)$ and $p_h\in \mcQ_h\subset L^2(\mcT_h)$
the following norms are used
}
\label{eq:oseen-norm-unfitted-u}
\tn \bfu_h \tn_h^2
&:=
\tn \bfu_h \tn^2
+
|\bfu_h|_h^2
,
\\
\label{eq:oseen-norm-unfitted-p}
\tn  p_h \tn_{h,\phi}^2
&:=
\tn  p_h \tn_{\phi}^2
+
| p_h |_h^2
,
\intertext{
with a piecewise constant scaling function~$\phi$ 
and semi-norms which are defined by the stabilization operators
}
| \bfu_h |_h^2
&:=
  s_{\beta}(\bfu_h, \bfu_h)
+ s_u(\bfu_h, \bfu_h)
+ g_{\sigma}(\bfu_h, \bfu_h)
+ g_{\nu}(\bfu_h, \bfu_h)
+ g_{\beta}(\bfu_h, \bfu_h)
+ g_u(\bfu_h, \bfu_h)
,
\\
| p_h |_h^2
&:=
  s_p(p_h,p_h)
+ g_p(p_h,p_h)
.
\end{align}
For the stability analysis we will also utilize the augmented natural energy norm for $U_h\in\mcV_h\times\mcQ_h$
\begin{align}
  \tn U_h \tn_h^2 
  := | U_h |_h^2 + \| \phi_u^{\onehalf} \nabla \cdot \bfu_h \|_{\Omega}^2
  + \dfrac{1}{1 + \omega_h} \|\phi_{\beta}^{\onehalf}(\bfbeta\cdot \nabla \bfu_h + \nabla p_h) \|_{\Omega}^2
  + \Phi\| p_h\|_{\Omega}^2,
  \label{eq:oseen-norm-up}
\end{align}
and a semi-norm which is defined as
\begin{align}
  |U_h|_h^2 := |(\bfu_h, p_h)|_h^2 = \tn \bfu_h \tn_h^2 + |p_h|^2_h = \tn \bfu_h \tn^2 + |\bfu_h|^2_h + |p_h|^2_h.
  \label{eq:Ah-semi-norm}
\end{align}
Note that it holds that $\Phi \lesssim \phi_u^{-1}$, which allows us to create a lower bound of the locally scaled pressure norms.
Moreover, $C_P$~denotes the so-called Poincar\'e constant as defined in \eqref{eq:Poincare-II} in Section~\ref{sec:interpolation-est},
which scales as the diameter of~$\Omega$.

For the \apriori~error analysis, additional control over boundary fluxes on $\Gamma$ and the divergence of the velocity is desired.
For this purpose, for sufficiently regular functions $U=(\bfu,p)\in H^2(\Omega)\times H^1(\Omega)$ we define
\begin{align}
\label{eq:oseen-norm-fluxes-u}
\tn \bfu \tn_{\ast}^2
&:=
\tn \bfu \tn^2
+
\| (\nu h)^{\onehalf} \grad \bfu \cdot \bfn \|_{\Gamma}^2
,
\\
\label{eq:oseen-norm-fluxes-p}
\tn  p \tn_{\ast,\phi}^2
&:=
\tn p \tn_{\phi}^2
+ \| \phi^{-\onehalf} h^{\onehalf} p \|_{\Gamma}^2,  
\\
\label{eq:oseen-norm-fluxes-up}
\tn U \tn_{\ast}^2
&:=
\tn \bfu \tn_{\ast}^2
+
\| \phi_u^{\onehalf} \nabla \cdot \bfu \|_{\Omega}^2
+
\tn p  \tn_{\ast,\Phi^{-1}}^2.
\end{align}
From the inverse estimate \eqref{eq:inverse-estimates} (see Section~\ref{sec:interpolation-est}) and the
norm equivalences from Proposition~\ref{prop:ghost-penalty-norm-equivalence},
discrete functions satisfy
\begin{alignat}{2}
\label{eq:oseen-norm-u-relation}
 \tn \bfu_h  \tn_{\ast}^2
&\lesssim
\tn \bfu_h \tn^2
+
g_{\nu}(\bfu_h,\bfu_h)
\lesssim
\tn \bfu_h \tn_h^2
\quad &&\foralls \bfu_h \in \mcV_h,
\\
\label{eq:oseen-norm-p-relation}
 \tn p_h  \tn_{\ast,\Phi^{-1}}^2
&\lesssim
\tn p_h \tn_{\Phi^{-1}}^2
+
g_p(p_h,p_h)
\lesssim
\tn p_h \tn_{h,\Phi^{-1}}^2
\quad &&\foralls p_h \in \mcQ_h, \\
\label{eq:oseen-norm-U-relation}
\tn U_h  \tn_{\ast}^2
&\lesssim
\tn U_h \tn_{h}^2
&&\foralls U_h \in \mcW_h
.
\end{alignat}

\section{Preliminary Estimates}
\label{sec:interpolation-est}
In this section, we collect important and useful estimates, which will be used frequently throughout the stability and \apriori~error analysis
of the proposed cut finite element method \eqref{eq:oseen-discrete-unfitted}
in Sections~\ref{sec:stability-properties} and~\ref{sec:apriori-analysis}.
First, we collect trace inequalities and inverse estimates and comment on the
Korn-type inequality for the strain-rate tensor in combination with Nitsche boundary terms to enforce a generalized Navier boundary condition.
Finally, we introduce suitable interpolation error estimates, which will be used to establish \apriori~error estimates.

\subsection{Useful Inequalities and Estimates}
For discrete functions $v_h\in \mcX_h$ the following well-known generalized inverse and trace inequalities
hold for elements~$T$ which are arbitrarily intersected by the boundary~$\Gamma$:
\begin{alignat}{3}
  \norm{D^j v_h}_{T}
 + h^{\onehalf} \norm{\partial_{\bfn}^j v_h}_{\partial T}
 + h^{\onehalf} \norm{\partial_{\bfn}^j v_h}_{\Gamma \cap T}
  & \lesssim h^{i-j} \norm{D^i v_h}_{T} & & \quad \foralls T \in
  \mcT_h, \quad 0\leqslant i\leqslant j.
  \label{eq:inverse-estimates}
\end{alignat}
%
For functions $v \in H^1(\Oast_h)$, the following trace inequalities are valid
  \begin{align}
    \label{eq:trace-inequality}
    \norm{v}_{\partial T}
  + \norm{v}_{\Gamma \cap T}
    &\lesssim
    h^{-1/2} \norm{v}_{T} +
    h^{1/2}  \norm{\nabla v}_{T}
    \quad \foralls T \in \mcT_h,
  \end{align}
as shown in~\cite{HansboHansbo2002,Burman2016}.
  Finally, assuming that $\meas(\GammaGNBC^\varepsilon)>0$, we recall the generalized Poincar\'e inequality 
for $[H^1(\Omega)]^d$ functions with non-vanishing boundary trace from \cite{Brenner2003}
  \begin{alignat}{2}
    \| \bfv \|_{0,\Omega}
    &\lesssim C_P (\| \nabla \bfv \|_{0,\Omega} + \|\bfv\|_{\GammaGNBC^{\varepsilon}}) \quad \foralls \bfv_h\in [H^1(\Omega)]^d.
    \label{eq:Poincare-II}
  \end{alignat}

In the following, a generalized Korn-type inequality is presented and it is shown how to control
the kernel of the strain-rate-deformation tensor $\bfepsilon(\cdot)$ with the help of Nitsche-type penalty terms.
\begin{proposition}\label{prop:norm_kernel_strain_rate}
  Let a semi-norm on $[H^1(\Omega)]^d$ be defined as
$\|\bfu\|_{\GammaGNBC^{\varepsilon}}^2:= \int_{\GammaGNBC^\varepsilon}{ \bfu^2} \ds$,
where $\|\varepsilon\|_{\infty,\GammaGNBC^\varepsilon} \leqslant c_\varepsilon < \infty$.
Let us assume that $\meas(\GammaGNBC^\varepsilon)>0$, then $\|\cdot\|_{\GammaGNBC^{\varepsilon}}$ defines a norm on the space of rigid body motions
\begin{equation}\label{eq:kernel_strain_rate_tensor_H1}
 RM(\Omega) := \{ \bfu\in [H^1(\Omega)]^d ~|~ \bfu(\bfx) := \bfc + \boldsymbol{W} \bfx,~\bfc\in\R^d, \boldsymbol{W}\in S_d~\foralls \bfx\in\Omega\},
\end{equation}
where $S_d$ is the space of anti-symmetric $d\times d$ matrices, i.e. $\boldsymbol{W}=-\boldsymbol{W}^T$.
Note, that $RM(\Omega)$ is the kernel of the symmetric strain-rate-deformation tensor $\bfepsilon(\cdot)$ in $[H^1(\Omega)]^d$
with $d\in\{2,3\}$.
\end{proposition}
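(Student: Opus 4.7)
The plan is to observe that $\|\cdot\|_{\GammaGNBC^{\varepsilon}}$ is already a seminorm on $[H^1(\Omega)]^d$---it inherits non-negativity, absolute homogeneity, and the triangle inequality from the $L^2(\GammaGNBC^{\varepsilon})$-norm composed with the trace operator---so the only thing that needs verification is definiteness when restricted to $RM(\Omega)$: namely, if $\bfu\in RM(\Omega)$ satisfies $\|\bfu\|_{\GammaGNBC^{\varepsilon}}=0$, then $\bfu\equiv\bfzero$ on $\Omega$.

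Write $\bfu(\bfx)=\bfc+\boldsymbol{W}\bfx$ with $\boldsymbol{W}=-\boldsymbol{W}^T$. Since $\|\bfu\|_{\GammaGNBC^{\varepsilon}}=0$ forces $\bfu=\bfzero$ pointwise $\ds$-almost everywhere on $\GammaGNBC^{\varepsilon}$, and since $\bfu$ is affine and therefore continuous, the zero set of $\bfu$ within $\GammaGNBC^{\varepsilon}$ has positive $(d-1)$-dimensional surface measure. The first key step is to extract $d$ affinely independent points $\bfx_0,\ldots,\bfx_{d-1}$ from this zero set. For $d=2$ this only requires two distinct points, which any set of positive $1$-measure certainly provides; for $d=3$ a set of positive $2$-measure on $\partial\Omega$ cannot be contained in a straight line (which has vanishing surface measure on the Lipschitz, piecewise $C^2$ manifold $\partial\Omega$), so it must contain three non-collinear points.

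Substituting these points into $\bfu(\bfx_i)=\bfc+\boldsymbol{W}\bfx_i=\bfzero$ and taking differences yields $\boldsymbol{W}(\bfx_i-\bfx_0)=\bfzero$ for the $d-1$ linearly independent directions $\bfx_i-\bfx_0$, i.e.\ $\dim\ker\boldsymbol{W}\geqslant d-1$. A real antisymmetric matrix has even rank; therefore, for $d\in\{2,3\}$, an antisymmetric $\boldsymbol{W}$ whose kernel has dimension at least $d-1$ must have rank zero, so $\boldsymbol{W}=\bfzero$. Then $\bfc=-\boldsymbol{W}\bfx_0=\bfzero$, and hence $\bfu\equiv\bfzero$ throughout $\Omega$, establishing definiteness.

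The main (and only mildly non-trivial) obstacle is the extraction-of-points step, which requires arguing that a positive-$(d-1)$-measure subset of $\partial\Omega$ cannot be contained in a $(d-2)$-dimensional affine subspace. This relies only on $\partial\Omega$ being a Lipschitz $(d-1)$-manifold on which lower-dimensional linear slices carry no $(d-1)$-surface measure; no assumption beyond the setting fixed in Section~\ref{ssec:notation} and the standing hypothesis $\meas(\GammaGNBC^{\varepsilon})>0$ is needed. The upper bound $\|\varepsilon\|_{\infty,\GammaGNBC^{\varepsilon}}\leqslant c_\varepsilon$ plays no role in this proof; it merely ensures that $\GammaGNBC^{\varepsilon}$ is a well-defined measurable subset of $\GammaGNBC$ on which the trace integral is sensible.
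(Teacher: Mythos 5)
Your proof is correct and follows essentially the same route as the paper, which simply asserts that the only solution $(\bfc,\boldsymbol{W})=(\bfzero,\bfzero)$ of $\bfc+\boldsymbol{W}\bfx=\bfzero$ on a set of positive surface measure is the trivial one; you supply the details of that assertion (extraction of affinely independent points from a positive-measure subset of $\GammaGNBC^{\varepsilon}$ and the even-rank property of antisymmetric matrices), which the paper leaves implicit. Your observation that the bound $\|\varepsilon\|_{\infty,\GammaGNBC^{\varepsilon}}\leqslant c_\varepsilon$ is not actually used here is also accurate.
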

\begin{proof}
For a more explicit description of why
$RM(\Omega)$ 
is the kernel of the tensor $\bfepsilon(\cdot)$ in $[H^1(\Omega)]^d$,
the reader is referred to, e.g. \cite{Mardal2005, BrennerScott2008}.
The claim regarding the norm property follows directly from the fact that $\meas(\GammaGNBC^\varepsilon) >0$ and from the uniqueness of the trivial solution
$(\bfc,\boldsymbol{W})=(\bfzero,\bfzero)\in\R^d\times S_d$
of the linear system $\bfc + \boldsymbol{W} \bfx = \bfzero$ on~$\GammaGNBC^\varepsilon$.
\end{proof}

\begin{theorem}[Korn-type inequality]
\label{thm:Korn_type_inequality}
Let $\meas(\GammaGNBC^\varepsilon)>0$ and $\|\cdot\|_{\GammaGNBC^{\varepsilon}}$ be as defined in Proposition~\ref{prop:norm_kernel_strain_rate},
then there exists a constant $C_K$ such that
 \begin{align}\label{eq:Korn_type_inequality}
  \| \nabla \bfu \|_{\Omega} \leqslant \| \bfu \|_{1,\Omega} \leqslant C_K( \| \bfepsilon(\bfu) \|_{\Omega} + \|\bfu\|_{\GammaGNBC^{\varepsilon}}) \quad \foralls \bfu\in [H^1(\Omega)]^d.
 \end{align}
\end{theorem}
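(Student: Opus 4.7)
The plan is to upgrade the classical second Korn inequality via a compactness--contradiction argument in the style of Peetre--Tartar, exploiting the fact just established in Proposition~\ref{prop:norm_kernel_strain_rate} that $\|\cdot\|_{\GammaGNBC^{\varepsilon}}$ is a genuine norm on the finite-dimensional kernel $RM(\Omega)$ of $\bfepsilon(\cdot)$. The first inequality $\|\nabla\bfu\|_\Omega \leqslant \|\bfu\|_{1,\Omega}$ is immediate from the definition of the $H^1$-norm, so all the real work goes into the second inequality.

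First, I would invoke the standard second Korn inequality on the bounded Lipschitz domain $\Omega$: there exists a constant $C_0>0$ such that
\[
\|\bfu\|_{1,\Omega} \leqslant C_0\bigl(\|\bfepsilon(\bfu)\|_{\Omega} + \|\bfu\|_{0,\Omega}\bigr) \quad \foralls \bfu\in [H^1(\Omega)]^d.
\]
This reduces the task to controlling the $L^2$-term $\|\bfu\|_{0,\Omega}$ by $\|\bfepsilon(\bfu)\|_{\Omega} + \|\bfu\|_{\GammaGNBC^{\varepsilon}}$. Assume for contradiction that no constant $C_K$ as claimed exists; then one obtains a sequence $\{\bfu_n\}\subset [H^1(\Omega)]^d$ normalized by $\|\bfu_n\|_{1,\Omega}=1$ with
\[
\|\bfepsilon(\bfu_n)\|_\Omega + \|\bfu_n\|_{\GammaGNBC^{\varepsilon}} \longrightarrow 0.
\]
By the Rellich--Kondrachov theorem and weak compactness in $H^1$, some subsequence satisfies $\bfu_n \rightharpoonup \bfu$ weakly in $[H^1(\Omega)]^d$ and $\bfu_n \to \bfu$ strongly in $[L^2(\Omega)]^d$. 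Applying the second Korn inequality to the differences $\bfu_n - \bfu_m$ shows that $\{\bfu_n\}$ is Cauchy in $[H^1(\Omega)]^d$, hence $\bfu_n \to \bfu$ strongly in $[H^1(\Omega)]^d$. Consequently $\|\bfu\|_{1,\Omega}=1$, $\bfepsilon(\bfu)=\bfzero$, and, by continuity of the trace operator $[H^1(\Omega)]^d\to[L^2(\GammaGNBC)]^d$, also $\|\bfu\|_{\GammaGNBC^{\varepsilon}}=0$.

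Since $\bfepsilon(\bfu)=\bfzero$ places $\bfu$ in $RM(\Omega)$, Proposition~\ref{prop:norm_kernel_strain_rate} forces $\bfu=\bfzero$, contradicting $\|\bfu\|_{1,\Omega}=1$. The main obstacle is conceptual rather than computational: one must ensure that (i) the Rellich embedding $[H^1(\Omega)]^d\hookrightarrow[L^2(\Omega)]^d$ is compact, and (ii) the trace map transports $H^1$-convergence to convergence in $\|\cdot\|_{\GammaGNBC^{\varepsilon}}$ (which uses the boundedness of $\varepsilon$ on $\GammaGNBC^\varepsilon$ assumed in Proposition~\ref{prop:norm_kernel_strain_rate}). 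Both ingredients hold under the standing Lipschitz hypothesis on $\Omega$, so the compactness argument closes with no additional regularity needed; note that the constant $C_K$ produced in this way is non-explicit, but this is acceptable for the subsequent analysis since only its existence enters the inf-sup stability and \apriori~error estimates.
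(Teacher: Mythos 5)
Your proof is correct, and it reaches the same final contradiction as the paper — a limit $\bfu\in RM(\Omega)$ with $\|\bfu\|_{1,\Omega}=1$ but $\|\bfu\|_{\GammaGNBC^{\varepsilon}}=0$, which Proposition~\ref{prop:norm_kernel_strain_rate} excludes — but it gets there by a different compactness mechanism. You use the textbook second Korn inequality $\|\bfu\|_{1,\Omega}\lesssim \|\bfepsilon(\bfu)\|_{\Omega}+\|\bfu\|_{0,\Omega}$ together with the Rellich--Kondrachov embedding to upgrade the weakly convergent minimizing sequence to a strongly $H^1$-convergent one (via the Cauchy argument on differences). The paper instead splits $\bfu_n=\bfz_n+\bfw_n$ along the direct sum $[H^1(\Omega)]^d=[\hat{H}^1(\Omega)]^d\oplus RM(\Omega)$ and invokes the variant of Korn's inequality whose lower-order terms are the seminorms $|\int_\Omega\rot\bfv\dx|$ and $|\int_\Omega\bfv\dx|$; these vanish identically on $[\hat{H}^1(\Omega)]^d$, so $\bfz_n\to\bfzero$ in $H^1$ with no appeal to Rellich, and the only compactness needed is the trivial one in the finite-dimensional space $RM(\Omega)$ containing $\bfw_n$. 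Your route is more self-contained and classical (pure Peetre--Tartar), at the price of the compact embedding; the paper's buys a Rellich-free argument at the price of the less standard form of Korn's inequality and the open-mapping bound for the splitting. Both yield a non-explicit $C_K$, which is all the subsequent stability analysis requires. One small correction: the passage $\|\bfu_n\|_{\GammaGNBC^{\varepsilon}}\to\|\bfu\|_{\GammaGNBC^{\varepsilon}}$ needs only continuity of the trace $[H^1(\Omega)]^d\to[L^2(\Gamma)]^d$; the bound $\|\varepsilon\|_{\infty,\GammaGNBC^{\varepsilon}}\leqslant c_\varepsilon$ plays no role there, since $\|\cdot\|_{\GammaGNBC^{\varepsilon}}$ is an unweighted $L^2$ norm over the subset $\GammaGNBC^{\varepsilon}$ and does not involve the slip-length function.
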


\begin{proof}
 The proof follows the technique proposed in \cite{Boiveau2014} and \cite{BrennerScott2008} for Korn-type inequalities
and is therefore only sketched in the following.
Let $[\hat{H}^1(\Omega)]^d:=\{\bfv \in [H^1(\Omega)]^d:|\int_\Omega \rot \bfv \dx|=0 \wedge |\int_\Omega\bfv\dx|=0 \}$, then it holds $[H^1(\Omega)]^d = [\hat{H}^1(\Omega)]^d \oplus RM(\Omega)$
such that for each $\bfu\in [H^1(\Omega)]^d$ there exists a unique pair $(\bfz,\bfw)\in [\hat{H}^1(\Omega)]^d \oplus RM(\Omega)$
with $\bfu=\bfz+\bfw$ satisfying $\|\bfz\|_{1,\Omega}+\|\bfw\|_{1,\Omega} \lesssim \|\bfu\|_{1,\Omega}$ (open mapping theorem by \citet{Lax2002functional}).
Following \cite{Boiveau2014} the claim can be proven via contradiction. Assuming non-boundedness in \eqref{eq:Korn_type_inequality}, there exists a sequence
$\{\bfu_n\}\subseteq [H^1(\Omega)]^d$ with $\|\bfu_n\|_{1,\Omega}=1$ and $\|\bfepsilon(\bfu_n)\|_{\Omega} + \|\bfu_n\|_{\GammaGNBC^{\varepsilon}}<\frac{1}{n}$.
Splitting $\bfu_n=\bfz_n+\bfw_n \in [\hat{H}^1(\Omega)]^d\oplus RM(\Omega)$ for each $n$ it holds that
$\|\bfepsilon(\bfz_n)\|_\Omega = \|\bfepsilon(\bfu_n)\|_\Omega < \frac{1}{n}$, since $\bfepsilon(\bfw_n) = \bfzero$.
Applying the second Korn's inequality, see e.g. \cite{Brenner2004}, we obtain
\begin{equation}
 \|\bfz_n\|_{1,\Omega} \lesssim \|\bfepsilon(\bfz_n)\|_{\Omega} + |\int_\Omega \rot \bfz_n \dx| + |\int_\Omega \bfz_n \dx| = \|\bfepsilon(\bfz_n)\|_{\Omega} \rightarrow 0 \text{ in } [H^1(\Omega)]^d.
\end{equation}
Since $\bfz_n+\bfw_n$ is a bounded sequence and $RM(\Omega)$ is finite dimensional, $\{\bfw_n\}$ is a bounded sequence and as such there exists a convergent subsequence
for which holds $\bfu=\lim_{n_k\to\infty}{\bfw_{n_k}}$ with $\bfu\in RM(\Omega)$ since $\lim_{n_k\to\infty}\bfz_{n_k}=\bfzero$ in $[H^1(\Omega)]^d$.
Note that by assumption $\|\bfu\|_{\GammaGNBC^{\varepsilon}}=0$ and $\|\bfu\|_{1,\Omega}=1$.
Since $\|\cdot\|_{\GammaGNBC^{\varepsilon}}$ defines a norm on $RM(\Omega)$ whenever $\meas(\GammaGNBC^{\varepsilon})>0$
(see Proposition~\ref{prop:norm_kernel_strain_rate}), it follows that $\bfu=\bfzero\in RM(\Omega)$ and therefore
$\bfu=\bfzero\in [H^1(\Omega)]^d$, which contradicts $\|\bfu\|_{1,\Omega}=1$.
As a result, the assumption of unboundedness of \eqref{eq:Korn_type_inequality} is refuted, which proves the claim.
\end{proof}

The subsequent corollary states that the Nitsche penalty terms occurring in our Nitsche-type cut finite element formulation
\eqref{eq:oseen-discrete-unfitted} are sufficient to
control the rigid body motions in $[H^1(\Omega)]^d$, which remain undetermined by the strain-rate-deformation tensor $\bfepsilon(\cdot)$.

\begin{corollary}
\label{cor:modified_korns_ineq}
Let the domain be bounded, i.e. $\diam(\Omega)<\infty$, the Nitsche penalty parameters $\gamma^t,\gamma^n < \infty$ and
assume that normal and tangential velocities are constrained by Nitsche-type penalty terms
on a non-vanishing part of the boundary~$\GammaGNBC^\varepsilon$, i.e. $\meas(\GammaGNBC^\varepsilon)>0$, then
  \begin{align}
    \| \nabla \bfu_h \|_{\Omega}^2
    \leqslant
   \|\bfu_h \|_{1,\Omega}^2
    \lesssim
  \| \bfepsilon(\bfu_h) \|_{\Omega}^2
  + \| \left( {\gamma^n h} \right)^{-\onehalf} \bfu_h \cdot \bfn \|_{\GammaGNBC}^2
  + \| \left( {\varepsilon + \gamma^t h} \right)^{-\onehalf} \bfu_h \boldsymbol{P}^t \|_{\GammaGNBC^{\varepsilon}}^2.
  \end{align}
\end{corollary}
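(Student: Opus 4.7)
The plan is to reduce the corollary to Theorem~\ref{thm:Korn_type_inequality} by controlling the boundary term $\|\bfu_h\|_{\GammaGNBC^\varepsilon}^2$ via the two Nitsche-type penalty norms. I would start by applying Theorem~\ref{thm:Korn_type_inequality} directly to the discrete function $\bfu_h \in [H^1(\Omega)]^d$, which yields
\begin{equation*}
\|\nabla \bfu_h\|_\Omega^2 \leqslant \|\bfu_h\|_{1,\Omega}^2 \lesssim \|\bfepsilon(\bfu_h)\|_\Omega^2 + \|\bfu_h\|_{\GammaGNBC^\varepsilon}^2.
\end{equation*}
The remaining task is then to absorb $\|\bfu_h\|_{\GammaGNBC^\varepsilon}^2$ into the two penalty contributions on the right-hand side of the claim.

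Next, I would use the pointwise orthogonal decomposition $\bfu_h = (\bfu_h\cdot\bfn)\bfn + \bfu_h\boldsymbol{P}^t$, valid on~$\GammaGNBC$, which together with $|\bfn|=1$ and $\boldsymbol{P}^t\boldsymbol{P}^t = \boldsymbol{P}^t$ gives the Pythagorean identity
\begin{equation*}
\|\bfu_h\|_{\GammaGNBC^\varepsilon}^2 = \|\bfu_h\cdot\bfn\|_{\GammaGNBC^\varepsilon}^2 + \|\bfu_h\boldsymbol{P}^t\|_{\GammaGNBC^\varepsilon}^2.
\end{equation*}

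For the normal component I would multiply and divide by $\gamma^n h$, writing
\begin{equation*}
\|\bfu_h\cdot\bfn\|_{\GammaGNBC^\varepsilon}^2 \leqslant \|\bfu_h\cdot\bfn\|_{\GammaGNBC}^2 \leqslant \gamma^n h \,\|(\gamma^n h)^{-\onehalf} \bfu_h\cdot\bfn\|_{\GammaGNBC}^2,
\end{equation*}
and note that the prefactor $\gamma^n h$ is uniformly bounded thanks to the standing assumptions $\gamma^n<\infty$ and $h\leqslant \diam(\Omega)<\infty$. An analogous argument for the tangential component, using $\|\varepsilon\|_{\infty,\GammaGNBC^\varepsilon}\leqslant c_\varepsilon<\infty$ and $\gamma^t h \leqslant \gamma^t \diam(\Omega)$, yields
\begin{equation*}
\|\bfu_h\boldsymbol{P}^t\|_{\GammaGNBC^\varepsilon}^2 \leqslant (c_\varepsilon + \gamma^t \diam(\Omega))\,\|(\varepsilon + \gamma^t h)^{-\onehalf} \bfu_h\boldsymbol{P}^t\|_{\GammaGNBC^\varepsilon}^2.
\end{equation*}
Combining these bounds with the Korn-type estimate concludes the proof, with the hidden constant absorbing $\gamma^n, \gamma^t, c_\varepsilon$ and $\diam(\Omega)$.

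No real obstacle is expected here, since the result is essentially a bookkeeping consequence of Theorem~\ref{thm:Korn_type_inequality}; the only subtlety is to make sure the two boundary terms appearing in the claim are strong enough to dominate $\|\bfu_h\|_{\GammaGNBC^\varepsilon}^2$ uniformly in~$h$ and~$\varepsilon$, which is guaranteed by the finiteness assumptions on $\gamma^n,\gamma^t,c_\varepsilon$ and $\diam(\Omega)$. Note in particular that the scaling of the penalty terms is chosen precisely so that the factors $\gamma^n h$ and $\varepsilon+\gamma^t h$ appearing after multiplying and dividing are bounded from above, which is why no lower bound on these quantities is needed for the estimate.
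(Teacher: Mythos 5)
Your proposal is correct and follows essentially the same route as the paper: both reduce the claim to Theorem~\ref{thm:Korn_type_inequality} by using the orthogonal splitting of $\bfu_h$ into normal and tangential parts on the boundary and bounding the weights $\gamma^n h$ and $\varepsilon+\gamma^t h$ from above by $\gamma^n\diam(\Omega)$ and $c_\varepsilon+\gamma^t\diam(\Omega)$. The only difference is cosmetic — you argue from left to right while the paper chains the inequalities from the penalty terms back to $\|\bfu_h\|_{1,\Omega}^2$.
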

\begin{proof}
The proof follows immediately from applying the Korn-type inequality deduced in Theorem~\ref{thm:Korn_type_inequality}, the definition
of $\GammaGNBC^{\epsilon}$ and the specified assumptions in the corollary
\begin{align}
\label{eq:modified_korns_ineq}
  & \| \bfepsilon(\bfu_h) \|_{\Omega}^2
  + \| \left( {\gamma^n h} \right)^{-\onehalf} \bfu_h \cdot \bfn \|_{\GammaGNBC}^2
  + \| \left({\varepsilon + \gamma^t h} \right)^{-\onehalf} \bfu_h \boldsymbol{P}^t \|_{\GammaGNBC^{\varepsilon}}^2 \nonumber
\\
& \quad
  \gtrsim
  \| \bfepsilon(\bfu_h) \|_{\Omega}^2
  + (\gamma^n \diam(\Omega))^{-1} \|\bfu_h \cdot \bfn \|_{\Gamma}^2
  + ( \|\varepsilon\|_{\infty,\GammaGNBC^{\varepsilon}} + \gamma^t \diam(\Omega) )^{-1} \| \bfu_h \boldsymbol{P}^t \|_{\GammaGNBC^{\varepsilon}}^2
\\
& \quad
  \gtrsim
  \| \bfepsilon(\bfu_h) \|_{\Omega}^2
  + \|  \bfu_h  \|_{\GammaGNBC^{\varepsilon}}^2
\\
& \quad
  \gtrsim
  \|\bfu_h \|_{1,\Omega}^2
  \geqslant \|\nabla \bfu_h \|_{\Omega}^2.
\end{align}
\end{proof}
\enlargethispage{0.2cm}

\subsection{Interpolation Operators}
\label{ssec:interpolation}
Since the finite element approximation space is defined on the enlarged domain $\Omega_h^{\ast}$,
we first comment on the construction of an appropriate interpolation operator
$L^2(\Omega) \to \mcX_h$.
From \cite{Stein1970} it is well known that for the Sobolev spaces $W^{m,q}(\Omega)$,
$0\leqslant m < \infty$, $1 \leqslant q \leqslant \infty$,
a linear extension operator can be defined
\begin{equation}
  \label{eq:extension-operator-boundedness}
  E: W^{m,q}({\Omega}) \rightarrow W^{m,q}(\Oast) \qquad \text{with} \qquad  \norm{E v}_{m,q,\Oast} \lesssim \norm{v}_{m,q,\Omega},
\end{equation}
and we write $v^\ast := Ev$.
Following the analysis provided by \citet{MassingSchottWall2016_CMAME_Arxiv_submit}, 
let $\pi_h$ denote the \emph{Cl\'ement operator}, see for instance \cite{ErnGuermond2004},
then for $u \in H^s(\Omega)$ we define its ``fictitious domain'' extension
$\pi_h^\ast: H^s(\Omega) \to \mcX_h$ by $\pi_h^\ast u := \pi_h(u^\ast)$.
For some fixed Lipschitz-domain $\Oast$ satisfying $\Oast_h \subseteq \Oast$ for $h \lesssim 1$,
then the following interpolation estimates hold for functions $v \in H^r(\Oast)$
and its fictitious domain variant
\begin{alignat}{3}
\| v - \pi_h v \|_{s,T} 
& \lesssim
  h^{t-s}| v |_{t,\omega(T)},
&&\quad 0\leqslant s \leqslant t \leqslant m \quad &\foralls T\in
  \mcT_h,
  \label{eq:interpest0}
\\
  \| v^\ast - \pi_h^{\ast} v \|_{s,\mcT_h} 
  & \lesssim
  h^{t-s}\| v \|_{t,\Omega},
  & &\quad 0\leqslant s \leqslant t \leqslant m,
  \label{eq:interpest0-ast}
\end{alignat}
owing to the boundedness of the extension operator~\eqref{eq:extension-operator-boundedness},
with $s, t \in \NN$, $m=\min\{r,k+1\}$, $k$ the interpolation order of $\mcX_h$
and $\omega(T)$ the set of elements in $\mcT_h$ sharing at least
one vertex with $T$.
Throughout the analysis we write $\boldsymbol{\pi}_h^{\ast}$
for the Cl\'ement interpolant of vector-valued functions~$\bfv$ and $\Pi_h^{\ast}$
for functions in a product space.

\section{Continuity and Stability Estimates}
\label{sec:stability-properties}

In this section, we establish stability properties of the proposed cut finite element method \eqref{eq:oseen-discrete-unfitted}.
Our presentation is closely related to the analysis presented in the preceding work by \citet{MassingSchottWall2016_CMAME_Arxiv_submit}
and mainly differs in the analysis of the boundary terms related to the weak enforcement of general Navier boundary conditions.
The major stability result relies on a modified coercivity estimate for the total bilinear form $A_h + S_h + G_h$.
Since fluid instabilities and the extension of the finite element method to unfitted meshes can be treated almost equivalently to \citet{MassingSchottWall2016_CMAME_Arxiv_submit},
we only recall the required statements from the latter publication without presenting proofs.

After deriving continuity estimates for parts of the stabilized bilinear form in Lemma~\ref{lem:continuity}, we start by
proving a coercivity estimate for our stabilized formulation w.r.t. the semi-norm $|U_h|_h$ on $\mcW_h$ in Lemma~\ref{lem:coercivity_ah}.
We then recall how to recover required control over three additional (semi-)norms
\begin{equation}
\| \phi_{u}^{\onehalf} \nabla\cdot \bfu_h \|_{\Omega}, \quad
\| \phi_{\beta}^{\onehalf}(\bfbeta \cdot \nabla \bfu_h + \nabla p_h)\|_{\Omega} \quad \text{and} \quad
\Phi^{\onehalf}\|p_h\|_{\Omega},
\end{equation}
in Lemmas~\ref{lem:divergence-stability},~\ref{lem:convect-pressure-stab} and \ref{lem:pressure-stability}
with the help of CIP and ghost-penalty stabilizations as introduced in Sections~\ref{ssec:stabilized-oseen} and~\ref{ssec:cutfem-oseen}
using techniques provided in \cite{Burman2007a,MassingSchottWall2016_CMAME_Arxiv_submit}.
By combining the above mentioned estimates, a global inf-sup stability on $\mcW_h$ is shown in Theorem~\ref{thm:inf-sup_condition_total}.

\begin{lemma}[Continuity Estimates]
\label{lem:continuity}
For an arbitrary choice of functions $\bfu,\bfv\in [H^2(\Omega)]^d$, $p\in H^1(\Omega)$ and discrete functions $\bfu_h,\bfv_h \in \mcV_h$, $p_h\in\mcQ_h$ the following continuity estimates
hold provided that $\gamma^n,\gamma^t<\infty$
\begin{alignat}{2}
\label{eq:continuity_ah_1}
 a_h(\bfu+\bfu_h, \bfv+\bfv_h) - (\bfbeta\cdot\nabla (\bfu+\bfu_h), (\bfv+\bfv_h))_\Omega &\lesssim \tn \bfu+\bfu_h\tn_{\ast  } \tn \bfv+\bfv_h\tn_{\ast}
 ,
\\
\label{eq:continuity_ah_2}
 a_h(\bfu+\bfu_h, \bfv+\bfv_h) + ((\bfu+\bfu_h), \bfbeta\cdot\nabla (\bfv+\bfv_h))_\Omega &\lesssim \tn \bfu+\bfu_h\tn_{\ast  } \tn \bfv+\bfv_h\tn_{\ast}
 ,
\\
\label{eq:continuity_ah_3}
 a_h(\bfu_h, \bfv_h) - (\bfbeta\cdot\nabla \bfu_h, \bfv_h)_{\Omega}&\gtrsim - \tn \bfu_h\tn_h \tn \bfv_h \tn_h
 ,
\\
\label{eq:continuity_ah_4}
 a_h(\bfu_h, \bfv_h) + (\bfu_h, \bfbeta\cdot\nabla \bfv_h)_{\Omega}  &\gtrsim - \tn \bfu_h\tn_h \tn \bfv_h \tn_h
 ,
\\
\label{eq:continuity_bh_1}
 |b_h(p+p_h,\bfv+\bfv_h)|      &\lesssim \tn p+p_h \tn_{\ast,\phi_u} (\tn \bfv+\bfv_h \tn_{\ast} + \| \phi_u^{\onehalf}\nabla\cdot(\bfv+\bfv_h )\|_{\Omega})
 ,
\\
\label{eq:continuity_bh_3}
 |b_h(p+p_h,\bfv_h)|      &\lesssim \tn p+p_h \tn_{\ast,\phi_u} (\tn \bfv_h \tn_{h} + \|\phi_u^{\onehalf} \nabla \cdot \bfv_h\|_{\Omega})
 .
\end{alignat}
\end{lemma}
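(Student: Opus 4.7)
All six estimates are proven by expanding each bilinear form into its constituent terms and bounding each term by Cauchy--Schwarz, matching the factors to components of the relevant norm. The continuous estimates \eqref{eq:continuity_ah_1}, \eqref{eq:continuity_ah_2}, \eqref{eq:continuity_bh_1} exploit that $\tn \cdot \tn_{\ast}$ contains the viscous flux $\|(\nu h)^{\onehalf}\nabla\bfu\cdot\bfn\|_{\Gamma}$ and that $\tn p\tn_{\ast,\phi_u}$ contains $\|\phi_u^{-\onehalf} h^{\onehalf} p\|_{\Gamma}$. The discrete counterparts \eqref{eq:continuity_ah_3}, \eqref{eq:continuity_ah_4}, \eqref{eq:continuity_bh_3} reduce to the continuous ones because the inverse trace inequality \eqref{eq:inverse-estimates} together with the ghost-penalty equivalences of Proposition~\ref{prop:ghost-penalty-norm-equivalence} imply $\tn \bfu_h \tn_{\ast}\lesssim\tn \bfu_h \tn_h$ (and its pressure analogue), as recorded in \eqref{eq:oseen-norm-u-relation}--\eqref{eq:oseen-norm-U-relation}.

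For \eqref{eq:continuity_ah_1} I would first combine the standard consistency term $-\langle 2\nu\bfepsilon(\bfu)\bfn,\bfv\rangle_{\Gamma}$ with the tangential consistency term $\langle \frac{\varepsilon}{\varepsilon+\gamma^t h}\,2\nu\bfepsilon(\bfu)\bfn\boldsymbol{P}^t,\bfv\rangle_{\Gamma}$; using $\boldsymbol{P}^n + \boldsymbol{P}^t = \boldsymbol{I}$ this reduces to
\[
-\langle 2\nu\bfepsilon(\bfu)\bfn\cdot\bfn,\,\bfv\cdot\bfn\rangle_{\Gamma} - \left\langle \tfrac{\gamma^t h}{\varepsilon+\gamma^t h}\,2\nu\bfepsilon(\bfu)\bfn\boldsymbol{P}^t,\,\bfv\boldsymbol{P}^t\right\rangle_{\Gamma}.
\]
The first is bounded by $\|(\nu h)^{\onehalf}\nabla\bfu\cdot\bfn\|_{\Gamma}\cdot\|(\nu/(\gamma^n h))^{\onehalf}\bfv\cdot\bfn\|_{\Gamma}$ (up to a factor $\sqrt{\gamma^n}$), and the second by $\|(\nu h)^{\onehalf}\nabla\bfu\cdot\bfn\|_{\Gamma}\cdot\|(\nu/(\varepsilon+\gamma^t h))^{\onehalf}\bfv\boldsymbol{P}^t\|_\Gamma$, using the uniform bound $\frac{\gamma^t h}{\varepsilon+\gamma^t h}\,\nu^{\onehalf}/h^{\onehalf}\leqslant\sqrt{\gamma^t}\,(\nu/(\varepsilon+\gamma^t h))^{\onehalf}$, which holds for all $\varepsilon\in[0,\infty]$. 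The two tangential adjoint-consistency terms in \eqref{eq:ah-form-def_tangential_2} carry weights $\frac{\gamma^t h\varepsilon}{\varepsilon+\gamma^t h}\leqslant\gamma^t h$ and $\frac{\gamma^t h}{\varepsilon+\gamma^t h}\leqslant 1$, and admit fully analogous Cauchy--Schwarz splits against $\|(\nu h)^{\onehalf}\nabla\bfv\cdot\bfn\|_{\Gamma}$. The remaining bulk $(\sigma\bfu,\bfv)_\Omega$ and $(2\nu\bfepsilon(\bfu),\bfepsilon(\bfv))_\Omega$, the normal Nitsche penalties (both the viscous $\nu/(\gamma^n h)$ and the regime $\phi_u/(\gamma^n h)$), the Robin tangential mass term $\langle\nu/(\varepsilon+\gamma^t h)\bfu\boldsymbol{P}^t,\bfv\boldsymbol{P}^t\rangle$, and the inflow term $-\langle(\bfbeta\cdot\bfn)\bfu,\bfv\rangle_{\GammaIn}$ each split directly by Cauchy--Schwarz into matching pieces of $\tn\bfu\tn_{\ast}$ and $\tn\bfv\tn_{\ast}$; the bulk convection $(\bfbeta\cdot\nabla\bfu,\bfv)_\Omega$ in $a$ cancels against the subtracted term on the left-hand side.

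For \eqref{eq:continuity_ah_2} I would use the identity $(\bfbeta\cdot\nabla\bfu,\bfv)_\Omega+(\bfu,\bfbeta\cdot\nabla\bfv)_\Omega = \langle(\bfbeta\cdot\bfn)\bfu,\bfv\rangle_\Gamma$, valid since $\nabla\cdot\bfbeta=0$, to reduce it to \eqref{eq:continuity_ah_1} plus an extra boundary term bounded by $\||\bfbeta\cdot\bfn|^{\onehalf}\bfu\|_\Gamma\||\bfbeta\cdot\bfn|^{\onehalf}\bfv\|_\Gamma$, which is contained in $\tn\cdot\tn_\ast$. Estimates \eqref{eq:continuity_bh_1} and \eqref{eq:continuity_bh_3} follow by splitting $b_h(p,\bfv) = -(p,\nabla\cdot\bfv)_\Omega + \langle p,\bfv\cdot\bfn\rangle_\Gamma$ and pairing $\|\phi_u^{-\onehalf}p\|_\Omega$ with $\|\phi_u^{\onehalf}\nabla\cdot\bfv\|_\Omega$ and $\|\phi_u^{-\onehalf}h^{\onehalf}p\|_{\Gamma}$ with $\|(\phi_u/(\gamma^n h))^{\onehalf}\bfv\cdot\bfn\|_{\Gamma}$.

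The main obstacle is controlling the tangential boundary contributions uniformly in the slip length $\varepsilon\in[0,\infty]$, because $\varepsilon/(\varepsilon+\gamma^t h)$ and $\gamma^t h/(\varepsilon+\gamma^t h)$ behave oppositely at the two limits. The unifying observation is that each of these weights is bounded by $1$ and, combined with the viscous factor $\nu$, admits a Cauchy--Schwarz split against exactly the tangential penalty weight $(\nu/(\varepsilon+\gamma^t h))^{\onehalf}$ that is already built into the triple norm, without ever requiring $\varepsilon$ to be bounded away from $0$ or from $\infty$. This is precisely the effect of the Juntunen--Stenberg weighting chosen in \eqref{eq:ah-form-def_tangential_1}--\eqref{eq:ah-form-def_tangential_2}, and it is what permits a clean $\varepsilon$-uniform continuity bound.
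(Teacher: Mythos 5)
Your proposal is correct and follows essentially the same route as the paper's proof: Cauchy--Schwarz on the bulk and symmetric boundary terms, the same merging of the standard consistency term with the tangential consistency term into a factor $\tfrac{\varepsilon}{\varepsilon+\gamma^t h}-1=-\tfrac{\gamma^t h}{\varepsilon+\gamma^t h}$, the same $\varepsilon$-uniform weight splits against $(\nu h)^{\onehalf}$ and $(\nu/(\varepsilon+\gamma^t h))^{\onehalf}$, integration by parts of the convective term for \eqref{eq:continuity_ah_2}, and reduction of the discrete estimates to the continuous ones via \eqref{eq:oseen-norm-u-relation}. No gaps.
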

\begin{proof}
We start by proving the continuity estimate for $a_h$ with neglected advective bulk term as in \eqref{eq:continuity_ah_1}.
The proof is straightforward as it follows directly from
applying the Cauchy-Schwarz inequality for each term.
For the bulk terms we have
\begin{align}
   &|(\sigma (\bfu + \bfu_h), (\bfv + \bfv_h))_{\Omega}| + |(2\nu \bfepsilon(\bfu + \bfu_h), \bfepsilon(\bfv + \bfv_h))_{\Omega}|
\nonumber\\
   &\qquad\lesssim
   (\| \sigma^{\onehalf}(\bfu + \bfu_h) \|_\Omega + \| \nu^{\onehalf} \nabla(\bfu + \bfu_h)\|_\Omega )
   (\| \sigma^{\onehalf}(\bfv + \bfv_h) \|_\Omega + \| \nu^{\onehalf} \nabla(\bfv + \bfv_h)\|_\Omega )
   \\
   & \qquad\lesssim
    ||| \bfu+\bfu_h |||_{\ast}  ||| \bfv+\bfv_h |||_{\ast}.
\end{align}
It remains to estimate the boundary terms. By analogy, the estimate for all symmetric viscous and advective inflow Nitsche penalty terms follows directly
from applying Cauchy-Schwarz inequality and the definition of $||| \cdot |||_{\ast}$.
The non-symmetric boundary terms in $a_h$ can be estimated as
\begin{align}
&| \langle ( \frac{\varepsilon}{\varepsilon + \gamma^t h}  - 1 )  (2 \nu \bfepsilon(\bfu+\bfu_h)\bfn )  \boldsymbol{P}^t  , (\bfv+\bfv_h) \rangle_{\GammaGNBC} |
   \nonumber
   \\
   &\qquad\lesssim
   2(\gamma^t)^{\onehalf} || (\frac{\gamma^t h}{\varepsilon + \gamma^t h})^{\onehalf} (\nu h)^{\onehalf} (\nabla(\bfu+\bfu_h)\cdot\bfn ) ||_{\GammaGNBC}|| (\frac{\nu}{\varepsilon + \gamma^t h})^{\onehalf} (\bfv+\bfv_h)\boldsymbol{P}^t ||_{\GammaGNBC}
   \\
   &\qquad\lesssim
   \tn \bfu+\bfu_h \tn_{\ast} \tn \bfv+\bfv_h \tn_{\ast},
\intertext{which holds if $\gamma^t < \infty$, since $\frac{\gamma^t h}{\varepsilon + \gamma^t h}\leqslant 1$.
Similarly, utilizing that $\frac{\gamma^t \varepsilon}{\varepsilon + \gamma^t h}\leqslant \gamma^t$ we have}
  &|\zeta_u \langle \frac{ \gamma^t h }{\varepsilon + \gamma^t h} \varepsilon(2 \nu \bfepsilon(\bfu+\bfu_h) \bfn)  \boldsymbol{P}^t
  , 2 \bfepsilon(\bfv+\bfv_h) \bfn \rangle_{\GammaGNBC}|
  \nonumber\\
   &\qquad\lesssim
   4(\frac{ \gamma^t \varepsilon}{\varepsilon + \gamma^t h})|| (\nu h)^{\onehalf} (\nabla(\bfu+\bfu_h)\cdot\bfn ) ||_{\GammaGNBC}
   || (\nu h)^{\onehalf} (\nabla(\bfv+\bfv_h)\cdot\bfn)  ||_{\GammaGNBC}   
   \\
   &\qquad\lesssim
   \gamma^t \tn \bfu+\bfu_h \tn_{\ast} \tn \bfv+\bfv_h \tn_{\ast}.
\end{align}
All remaining boundary terms in $a_h$ can be estimated analogously, which yields
\begin{align}
 | a_h(\bfu+\bfu_h, \bfv+\bfv_h)- (\bfbeta\cdot\nabla (\bfu+\bfu_h), (\bfv+\bfv_h))_\Omega| \lesssim \tn \bfu+\bfu_h\tn_{\ast  } \tn \bfv+\bfv_h\tn_{\ast}
\label{eq:continuity_tmp_1}
\end{align}
and as a consequence \eqref{eq:continuity_ah_1}.
Estimate \eqref{eq:continuity_ah_2} can easily be deduced by integrating the advective bulk term by parts
and applying the Cauchy-Schwarz inequality for the resulting boundary term
\begin{align}
 & |a_h(\bfu+\bfu_h, \bfv+\bfv_h) + ((\bfu+\bfu_h), \bfbeta\cdot\nabla (\bfv+\bfv_h))_\Omega|
\nonumber
\\
&\qquad=
 |a_h(\bfu+\bfu_h, \bfv+\bfv_h) - (\bfbeta\cdot\nabla (\bfu+\bfu_h), (\bfv+\bfv_h))_\Omega
+ \langle (\bfbeta\cdot\bfn) (\bfu+\bfu_h), (\bfv+\bfv_h) \rangle_\Gamma |\\
&\qquad\lesssim
 |a_h(\bfu+\bfu_h, \bfv+\bfv_h)
- (\bfbeta\cdot\nabla (\bfu+\bfu_h), (\bfv+\bfv_h))_\Omega|
+ \tn \bfu+\bfu_h \tn_{\ast} \tn \bfv+\bfv_h \tn_{\ast}.
\label{eq:continuity_tmp_2}
\end{align}
Setting $\bfv=\bfu=\bfzero$ in \eqref{eq:continuity_tmp_1}--\eqref{eq:continuity_tmp_2} and applying relation \eqref{eq:oseen-norm-u-relation}, i.e.
$\tn \bfv_h \tn_{\ast} \lesssim \tn \bfv_h \tn_{h}$, gives
\begin{align}
 | a_h(\bfu_h, \bfv_h) - (\bfbeta\cdot\nabla\bfu_h, \bfv_h)_\Omega| &\lesssim \tn \bfu_h \tn_{h} \tn \bfv_h \tn_{h} , \\
 | a_h(\bfu_h, \bfv_h) + (\bfu_h, \bfbeta\cdot\nabla\bfv_h)_\Omega| &\lesssim \tn \bfu_h \tn_{h} \tn \bfv_h \tn_{h} ,
\end{align}
which leads to \eqref{eq:continuity_ah_3} and \eqref{eq:continuity_ah_4}.

Similar estimates can be established for $b_h$ by applying the Cauchy-Schwarz inequality
\begin{align}
 |b_h(p+p_h,\bfv+\bfv_h)|
&\leqslant
|(p+p_h, \nabla\cdot(\bfv+\bfv_h))_{\Omega}|
+
|\langle p+p_h, (\bfv+\bfv_h)\cdot\bfn\rangle_{\GammaGNBC}|
\\
&\lesssim
(\| \phi_u^{-\onehalf} (p+p_h) \|_{\Omega} + \| \phi_u^{-\onehalf} h^{\onehalf}(p+p_h) \|_{\GammaGNBC})
(\|\phi_u^{\onehalf}\nabla\cdot(\bfv+\bfv_h)\|_{\Omega} + \| (\phi_u/h)^{\onehalf}(\bfv+\bfv_h)\cdot\bfn\|_{\GammaGNBC} )
\\
&\lesssim
\tn p+p_h \tn_{\ast,\phi_u}
(\tn \bfv+\bfv_h \tn_{\ast} + \|\phi_u^{\onehalf}\nabla\cdot(\bfv+\bfv_h)\|_{\Omega} ),
\end{align}
which proves \eqref{eq:continuity_bh_1}.
Choosing $\bfv=\bfzero$ and using \eqref{eq:oseen-norm-u-relation} yields estimate \eqref{eq:continuity_bh_3}.
\end{proof}

The next lemma shows how the Nitsche-type boundary terms,
which have been consistently added to enforce the general Navier boundary conditions,
guarantee stability of the bilinear form with respect to a semi-norm on $\mcW_h$
and how ghost-penalty terms extend stability to the enlarged domain $\Omega_h^{\ast}$.

\begin{lemma}[Coercivity estimate]
The stabilized cut finite element formulation is coercive, i.e.
  \begin{equation}
    | U_h |_h^2
   \lesssim
   A_h(U_h,U_h) + S_h(U_h,U_h) + G_h(U_h,U_h) \quad \foralls U_h = (\bfu_h,p_h) \in \mcW_h
   \label{eq:coercivity_ah}
  \end{equation}
  holds whenever the CIP and ghost penalty stability parameters $\gamma_\nu,\gamma_\sigma,\gamma_\beta,\gamma_u,\gamma_p$ are chosen
  large enough.
  For $\zeta_u=1$, the Nitsche penalty parameters need to be chosen small enough, i.e. $0<\gamma^n,\gamma^t \leqslant C < \infty$,
where $C$ depends on the shape and polynomial order of the elements (see inverse estimate \eqref{eq:inverse-estimates}), however, it is independent of how the boundary intersects the element.
  By contrast, for $\zeta_u=-1$, the bilinear form is coercive for any choice $0<\gamma^n,\gamma^t < \infty$.
Note that $\meas{(\Gamma^{\varepsilon})}>0$ is assumed.
  \label{lem:coercivity_ah}
\end{lemma}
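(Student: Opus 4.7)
The plan is to test the bilinear form against $V_h=U_h$ and exploit the structural cancellation $A_h(U_h,U_h)=a_h(\bfu_h,\bfu_h)$, since the two $b_h$ contributions cancel. I would then decompose $a_h(\bfu_h,\bfu_h)$ into manifestly sign-definite contributions plus non-symmetric cross terms that must be absorbed. The sign-definite part is produced as follows: the reactive and viscous bulk integrals give $\|\sigma^{\onehalf}\bfu_h\|_\Omega^2$ and $\|(2\nu)^{\onehalf}\bfepsilon(\bfu_h)\|_\Omega^2$; the advective bulk term, integrated by parts using $\nabla\cdot\bfbeta=0$ and combined with the inflow penalty $-\langle(\bfbeta\cdot\bfn)\bfu_h,\bfu_h\rangle_{\GammaIn}$, yields $\tfrac{1}{2}\||\bfbeta\cdot\bfn|^{\onehalf}\bfu_h\|_\Gamma^2$; and the three symmetric Nitsche-type penalties reproduce $\|(\nu/(\gamma^n h))^{\onehalf}\bfu_h\cdot\bfn\|_\Gamma^2$, $\|(\phi_u/(\gamma^n h))^{\onehalf}\bfu_h\cdot\bfn\|_\Gamma^2$ and $\|(\nu/(\varepsilon+\gamma^t h))^{\onehalf}\bfu_h\boldsymbol{P}^t\|_\Gamma^2$.

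The nontrivial work is on the remaining boundary contributions. After splitting every boundary occurrence of $\bfu_h$ and of the traction $2\nu\bfepsilon(\bfu_h)\bfn$ into normal and tangential components via $\boldsymbol{I}=\boldsymbol{P}^n+\boldsymbol{P}^t$, the standard consistency term $-\langle 2\nu\bfepsilon(\bfu_h)\bfn,\bfu_h\rangle_\Gamma$ separates into a normal and a tangential piece. Combined with the normal adjoint term, the normal cross term collects into $-(1+\zeta_u)\langle(2\nu\bfepsilon(\bfu_h)\bfn)\cdot\bfn,\bfu_h\cdot\bfn\rangle_\Gamma$, while the tangential part, together with the Robin consistency term $\varepsilon/(\varepsilon+\gamma^t h)$ of \eqref{eq:ah-form-def_tangential_1} and the adjoint contribution \eqref{eq:ah-form-def_tangential_2}, yields $-(1+\zeta_u)\,\gamma^t h/(\varepsilon+\gamma^t h)\langle(2\nu\bfepsilon(\bfu_h)\bfn)\boldsymbol{P}^t,\bfu_h\boldsymbol{P}^t\rangle_\Gamma$. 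The pure-traction adjoint contribution from \eqref{eq:ah-form-def_tangential_2} scaled by $\gamma^t h\varepsilon/(\varepsilon+\gamma^t h)$ is sign-favourable for $\zeta_u=-1$ and controlled for $\zeta_u=1$.

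Each cross term is then estimated by Cauchy--Schwarz followed by a weighted Young inequality, with the weights chosen so that, after applying the inverse trace estimate \eqref{eq:inverse-estimates}, the traction factor becomes a fraction proportional to $\gamma^n$ (resp.\ $\gamma^t$) of $\|\nu^{\onehalf}\nabla\bfu_h\|_\Omega^2$ and can be absorbed by $\|(2\nu)^{\onehalf}\bfepsilon(\bfu_h)\|_\Omega^2$, while the velocity factor is absorbed by the matching Nitsche penalty. The crucial observation is that $\varepsilon/(\varepsilon+\gamma^t h)\leqslant 1$ and $\gamma^t h/(\varepsilon+\gamma^t h)\leqslant 1$ uniformly in $\varepsilon\in[0,\infty]$; this is the Juntunen--Stenberg trick, and it is what distinguishes the present proof from the Dirichlet analysis of \cite{MassingSchottWall2016_CMAME_Arxiv_submit}. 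For $\zeta_u=1$ this absorption forces $\gamma^n,\gamma^t\leqslant C$ with $C$ depending only on the inverse-estimate constant; for $\zeta_u=-1$ the normal cross term cancels exactly and the tangential cross term plus \eqref{eq:ah-form-def_tangential_2} are sign-favourable, so any $0<\gamma^n,\gamma^t<\infty$ works. I would then invoke the generalized Korn-type inequality of Corollary~\ref{cor:modified_korns_ineq} to upgrade the surviving $\|\bfepsilon(\bfu_h)\|_\Omega^2$ plus the normal and tangential penalty norms to $\|\nu^{\onehalf}\nabla\bfu_h\|_\Omega^2$, which is the final missing piece of $\tn\bfu_h\tn^2$.

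Finally, $S_h(U_h,U_h)+G_h(U_h,U_h)$ supply the remaining seminorm contributions $|\bfu_h|_h^2+|p_h|_h^2$ directly, provided the CIP and ghost-penalty parameters $\gamma_\bullet$ are large enough; a small amount of $g_\nu$ and $g_\sigma$ may need to be spent via Proposition~\ref{prop:ghost-penalty-norm-equivalence} to keep $\|\nu^{\onehalf}\nabla\bfu_h\|_\Omega^2$ and $\|\sigma^{\onehalf}\bfu_h\|_\Omega^2$ strictly positive after the Young-inequality absorption. The main obstacle is precisely the balancing in the tangential Robin cross terms: the Young weights must be tuned so that the absorption into the viscous bulk and into the tangential penalty is simultaneously uniform in $\varepsilon\in[0,\infty]$ and independent of how $\Gamma$ cuts $\mcT_h$, and it is the common denominator $\varepsilon+\gamma^t h$ in the Nitsche formulation that makes this uniform absorption possible.
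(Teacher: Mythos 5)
Your proposal is correct and follows essentially the same route as the paper's proof: testing with $V_h=U_h$ so the $b_h$ terms cancel, integrating the advective term by parts against the inflow penalty, collecting the normal and tangential viscous cross terms into $(1+\zeta_u)$-weighted contributions, absorbing them via weighted Young inequalities combined with the trace estimate \eqref{eq:inverse-estimates} and the ghost-penalty norm equivalence of Proposition~\ref{prop:ghost-penalty-norm-equivalence} (using $\varepsilon/(\varepsilon+\gamma^t h)\leqslant 1$ and $\gamma^t h/(\varepsilon+\gamma^t h)\leqslant 1$ uniformly in $\varepsilon$), and finally invoking Corollary~\ref{cor:modified_korns_ineq} to recover the full $H^1$-seminorm. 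The parameter conclusions for $\zeta_u=\pm 1$ are obtained exactly as in the paper.
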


\begin{proof}
Starting from the definition of $A_h$, see \eqref{eq:Ah-form-def}, we have
  \begin{align}
   A_h(U_h,U_h)
  &= \normLtwo{\sigmahalf \bfu_h}{\Omega}^2
     + \normLtwo{(2\nu)^\onehalf\bfepsilon(\bfu_h)}{\Omega}^2
     + (\bfbeta\cdot\nabla\bfu_h, \bfu_h)_{\Omega}
     - \langle (\bfbeta\cdot\bfn)\bfu_h,  \bfu_h\rangle_{\GammaIn}
     + \| \left( \frac{ \phi_u}{\gamma^n h}  \right)^{\onehalf} \bfu_h \cdot \bfn \|^2_{\GammaGNBC} 
  \nonumber
\\
  &  \quad
       + \| \left( \frac{\nu}{\gamma^n h} \right)^{\onehalf} \bfu_h \cdot \bfn \|^2_{\GammaGNBC}
     - (1+\zeta_u) \langle (2 \nu \bfepsilon(\bfu_h)\bfn) \cdot \bfn, \bfu_h \cdot \bfn \rangle_{\GammaGNBC}
   \nonumber
\\
  &  \quad
     + \| \left( \frac{\nu}{\varepsilon + \gamma^t h} \right)^{\onehalf} \bfu_h \boldsymbol{P}^t \|^2_{\GammaGNBC} 
     - (1+\zeta_u) \langle \frac{\gamma^t h }{\varepsilon + \gamma^t h} 2 \nu \bfepsilon(\bfu_h)\bfn, \bfu_h \boldsymbol{P}^t \rangle_{\GammaGNBC}
     - \zeta_u \| \left( \frac{4 \gamma^t h  \varepsilon}{\varepsilon + \gamma^t h}  \right)^{\onehalf} \nu^{\onehalf} (  \bfepsilon(\bfu_h) \bfn )\boldsymbol{P}^t \|^2_{\GammaGNBC}  . 
  \label{eq:coercivity_ah_Ah}
  \end{align}
  Integration by parts for the advective term together with continuity of $\bfbeta$ yields
  \begin{align}
   (\bfbeta\cdot\nabla\bfu_h, \bfu_h)_{\Omega} = \frac{1}{2}\langle
    (\bfbeta\cdot\bfn)\bfu_h,\bfu_h \rangle_{\Gamma} -
    \frac{1}{2}((\div\bfbeta)\bfu_h, \bfu_h)_{\Omega}.
  \end{align}
  Using the assumption $\div\bfbeta = 0$, with the help of the advective inflow control imposed on
$\Gamma_{\mathrm{in}} \subseteq  \Gamma_D \subseteq \GammaGNBC$, the boundary control can be recovered at the entire boundary $\Gamma$ as
  \begin{align}\label{eq:coercivity_adv}
   (\bfbeta\cdot\nabla\bfu_h, \bfu_h)_{\Omega}
       - \langle (\bfbeta\cdot\bfn)\bfu_h,  \bfu_h\rangle_{\GammaIn}
   &= \frac{1}{2}\langle(\bfbeta\cdot\bfn)\bfu_h,\bfu_h\rangle_{\Gamma}
       - \langle (\bfbeta\cdot\bfn)\bfu_h,  \bfu_h\rangle_{\GammaIn}
    = \frac{1}{2}\normLtwo{|\bfbeta\cdot\bfn|^\onehalf \bfu_h}{\Gamma}^2.
  \end{align}
For the viscous boundary terms, the following estimates can be established.
  By applying a $\delta$-scaled Cauchy-Schwarz inequality, a trace inequality \eqref{eq:inverse-estimates}
and the viscous norm equivalence from Proposition~\ref{prop:ghost-penalty-norm-equivalence} for cut meshes, the following estimate can be established
 \begin{align}
 &(1+\zeta_u)  |\langle (2 \nu \bfepsilon(\bfu_h)\bfn) \cdot \bfn, \bfu_h \cdot \bfn \rangle_{\GammaGNBC}|
   \nonumber
\\
   & \quad\lesssim \frac{4}{2\delta_1}
   (1+\zeta_u)\gamma^n
   \| (\nu h )^\onehalf\bfepsilon(\bfu_h)\bfn
   \|_{\GammaGNBC}^2  
   + \frac{\delta_1}{2}(1+\zeta_u)\| (\nu/(\gamma^n h))^\onehalf \bfu_h \cdot \bfn \|_{\GammaGNBC}^2 
\\
     &\quad\lesssim \frac{4}{2\delta_1} (1+\zeta_u)\gamma^n
     \bigl(
     \| \nuhalf\grad\bfu_h\|_{\Omega}^2  + 
     g_{\nu}(\bfu_h, \bfu_h) 
     \bigr)
     +
     \frac{\delta_1}{2} (1+\zeta_u)\| (\nu/(\gamma^n h))^\onehalf \bfu_h  \cdot \bfn\|_{\GammaGNBC}^2.
\label{eq:stress-boundary-estimate-I}
  \end{align}
By analogy, the following boundary term from \eqref{eq:coercivity_ah_Ah} can be estimated as
\begin{align}
 &(1+\zeta_u) |\langle   \frac{\gamma^t h }{\varepsilon + \gamma^t h} 2 \nu \bfepsilon(\bfu_h)\bfn, \bfu_h \boldsymbol{P}^t \rangle_{\GammaGNBC}|
 \nonumber
\\
     &\quad\lesssim\frac{4}{2\delta_2}  (1+\zeta_u)\gamma^t(\gamma^t h)/(\varepsilon + \gamma^t h)
     \bigl(
     \| \nuhalf\grad\bfu_h\|_{\Omega}^2  + 
     g_{\nu}(\bfu_h, \bfu_h) 
     \bigr)
     +
     \frac{\delta_2}{2} (1+\zeta_u)\|  (\nu/(\varepsilon + \gamma^t h))^\onehalf \bfu_h  \boldsymbol{P}^t\|_{\GammaGNBC}^2.
\label{eq:stress-boundary-estimate-II}
\end{align}
The last term in \eqref{eq:coercivity_ah_Ah} can be bounded by applying a trace inequality, such that
\begin{align}
 \zeta_u \| \left( \frac{4  \gamma^t h  \varepsilon}{\varepsilon + \gamma^t h}  \right)^{\onehalf} \nu^{\onehalf}( \bfepsilon(\bfu_h) \bfn )\boldsymbol{P}^t \|^2_{\GammaGNBC}  
  \lesssim
  4 \zeta_u \gamma^t \left( \frac{\varepsilon}{\varepsilon + \gamma^t h}  \right)
     \bigl(
     \| \nuhalf\grad\bfu_h\|_{\Omega}^2  + 
     g_{\nu}(\bfu_h, \bfu_h) 
     \bigr).
\label{eq:stress-boundary-estimate-III}
\end{align}
From the Corollary~\ref{cor:modified_korns_ineq} a part of the remaining viscous terms in \eqref{eq:coercivity_ah_Ah} can be estimated as
  \begin{align}
\| \nu^\onehalf\bfepsilon(\bfu_h) \|_{\Omega}^2
  + \frac{1}{2} \| \left( \frac{\nu}{\gamma^n h} \right)^\onehalf \bfu_h \cdot \bfn \|_{\GammaGNBC}^2
  + \frac{1}{2} \| \left( \frac{\nu}{\varepsilon + \gamma^t h} \right)^\onehalf \bfu_h \boldsymbol{P}^t \|_{\GammaGNBC^{\varepsilon}}^2
  \gtrsim
  \| \nu^\onehalf \nabla \bfu_h \|_{\Omega}^2,
  \end{align}
and by combining the previous inequalities \eqref{eq:coercivity_ah_Ah}, \eqref{eq:coercivity_adv}, \eqref{eq:stress-boundary-estimate-I},
\eqref{eq:stress-boundary-estimate-II}, \eqref{eq:stress-boundary-estimate-III} with \eqref{eq:modified_korns_ineq},
the following estimate can be done
  \begin{align}
&A_h(U_h,U_h) + g_\nu(\bfu_h,\bfu_h)
\nonumber\\
& \quad \gtrsim
  \normLtwo{\sigmahalf \bfu_h}{\Omega}^2 +
  \frac{1}{2}\normLtwo{|\bfbeta\cdot\bfn|^\onehalf \bfu_h}{\Gamma}^2
\nonumber\\
  & \qquad
  +\left(  1  - \frac{4 }{2\delta_1} \left( 1 +\zeta_u \right)\gamma^n  - \frac{4}{2 \delta_2} \left(1+\zeta_u\right) \gamma^t\left( \frac{  \gamma^t h }{\varepsilon + \gamma^t h} \right)   -  4 \zeta_u \gamma^t \left( \frac{  \varepsilon}{\varepsilon + \gamma^t h}  \right) \right)
        \left(  \normLtwo{\nu^\onehalf\nabla \bfu_h}{\Omega}^2 + g_\nu(\bfu_h,\bfu_h)  \right)
\nonumber  \\
  &  \qquad  
  +     \left( \frac{1}{2} - \frac{\delta_1}{2} \left( 1 +\zeta_u \right) \right)
            \|  (\nu/(\gamma^n h))^{\onehalf} \bfu_h \cdot \bfn \|^2_{\GammaGNBC}
\nonumber  \\
  &  \qquad
     +  \left(  \frac{1}{2} -\frac{\delta_2}{2 } \left(1+\zeta_u\right)   \right)
           \| \left(\nu/(\varepsilon + \gamma^t h) \right)^{\onehalf} \bfu_h \boldsymbol{P}^t \|^2_{\GammaGNBC}.
  \end{align}
At this stage it is trivial to see that if an adjoint-inconsistent approach, i.e. $\zeta_u = -1$, is chosen, stability is guaranteed
for any positive choice of $\gamma^n,\gamma^t<\infty$.
More delicate is to show stability in the adjoint-consistent case, i.e. $\zeta_u= 1$, which is considered from here on.
From the last two rows we have an upper bound with $\delta_1,\delta_2 \leqslant c < 1$.
The second row can be rewritten as
\begin{align}
& \left(  1  - \frac{4 }{2\delta_1} \left( 1 +\zeta_u \right)\gamma^n  - \frac{4}{2 \delta_2} \left(1+\zeta_u\right) \gamma^t \left( \frac{  \gamma^t h }{\varepsilon + \gamma^t h} \right)   -  4  \zeta_u \gamma^t \left( \frac{  \varepsilon}{\varepsilon + \gamma^t h}  \right) \right)
\nonumber 
\\
& \qquad= \frac{1}{\varepsilon + \gamma^t h} 
   \left( 
      \varepsilon \left[ 1 - \frac{4}{\delta_1}\gamma^n -  4 \gamma^t \right]
       + \gamma^t h  \left[ 1 - \frac{4}{\delta_1}\gamma^n -  \frac{4}{\delta_2} \gamma^t\right]
   \right).
\end{align}
Choosing, for instance, $\delta_1=\delta_2=1/2$,
 stability of the adjoint-consistent Nitsche-type method can be guaranteed by bounding
the stabilization parameters $\gamma^n,\gamma^t$ by
$\gamma^n\leqslant \delta_1/8  = 1/16$ and
$\gamma^t\leqslant \delta_2/8  = 1/16$.
Note that the parameters also need to be lower bounded,
 i.e. $\gamma^n,\gamma^t>0$, 
 not to lose contributions of the bulk measures contained in the energy-type semi-norm $| U_h |_h$
and to prevent ill-conditioning. 
Finally, the claim follows by adding all other stabilization terms in $S_h$ and $G_h$, not already included in the analysis.
\end{proof}

The next three lemmas are recalled from the work by \citet{MassingSchottWall2016_CMAME_Arxiv_submit} and show how
the CIP, the ghost-penalty and the Nitsche-related stabilization terms can be used to recover
the missing velocity and pressure semi-norm parts which are included in $\tn V_h \tn_h$.

First, we start by recovering control over the incompressibility constraint with the help of the stabilization terms $s_u$ and $g_u$.
This is required for low Reynolds-numbers when the viscous $H^1$-semi-norm control $\|\nu^{\onehalf}\bfepsilon(\bfu_h)\|_\Omega$ vanishes.
\begin{lemma}
  \label{lem:divergence-stability}
There is a constant $c_1 > 0$ such that
for each $\bfu_h \in \mcV_h$ there exists a $q_h \in \mcQ_h$ satisfying
 \begin{align}
   - b_h(q_h, \bfu_h) 
   &\gtrsim
   \| \phi_u^{\onehalf} \nabla \cdot \bfu_h \|_{\Omega}^2
   - 
   c_1\Bigl(
   s_u(\bfu_h,\bfu_h)
   + g_u(\bfu_h,\bfu_h)
   + \| h^{-\onehalf}\phi_u^{\onehalf} \bfu_h\cdot \bfn \|_{\GammaGNBC}^2
   \Bigr)
  \label{eq:divergence-stability}
 \end{align}
and the stability estimate
\begin{align}
  \label{eq:divergence-stability-qh_stab_bound}
    \tn q_h \tn_{h,\phi}^2 = \|\phi^{-\onehalf} q_h\|_{\Omega}^2 + |q_h|_h^2
    &\lesssim 
    \| \phi_u^{\onehalf} \nabla \cdot \bfu_h \|_{\mcT_h}^2
    \lesssim
    \| \phi_u^{\onehalf} \nabla \cdot \bfu_h \|_{\Omega}^2
    + g_u(\bfu_h,\bfu_h)
 \end{align}
 with $\phi\in\{\phi_u,\Phi^{-1}\}$ whenever the CIP and ghost-penalty parameter
 $\gamma_u$ is chosen large.
\end{lemma}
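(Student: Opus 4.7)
My plan is to construct the test pressure $q_h$ by projecting the weighted divergence $-\phi_u \nabla\cdot\bfu_h$ onto the continuous space $\mcQ_h = \mcX_h$ by means of an Oswald-type quasi-interpolation operator, then testing against $\bfu_h$ and absorbing the resulting error terms into the CIP and ghost-penalty operators. More precisely, since $\phi_u$ is piecewise constant on $\mcT_h$ and $\nabla\cdot\bfu_h \in \mcP^{k-1}(T)$, the product $\phi_u \nabla\cdot\bfu_h$ is a (possibly discontinuous) piecewise polynomial on the active mesh $\mcT_h$. I would let $\pi_h^{\mathrm{Osw}}: L^2(\Oast_h)\to \mcQ_h$ denote the standard Oswald/Scott--Zhang-type averaging operator and set
\begin{equation*}
 q_h := -\pi_h^{\mathrm{Osw}}(\phi_u \nabla\cdot\bfu_h) \in \mcQ_h.
\end{equation*}

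The first step is to split the bilinear form as
\begin{equation*}
 -b_h(q_h,\bfu_h) = (-q_h, \nabla\cdot\bfu_h)_{\Omega} + \langle -q_h, \bfu_h\cdot\bfn\rangle_{\Gamma}
 = \|\phi_u^{\onehalf}\nabla\cdot\bfu_h\|_{\Omega}^2 + R_1 + R_2,
\end{equation*}
where $R_1 := (\phi_u\nabla\cdot\bfu_h - \pi_h^{\mathrm{Osw}}(\phi_u\nabla\cdot\bfu_h), \nabla\cdot\bfu_h)_{\Omega}$ measures the quasi-interpolation consistency error, and $R_2 := -\langle q_h, \bfu_h\cdot\bfn\rangle_{\Gamma}$ is the weakly imposed boundary contribution arising from the modified form $b_h$.

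For $R_1$, the key estimate is the facet-jump control of the Oswald operator,
\begin{equation*}
 \|\phi_u\nabla\cdot\bfu_h - \pi_h^{\mathrm{Osw}}(\phi_u\nabla\cdot\bfu_h)\|_{T}^2
 \lesssim \sum_{F\subset\partial T \cap \mcF_h} h \|\jump{\phi_u\nabla\cdot\bfu_h}\|_F^2,
\end{equation*}
together with the local equivalence $\phi_{u,T}\sim\phi_{u,T'}$ on neighboring elements stated in~\eqref{eq:local_equivalence_stab_param_patch}, which allows to factor $\phi_u^{\onehalf}$ out of the jump and recognize $s_u(\bfu_h,\bfu_h)$ on interior facets in $\mcF_i$. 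For facets lying near the boundary but not fully inside the physical domain, the analogous ghost-penalty jump contributions $g_u(\bfu_h,\bfu_h)$ take over. A $\delta$-weighted Cauchy--Schwarz inequality then absorbs one factor of $\|\phi_u^{\onehalf}\nabla\cdot\bfu_h\|_{\Omega}$ into the main positive term, leaving precisely the $c_1(s_u + g_u)$ defect. For $R_2$, I would apply Cauchy--Schwarz with weights $h^{\pm\onehalf}\phi_u^{\mp\onehalf}$,
\begin{equation*}
 |R_2| \lesssim \|h^{\onehalf}\phi_u^{-\onehalf}q_h\|_{\Gamma} \cdot \|h^{-\onehalf}\phi_u^{\onehalf}\bfu_h\cdot\bfn\|_{\Gamma},
\end{equation*}
then use the trace inequality~\eqref{eq:trace-inequality} and the inverse estimate~\eqref{eq:inverse-estimates} on cut elements to pass $\|h^{\onehalf}\phi_u^{-\onehalf}q_h\|_{\Gamma} \lesssim \|\phi_u^{-\onehalf}q_h\|_{\Oast_h}$, after which the $L^2$-stability of the Oswald operator combined with the local equivalence of $\phi_u$ gives $\|\phi_u^{-\onehalf}q_h\|_{\Oast_h}\lesssim \|\phi_u^{\onehalf}\nabla\cdot\bfu_h\|_{\Oast_h}$; a further $\delta$-absorption yields exactly the boundary defect term in \eqref{eq:divergence-stability}.

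The stability bound \eqref{eq:divergence-stability-qh_stab_bound} then follows from: (i) $L^2$-stability of $\pi_h^{\mathrm{Osw}}$ with piecewise-constant weights on element patches, yielding $\|\phi_u^{-\onehalf}q_h\|_{\Oast_h}\lesssim\|\phi_u^{\onehalf}\nabla\cdot\bfu_h\|_{\mcT_h}$ and, via $\Phi\lesssim\phi_u^{-1}$, the pressure contribution for $\phi\in\{\phi_u,\Phi^{-1}\}$; (ii) bounding the seminorm $|q_h|_h^2 = s_p(q_h,q_h) + g_p(q_h,q_h)$ with an inverse/trace estimate that replaces jumps of $\nabla q_h$ across facets by local $L^2$-norms of $q_h$ (the scalings $\phi_p\sim h^2\phi_u^{-1}$ in \eqref{eq:cip-s_scalings} yield the right power of $\phi_u^{-\onehalf}$); (iii) finally, the norm-equivalence \eqref{eq:norm_equivalence_rsb_div} from Proposition~\ref{prop:ghost-penalty-norm-equivalence} transfers control from the enlarged domain $\Oast_h$ (i.e.~$\mcT_h$) back to the physical domain $\Omega$ at the cost of $g_u(\bfu_h,\bfu_h)$. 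The main obstacle I expect is the correct book-keeping of the weight $\phi_u$ in the Oswald error estimate across facets where $\phi_u$ jumps, and ensuring that the additional boundary term $\langle q_h, \bfu_h\cdot\bfn\rangle_\Gamma$ (which is not present in the classical, strongly enforced Stokes setting) can be fully absorbed into the penalty term $\|h^{-\onehalf}\phi_u^{\onehalf}\bfu_h\cdot\bfn\|_\Gamma^2$ already available from the Nitsche enforcement of the wall-normal constraint; this is what forces the appearance of that seminorm as a defect on the right-hand side of~\eqref{eq:divergence-stability}.
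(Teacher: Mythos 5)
Your construction is essentially the paper's (the argument deferred to Lemma~6.3 of Massing--Schott--Wall): take an Oswald average of the weighted divergence as the test pressure, identify the main positive term, push the averaging error into facet jumps controlled by $s_u+g_u$, and absorb the boundary contribution of $b_h$ into the Nitsche penalty $\| h^{-\onehalf}\phi_u^{\onehalf}\bfu_h\cdot\bfn\|_{\GammaGNBC}^2$. Two points need repair, however. First, a sign: since $b_h(q,\bfv)=-(q,\nabla\cdot\bfv)_\Omega+\langle q,\bfv\cdot\bfn\rangle_\Gamma$, one has $-b_h(q_h,\bfu_h)=(q_h,\nabla\cdot\bfu_h)_\Omega-\langle q_h,\bfu_h\cdot\bfn\rangle_\Gamma$, so with your choice $q_h:=-\pi_h^{\mathrm{Osw}}(\phi_u\nabla\cdot\bfu_h)$ the leading term is $-\|\phi_u^{\onehalf}\nabla\cdot\bfu_h\|_\Omega^2$; your displayed identity only appears positive because your expansion of $-b_h$ also carries the wrong sign on the volume term. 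Drop the minus in the definition of $q_h$ (as the paper does) and the bookkeeping becomes consistent.

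Second, and more substantively, the ``obstacle'' you flag at the end is real and your proposed fix does not close it: for a piecewise-constant weight, $\jump{\phi_u\,\nabla\cdot\bfu_h}=\phi_u^+\jump{\nabla\cdot\bfu_h}+\jump{\phi_u}\,(\nabla\cdot\bfu_h)^-$, and the second contribution is a full facet trace of $\nabla\cdot\bfu_h$, not a jump, so it cannot be absorbed into $s_u$ no matter how you invoke the local comparability $\phi_{u,T}\sim\phi_{u,T'}$ of \eqref{eq:local_equivalence_stab_param_patch}. The paper circumvents this by first smoothing the weight: it sets $\widetilde{\phi}_u:=\mcO_h(\phi_u)$, a continuous piecewise-linear function with $\widetilde{\phi}_u|_T\sim\phi_u|_T$, and defines $q_h=\mcO_h(\widetilde{\phi}_u\nabla\cdot\bfu_h)$; then $\jump{\widetilde{\phi}_u\nabla\cdot\bfu_h}=\widetilde{\phi}_u\jump{\nabla\cdot\bfu_h}$ on each facet and the bound $\phi_{u,T}^{-1}\widetilde{\phi}_{u,F}^{2}\lesssim\phi_{u,F}$ yields exactly the $s_u$ scaling. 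With these two corrections the remainder of your argument --- the weighted Cauchy--Schwarz and trace estimate for the boundary term, the $L^2$-stability of the Oswald operator for \eqref{eq:divergence-stability-qh_stab_bound}, the inverse estimates bounding $|q_h|_h$, and the final appeal to the norm equivalence \eqref{eq:norm_equivalence_rsb_div} --- matches the paper's proof step for step.
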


\begin{proof}
 A detailed proof can be found in \citet{MassingSchottWall2016_CMAME_Arxiv_submit} (cf. Lemma 6.3).
\end{proof}

The next lemma shows how additional control over a mixed semi-norm of the
form $\| \phi_{\beta}^{\onehalf}(\bfbeta \cdot \nabla \bfu_h + \nabla
p_h)\|_{\Omega}^2$ can be recovered with the help of the CIP stabilizations $s_\beta,s_p$ and the ghost penalty stabilizations.

\begin{lemma}
  \label{lem:convect-pressure-stab}
There is a constant $c_2 > 0$ such that for each $U_h=(\bfu_h,p_h)\in \mcW_h$
we can construct a $\bfv_h \in V_h$ satisfying
 \begin{align}
   (\bfbeta \cdot \nabla \bfu_h + \nabla p_h, \bfv_h)_{\Omega}
   &\gtrsim
   \| \phi_{\beta}^{\onehalf}(\bfbeta \cdot \nabla \bfu_h + \nabla p_h)\|_{\Omega}^2
   - c_2 (s_\beta(\bfu_h,\bfu_h) + s_p(p_h,p_h) + \omega_h \|\nu^{\onehalf}\nabla\bfu_h\|_{\Omega_h^\ast}^2 + \|\sigma^{\onehalf}\bfu_h\|_{\Omega_h^\ast}^2) \\
   &\gtrsim
   \| \phi_{\beta}^{\onehalf}(\bfbeta \cdot \nabla \bfu_h + \nabla p_h)\|_{\Omega}^2
   - c_2 (1 + \omega_h) | U_h |_h^2
  \label{eq:convect-pressure-stab}
\end{align}
and the stability estimate
\begin{align}
   \| h^{-\onehalf}\phi_u^{\onehalf} \bfv_h\cdot\bfn\|_{\Gamma}^2
+ \tn \bfv_h \tn_h^2
+ \| \phi_{\beta}^{\onehalf}\bfbeta \cdot \nabla \bfv_h\|_{\Omega}^2
+ \| \phi_u^{\onehalf} \nabla \cdot \bfv_h \|_{\Omega}^2
   \lesssim
   \| \phi_{\beta}^{\onehalf}(\bfbeta \cdot \nabla \bfu_h + \nabla p_h)\|_{\Omega}^2
   + (1 + \omega_h) | U_h |_h^2,
   \label{eq:convect-pressure-stab-vh_stab_bound_2}
 \end{align}
  whenever the stability parameters $\gamma_\beta,\gamma_p, \gamma_\nu,\gamma_\sigma$ are chosen sufficiently large and the Nitsche penalty parameter $\gamma^n$ is chosen sufficiently small.
\end{lemma}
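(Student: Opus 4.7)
The plan is to construct $\bfv_h$ as an Oswald-type averaging interpolant of a suitable piecewise polynomial testing function, then exploit the CIP and ghost-penalty norm equivalences from Proposition~\ref{prop:ghost-penalty-norm-equivalence}, exactly in the spirit of the corresponding proof in \cite{MassingSchottWall2016_CMAME_Arxiv_submit}. Since the bilinear form $A_h$ differs from the Dirichlet case only in the treatment of boundary terms (which do not enter the left-hand side of \eqref{eq:convect-pressure-stab}), the adaptation is largely a matter of tracking the boundary contributions in the stability bound \eqref{eq:convect-pressure-stab-vh_stab_bound_2}.

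First, I would set $\bfw := \phi_\beta(\bfbeta_h^0 \cdot \nabla \bfu_h + \nabla p_h)$, where $\bfbeta_h^0$ is the piecewise constant approximation from \eqref{eq:beta-approximation}. Note that $\bfw \in [L^2(\Oast_h)]^d$ is piecewise polynomial on $\mcT_h$ but generally discontinuous across interior facets. I would then define $\bfv_h := \mcO_h(\bfw) \in \mcV_h$ via an Oswald-type averaging operator, which satisfies the local approximation estimate
\begin{align}
\|\mcO_h(\bfw) - \bfw\|_T^2 \lesssim \sum_{F \subset \partial T \cap \mcF_i} h \|\jump{\bfw}\|_F^2
\end{align}
together with a local $L^2$-stability $\|\mcO_h(\bfw)\|_T \lesssim \|\bfw\|_{\omega(T)}$.

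Next I would split
\begin{align}
(\bfbeta \cdot \nabla \bfu_h + \nabla p_h, \bfv_h)_\Omega
&= (\bfbeta_h^0 \cdot \nabla \bfu_h + \nabla p_h, \bfw)_\Omega
+ (\bfbeta_h^0 \cdot \nabla \bfu_h + \nabla p_h, \bfv_h - \bfw)_\Omega
\nonumber\\
&\quad + ((\bfbeta - \bfbeta_h^0) \cdot \nabla \bfu_h, \bfv_h)_\Omega =: \mathrm{I} + \mathrm{II} + \mathrm{III}.
\end{align}
Term $\mathrm{I}$ equals $\|\phi_\beta^{1/2}(\bfbeta_h^0 \cdot \nabla \bfu_h + \nabla p_h)\|_\Omega^2$, from which a lower bound in terms of $\|\phi_\beta^{1/2}(\bfbeta \cdot \nabla \bfu_h + \nabla p_h)\|_\Omega^2$ is recovered via a $\delta$-Young inequality together with \eqref{eq:norm_rsb_beta_diff_const}, producing an $\omega_h\bigl(\|\nu^{1/2}\nabla\bfu_h\|_{\Oasth}^2 + \|\sigma^{1/2}\bfu_h\|_{\Oasth}^2\bigr)$ remainder. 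Term $\mathrm{II}$ is bounded by a $\delta$-Young inequality, the Oswald approximation estimate and the relation $\phi_{u,F}\phi_\beta \lesssim h^2$, which yields contributions of the form $s_\beta(\bfu_h,\bfu_h) + s_p(p_h,p_h)$ after observing that the relevant facet jumps equal $\jump{\bfbeta_h^0\cdot\nabla\bfu_h}$ and $\jump{\nabla p_h}$. Term $\mathrm{III}$ is handled directly by Cauchy--Schwarz and \eqref{eq:norm_rsb_beta_diff_const}, again contributing the $\omega_h$ remainder. Combining these and absorbing the $\delta$-terms on the right produces \eqref{eq:convect-pressure-stab}.

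For the stability bound \eqref{eq:convect-pressure-stab-vh_stab_bound_2}, I would use the local $L^2$-stability of $\mcO_h$ to get $\|\bfv_h\|_{\mcT_h}^2 \lesssim \|\bfw\|_{\mcT_h}^2 = \|\phi_\beta^{1/2}(\bfbeta_h^0\cdot\nabla\bfu_h + \nabla p_h)\|_{\mcT_h}^2\cdot\phi_\beta$, then invoke the ghost-penalty norm equivalence \eqref{eq:norm_equivalence_rsb_beta} to pass from $\mcT_h$ to $\Omega$. All other norms of $\bfv_h$ appearing on the left-hand side of \eqref{eq:convect-pressure-stab-vh_stab_bound_2} (namely $\tn\bfv_h\tn_h$, $\|\phi_\beta^{1/2}\bfbeta\cdot\nabla\bfv_h\|_\Omega$, $\|\phi_u^{1/2}\nabla\cdot\bfv_h\|_\Omega$ and the boundary term $\|h^{-1/2}\phi_u^{1/2}\bfv_h\cdot\bfn\|_\Gamma$) are obtained from this $L^2$-bound by combining the inverse estimate \eqref{eq:inverse-estimates} and the trace inequality \eqref{eq:trace-inequality}, using the scaling relations $\phi_\beta = h^2 \phi_u^{-1}$ and $\|\bfbeta\|_{0,\infty,T}^2 \phi_\beta \lesssim \phi_u$ to match all the powers.

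The main obstacle is the bookkeeping required to absorb the new boundary term $\|h^{-1/2}\phi_u^{1/2}\bfv_h\cdot\bfn\|_\Gamma$ appearing in \eqref{eq:convect-pressure-stab-vh_stab_bound_2}, since the Oswald interpolant $\bfv_h = \mcO_h(\bfw)$ does not vanish on $\Gamma$. This forces an application of the trace inequality \eqref{eq:trace-inequality} on cut elements, yielding a bound by $h^{-1}\|\phi_u^{1/2}\bfv_h\|_{\mcT_\Gamma}^2$, which in turn must be controlled uniformly in the cut position through the local stability of $\mcO_h$ and the ghost-penalty extension estimates; one then absorbs this boundary contribution into the right-hand side by requiring $\gamma^n$ small enough. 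Similarly, care is needed because the stabilization parameters $\phi_\beta,\phi_u,\phi_p$ are only piecewise constant, so the local equivalence \eqref{eq:local_equivalence_stab_param_patch} must be invoked whenever one redistributes them across facets or patches $\omega(T)$.
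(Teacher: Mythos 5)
Your construction and decomposition are essentially the same as in the proof the paper defers to (\citet{MassingSchottWall2016_CMAME_Arxiv_submit}, Lemma~6.4): an Oswald-type average of a $\phi_\beta$-weighted residual, a three-way split into main term, averaging error and $\bfbeta$-approximation error controlled via \eqref{eq:norm_rsb_beta_diff_const}, and the scaling relations $\phi_u h^{-2}\phi_\beta\lesssim 1$ and $\|\bfbeta\|^2_{0,\infty,T}h^{-2}\phi_\beta^2\lesssim 1$ together with inverse/trace inequalities for the stability bound \eqref{eq:convect-pressure-stab-vh_stab_bound_2}. The route is the right one; the problem is one concrete step.

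You set $\bfw:=\phi_\beta(\bfbeta_h^0\cdot\nabla\bfu_h+\nabla p_h)$ with the \emph{raw, piecewise constant} $\phi_\beta$ and then assert that the Oswald approximation estimate turns term $\mathrm{II}$ into $s_\beta+s_p$ because ``the relevant facet jumps equal $\jump{\bfbeta_h^0\cdot\nabla\bfu_h}$ and $\jump{\nabla p_h}$''. That assertion fails: the facet jump delivered by the Oswald estimate is $\jump{\phi_\beta(\bfbeta_h^0\cdot\nabla\bfu_h+\nabla p_h)}$, and since $\phi_\beta$ is discontinuous across facets this contains the term $\jump{\phi_\beta}\,(\bfbeta_h^0\cdot\nabla\bfu_h+\nabla p_h)^{-}$, which is not a jump of the residual at all. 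The local comparability \eqref{eq:local_equivalence_stab_param_patch} only gives $\phi_{\beta,T}\sim\phi_{\beta,T'}$ up to fixed multiplicative constants, so $|\jump{\phi_\beta}|$ is generically of the same order as $\phi_\beta$ itself; the resulting facet contribution, after the $\phi_\beta^{-1}h$ weighting, is of the order of $\|\phi_\beta^{\onehalf}(\bfbeta_h^0\cdot\nabla\bfu_h+\nabla p_h)\|_T^2$ — i.e.\ exactly the main term you are trying to keep on the left — and therefore cannot be absorbed with a small constant. The standard repair, and the one used in the cited proof, is to first smooth the weight: replace $\phi_\beta$ by a continuous, piecewise linear $\widetilde{\phi}_\beta=\mcO_h(\phi_\beta)$ (locally comparable to $\phi_\beta$), so that $\jump{\widetilde{\phi}_\beta\,\bfw_h}=\widetilde{\phi}_\beta\jump{\bfw_h}$ identically on each interior facet. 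A second, smaller omission in the same step: $s_\beta$ penalizes $\jump{\bfbeta\cdot\nabla\bfu_h}$, not $\jump{\bfbeta_h^0\cdot\nabla\bfu_h}$, so term $\mathrm{II}$ additionally produces a $\jump{(\bfbeta_h^0-\bfbeta_h)\cdot\nabla\bfu_h}$ correction that must be routed into the $\omega_h\bigl(\|\nu^{\onehalf}\nabla\bfu_h\|_{\Oast_h}^2+\|\sigma^{\onehalf}\bfu_h\|_{\Oast_h}^2\bigr)$ remainder via \eqref{eq:norm_rsb_beta_diff_const} rather than into $s_\beta$. With these two repairs your argument closes as described; also note that the boundary term $\|h^{-\onehalf}\phi_u^{\onehalf}\bfv_h\cdot\bfn\|_\Gamma^2$ is bounded directly by the trace inequality and $(\nu+\phi_u)h^{-2}\phi_\beta\lesssim 1$ — no absorption via a small $\gamma^n$ is needed there.
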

\begin{proof}
A detailed proof of this estimate has been presented in \citet{MassingSchottWall2016_CMAME_Arxiv_submit} (cf. Lemma 6.4). 
\end{proof}

Next, we recall two estimates from \cite{MassingSchottWall2016_CMAME_Arxiv_submit} which will be useful in deriving the final stability estimate in Theorem~\ref{thm:inf-sup_condition_total}.
\begin{lemma}
  \label{lem:Phi_p_bounds}
Let $\bfu_h,\bfv_h\in \mcV_h$, then the following estimates hold
  \begin{align}
    \tn \bfv_h \tn_h^2
+
(1+\omega_h)^{-1}\|\phi_\beta^{\onehalf} \bfbeta\cdot\nabla\bfv_h \|_{\Omega}^2
+
\|\phi_u^{\onehalf} \nabla \cdot \bfv_h\|_{\Omega}^2
    &\lesssim
    \bigl(\nu + \| \bfbeta \|_{0,\infty, \Omega}h + \sigma C_P^2
    \bigr)
    \bigl(
    \| \nabla \bfv_h \|_{\mcT_h}^2 + \| h^{-\onehalf} \bfv_h \|_{\Gamma}^2
    \bigr)
    \\
    &
    \lesssim
    \Phi^{-1} 
    \bigl(
    \| \nabla \bfv_h \|_{\mcT_h}^2 + \| h^{-\onehalf} \bfv_h \|_{\Gamma}^2
    \bigr),
    \label{eq:Phi_p_bounds_1}
  \end{align}
  \begin{align}
    |( \bfu_h, \bfbeta \cdot \nabla \bfv_h)_{\Omega}|
    &\lesssim
    \tn \bfu_h \tn_h
    \dfrac{\|\bfbeta\|_{0,\infty,\Omega} C_P}{\sqrt{\nu + \sigma C_P^2}}
    \| \nabla \bfv_h \|_{\Omega}
    \lesssim
    \tn \bfu_h \tn_h
    \Phi^{-\onehalf} 
    \| \nabla \bfv_h \|_{\Omega}.
    \label{eq:Phi_p_bounds_2}
  \end{align}
\end{lemma}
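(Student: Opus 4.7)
The plan is to establish the two estimates independently; each reduces to a careful bookkeeping of stabilization scalings together with the Poincar\'e inequality~\eqref{eq:Poincare-II}, the trace inequality~\eqref{eq:trace-inequality}, and the inverse estimates~\eqref{eq:inverse-estimates}. Throughout I use the standing mesh assumption $h\lesssim C_P$.

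For \eqref{eq:Phi_p_bounds_1}, I would show term by term that every contribution on the left-hand side is bounded by $(\text{weight})\cdot(\|\nabla\bfv_h\|_{\mcT_h}^2+\|h^{-\onehalf}\bfv_h\|_{\Gamma}^2)$ and then check that every weight is dominated by $\nu+\|\bfbeta\|_{0,\infty,\Omega}h+\sigma C_P^2$. The viscous bulk term is direct; the reactive bulk term $\|\sigma^{\onehalf}\bfv_h\|_\Omega^2$ is handled by Poincar\'e~\eqref{eq:Poincare-II} together with $\|\bfv_h\|_{\Gamma^{\varepsilon}}\le\|\bfv_h\|_{\Gamma}\lesssim h^{\onehalf}\|h^{-\onehalf}\bfv_h\|_{\Gamma}$, which under $h\lesssim 1$ keeps the multiplicative constant $\sigma C_P^2$. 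The viscous, pressure-regime and convective boundary weights in $\tn\bfv_h\tn^2$ are respectively of size $\nu$, $\phi_u$ and $\|\bfbeta\|_{0,\infty,\Omega}h$, using the elementary identity $\||\bfbeta\cdot\bfn|^{\onehalf}\bfv_h\|_\Gamma^2\lesssim\|\bfbeta\|_{0,\infty,\Omega}h\|h^{-\onehalf}\bfv_h\|_\Gamma^2$, and each one is $\lesssim\nu+\|\bfbeta\|_{0,\infty,\Omega}h+\sigma C_P^2$ because $\phi_u=\nu+\|\bfbeta\|_{0,\infty,\Omega}h+\sigma h^2$ and $\sigma h^2\lesssim\sigma C_P^2$. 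For the stabilization seminorms in $|\bfv_h|_h^2$, an inverse trace estimate followed by an inverse estimate on each facet converts every jump norm into a bulk $\|\nabla\bfv_h\|_{\mcT_h}^2$ contribution with prefactors of size $\sim\phi_u$, $\sim\phi_\beta\|\bfbeta\|_{0,\infty,\Omega}^2$, $\sim\sigma h^2$, or $\sim\nu$; each is controlled by the master scale thanks to the key identity $\phi_\beta\|\bfbeta\|_{0,\infty,\Omega}^2 = h^2\phi_u^{-1}\|\bfbeta\|_{0,\infty,\Omega}^2\le\|\bfbeta\|_{0,\infty,\Omega}h$. The same two lines dispatch $(1+\omega_h)^{-1}\|\phi_\beta^{\onehalf}\bfbeta\cdot\nabla\bfv_h\|_\Omega^2$ and $\|\phi_u^{\onehalf}\nabla\cdot\bfv_h\|_\Omega^2$. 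The second inequality in \eqref{eq:Phi_p_bounds_1} is then immediate from the explicit formula~\eqref{eq:phi_p_definition}, since $\|\bfbeta\|_{0,\infty,\Omega}h\lesssim\|\bfbeta\|_{0,\infty,\Omega}C_P$ and $\nu+\|\bfbeta\|_{0,\infty,\Omega}C_P+\sigma C_P^2\le\Phi^{-1}$.

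For \eqref{eq:Phi_p_bounds_2}, I would apply Cauchy--Schwarz to the bulk inner product to obtain $\|\bfbeta\|_{0,\infty,\Omega}\|\bfu_h\|_\Omega\|\nabla\bfv_h\|_\Omega$, and then bound $\|\bfu_h\|_\Omega$ by a case distinction on whether the viscous or reactive scale dominates. If $\sigma C_P^2\le\nu$, Poincar\'e~\eqref{eq:Poincare-II} together with $\|\nu^{\onehalf}\nabla\bfu_h\|_\Omega\le\tn\bfu_h\tn_h$ and the splitting $\|\bfu_h\|_{\Gamma^{\varepsilon}}^2\le\|\bfu_h\cdot\bfn\|_\Gamma^2+\|\bfu_h\boldsymbol{P}^t\|_{\Gamma^{\varepsilon}}^2$ (each of these boundary pieces being controlled by the $(\nu/(\gamma^n h))^{\onehalf}$- and $(\nu/(\varepsilon+\gamma^t h))^{\onehalf}$-weighted norms entering $\tn\bfu_h\tn_h$, modulo an $h\lesssim C_P$ factor) yields $\|\bfu_h\|_\Omega\lesssim C_P\nu^{-\onehalf}\tn\bfu_h\tn_h\lesssim C_P(\nu+\sigma C_P^2)^{-\onehalf}\tn\bfu_h\tn_h$. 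In the reverse regime $\sigma C_P^2>\nu$, the bound $\|\sigma^{\onehalf}\bfu_h\|_\Omega\le\tn\bfu_h\tn_h$ already gives $\|\bfu_h\|_\Omega\le\sigma^{-\onehalf}\tn\bfu_h\tn_h\lesssim C_P(\nu+\sigma C_P^2)^{-\onehalf}\tn\bfu_h\tn_h$. Unifying both cases yields the first inequality of~\eqref{eq:Phi_p_bounds_2}; the second follows directly from the formula~\eqref{eq:phi_p_definition}, since the last summand in $\Phi^{-1}$ is exactly the square of $\|\bfbeta\|_{0,\infty,\Omega}C_P/(\nu+\sigma C_P^2)^{\onehalf}$.

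The only non-routine aspect is the weight-chasing needed to collapse the many different scaling factors into the master scale $\nu+\|\bfbeta\|_{0,\infty,\Omega}h+\sigma C_P^2$ and then into $\Phi^{-1}$. The central combinatorial observation is that the CIP and ghost-penalty parameters are designed precisely so that $\phi_u$ and $\phi_\beta\|\bfbeta\|_{0,\infty,\Omega}^2$ are both dominated by this scale; once this is identified, both estimates reduce to a mechanical application of standard inverse and trace estimates, and no delicate cancellation is expected to arise.
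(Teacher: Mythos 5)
Your proposal follows essentially the same route as the paper's own argument (given in the source and deferred to Lemma~6.5 of \citet{MassingSchottWall2016_CMAME_Arxiv_submit}): for \eqref{eq:Phi_p_bounds_1} a term-by-term weight-chasing via the Poincar\'e inequality for the reactive bulk term and inverse/trace estimates for the stabilization and boundary terms, collapsing everything onto the scale $\nu+\|\bfbeta\|_{0,\infty,\Omega}h+\sigma C_P^2\lesssim\Phi^{-1}$; and for \eqref{eq:Phi_p_bounds_2} a Cauchy--Schwarz step followed by bounding $\|\bfu_h\|_{\Omega}$ by $\min\{\sigma^{-\onehalf},\nu^{-\onehalf}C_P\}\tn\bfu_h\tn_h$, which is exactly your case distinction. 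The argument is correct and no substantive difference from the paper's proof is present.
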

\begin{proof}
A detailed proof can be found in \citet{MassingSchottWall2016_CMAME_Arxiv_submit} (cf. Lemma 6.5).
\end{proof}

Finally, a stabilized inf-sup condition for the operator $b_h(p_h, \bfv_h)$ holds, provided that the pressure stabilization operator~$s_p$ is added,
which gives the desired $L^2$-pressure norm control.
\begin{lemma}
There is a constant $c_3 > 0$ such that
for each pressure $p_h \in \mcQ_h$ there exists a velocity field $\bfv_h \in \mcV_h$ satisfying
 \begin{align}
   b_h(p_h, \bfv_h) 
   &\gtrsim
   \Phi \| p_h \|_{\Omega}^2
   - c_3
s_p(p_h, p_h)
  \label{eq:pressure-stability}
 \end{align}
and the stability estimate
\begin{align}
  \label{eq:pressure-stability-vh_stab_bound}
    \tn \bfv_h \tn_h^2
+
(1+\omega_h)^{-1}\|\phi_\beta^{\onehalf} \bfbeta\cdot\nabla\bfv_h\|_{\Omega}^2
+
\|\phi_u^{\onehalf} \nabla \cdot \bfv_h\|_{\Omega}^2
    &\lesssim
    \Phi^{-1} (
    \| \nabla \bfv_h \|_{\mcT_h}^2
    +
    \| h^{-\onehalf} \bfv_h \|_{\Gamma}^2
    )
    \lesssim
    \Phi\| p_h \|_{\Omega}^2 + g_p(p_h,p_h),
 \end{align}
  whenever the stability parameters 
  $\gamma_\nu,\gamma_\sigma,\gamma_\beta,\gamma_u,\gamma_p$ are chosen
  large enough. 
  \label{lem:pressure-stability}
\end{lemma}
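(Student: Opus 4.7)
The plan is to reduce the discrete inf-sup problem to the continuous one on the enlarged domain $\Omega_h^\ast$ by means of a Bogovski(\u\i)--type lifting, then to interpolate via a Cl\'ement operator and control the resulting consistency residual by the pressure ghost-penalty $s_p$ (together with $g_p$ through the norm equivalence~\eqref{eq:norm_equivalence_rsb_pressure}). Concretely, let $\bfv^\ast\in [H_0^1(\Omega_h^\ast)]^d$ be a solution of the continuous problem $-\nabla\cdot\bfv^\ast = p_h-\bar p_h^\ast$ on~$\Omega_h^\ast$, where $\bar p_h^\ast$ denotes the mean of $p_h$ on~$\Omega_h^\ast$, with the standard bound $\|\bfv^\ast\|_{1,\Omega_h^\ast}\lesssim \|p_h-\bar p_h^\ast\|_{0,\Omega_h^\ast}$. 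I would then define the test function $\bfv_h := \boldsymbol{\pi}_h^\ast(\Phi\,\bfv^\ast)\in\mcV_h$, so that the ``large'' scale $\Phi$ is built in from the start in order to match the desired lower bound $\Phi\|p_h\|_\Omega^2$.

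Next I would insert $\bfw := \Phi\bfv^\ast$ and its interpolant into $b_h(p_h,\bfv_h)$. Writing $b_h(p_h,\bfv_h)=-(p_h,\nabla\cdot\bfw)_\Omega+(p_h,\nabla\cdot(\bfw-\bfv_h))_\Omega+\langle p_h,\bfv_h\cdot\bfn\rangle_\Gamma$, the first term produces the leading contribution $\Phi\|p_h\|_\Omega^2$ after exploiting that $p_h\in L_0^2(\Omega)$ to kill the mean $\bar p_h^\ast$ (the needed splitting of the constant pressure mode between $\Omega$ and $\Omega_h^\ast$ is handled by the ghost-penalty equivalence~\eqref{eq:norm_equivalence_rsb_pressure} and is analogous to the corresponding step in \cite{MassingSchottWall2016_CMAME_Arxiv_submit}). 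The consistency remainder $(p_h,\nabla\cdot(\bfw-\bfv_h))_\Omega$ is then treated elementwise: integrating by parts on each $T\cap\Omega$ and summing turns the volume residual into jump contributions across interior facets, for which the Cl\'ement error estimate~\eqref{eq:interpest0-ast}, the trace inverse estimate \eqref{eq:inverse-estimates}, and the scaling identity $\phi_{p}\sim h^2\phi_u^{-1}\gtrsim h^2\Phi$ give
\begin{equation*}
|(p_h,\nabla\cdot(\bfw-\bfv_h))_\Omega| \lesssim s_p(p_h,p_h)^{1/2}\,\Phi^{1/2}\,\|p_h\|_{\Omega_h^\ast}.
\end{equation*}
The cut-boundary term $\langle p_h,\bfv_h\cdot\bfn\rangle_\Gamma$ is estimated by the trace inequality~\eqref{eq:trace-inequality} applied to $\bfv_h$ together with the Cl\'ement stability $\|\bfv_h\|_{1,\mcT_h}\lesssim \Phi\,\|p_h\|_{\Omega_h^\ast}$, producing a contribution of the same form. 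A $\delta$-Young inequality then absorbs the $\Phi\|p_h\|_\Omega^2$ part on the right-hand side and yields \eqref{eq:pressure-stability} with some constant $c_3>0$.

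The stability estimate~\eqref{eq:pressure-stability-vh_stab_bound} follows from Lemma~\ref{lem:Phi_p_bounds}: by the scaled stability of the Cl\'ement interpolant one has $\|\nabla\bfv_h\|_{\mcT_h}^2 + \|h^{-1/2}\bfv_h\|_\Gamma^2 \lesssim \Phi^2\|p_h\|_{\Omega_h^\ast}^2$, so \eqref{eq:Phi_p_bounds_1} gives
\begin{equation*}
\tn\bfv_h\tn_h^2 +(1+\omega_h)^{-1}\|\phi_\beta^{\onehalf}\bfbeta\!\cdot\!\nabla\bfv_h\|_\Omega^2 +\|\phi_u^{\onehalf}\nabla\!\cdot\bfv_h\|_\Omega^2 \lesssim \Phi^{-1}\Phi^2\|p_h\|_{\Omega_h^\ast}^2 = \Phi\|p_h\|_{\Omega_h^\ast}^2,
\end{equation*}
and the ghost-penalty equivalence~\eqref{eq:norm_equivalence_rsb_pressure} replaces $\|p_h\|_{\Omega_h^\ast}^2$ by $\|p_h\|_\Omega^2+g_p(p_h,p_h)$ as claimed.

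The main difficulty is the interplay between the cut-boundary residual $\langle p_h,\bfv_h\cdot\bfn\rangle_\Gamma$ and the Bogovski lift: since $\bfv^\ast$ has vanishing trace only on $\partial\Omega_h^\ast$ and not on the interior boundary $\Gamma$, the standard element-wise integration-by-parts argument must be augmented with a careful weighted trace estimate on $\Gamma$. The proper weighting uses $\Phi\lesssim\phi_u^{-1}$ and the identity $\phi_p\sim h^2\phi_u^{-1}$, which together ensure that the boundary-flux contribution is controllable by $s_p(p_h,p_h)$ after a $\delta$-Young absorption; getting the constants correct so that the resulting $c_3$ depends only on the stabilization parameters and not on~$\Phi$ or the position of $\Gamma$ within $\mcT_h$ is the delicate step.
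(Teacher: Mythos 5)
Your proposal has two genuine gaps, both concentrated in the treatment of the consistency residual, and the first one is fatal to the argument as written.

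First, the claim that the volume remainder $(p_h,\nabla\cdot(\bfw-\bfv_h))_\Omega$ can be turned into ``jump contributions across interior facets'' by elementwise integration by parts is incorrect. Integration by parts on each element gives $-(\nabla p_h,\bfw-\bfv_h)_T$ plus facet terms; since $p_h$, $\bfv_h$ are continuous and $\bfw\in [H^1]^d$, the facet terms cancel and what survives is exactly $-(\nabla p_h,\bfw-\bfv_h)_\Omega$, a pairing of the \emph{bulk} pressure gradient with an interpolation error of size $h\,\Phi\|p_h\|$. This produces a term of the form $\delta^{-1}\|\Phi^{\onehalf}h\nabla p_h\|_\Omega^2 + \delta\,\Phi\|p_h\|_\Omega^2$, and $\|\Phi^{\onehalf}h\nabla p_h\|_\Omega^2$ is \emph{not} bounded by $s_p(p_h,p_h)$: the CIP operator \eqref{eq:cip-s_p} only penalizes the facet jumps $\jump{\bfn_F\cdot\nabla p_h}$, not the full gradient. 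This is precisely why the paper's proof needs a second ingredient: a separate test function $\bfv_h^2:=\Phi h^2\,\mcO_h(\nabla p_h)$ built from an Oswald (quasi-interpolation) average of $\nabla p_h$, for which $b_h(p_h,\bfv_h^2)\gtrsim \|\Phi^{\onehalf}h\nabla p_h\|_\Omega^2 - \delta^{-1}s_p(p_h,p_h)$ because only the \emph{difference} $\mcO_h(\nabla p_h)-\nabla p_h$ is controlled by the jumps. The final test function is a suitable combination $\bfv_h=\bfv_h^1+2\delta^{-1}\bfv_h^2$. Without this second step your lower bound cannot close, and no choice of weights involving $\phi_p\sim h^2\phi_u^{-1}$ repairs it.

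Second, your choice to pose the Bogovski\u{\i} problem on the enlarged domain $\Oast_h$ with $\bfv^\ast\in[H_0^1(\Oast_h)]^d$ creates a boundary term you cannot absorb. Since $\bfv^\ast$ vanishes only on $\partial\Oast_h$ and not on $\Gamma$, the Nitsche term $\langle p_h,\bfv_h\cdot\bfn\rangle_\Gamma$ involves a test function of full (scaled) size on $\Gamma$; a trace estimate gives $|\langle p_h,\bfv_h\cdot\bfn\rangle_\Gamma|\lesssim \Phi\|p_h\|_{\mcT_h}^2$, i.e.\ a term of exactly the same order as the leading term $\Phi\|p_h\|_\Omega^2$, with no small factor available for a Young absorption. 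The paper avoids this by solving $\nabla\cdot\bfv_p=-\Phi p_h$ on the \emph{physical} domain with $\bfv_p\in[H^1_0(\Omega)]^d$ (no mean correction is needed since $p_h\in L_0^2(\Omega)$), so that on $\Gamma$ the interpolant satisfies $\bfv_h^1=\bfpiast\bfv_p-\bfv_p$, which is an interpolation error gaining the power of $h$ needed to pair against $\|\Phi^{\onehalf}h^{\onehalf}p_h\|_\Gamma$ and be absorbed. Your second part, the stability bound \eqref{eq:pressure-stability-vh_stab_bound} via Lemma~\ref{lem:Phi_p_bounds} and the ghost-penalty norm equivalence, is in the right spirit, but it too must be carried out for both test-function components, including the Oswald-based one.
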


\begin{proof}
A detailed proof of this modified inf-sup condition has been presented in \citet{MassingSchottWall2016_CMAME_Arxiv_submit} (cf. Lemma 6.6).
\end{proof}

Collecting the previous lemmas, the final inf-sup stability estimate of our stabilized cut finite element method
$A_h+S_h+G_h$, see \eqref{eq:oseen-discrete-unfitted}, can be stated with respect to the energy norm $\tn U_h \tn_h^2$.
The subsequent theorem ensures existence and uniqueness of a discrete velocity and pressure solution.
\begin{theorem}
\label{thm:inf-sup_condition_total}
 Let $U_h=(\bfu_h,p_h)\in \mcW_h$.
Then, under the assumptions of Lemma~\ref{lem:coercivity_ah} on the Nitsche penalty parameter~$\gamma^t$ depending on $\zeta_u\in\{-1,1\}$,
on $\gamma^n$ being sufficiently small as stated in Lemmas~\ref{lem:coercivity_ah} and~\ref{lem:convect-pressure-stab} and that CIP and ghost penalty stability parameters
$\gamma_\nu,\gamma_\sigma,\gamma_\beta,\gamma_u,\gamma_p$ are chosen large enough (see Lemmas~\ref{lem:coercivity_ah}--\ref{lem:pressure-stability}),
the cut finite element method \eqref{eq:oseen-discrete-unfitted} is inf-sup stable 
 \begin{equation}
\label{eq:inf-sup_condition_Ah_weak_norm}
\tn U_h \tn_h
\lesssim
\sup_{V_h\in \mcW_h \setminus \{0\}}
\dfrac{
A_h(U_h,V_h) + S_h(U_h,V_h)
+ G_h(U_h,V_h)
}
{\tn V_h \tn_h }.
\end{equation}
Note that the hidden stability constant is independent of the mesh size $h$, the slip-length coefficient~$\varepsilon\in [o,\infty]$
and in case of unfitted meshes independent of the position of the boundary relative to the background mesh.
\end{theorem}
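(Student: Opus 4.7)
The plan is to follow the standard strategy for proving inf-sup stability of stabilized saddle-point formulations: construct a composite test function $V_h^\ast = U_h + \alpha_1 V_h^{\mathrm{d}} + \alpha_2 V_h^{\mathrm{cp}} + \alpha_3 V_h^{\mathrm{p}}$, and show that the bilinear form evaluated at $(U_h, V_h^\ast)$ dominates $\tn U_h \tn_h^2$, while simultaneously $\tn V_h^\ast \tn_h \lesssim \tn U_h \tn_h$.

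First I would record that choosing $V_h = U_h$ yields, by the coercivity estimate of Lemma~\ref{lem:coercivity_ah}, the control
\begin{equation*}
  | U_h |_h^2 \lesssim A_h(U_h, U_h) + S_h(U_h, U_h) + G_h(U_h, U_h),
\end{equation*}
which already accounts for the first component of $\tn U_h \tn_h^2$, namely the energy semi-norm contributions $\tn \bfu_h \tn_h^2 + |p_h|_h^2$. What is missing compared with the definition~\eqref{eq:oseen-norm-up} are the three terms $\| \phi_u^{1/2} \nabla \cdot \bfu_h \|_{\Omega}^2$, $(1+\omega_h)^{-1} \| \phi_\beta^{1/2} (\bfbeta\cdot\nabla \bfu_h + \nabla p_h) \|_\Omega^2$ and $\Phi \| p_h \|_\Omega^2$, each of which is produced by one of the three special test functions constructed in Lemmas~\ref{lem:divergence-stability}, \ref{lem:convect-pressure-stab} and \ref{lem:pressure-stability}. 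Accordingly, I take $V_h^{\mathrm{d}} := (\bfzero, q_h)$ with $q_h$ from Lemma~\ref{lem:divergence-stability}, $V_h^{\mathrm{cp}} := (\bfv_h^{\mathrm{cp}}, 0)$ with $\bfv_h^{\mathrm{cp}}$ from Lemma~\ref{lem:convect-pressure-stab}, and $V_h^{\mathrm{p}} := (\bfv_h^{\mathrm{p}}, 0)$ with $\bfv_h^{\mathrm{p}}$ from Lemma~\ref{lem:pressure-stability}.

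Next, I would expand $A_h(U_h, \alpha_1 V_h^{\mathrm{d}}) = -\alpha_1 b_h(q_h, \bfu_h)$ and apply Lemma~\ref{lem:divergence-stability} directly to obtain $\alpha_1 \| \phi_u^{1/2} \nabla\cdot\bfu_h \|_\Omega^2$ minus a constant times $|U_h|_h^2$; for $A_h(U_h, \alpha_2 V_h^{\mathrm{cp}})$ the leading contribution is $\alpha_2 (\bfbeta\cdot\nabla\bfu_h + \nabla p_h, \bfv_h^{\mathrm{cp}})_\Omega$, estimated by Lemma~\ref{lem:convect-pressure-stab}, while the remaining symmetric and boundary terms $a_h(\bfu_h, \bfv_h^{\mathrm{cp}}) - (\bfbeta\cdot\nabla\bfu_h, \bfv_h^{\mathrm{cp}})_\Omega$ together with $b_h(p_h, \bfv_h^{\mathrm{cp}})$ are bounded via the continuity estimates~\eqref{eq:continuity_ah_3} and~\eqref{eq:continuity_bh_3} of Lemma~\ref{lem:continuity} and the stability bound~\eqref{eq:convect-pressure-stab-vh_stab_bound_2}; analogously $A_h(U_h, \alpha_3 V_h^{\mathrm{p}})$ yields $\alpha_3 b_h(p_h, \bfv_h^{\mathrm{p}})$ together with controllable remainders via Lemma~\ref{lem:pressure-stability}. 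Since each of $V_h^{\mathrm{d}}, V_h^{\mathrm{cp}}, V_h^{\mathrm{p}}$ consists of discrete finite element functions that vanish in one component, their contributions to $S_h$ and $G_h$ involve only either the pressure jumps (for $V_h^{\mathrm{d}}$) or the velocity-related stabilizations (for $V_h^{\mathrm{cp}}, V_h^{\mathrm{p}}$), all of which are controlled by the stability bounds accompanying the three lemmas.

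The main obstacle is the bookkeeping of cross terms: each inserted test function produces, besides the target semi-norm, spurious contributions that must be absorbed into the good terms already produced by coercivity. The resolution is standard: after using $\delta$-scaled Young inequalities on every cross term, I would choose the small multipliers in the hierarchy $\alpha_3 \ll \alpha_2 \ll \alpha_1 \ll 1$, so that the bad contributions produced by $V_h^{\mathrm{p}}$ are absorbed by the positive $\alpha_2$-terms controlling the convection-pressure norm, the bad contributions of $V_h^{\mathrm{cp}}$ are absorbed by the positive $\alpha_1$-terms controlling the divergence, and the bad contributions of $V_h^{\mathrm{d}}$ are absorbed by the $|U_h|_h^2$-control given by coercivity. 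A particular delicacy stems from the factor $(1+\omega_h)$ appearing in Lemma~\ref{lem:convect-pressure-stab}, which dictates that the convection-pressure norm is controlled only after division by $(1+\omega_h)$, precisely matching the definition of $\tn U_h \tn_h^2$. Finally, combining the stability bounds~\eqref{eq:divergence-stability-qh_stab_bound}, \eqref{eq:convect-pressure-stab-vh_stab_bound_2} and~\eqref{eq:pressure-stability-vh_stab_bound} yields $\tn V_h^\ast \tn_h \lesssim \tn U_h \tn_h$, which upon division delivers the asserted inf-sup estimate~\eqref{eq:inf-sup_condition_Ah_weak_norm}. All hidden constants depend only on the shape-regularity, the polynomial order, and the stabilization parameters, but neither on $h$, nor on $\varepsilon \in [0,\infty]$, nor on the position of $\Gamma$ relative to $\mcT_h$, thanks to the ghost-penalty-based norm equivalences of Proposition~\ref{prop:ghost-penalty-norm-equivalence}.
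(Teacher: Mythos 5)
Your proposal is correct and follows essentially the same route as the paper: test with $U_h$ for coercivity, add the three auxiliary test functions from Lemmas~\ref{lem:divergence-stability}, \ref{lem:convect-pressure-stab} and \ref{lem:pressure-stability} (with the $(1+\omega_h)^{-1}$ weight on the convection--pressure one), absorb the defective terms, and verify $\tn V_h^\ast \tn_h \lesssim \tn U_h \tn_h$ from the accompanying stability bounds. The only cosmetic difference is that the paper needs no hierarchy $\alpha_3 \ll \alpha_2 \ll \alpha_1$: since every spurious contribution from the three auxiliary test functions is already of the form $C(\delta)(1+\omega_h)\,|U_h|_h^2$, a single sufficiently small multiplier $\eta$ absorbed against the coercivity term suffices.
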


\begin{proof}
Since the proof follows the procedure proposed by \citet{MassingSchottWall2016_CMAME_Arxiv_submit}, the major steps are only sketched in the following.
For a given $U_h\in\mcW_h$ we construct a test function $V_h\in\mcW_h$
based on estimates derived in Lemmas~\ref{lem:coercivity_ah}, \ref{lem:divergence-stability}, \ref{lem:convect-pressure-stab}
and~\ref{lem:pressure-stability}.
\\
\noindent{\bf Step 1.}
Choosing the test function $V_h^1 := (\bfzero,q_h^1)$ with $q_h^1$ from Lemma~\ref{lem:divergence-stability}
to recover divergence control and utilizing its stability bounds \eqref{eq:divergence-stability} and \eqref{eq:divergence-stability-qh_stab_bound} we have
\begin{align}
  (A_h
  + S_h
  + G_h)(U_h,V_h^1)
  &= -b_h(q_h^1, \bfu_h) + s_p(p_h,q_h^1) + g_p(p_h, q_h^1)
  \\
  &\gtrsim
  \| \phi_u^{\onehalf} \nabla \cdot \bfu_h \|_{\Omega}^2
   - 
   c_1 \bigl(
  |\bfu_h|_h^2
   + \| h^{-\onehalf}\phi_u^{\onehalf} \bfu_h\cdot \bfn \|_{\GammaGNBC}^2
   \bigr)
  -\delta^{-1}
  |p_h|_h^2
  -\delta
  \|\phi_u^{\onehalf}\nabla \cdot \bfu_h\|_{\mcT_h}^2
\\
  &\gtrsim
  (1-\delta)\| \phi_u^{\onehalf} \nabla \cdot \bfu_h \|_{\Omega}^2
   - C_1(\delta) |U_h|_h^2
\end{align}
for any positive $\gamma^n \leqslant C < \infty$. This shows the need for the boundary-normal Nitsche penalty term at $\GammaGNBC$
for convective and reactive dominant flows as well as the need for the pressure and divergence CIP and ghost penalty terms.

\noindent{\bf Step 2.}
Inserting $V_h^2 := (\bfv_h^2,0)$ with $\bfv_h^2$, from Lemma~\ref{lem:convect-pressure-stab} into the formulation $A_h + S_h + G_h$,
followed by applying the continuity estimate \eqref{eq:continuity_ah_3} for $a_h$
and the stability bounds \eqref{eq:convect-pressure-stab} and \eqref{eq:convect-pressure-stab-vh_stab_bound_2}
yields the following
\begin{align}
(A_h+S_h+G_h)(U_h, V_h^2)
&= a_h(\bfu_h, \bfv_h^2) + b_h(p_h, \bfv_h^2) + (S_h+G_h)(U_h, V_h^2)
\\
&\gtrsim 
- \delta^{-1}\tn \bfu_h \tn_h^2
- \delta \tn \bfv_h^2 \tn_h^2
+ \| \phi_{\beta}^{\onehalf}(\bfbeta \cdot \nabla \bfu_h + \nabla p_h)\|_{\Omega}^2
   - c_2(1 + \omega_h) | U_h |_h^2
\\
&\gtrsim 
(1-\delta)\| \phi_{\beta}^{\onehalf}(\bfbeta \cdot \nabla \bfu_h + \nabla p_h)\|_{\Omega}^2
  - C_2(\delta)(1 + \omega_h) | U_h |_h^2.
\label{eq:A_h-step-2}
\end{align}

\noindent{\bf Step 3.}
The $L^2$-pressure norm term can be constructed by testing with
$V_h^3 := (\bfv_h^3,0)$, where $\bfv_h^3$ is now chosen as in
Lemma~\ref{lem:pressure-stability}.
Utilizing the continuity estimate \eqref{eq:continuity_ah_4} for $a_h$ and
making use of the estimates~\eqref{eq:Phi_p_bounds_1}, \eqref{eq:Phi_p_bounds_2}, and
the stability bound~\eqref{eq:pressure-stability-vh_stab_bound} for the chosen test function
allows us to deduce that
\begin{align}
(A_h+S_h+G_h)(U_h, V_h^3) 
&= a_h(\bfu_h, \bfv_h^3) + b_h(p_h, \bfv_h^3) + (S_h+G_h)(U_h, V_h^3)
\\
&\gtrsim 
- \tn \bfu_h \tn_h
\tn \bfv_h^3 \tn_h
- (\bfu_h, \bfbeta\cdot\nabla \bfv_h^3)_{\Omega}
+ 
  \Phi \| p_h \|_{\Omega}^2 - c_3 s_p(p_h, p_h)
  \\
  &\gtrsim
  - \delta^{-1}\tn \bfu_h \tn_h^2
  - \delta \Phi^{-1}(\|\nabla \bfv_h^3\|_{\mcT_h}^2 + \|h^{-\onehalf}
  \bfv_h^3\|_{\Gamma}^2)
  - \delta^{-1} \tn \bfu_h \tn_h^2 
  - \delta \Phi^{-1} \| \nabla \bfv_h^3 \|_{\Omega}^2
  \nonumber
  \\
  &\phantom{\gtrsim}\quad
  + 
  \Phi \| p_h \|_{\Omega}^2 - c_3 s_p(p_h, p_h)
  \\
  &\gtrsim
  (1-2\delta) \Phi \| p_h \|_{\Omega}^2
  -2 \delta^{-1} \tn \bfu_h \tn_h^2 - c_3s_p(p_h,p_h) - 2\delta
  g_p(p_h,p_h)
  \\
&\gtrsim 
  \Phi \| p_h \|_{\Omega}^2  - C_3(\delta) | U_h |_{h}^2.
\end{align}

\noindent{\bf Step 4.}
From the coercivity estimate in Lemma~\ref{lem:coercivity_ah}
we obtain the positive semi-norm term $|U_h|_h$ with $V_h^4 := U_h$
\begin{align}
  A_h(U_h, V_h^4) + S_h(U_h, V_h^4) + G_h(U_h, V_h^4) \gtrsim |U_h|_h^2.
\end{align}
\noindent{\bf Step 5.}
It remains to combine Step 1--4 by choosing $\delta$ sufficiently small and defining the final test function
\begin{equation}
  V_h^5 := \eta (V_h^1 + (1+\omega_h)^{-1}V_h^2 + V_h^3) + V_h^4
\end{equation}
for a given~$U_h$.
Choosing \mbox{$\eta>0$} sufficiently small for some \mbox{$2\eta~\sim (C_1(\delta)+C_2(\delta)+C_3(\delta))^{-1}$}
allows us to gain control over all desired norm parts in $\tn U_h \tn_h$ and at the same time to absorb
the defective \mbox{$|U_h|_{h}$}-contribution, which stem from testing with~\mbox{$V_h^1,V_h^2,V_h^3$}.
Consequently,
\begin{align}
(A_h+S_h+G_h)(U_h, V_h^5)
  &\gtrsim
  (1 - \eta(C_1(\delta) + C_2(\delta) + C_3(\delta)))| U_h |_{h}^2 \nonumber\\
  &\quad
  +
  \eta \left(\|\phi_u^{\onehalf}\nabla\cdot\bfu_h\|_{\Omega}^2
  +
  \dfrac{1}{1 + \omega_h} 
  \| \phi_{\beta}^{\onehalf}(\bfbeta \cdot \nabla \bfu_h + \nabla p_h)\|_{\Omega}^2
  + \Phi \|p_h\|_{\Omega}^2 \right)
\gtrsim \tn U_h \tn_h^2.
\end{align}
The inf-sup stability estimate can be concluded as
\begin{align}
(A_h+S_h+G_h)(U_h, V_h^5)
&\gtrsim \tn U_h \tn_h \tn V_h^5 \tn_h,
\end{align}
after dividing by \mbox{$\tn V_h^5 \tn_h$} and choosing the supremum over \mbox{$V_h\in\mcW_h\backslash \{0\}$}.
To conclude the proof, it remains to prove that \mbox{$\tn V_h^5 \tn_h \lesssim \tn U_h \tn_h$}.
Note that
\begin{align}
 \tn V_h^5 \tn_h \leqslant \tn U_h \tn_h + \eta \tn V_h^1 \tn_h + \frac{\eta}{1+\omega_h} \tn V_h^2 \tn_h + \eta \tn V_h^3 \tn_h.
\end{align}
Thanks to the stability estimate~\eqref{eq:convect-pressure-stab-vh_stab_bound_2} and
norm definitions~\eqref{eq:oseen-norm-up} and \eqref{eq:Ah-semi-norm} it holds that
\begin{align}
  \tn V_h^2 \tn_h^2
  &= \tn \bfv_h^2 \tn_h^2 + g_p(0,0) + s_p(0,0) + (1 + \omega_h)^{-1} \|\phi_{\beta}^{\onehalf}\bfbeta\cdot \nabla \bfv_h^2 \|_{\Omega}^2 + \|\phi_u^{\onehalf} \nabla \cdot \bfv_h^2 \|_{\Omega}^2\\
  &\lesssim (1 + (1 + \omega_h)^{-1}) (   \|\phi_{\beta}^{\onehalf}(\bfbeta\cdot \nabla \bfu_h + \nabla p_h) \|_{\Omega}^2 + (1+\omega_h) | U_h |_{h}^2   )\\
  &\lesssim (1 + \omega_h) \tn U_h \tn_h^2.
\end{align}
Similarly, the stability bound \eqref{eq:divergence-stability-qh_stab_bound}
for $q_h^1$ implies that \mbox{$\tn V_h^1 \tn_{h}^2 = \Phi\|q_h^1\|_{\Omega}^2 + |q_h^1|_{h}^2 \lesssim \tn U_h \tn_{h}^2$}
and from \eqref{eq:pressure-stability-vh_stab_bound} it is obtained that
\mbox{$\tn V_h^3 \tn_h^2 = \tn \bfv_h^3 \tn_h^2 + (1+\omega_h)^{-1}\|\phi_\beta^{\onehalf} \bfbeta\cdot\nabla\bfv_h^3\|_{\Omega}^2 + \|\phi_u^{\onehalf} \nabla \cdot \bfv_h^3\|_{\Omega}^2 \lesssim \tn U_h \tn_{h}^2$}.
As a result it holds 
\begin{equation}
  \tn V_h^5 \tn_h \lesssim (1+ \eta + \eta(1 + \omega_h)^{-\onehalf} + \eta) \tn U_h \tn_h \lesssim \tn U_h \tn_h,
\end{equation}
which concludes the proof.
\end{proof}

\section{A Priori Error Estimates}
\label{sec:apriori-analysis}
This section is devoted to the \apriori~error analysis of our cut finite element method.
We derive an energy-norm error estimate for the discrete velocity and pressure solution
in Theorem~\ref{thm:apriori-estimate}
and use a standard duality technique to establish optimal error convergence in the velocity $L^2$-norm
for the adjoint-consistent formulation with dominating viscous flow effects
in Theorem~\ref{thm:apriori-estimate-L2}.

\subsection{Consistency and Interpolation Error Estimates}
We start by showing that the discrete formulation~\eqref{eq:oseen-discrete-unfitted}
satisfies a weakened form of the
Galerkin orthogonality.
\begin{lemma}[Weakened Galerkin Orthogonality]
\label{lem:weak-orthogonality}
  Assume that the solution $U = (\bfu,p)$ of the
  formulation~\eqref{eq:oseen-problem-momentum}--\eqref{eq:oseen-problem-boundary-GNBC-tangential} is sufficiently regular, i.e. in
  $[H^2(\Omega)]^d \times H^1(\Omega)$,
  and let
  $U_h = (\bfu_h, p_h) \in \mcV_h \times \mcQ_h$ 
  be the cut finite element solution to the
  discrete weak formulation~\eqref{eq:oseen-discrete-unfitted}.
  Then, the error $U-U_h$ satisfies a weak Galerkin orthogonality property
  \begin{align}
    A_h(U - U_h, V_h) = S_h(U_h, V_h) + G_h(U_h, V_h) \quad \foralls V_h \in \mcV_h\times \mcQ_h .
    \label{eq:weak-orthogonality}
  \end{align}
\end{lemma}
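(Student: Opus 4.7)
The plan is the classical two-step consistency argument: first show that the exact solution~$U=(\bfu,p)$ satisfies $A_h(U,V_h)=L_h(V_h)$ for every $V_h\in\mcV_h\times\mcQ_h$, and then subtract the discrete equation~\eqref{eq:oseen-discrete-unfitted} from it. Since $U$ is assumed to lie in $[H^2(\Omega)]^d\times H^1(\Omega)$, the stabilization operators $S_h$ and $G_h$ vanish when tested with $U$: the CIP terms $s_\beta,s_u,s_p$ involve jumps of $\bfbeta\cdot\nabla\bfu$, $\nabla\cdot\bfu$, $\nabla p$ across interior facets, which are zero for $H^2\times H^1$ functions, and similarly all ghost-penalty terms \eqref{eq:ghost-penalty-sigma}--\eqref{eq:ghost-penalty-p} vanish for the same regularity reason (noting that the exact solution is understood via its extension to~$\Oast_h$ which remains $H^2\times H^1$). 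Thus once consistency is established for $A_h$, a direct subtraction yields \eqref{eq:weak-orthogonality}.

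First I would multiply the momentum equation~\eqref{eq:oseen-problem-momentum} by $\bfv_h$, the continuity equation~\eqref{eq:oseen-problem-compressible} by $-q_h$, and integrate by parts the viscous and pressure bulk terms over~$\Omega$. This produces the bulk form $a(\bfu,\bfv_h)+b(p,\bfv_h)-b(q_h,\bfu)$ together with the boundary contribution $-\langle 2\nu\bfepsilon(\bfu)\bfn - p\bfn,\bfv_h\rangle_{\Gamma} + \langle \bfu\cdot\bfn,q_h\rangle_{\Gamma}$. The latter exactly matches the standard Nitsche-consistency terms appearing in the definitions \eqref{eq:ah-form-def} and \eqref{eq:bh-form-def} of $a_h$ and $b_h$, so these boundary contributions are already accounted for.

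The core step is to show that the remaining Nitsche-type boundary terms in $a_h-a$ (lines \eqref{eq:ah-form-def_normal_adj_consistency_penalty}--\eqref{eq:ah-form-def_tangential_2}) evaluated at~$\bfu$ reproduce exactly the right-hand-side contributions in lines \eqref{eq:lh-form-def_normal_adj_consistency_penalty}--\eqref{eq:lh-form-def_tangential_2}. For the wall-normal direction this follows immediately from \eqref{eq:oseen-problem-boundary-GNBC-normal}: substituting $\bfu\cdot\bfn=\bfg\cdot\bfn$ into the adjoint-consistency term, the symmetric penalty term, the $\phi_u$-scaled penalty term, and the inflow term turns each one into its matching term in $L_h$. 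For the wall-tangential direction, a short computation using \eqref{eq:oseen-problem-boundary-GNBC-tangential} in the form $\nu(\bfu-\bfg)\boldsymbol{P}^t = -\varepsilon(2\nu\bfepsilon(\bfu)\bfn-\bfh)\boldsymbol{P}^t$ shows that
\[
\langle \tfrac{1}{\varepsilon+\gamma^t h}\bigl[\varepsilon(2\nu\bfepsilon(\bfu)\bfn)+\nu\bfu\bigr]\boldsymbol{P}^t,\bfv_h\rangle_{\Gamma} \;=\; \langle \tfrac{1}{\varepsilon+\gamma^t h}\bigl[\varepsilon\bfh+\nu\bfg\bigr]\boldsymbol{P}^t,\bfv_h\rangle_{\Gamma},
\]
and an analogous identity holds for the adjoint-consistency pair tested against $2\bfepsilon(\bfv_h)\bfn$, with the weights $\gamma^t h/(\varepsilon+\gamma^t h)$ chosen precisely to make this identity work uniformly in $\varepsilon\in[0,\infty]$. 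This is the point where the Juntunen--Stenberg weighting is essential, and it is where I expect the bookkeeping to be most delicate; the identity must be verified separately on $\Gamma_D$ (where $\varepsilon=0$) and on $\Gamma\setminus\Gamma_D$, using in the former case that $\bfu=\bfg$ so the tangential components agree trivially.

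Combining these identities shows $A_h(U,V_h)=L_h(V_h)$ for all $V_h\in\mcV_h\times\mcQ_h$. Subtracting \eqref{eq:oseen-discrete-unfitted} from this equality gives
\[
A_h(U-U_h,V_h) = S_h(U_h,V_h)+G_h(U_h,V_h),
\]
after using $S_h(U,V_h)=G_h(U,V_h)=0$ by the regularity argument above. This yields \eqref{eq:weak-orthogonality}.
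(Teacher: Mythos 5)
Your proposal is correct and follows essentially the same route as the paper: establish consistency $A_h(U,V_h)=L_h(V_h)$ by multiplying the strong form by test functions, integrating by parts, and using the boundary conditions \eqref{eq:oseen-problem-boundary-GNBC-normal}--\eqref{eq:oseen-problem-boundary-GNBC-tangential} to match the added Nitsche terms with those in $L_h$, then subtract the discrete equation \eqref{eq:oseen-discrete-unfitted}. Your only detour is the closing remark that $S_h(U,V_h)=G_h(U,V_h)=0$, which is not actually needed, since the subtraction never applies $S_h$ or $G_h$ to $U$; the identity $A_h(U-U_h,V_h)=S_h(U_h,V_h)+G_h(U_h,V_h)$ falls out directly.
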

\begin{proof}
  The proof follows standard techniques. Multiplying the problem formulation \eqref{eq:oseen-problem-momentum}--\eqref{eq:oseen-problem-compressible}
with test functions $V_h=(\bfv_h,q_h)\in \mcV_h\times \mcQ_h$, integrating over $\Omega$, performing integration by parts
and using the fact that all additional Nitsche-related terms vanish for $U$ satisfying the boundary condition,
yields
  \begin{alignat}{2}
   \textbf{I} &= A_h(U,V_h) - L_h(V_h) = 0 \quad &&\foralls V_h \in \mcV_h\times Q_h, \\
   \textbf{II} &= A_h(U_h,V_h) + S_h(U_h,V_h) + G_h(U_h,V_h) - L_h(V_h) = 0 \quad &&\foralls V_h \in \mcV_h\times \mcQ_h,
 \end{alignat}
where the last equation holds for the discrete solution $U_h\in\mcV_h\times\mcQ_h$. The combination $\textbf{I}-\textbf{II}$ yields the claim.
\end{proof}

The subsequent lemma recalls the well known weak consistency property of continuous interior penalty and ghost penalty stabilization operators $S_h,G_h$,
and as such they do not deteriorate the optimality of our cut finite element scheme.
\begin{lemma}[Weak Consistency]
\label{lem:weak-consistency}
For all functions $(\bfu, p) \in [H^r(\Omega)]^d \times H^s(\Omega)$ there holds
\begin{align}
  S_h(\Piast U, \Piast U)
  +
  G_h(\Piast U, \Piast U)
  &\lesssim
  ( \nu + \|\bfbeta\|_{0,\infty,\Omega}h +  \sigma h^{2})
   h^{2r_{u} -2}\| \bfu \|_{r_{u}, \Omega}^2
  \nonumber
  \\
  &\phantom{\lesssim}\quad
     +
   \max_{T \in \mcT_h}
   \left\{
   (\nu + \|\bfbeta \|_{0,\infty, T} h + \sigma h^2)^{-1}
   \right\}
   h^{2 s_p}
   \| p \|_{s_p, \Omega}^2,
\end{align}
where $k$ is the polynomial degree of the respective Cl\'ement interpolants $\Piast U = (\bfpiast \bfu,\piast p)$ for the velocity and pressure
and $r_{u} := \min\{r, k+1\}$ and $s_p := \min\{s, k+1\}$.
\end{lemma}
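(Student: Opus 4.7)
The plan is to split $S_h + G_h$ into its velocity contributions $s_\beta, s_u, g_\sigma, g_\nu, g_\beta, g_u$ and the two pressure contributions $s_p, g_p$, estimate each separately using the fictitious-domain interpolation estimate~\eqref{eq:interpest0-ast} applied to $\bfpiast \bfu$ and $\piast p$, and then collect the results into the two groups appearing on the right-hand side of the claim. Since $S_h$ and $G_h$ contain no boundary terms, the change from Dirichlet to generalized Navier boundary conditions plays no role, so the argument is a direct adaptation of Lemma~6.7 in \cite{MassingSchottWall2016_CMAME_Arxiv_submit}.

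The common template for a single facet term proceeds as follows. On an interior facet $F \in \mcF_i$, the jump $\jump{\nablan^{j} \bfpiast \bfu}$ equals $\jump{\nablan^{j} (\bfpiast \bfu - \bfu^{\ast})}$ because the extension $\bfu^{\ast} \in [H^{r}(\Oast)]^d$ is single-valued. The trace inequality~\eqref{eq:trace-inequality} applied on each of the two adjacent elements and then the interpolation estimate~\eqref{eq:interpest0-ast} with $s=j$ and $t=r_{u}$ yield
\begin{equation*}
  \|\jump{\nablan^{j} \bfpiast \bfu}\|_F^2 \lesssim h^{2r_{u}-2j-1} \|\bfu\|_{r_{u},\omega(F)}^2,
\end{equation*}
with an analogous bound for $\piast p$ involving $s_p$. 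Substituting into the definitions \eqref{eq:ghost-penalty-sigma}--\eqref{eq:ghost-penalty-p} and \eqref{eq:cip-s_beta}--\eqref{eq:cip-s_p}, then summing over facets using the finite-overlap shape-regularity of $\omega(F)$, yields $g_\sigma + g_\nu \lesssim (\nu + \sigma h^2)h^{2r_{u}-2}\|\bfu\|_{r_{u},\Omega}^2$, $s_u + g_u \lesssim \phi_u h^{2r_{u}-2}\|\bfu\|_{r_{u},\Omega}^2$, and $s_\beta + g_\beta \lesssim \phi_{\beta,T}\|\bfbeta\|_{0,\infty,T}^2 h^{2r_{u}-2}\|\bfu\|_{r_{u},\Omega}^2$. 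The scaling identity $\phi_{\beta,T} = h^2 \phi_{u,T}^{-1}$ together with $\phi_{u,T} \geqslant \|\bfbeta\|_{0,\infty,T} h$ turns the last factor into $\|\bfbeta\|_{0,\infty,T} h$, so all velocity pieces are majorized by $\phi_u h^{2r_{u}-2}\|\bfu\|_{r_{u},\Omega}^2$, which (after bounding $\phi_{u,T}$ by its global counterpart via the local equivalence~\eqref{eq:local_equivalence_stab_param_patch}) gives the first term of the stated estimate. For the pressure, the factor $\phi_{p,F} h^{2j-1}$ combined with the template bound $h^{2s_p - 2j - 1}$ collapses to $\phi_{p,F} h^{2s_p - 2} = h^{2s_p} \phi_{u,F}^{-1}$; since this factor now carries a sign-reversed dependence on the flow parameters, it must be pulled out of the sum before estimating, which is precisely where the $\max_{T\in\mcT_h} \phi_{u,T}^{-1}$ in the statement originates.

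The main obstacle is bookkeeping rather than analysis: tracking the element-wise versus global scaling of $\phi_u,\phi_\beta,\phi_p$, handling the full range $0 \leqslant j \leqslant k-1$ or $1 \leqslant j \leqslant k$ in the ghost-penalty sums, and verifying that the boundedness of the extension operator~\eqref{eq:extension-operator-boundedness} together with the local patches $\omega(F) \subset \mcT_h$ allows every local interpolation estimate to be summed to a global $\|\bfu\|_{r_{u},\Omega}$ or $\|p\|_{s_p,\Omega}$ bound independently of how $\Gamma$ intersects $\mcT_h$. All remaining steps are routine and do not affect the asymptotic rate.
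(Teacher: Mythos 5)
Your proposal is correct and follows essentially the same route as the paper, which itself only cites Lemma~7.2 of \citet{MassingSchottWall2016_CMAME_Arxiv_submit} for this result: subtract the single-valued extension inside each jump, apply the element trace inequality and the fictitious-domain Cl\'ement estimate \eqref{eq:interpest0-ast}, and track the scalings $\phi_{\beta}=h^2\phi_u^{-1}$ and $\phi_{u,T}\geqslant \|\bfbeta\|_{0,\infty,T}h$ exactly as you do. The one step you should make explicit rather than file under bookkeeping is the case $r_u = r < k+1$ with $j\geqslant r_u$ in the ghost-penalty sums, where your template fails as stated (the trace of $\nablan^{j}\bfu^{\ast}$ is then not single-valued and \eqref{eq:interpest0-ast} with $s=j+1>r_u$ is unavailable); there one first reduces $h^{j+\onehalf}\|\jump{\nablan^{j}\bfpiast\bfu}\|_F$ to order-$r_u$ bulk derivatives of $\bfpiast\bfu$ via the discrete inverse estimate \eqref{eq:inverse-estimates} and only then adds and subtracts $\bfu^{\ast}$.
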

\begin{proof}
A detailed proof of the weak consistency of the proposed continuous interior penalty and ghost penalty operators $S_h$ and $G_h$ has been given in \cite{MassingSchottWall2016_CMAME_Arxiv_submit} (see Lemma 7.2)
and the references therein. 
\end{proof}
The next lemma ensures that
the interpolation error between a continuous solution and its Cl\'ement interpolation
converges with optimal rates.

\begin{lemma}[Interpolation Estimates]
\label{lem:interpolation-estimate}
Assume that $(\bfu,p) \in [H^r(\Omega)]^d \times H^s(\Omega)$
and let $r_{u} := \min\{r, k+1\} \geqslant 2$, $s_p := \min\{s,k+1\}\geqslant 1$
where $k$ is the polynomial degree of the approximation spaces
for the velocity and pressure. Then
\begin{align}
\tn \bfu  - \bfpiast \bfu \tn_{\ast}
+ \|\phi_u^{\onehalf} \nabla\cdot(\bfu  - \bfpiast \bfu)\|_{\Omega}
&\lesssim
(\nu +  \|\bfbeta\|_{0,\infty,\Omega} h + \sigma h^2)^{\onehalf}
h^{r_u-1}\|\bfu \|_{r_u,\Omega},
\label{eq:interpolation-estimate-u}
\\
\tn p  - \piast p \tn_{\ast,\phi} 
&\lesssim
\max_{T\in\mcT_h}
\left\{
(\nu + \|\bfbeta\|_{0,\infty,T}h + \sigma h^2)^{-1}
\right\}^{\onehalf}
h^{s_p}\|p \|_{s_p, \Omega}
\label{eq:interpolation-estimate-p}
\end{align}
with $\phi\in\{\phi_u,\Phi^{-1}\}$ in the pressure estimate.
\end{lemma}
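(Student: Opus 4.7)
The plan is to bound the interpolation error term by term following the definitions of the augmented energy (semi-)norms $\tn\cdot\tn_{\ast}$ and $\tn\cdot\tn_{\ast,\phi}$ in \eqref{eq:oseen-norm-fluxes-u}--\eqref{eq:oseen-norm-fluxes-up}. The two workhorses will be the Clément-type interpolation estimate \eqref{eq:interpest0-ast} on the fictitious domain, which gives $\|\bfu^{\ast}-\bfpiast \bfu\|_{s,\mcT_h}\lesssim h^{r_u-s}\|\bfu\|_{r_u,\Omega}$ for $0\le s\le r_u$, and the element-wise trace inequality \eqref{eq:trace-inequality} applied to the residual $\bfu-\bfpiast\bfu$ on $\partial T$ and on $\Gamma\cap T$. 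No cut-sensitivity arises because \eqref{eq:trace-inequality} is uniform in the position of $\Gamma$ with respect to $\mcT_h$.

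For the velocity estimate, I would split $\tn\bfu-\bfpiast\bfu\tn_{\ast}^2$ into its bulk contributions ($\sigma$- and $\nu$-scaled $L^2$- and $H^1$-parts) and its boundary contributions. The bulk terms follow at once from \eqref{eq:interpest0-ast} and contribute factors $\sigma h^{2r_u}$ and $\nu h^{2r_u-2}$. Each boundary term of the form $\|\alpha^{\onehalf}(\bfu-\bfpiast\bfu)\|_{\Gamma}^2$ with $\alpha\in\{\nu/(\gamma^n h),\nu/(\varepsilon+\gamma^t h),\phi_u/(\gamma^n h),|\bfbeta\cdot\bfn|\}$ is first estimated by \eqref{eq:trace-inequality}, which yields an overall factor $\alpha(h^{-1}+h)$ applied to $h^{2r_u}\|\bfu\|_{r_u,\Omega}^2$; the normal-flux term $\|(\nu h)^{\onehalf}\nabla(\bfu-\bfpiast\bfu)\cdot\bfn\|_{\Gamma}^2$ is handled analogously. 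Using that $\alpha(h^{-1}+h)\lesssim\phi_{u,T}h^{-2}$ elementwise (since $\nu/(\varepsilon+\gamma^t h)\le\nu/(\gamma^t h)$ for all $\varepsilon\ge 0$ and $|\bfbeta\cdot\bfn|\le\|\bfbeta\|_{0,\infty,\Omega}$, and $\phi_u=\nu+\|\bfbeta\|_{0,\infty,T}h+\sigma h^2$ dominates each of these), every boundary contribution collapses into $\phi_u h^{2r_u-2}\|\bfu\|_{r_u,\Omega}^2$. Finally, the divergence term $\|\phi_u^{\onehalf}\nabla\cdot(\bfu-\bfpiast\bfu)\|_{\Omega}^2\le\|\phi_u^{\onehalf}\nabla(\bfu-\bfpiast\bfu)\|_{\Omega}^2\lesssim\phi_u h^{2r_u-2}\|\bfu\|_{r_u,\Omega}^2$ by \eqref{eq:interpest0-ast}. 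Summing and using $\phi_u\lesssim\nu+\|\bfbeta\|_{0,\infty,\Omega}h+\sigma h^2$ yields \eqref{eq:interpolation-estimate-u}.

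For the pressure estimate I proceed in parallel, treating the two admissible scalings $\phi\in\{\phi_u,\Phi^{-1}\}$ simultaneously via the elementwise bound $\phi^{-1}|_T\lesssim\phi_{u,T}^{-1}$; for $\phi=\phi_u$ this is trivial and for $\phi=\Phi^{-1}$ it follows from the remark $\Phi\lesssim\phi_u^{-1}$ already recorded below \eqref{eq:Ah-semi-norm}. The bulk part then gives
\[
\|\phi^{-\onehalf}(p-\piast p)\|_{\Omega}^2\lesssim\sum_{T\in\mcT_h}\phi_{u,T}^{-1}h^{2s_p}\|p\|_{s_p,\omega(T)}^2\lesssim\max_{T\in\mcT_h}\{\phi_{u,T}^{-1}\}\,h^{2s_p}\|p\|_{s_p,\Omega}^2,
\]
and the boundary contribution $\|\phi^{-\onehalf}h^{\onehalf}(p-\piast p)\|_{\Gamma}^2$ is treated by applying the trace inequality \eqref{eq:trace-inequality} and then \eqref{eq:interpest0-ast}, producing the same right-hand side.

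The proof is largely routine bookkeeping; the only non-obvious step is the uniform absorption of the $\varepsilon$-dependent tangential Nitsche weight into $\phi_u$. This is where one must exploit $1/(\varepsilon+\gamma^t h)\le 1/(\gamma^t h)$ to obtain an interpolation bound that is robust across the entire slip-length regime $\varepsilon\in[0,\infty]$, in line with the coercivity analysis in Lemma~\ref{lem:coercivity_ah}; everything else reduces to the standard combination of \eqref{eq:interpest0-ast} with the trace inequality \eqref{eq:trace-inequality} on possibly cut elements.
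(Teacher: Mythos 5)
Your proposal is correct and follows essentially the same route as the paper's own proof: bulk terms via the fictitious-domain Clément estimate \eqref{eq:interpest0-ast}, boundary terms via the cut-robust trace inequality \eqref{eq:trace-inequality}, absorption of the tangential Nitsche weight through $(\varepsilon+\gamma^t h)^{-1}\leqslant(\gamma^t h)^{-1}$, and the bound $\Phi\lesssim\phi_u^{-1}$ for the pressure scaling. No substantive differences.
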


\begin{proof}
  The proof follows the techniques proposed in \cite{MassingSchottWall2016_CMAME_Arxiv_submit} (see Lemma 7.4).
By applying the interpolation estimate~\eqref{eq:interpest0-ast}, the viscous, the reactive and the divergence bulk error measures can be estimated as
\begin{align}
\label{eq:interpolation-estimate-u-tmp}
 &\| \nu^{\onehalf} \nabla(\bfu^{\ast} - \bfpiast \bfu) \|_{\Omega}^2
+
\|\sigma^{\onehalf} (\bfu^{\ast} - \bfpiast \bfu) \|_{\Omega}^2
+
\| \phi_u^{\onehalf} \nabla\cdot(\bfu^{\ast} - \bfpiast \bfu) \|_{\Omega}^2
\lesssim
(\nu + \|\bfbeta\|_{0,\infty,\Omega}h + \sigma h^2) h^{2(r_u-1)}\|\bfu \|_{r_u,\Omega}^2
\intertext{and similarly for the pressure}
& \| \phi_u^{-\onehalf} (p^{\ast} - \piast p) \|_{\Omega}^2
\lesssim
\sum_{T\in \mcT_h}{
(\nu + \|\bfbeta\|_{0,\infty,T}h + \sigma h^2)^{-1}
h^{2s_p} 
 \| p^{\ast} \|_{s_p, \omega(T)}^2
}
\lesssim
\max_{T\in\mcT_h}
\left\{
(\nu + \|\bfbeta\|_{0,\infty,T}h + \sigma h^2)^{-1}
\right\}
h^{2s_p}\|p \|_{s_p, \Omega}^2
.
\end{align}
Considering the normal and tangential boundary semi-norms with $\gamma^t,\gamma^n \geqslant c >0$ and $\varepsilon \geqslant 0$,
the claim follows by combining the interpolation estimate~\eqref{eq:interpest0-ast},
the trace inequality~\eqref{eq:trace-inequality} and the definition of the scaling function $\phi_u$ such that
\begin{align}
\|((\nu + \phi_u)/(\gamma^n h))^{\onehalf} (\bfu^{\ast} - \bfpiast \bfu)\boldsymbol{P}^n \|_{\GammaGNBC}^2
&\lesssim
\|((\nu + \phi_u)/h)^{\onehalf} (\bfu^{\ast} - \bfpiast \bfu) \|_{\GammaGNBC}^2
\\
&\lesssim
(\nu +  \|\bfbeta\|_{0,\infty,\Omega}h + \sigma h^2)
\bigl(
h^{-2}\| \bfu^{\ast} - \bfpiast \bfu \|_{\mcT_h}^2
+\|\nabla(\bfu^{\ast} - \bfpiast \bfu)\|_{\mcT_h}^2
\bigr)
\\
&\lesssim
(\nu +  \|\bfbeta\|_{0,\infty,\Omega}h + \sigma h^2) h^{2(r_u-1)}\|\bfu \|_{r_u,\Omega}^2,
\\
\|(\nu /(\varepsilon + \gamma^t h))^{\onehalf} (\bfu^{\ast} - \bfpiast \bfu)\boldsymbol{P}^t \|_{\GammaGNBC}^2
&\lesssim
\|(\nu /h)^{\onehalf} (\bfu^{\ast} - \bfpiast \bfu) \|_{\GammaGNBC}^2
\\
&\lesssim
\nu
\bigl(
h^{-2}\| \bfu^{\ast} - \bfpiast \bfu \|_{\mcT_h}^2
+\|\nabla(\bfu^{\ast} - \bfpiast \bfu)\|_{\mcT_h}^2
\bigr)
\\
&\lesssim
\nu  h^{2(r_u-1)}\|\bfu \|_{r_u,\Omega}^2.
\end{align}
The remaining boundary terms can be estimated in a similar fashion,
\begin{align}
\| |\bfbeta\cdot\bfn|^{\onehalf} (\bfu^{\ast} - \bfpiast \bfu) \|_{\Gamma}^2
&\lesssim
\|\bfbeta\|_{0,\infty,\Omega}
(h^{-1}\| \bfu^{\ast} - \bfpiast \bfu \|_{\mcT_h}^2
+ h \|\nabla(\bfu^{\ast} - \bfpiast \bfu)\|_{\mcT_h}^2)
\lesssim
(\|\bfbeta\|_{0,\infty,\Omega}h) h^{2(r_u-1)}\|\bfu \|_{r_u,\Omega}^2,
\\
 \| (\nu h)^{\onehalf}\nabla(\bfu^{\ast}-\bfpiast\bfu)\cdot\bfn \|_{\Gamma}^2
&\lesssim
\nu \| \nabla(\bfu^{\ast}-\bfpiast\bfu) \|_{\mcT_h}^2
+
\nu h^2\| D^2(\bfu^{\ast}-\bfpiast\bfu) \|_{\mcT_h}^2
\lesssim
\nu h^{2(r_u-1)}\|\bfu\|_{r_u,\Omega}^2,
\\
\label{eq:interpolation-estimate-tmp_pressure_flux}
 \| \phi_u^{-\onehalf} h^{\onehalf}(p^{\ast}-\piast p)\|_{\Gamma}^2
&\lesssim
 \| \phi_u^{-\onehalf} (p^{\ast}-\piast p) \|_{\mcT_h}^2
+
 h^2\| \phi_u^{-\onehalf} \nabla(p^{\ast}-\piast p) \|_{\mcT_h}^2
\lesssim
\max_{T\in\mcT_h}\left\{\phi_u^{-1}\right\} h^{2 s_p} \|p\|_{s_p,\Omega}^2.
\end{align}
Collecting all estimates and noting that $\Phi \lesssim \phi_u^{-1}$ yields the claim.
\end{proof}

\subsection{A Priori Error Estimates}

Subsequently, the main \apriori~estimate for the velocity and pressure errors w.r.t a natural energy norm is stated.

\begin{theorem}[Energy norm error estimate]
  \label{thm:apriori-estimate}
Assume that the continuous solution of the Oseen problem~\eqref{eq:oseen-problem-momentum}--\eqref{eq:oseen-problem-boundary-GNBC-tangential}
resides in $U = (\bfu, p) \in [H^r(\Omega)]^d \times H^s(\Omega)$ and let
$U_h = (\bfu_h,p_h) \in \mcV_h \times \mcQ_h$ be the discrete solution
of problem~\eqref{eq:oseen-discrete-unfitted}.
Let the energy type norm be defined as in \eqref{eq:oseen-norm-fluxes-up}, then
\begin{align}
    \tn U -  U_h \tn_{\ast}
    &\lesssim
    (1+\omega_h)^{\onehalf}
    \bigl(
    \nu + \|\bfbeta \|_{0,\infty,\Omega} h + \sigma h^2
    \bigr)^{\onehalf}
    h^{r_{u} - 1}
    \| \bfu \|_{r_{u},\Omega}
      +
   \max_{T\in \mcT_h}
   \left\{(\nu + \|\bfbeta \|_{0,\infty, \Omega} h + \sigma h^2)^{-1}
   \right\}
   ^{\onehalf}
   h^{ s_p}
   \| p \|_{s_p, \Omega},
    \label{eq:apriori-estimate}
  \end{align} 
where $r_{u} := \min\{r, k+1\}\geqslant 2$ and $s_p := \min\{s, k+1\}\geqslant 1$.
Note that the hidden constants are independent of $h$, are bounded with respect to the slip length coefficient $\varepsilon\in[0,\infty]$
and, owing to the ghost penalty stabilization terms $G_h$, independent of how the boundary intersects the mesh $\mcT_h$.
The (hidden) scaling functions $\omega_h,\Phi$
are as defined in \eqref{eq:phi_p_definition}.
\end{theorem}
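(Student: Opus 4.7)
The plan is a standard Str\"ang-type argument combining error splitting, inf-sup stability and weak Galerkin orthogonality. I decompose the total error using the Cl\'ement interpolant introduced in Section~\ref{ssec:interpolation} by writing $U - U_h = \eta - \xi_h$ with interpolation error $\eta := U - \Pi_h^{\ast} U$ and discrete error $\xi_h := U_h - \Pi_h^{\ast} U \in \mcW_h$. By the triangle inequality
\begin{equation*}
\tn U - U_h \tn_{\ast} \leqslant \tn \eta \tn_{\ast} + \tn \xi_h \tn_{\ast},
\end{equation*}
and since $\xi_h \in \mcW_h$, the norm relation~\eqref{eq:oseen-norm-U-relation} reduces the discrete contribution to the stronger norm, $\tn \xi_h \tn_{\ast} \lesssim \tn \xi_h \tn_h$. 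The interpolation contribution $\tn \eta \tn_{\ast}$ is immediately controlled by the interpolation estimates in Lemma~\ref{lem:interpolation-estimate}, which already have the desired convergence order, so the heart of the proof is to bound $\tn \xi_h \tn_h$.

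For the latter, I would invoke the inf-sup stability from Theorem~\ref{thm:inf-sup_condition_total} to produce a test function $V_h \in \mcW_h \setminus \{0\}$ with $\tn V_h \tn_h \lesssim \tn \xi_h \tn_h$ and
\begin{equation*}
\tn \xi_h \tn_h^2 \lesssim (A_h + S_h + G_h)(\xi_h, V_h).
\end{equation*}
The weak Galerkin orthogonality from Lemma~\ref{lem:weak-orthogonality} applied to $\xi_h = (U_h - \Pi_h^{\ast}U)$ gives
\begin{equation*}
(A_h + S_h + G_h)(\xi_h, V_h) = -A_h(\eta, V_h) + S_h(\Pi_h^{\ast}U, V_h) + G_h(\Pi_h^{\ast}U, V_h).
\end{equation*}
The term $A_h(\eta, V_h)$ is split into its $a_h$ and $b_h$ contributions and bounded via the continuity estimates~\eqref{eq:continuity_ah_1} and~\eqref{eq:continuity_bh_3} by quantities of the form $\tn \eta \tn_{\ast}(\tn V_h \tn_h + \|\phi_u^{\onehalf}\nabla\cdot \bfv_h\|_\Omega)$. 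The stabilization contributions $S_h(\Pi_h^{\ast}U,V_h)$ and $G_h(\Pi_h^{\ast}U,V_h)$ are treated by a Cauchy-Schwarz inequality in the respective stabilization semi-norms, paired with the weak consistency bound from Lemma~\ref{lem:weak-consistency} applied to $\Pi_h^{\ast}U$ and with $|V_h|_h \leqslant \tn V_h \tn_h$.

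The main obstacle, as expected, will be the convective mixed term $(\bfbeta\cdot\nabla \bfu_{\eta}, \bfv_h)_\Omega$ hidden in $a_h(\eta, V_h)$: a naive Cauchy-Schwarz would yield a suboptimal scaling. To obtain the factor $(1+\omega_h)^{\onehalf}$ in the final estimate, I would rewrite this term by testing against $\phi_\beta^{-\onehalf}\phi_\beta^{\onehalf}$ and combine the resulting $\|\phi_\beta^{\onehalf}\bfbeta\cdot\nabla \eta_{\bfu}\|_\Omega$ factor (bounded through Lemma~\ref{lem:interpolation-estimate}) with the bound~\eqref{eq:Phi_p_bounds_1} for the dual norm $\|\phi_\beta^{-\onehalf} \bfv_h\|$, which produces precisely the $(1+\omega_h)^{\onehalf}$ scaling times $\tn V_h \tn_h$. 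Collecting all contributions, dividing by $\tn V_h \tn_h$, applying the triangle inequality and inserting the interpolation and weak consistency estimates from Lemmas~\ref{lem:interpolation-estimate} and~\ref{lem:weak-consistency} produces the stated bound. The remaining work is bookkeeping of the flow-regime dependent scalings $\nu + \|\bfbeta\|_{0,\infty,\Omega}h + \sigma h^2$ and $\phi_u^{-1}$; the dependence on $\varepsilon \in [0,\infty]$ is harmless because all involved boundary terms were already estimated uniformly in $\varepsilon$ through the definition of the weights $\nu/(\varepsilon + \gamma^t h)$ and $\varepsilon \gamma^t h/(\varepsilon + \gamma^t h)$.
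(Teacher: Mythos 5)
Your overall architecture --- splitting the error with the Cl\'ement interpolant, bounding the discrete part via the inf-sup condition of Theorem~\ref{thm:inf-sup_condition_total}, eliminating the exact solution through the weakened Galerkin orthogonality of Lemma~\ref{lem:weak-orthogonality}, and then invoking the continuity, weak consistency and interpolation lemmas --- is exactly the paper's. (A harmless sign slip: with your conventions the orthogonality gives $(A_h+S_h+G_h)(\xi_h,V_h)=A_h(\eta,V_h)-S_h(\Pi_h^{\ast}U,V_h)-G_h(\Pi_h^{\ast}U,V_h)$, not its negative; this does not affect the absolute-value bounds.)

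The genuine gap is in your treatment of the convective residual, which you correctly single out as the crux. You keep the streamline derivative on the interpolation error and propose to estimate $(\bfbeta\cdot\nabla\eta_{\bfu},\bfv_h)_{\Omega}\leqslant \|\phi_{\beta}^{\onehalf}\bfbeta\cdot\nabla\eta_{\bfu}\|_{\Omega}\,\|\phi_{\beta}^{-\onehalf}\bfv_h\|_{\Omega}$, claiming that \eqref{eq:Phi_p_bounds_1} controls the second factor. It does not: \eqref{eq:Phi_p_bounds_1} is an upper bound \emph{on} $\tn\bfv_h\tn_h$ in terms of $\Phi^{-1}(\|\nabla\bfv_h\|_{\mcT_h}^2+\|h^{-\onehalf}\bfv_h\|_{\Gamma}^2)$, not a bound on a weighted $L^2$-norm of $\bfv_h$ by $\tn V_h\tn_h$. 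Indeed $\|\phi_{\beta}^{-\onehalf}\bfv_h\|_{\Omega}^2\sim(\nu h^{-2}+\|\bfbeta\|_{0,\infty,\Omega}h^{-1}+\sigma)\|\bfv_h\|_{\Omega}^2$, and neither the $\nu h^{-2}$ nor the $\|\bfbeta\|_{0,\infty,\Omega}h^{-1}$ contribution is controlled by $\tn V_h\tn_h$ (the norm only carries $\sigma^{\onehalf}\|\bfv_h\|_{\Omega}$ and $\nu^{\onehalf}\|\nabla\bfv_h\|_{\Omega}$ in the bulk; a Poincar\'e inequality loses a factor $C_P/h$). The paper's fix is to put the weights and the derivative the other way around: use the adjoint continuity estimate \eqref{eq:continuity_ah_2} (equivalently, integrate the convective term by parts so that the derivative lands on $\bfv_h$), integrate $b_h(q_h,\eta_{\bfu})$ by parts as well, and estimate the combined residual $(\eta_{\bfu},\bfbeta\cdot\nabla\bfv_h+\nabla q_h)_{\Omega}$ by $(1+\omega_h)^{\onehalf}\|\phi_{\beta}^{-\onehalf}\eta_{\bfu}\|_{\Omega}\cdot(1+\omega_h)^{-\onehalf}\|\phi_{\beta}^{\onehalf}(\bfbeta\cdot\nabla\bfv_h+\nabla q_h)\|_{\Omega}$. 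The second factor is now precisely the augmented-norm contribution in \eqref{eq:oseen-norm-up}, and the first factor carries no derivative, so the $h^{-1}$ hidden in $\phi_{\beta}^{-\onehalf}$ is absorbed by the extra power of $h$ in the $L^2$ interpolation estimate \eqref{eq:interpest0}. Note also that only the sum $\bfbeta\cdot\nabla\bfv_h+\nabla q_h$ is controlled by $\tn V_h\tn_h$, so the pressure-gradient part arising from $-b_h(q_h,\eta_{\bfu})$ --- which your sketch does not mention --- must be handled jointly with the convective part rather than separately.
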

\begin{proof}
We first split the total discretization error into a discrete error and an interpolation part by applying \eqref{eq:oseen-norm-U-relation}
and the norm definitions in Section~\ref{ssec:discrete_form_and_norms}
\begin{align}
\tn U - U_h \tn_{\ast}
& \lesssim
\tn U - \Piast U \tn_{\ast}
+
\tn \Piast U - U_h \tn_{\ast}
\lesssim
\tn U - \Piast U \tn_{\ast}
+
\tn \Piast U - U_h \tn_{h}
.
  \label{eq:apriori_split_velocity}
\end{align}
As the term $ \tn  U - \Piast U \tn_{\ast}$ is readily estimated from the interpolation estimates~\eqref{eq:interpolation-estimate-u} and \eqref{eq:interpolation-estimate-p}, only the discrete error $\tn\Piast U-U_h \tn_h$ is considered from here on. 
From the inf-sup condition~\eqref{eq:inf-sup_condition_Ah_weak_norm} there exists a $\tn V_h \tn_h = 1$ such that
\begin{align}
  \tn \Piast U - U_h \tn_h
  &\lesssim
  A_h(\Piast U - U_h, V_h) 
  + S_h(\Piast U - U_h, V_h) 
  + G_h(\Piast U - U_h, V_h)
  \label{eq:discrete-error-est}
  \\
  &= 
  A_h(\Piast U - U, V_h) 
  + S_h(\Piast U, V_h) 
  + G_h(\Piast U, V_h),
  \label{eq:discrete-error-est-I}
\end{align}
where the last step follows from applying the weak Galerkin orthogonality~\eqref{eq:weak-orthogonality}.
By applying a Cauchy-Schwarz inequality on the stabilization terms $S_h(\Piast U, V_h)$ and $G_h(\Piast U, V_h)$ and utilizing the results from Lemma~\ref{lem:weak-consistency} the following estimate can be made
\begin{align}
\label{eq:consistency_error}
  S_h(\Piast U, V_h) + G_h(\Piast U, V_h)
  &\lesssim
  ( \nu + \|\bfbeta\|_{0,\infty,\Omega}h +  \sigma h^{2})^{\onehalf}
   h^{r_{u} -1}\| \bfu \|_{r_{u}, \Omega}
    +
   \max_{T \in \mcT_h}
   \left\{
   (\nu + \|\bfbeta \|_{0,\infty, T} h + \sigma h^2)^{-1}
   \right\}^{\onehalf}
   h^{ s_p}
   \| p \|_{s_p, \Omega}.
\end{align}
The term $A_h(\Piast U - U, V_h)$ can be estimated by integrating $b_h(q_h, \bfpiast \bfu - \bfu)$ by parts and
applying the continuity estimates \eqref{eq:continuity_ah_2} and \eqref{eq:continuity_bh_3} to
$a_h( \bfpiast \bfu - \bfu, \bfv_h)$ and $b_h(\piast p - p, \bfv_h)$ respectively, such that
\begin{align}
 &
  A_h(\Piast U - U, V_h) \nonumber\\
 &\qquad
=
 a_h( \bfpiast \bfu - \bfu, \bfv_h)
 + b_h(\piast p - p, \bfv_h)
 - b_h(q_h, \bfpiast \bfu - \bfu)
 \\
&\qquad
\lesssim
\tn  \bfpiast \bfu - \bfu \tn_{\ast}\tn \bfv_h \tn_{h} - (\bfpiast \bfu - \bfu, \bfbeta \cdot \nabla \bfv_h)_{\Omega}
 + \tn  \piast p - p \tn_{\ast,\phi_u} ( \tn \bfv_h \tn_{h} + \| \phi_u^{\onehalf} \nabla \cdot \bfv_h \|_{\Omega})
- (\bfpiast \bfu - \bfu, \nabla q_h)_{\Omega}
\\
&\qquad
\lesssim
(\tn  \bfpiast \bfu - \bfu \tn_{\ast} + \tn  \piast p - p \tn_{\ast,\phi_u} ) (\tn \bfv_h \tn_{h} + \| \phi_u^{\onehalf} \nabla \cdot \bfv_h \|_{\Omega})
- (\bfpiast \bfu - \bfu, \bfbeta \cdot \nabla \bfv_h + \nabla q_h)_{\Omega}.
\end{align}
Thanks to the interpolation estimate Lemma~\ref{lem:interpolation-estimate} and keeping in mind that $\tn \bfv_h \tn_{h} + \| \phi_u^{\onehalf} \nabla \cdot \bfv_h \|_{\Omega} \lesssim \tn V_h \tn_h = 1$,
it only remains to estimate the last term.
Its estimate follows readily from applying a Cauchy-Schwarz inequality, using \eqref{eq:interpest0} and the definition of~$\phi_\beta$ to arrive at
\begin{align}
  |(\bfpiast \bfu - \bfu, \bfbeta \cdot \nabla \bfv_h + \nabla q_h)| &\lesssim
(1+\omega_h)^{\onehalf} \| \phi^{-\onehalf}_{\beta}(\bfpiast \bfu - \bfu) \|_{\Omega}
\cdot
(1+\omega_h)^{-\onehalf} \| \phi_{\beta}^{\onehalf} (\bfbeta \cdot \nabla \bfv_h + \nabla q_h)\|_{\Omega}
\\
&\lesssim
(1+\omega_h)^{\onehalf} (\nu + \|\bfbeta \|_{0,\infty,\Omega}h + \sigma h^2)^{\onehalf}
 h^{r_u - 1}\| \bfu \|_{r_u,\Omega} 
\tn V_h \tn_h,
\end{align}
which concludes the proof of the \apriori~error estimate \eqref{eq:apriori-estimate}.
\end{proof}

\subsection{$L^2$-Optimal Estimate for Flows with Large Viscosity}


We now proceed to deduce an optimal $L^2$-error estimate for the velocity
of an adjoint-consistent ($\zeta_u=1$) formulation 
for flows dominated by viscous forces, i.e. from here on we assume
\begin{align}
\label{eq:assumption_low_RE}
  \nu \geqslant \|\bfbeta\|_{0,\infty,T} h_T + \sigma h_T^2 \quad \foralls T \in \mcT_h.
\end{align}
The proof follows the standard Aubin--Nitsche duality argument, see e.g. \cite{Aubin1967, Nitsche1968}, and requires the
established optimal energy-type error estimate from Theorem~\ref{thm:apriori-estimate}.
A similar estimate for the Oseen problem has been derived for boundary-fitted meshes by \citet{BurmanFernandezHansbo2006}.
%
We introduce the following dual problem to \eqref{eq:oseen-problem-momentum}--\eqref{eq:oseen-problem-boundary-GNBC-tangential}:
find adjoint velocity and pressure $(\bfw,r)$ such that
\begin{alignat}{2}
  \label{eq:oseen-problem-momentum-dual}
  \sigma \bfw +(-\bfbeta)\cdot\nabla\bfw - \nabla\cdot(2\nu\bfepsilon(\bfw)) + \nabla (-r) 
&= \tilde{\bff} \quad &&\text{ in } \Omega,
  \\
  \div\bfw &= 0 \quad &&\text{ in } \Omega,
  \label{eq:oseen-problem-compressible-dual}
  \\
  \bfw \boldsymbol{P}^n &= \bfzero
  \quad &&\text{ on } \GammaGNBC,
    \label{eq:oseen-problem-boundary-GNBC-normal-dual}\\
  \left( \varepsilon 2 \nu \bfepsilon(\bfw) \bfn + (\nu + \varepsilon \bfbeta \cdot \bfn) \bfw \right) \boldsymbol{P}^t &= \bfzero
  \quad &&\text{ on } \GammaGNBC,
  \label{eq:oseen-problem-boundary-GNBC-tangential-dual}
\end{alignat}
for which we assume additional elliptic regularity and that the solution belongs to $ (\bfw,r) \in [H^2(\Omega)]^d \times H^1(\Omega)$
so that it satisfies
\begin{align}
  \label{eq:estimate_dual_solution}
 \nu\|\bfw\|_{2,\Omega} + \|r\|_{1,\Omega} \leqslant C \|\tilde{\bff}\|_{0,\Omega}, 
\end{align}
provided that the boundaries are sufficiently smooth, see e.g. \cite{RaviartGirault1986, Roos2008, Quarteroni2009}.
Note that due to the homogeneous boundary conditions in \eqref{eq:oseen-problem-boundary-GNBC-normal-dual}--\eqref{eq:oseen-problem-boundary-GNBC-tangential-dual}
the estimate \eqref{eq:estimate_dual_solution} is independent of the boundary data.
However, the dimensionless constant $C$ depends on the
physical parameters $\nu,\sigma,\bfbeta$ and the domain $\Omega$.
By choosing $\tilde{\bff} = \bfu-\bfu_h\in L^2(\Omega)$ as the right hand side of the dual momentum equation,
the desired error quantity bounds the dual solution $(\bfw,r)$ in \eqref{eq:estimate_dual_solution}.
Note that the dual advective velocity is set as the negative advective velocity field of the primal problem
\eqref{eq:oseen-problem-momentum}--\eqref{eq:oseen-problem-boundary-GNBC-tangential}.
As a result, inflow and outflow parts of the boundary of primal and dual problems swap, respectively.
Similarly, the dual pressure solution $r$ changes the sign compared to the pressure solution $p$ of the primal problem.
For further explanations on the dual problem, see e.g. the textbook \cite{Quarteroni2009}.
Furthermore, it is assumed that $\bfbeta \cdot \bfn = 0$ on $\GammaGNBC\setminus \Gamma_{\mathrm{D}}$, which simplifies \eqref{eq:oseen-problem-boundary-GNBC-tangential-dual} in the following.


\begin{theorem}[Velocity $L^2$-error estimate]
  \label{thm:apriori-estimate-L2}
Let $U = (\bfu, p) \in [H^r(\Omega)]^d \times H^s(\Omega)$ be the continuous solution to the Oseen problem~\eqref{eq:oseen-problem-momentum}--\eqref{eq:oseen-problem-boundary-GNBC-tangential}
and $U_h = (\bfu_h,p_h) \in \mcV_h \times \mcQ_h$ be the discrete solution
of problem~\eqref{eq:oseen-discrete-unfitted}.
Under previously specified assumptions, for an adjoint-consistent Nitsche-type formulation ($\zeta_u=1$)
we have
\begin{align}
    \| \bfu -  \bfu_h \|_{0,\Omega}
    \lesssim
    h^{r_{u}}
    \| \bfu \|_{r_{u},\Omega}
   + h^{ s_p+1}
   \nu^{-1}\| p \|_{s_p, \Omega},
    \label{eq:apriori-estimate-L2}
  \end{align}
provided that viscous forces dominate the flow as assumed in \eqref{eq:assumption_low_RE}
with $r_{u} := \min\{r, k+1\}$ and $s_p := \min\{s, k+1\}$.
Note that the hidden constant is independent of $h$
and 
independent of how the boundary intersects the mesh $\mcT_h$.
However, the constant depends on the physical parameters.
\end{theorem}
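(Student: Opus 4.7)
The strategy is the classical Aubin--Nitsche duality argument, adapted to our Nitsche-type weak enforcement of the generalized Navier boundary conditions. Set $\bfe := \bfu - \bfu_h \in [L^2(\Omega)]^d$ and let $W=(\bfw,r)$ be the unique regular solution of the dual problem~\eqref{eq:oseen-problem-momentum-dual}--\eqref{eq:oseen-problem-boundary-GNBC-tangential-dual} driven by $\tilde{\bff}:=\bfe$; by the assumed elliptic regularity~\eqref{eq:estimate_dual_solution},
\begin{equation}
\nu\|\bfw\|_{2,\Omega}+\|r\|_{1,\Omega}\lesssim \|\bfe\|_{0,\Omega}.
\label{eq:dual-reg-plan}
\end{equation}
The first step is to derive a duality identity of the form
\begin{equation}
\|\bfe\|_{0,\Omega}^2 \;=\; A_h(U-U_h,\widehat{W}),
\label{eq:dual-identity-plan}
\end{equation}
where $\widehat{W}=(\bfw,\pm r)$ with the sign on $r$ chosen to match the conventions in~\eqref{eq:Ah-form-def}. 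This is obtained by testing the strong dual momentum equation with $\bfe$, integrating by parts, and algebraically matching the arising boundary terms against the Nitsche contributions in~\eqref{eq:ah-form-def}--\eqref{eq:bh-form-def} with the help of the dual boundary conditions~\eqref{eq:oseen-problem-boundary-GNBC-normal-dual}--\eqref{eq:oseen-problem-boundary-GNBC-tangential-dual}. The adjoint-consistent choice $\zeta_u=1$ is crucial here, as is the $\varepsilon$-uniform weighting $1/(\varepsilon+\gamma^t h)$ in~\eqref{eq:ah-form-def_tangential_1}--\eqref{eq:ah-form-def_tangential_2}, which renders the tangential Robin--Nitsche form self-adjoint uniformly for every slip length $\varepsilon\in[0,\infty]$; the hypothesis $\bfbeta\cdot\bfn=0$ on $\GammaGNBC\setminus\Gamma_D$ is used to eliminate otherwise incompatible advective boundary contributions.

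Second, I subtract the Cl\'ement interpolant $\Piast \widehat{W}\in\mcW_h$ from $\widehat{W}$ and invoke the weak Galerkin orthogonality of Lemma~\ref{lem:weak-orthogonality} to obtain
\begin{equation}
A_h(U-U_h,\widehat{W}) = A_h(U-U_h,\widehat{W}-\Piast\widehat{W})+S_h(U_h,\Piast\widehat{W})+G_h(U_h,\Piast\widehat{W}).
\label{eq:plan-splitting}
\end{equation}
The first term on the right-hand side is bounded by the continuity estimates~\eqref{eq:continuity_ah_1}--\eqref{eq:continuity_bh_1} through $\tnorm{U-U_h}_{\ast}\cdot \tnorm{\widehat{W}-\Piast\widehat{W}}_{\ast}$; applying Lemma~\ref{lem:interpolation-estimate} with $r_u=s_p=2$ to the dual solution furnishes, under the viscous-dominated assumption~\eqref{eq:assumption_low_RE}, the order-$h$ gain
\begin{equation}
\tnorm{\widehat{W}-\Piast\widehat{W}}_{\ast} \lesssim h\bigl(\nu^{1/2}\|\bfw\|_{2,\Omega}+\nu^{-1/2}\|r\|_{1,\Omega}\bigr).
\end{equation}
For the stabilization contributions in~\eqref{eq:plan-splitting}, a Cauchy--Schwarz inequality in the positive semi-definite forms $S_h, G_h$ together with the weak consistency Lemma~\ref{lem:weak-consistency} applied to $\Piast\widehat{W}$ yields the same order-$h$ gain, while $S_h(U_h,U_h)^{1/2}+G_h(U_h,U_h)^{1/2}\lesssim\tnorm{U-U_h}_{\ast}$ is in turn controlled by the energy-norm error estimate already established in Theorem~\ref{thm:apriori-estimate}.

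Collecting all estimates, inserting~\eqref{eq:dual-reg-plan}, dividing by $\|\bfe\|_{0,\Omega}$ and simplifying through the assumption~\eqref{eq:assumption_low_RE} (which gives $\nu+\|\bfbeta\|_{0,\infty,\Omega}h+\sigma h^2\sim\nu$, $\omega_h\lesssim 1$ and makes the local pressure scaling $\max_{T}\{(\nu+\|\bfbeta\|_{0,\infty,T}h+\sigma h^2)^{-1}\}\lesssim\nu^{-1}$) leads directly to
\begin{equation}
\|\bfe\|_{0,\Omega}\;\lesssim\; h^{r_u}\|\bfu\|_{r_u,\Omega}+h^{s_p+1}\nu^{-1}\|p\|_{s_p,\Omega},
\end{equation}
which is the claimed bound~\eqref{eq:apriori-estimate-L2}. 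The main obstacle in executing this plan is the duality identity~\eqref{eq:dual-identity-plan}: the algebraic bookkeeping of the boundary terms in the unfitted setting with generalized Navier conditions is technical and requires the simultaneous use of adjoint consistency, of the specific $\varepsilon$-dependent weighting that renders both the tangential Robin form and the pressure mass-conservation term self-adjoint modulo the dual constraints, and of the compatibility hypothesis on $\bfbeta\cdot\bfn$ along the Robin portion of $\GammaGNBC$; once this identity is in hand, the remainder of the argument is essentially a routine combination of continuity, interpolation, weak consistency and Theorem~\ref{thm:apriori-estimate}.
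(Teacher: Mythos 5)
Your proposal follows essentially the same Aubin--Nitsche duality argument as the paper: the same dual problem with $\tilde{\bff}=\bfu-\bfu_h$, the same duality identity $\|\bfu-\bfu_h\|_{0,\Omega}^2=A_h(U-U_h,W)$ obtained via adjoint consistency and the hypothesis $\bfbeta\cdot\bfn=0$ on the Robin part, the same use of weak Galerkin orthogonality with the Cl\'ement interpolant of the dual solution, and the same combination of continuity, dual interpolation, weak consistency and Theorem~\ref{thm:apriori-estimate}. The only execution detail worth noting is that the continuity estimate \eqref{eq:continuity_ah_1} excludes the advective bulk term, so $(\bfbeta\cdot\nabla(\bfu-\bfu_h),\bfw-\bfpiast\bfw)_{\Omega}$ must be bounded separately using the viscous-dominance assumption \eqref{eq:assumption_low_RE}, exactly as the paper does.
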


\begin{proof}
We need to estimate the desired velocity $L^2$-error $\|\bfu-\bfu_h\|_{0,\Omega}$.
For this purpose, we choose $\tilde{\bff}=\bfu-\bfu_h$ as right hand side of the dual momentum equation \eqref{eq:oseen-problem-momentum-dual}
and multiply \eqref{eq:oseen-problem-momentum-dual}--\eqref{eq:oseen-problem-compressible-dual} with test functions $\bfv:=\bfu-\bfu_h$ and $-q:=-(p-p_h)$, respectively.
After integrating by parts, using $\div \bfbeta =0$ and the relation
\begin{align}
  \langle ((-\bfbeta)\cdot\bfn)\bfw,\bfv \rangle_{\Gamma}
= \bf0
\end{align}
as $\bfw=\bf0$ on $\Gamma_D$ and $\bfbeta\cdot\bfn= 0$ on $\GammaGNBC\setminus \Gamma_{\mathrm{D}}$, we obtain
\begin{align}
 \| \bfu-\bfu_h \|_{0,\Omega}^2
  &= (\tilde{\bff},\bfv)_{\Omega}
  = (\sigma \bfw,\bfv)_{\Omega} + ((-\bfbeta)\cdot\nabla\bfw, \bfv)_{\Omega} - (\nabla\cdot(2\nu\bfepsilon(\bfw)),\bfv)_{\Omega} + (\nabla (-r), \bfv)_{\Omega} - (\div \bfw, q)_{\Omega}
\\
  &= (\sigma \bfw,\bfv)_{\Omega}
+ (\bfw, \bfbeta\cdot\nabla\bfv)_{\Omega}
+ (\bfepsilon(\bfw),2\nu\bfepsilon(\bfv))_{\Omega} - \langle 2\nu\bfepsilon(\bfw)\bfn,\bfv\rangle_{\GammaGNBC}
+ ( r , \nabla\cdot \bfv)_{\Omega} - \langle r ,\bfv \cdot\bfn \rangle_{\GammaGNBC}
- (\div \bfw, q)_{\Omega}
\\
  &= a(\bfv,\bfw)
- \langle 2\nu\bfepsilon(\bfw)\bfn,\bfv\rangle_{\GammaGNBC}
-b_h(r,\bfv)
+ b(q,\bfw).
\label{eq:dual_estimate_1_last_line}
\end{align}
Using the boundary conditions for the normal and tangential directions \eqref{eq:oseen-problem-boundary-GNBC-normal-dual}--\eqref{eq:oseen-problem-boundary-GNBC-tangential-dual} with the assumption that $\bfbeta \cdot \bfn = 0$ on $\GammaGNBC\setminus \Gamma_{\mathrm{D}}$,
which then reduce to $\bfw\cdot\bfn = 0$ on $\GammaGNBC$ and $(\varepsilon(2\nu\bfepsilon(\bfw)\bfn) + \nu\bfw)\boldsymbol{P}^t = \bfzero $ on $\GammaGNBC$,
the following terms can be consistently added to \eqref{eq:dual_estimate_1_last_line}
\begin{align}
 0 &=
\langle \bfw\cdot\bfn , q \rangle_{\GammaGNBC}
- \langle \bfw\cdot\bfn, (2\nu\bfepsilon(\bfv)\bfn)\cdot\bfn \rangle_{\GammaGNBC}
+ \langle \frac{\nu}{\gamma^n h}  \bfw\cdot\bfn , \bfv\cdot\bfn \rangle_{\GammaGNBC} 
+ \langle \frac{\phi_u}{\gamma^n h} \bfw\cdot\bfn , \bfv\cdot\bfn \rangle_{\GammaGNBC}
= I + II + III + IV
,
\\
 0 &=
  \langle \frac{1}{\varepsilon + \gamma^t h} (\varepsilon (2\nu\bfepsilon(\bfw)\bfn) + \nu\bfw)\boldsymbol{P}^t, \bfv \rangle_{\GammaGNBC}
- \langle \frac{\gamma^t h }{\varepsilon + \gamma^t h} (\varepsilon (2\nu\bfepsilon(\bfw)\bfn) + \nu\bfw)\boldsymbol{P}^t, 2\bfepsilon(\bfv)\bfn \rangle_{\GammaGNBC}
= V + VI + VII + VIII.
\end{align}
Continuing in \eqref{eq:dual_estimate_1_last_line}, we note that $b(q,\bfw) + I = b_h(q, \bfw)$.
The viscous term can be split into directional parts such that
\begin{align}
 - \langle 2\nu\bfepsilon(\bfw)\bfn,\bfv\rangle_{\GammaGNBC} + V
&= -\langle (2\nu\bfepsilon(\bfw)\bfn)\cdot\bfn , \bfv\cdot\bfn \rangle_{\GammaGNBC}
-\langle \frac{\gamma^t h}{\varepsilon + \gamma^t h} (2\nu\bfepsilon(\bfw)\bfn)\boldsymbol{P}^t, \bfv \rangle_{\GammaGNBC}
\intertext{and}
 II + VIII
&=
- \langle \bfw\cdot\bfn, (2\nu\bfepsilon(\bfv)\bfn)\cdot\bfn \rangle_{\GammaGNBC}
- \langle \frac{(\varepsilon+\gamma^t h)-\varepsilon }{\varepsilon + \gamma^t h} \nu\bfw\boldsymbol{P}^t, 2\bfepsilon(\bfv)\bfn \rangle_{\GammaGNBC}
\\
&= - \langle \bfw, 2\nu\bfepsilon(\bfv)\bfn \rangle_{\GammaGNBC}
+\langle \frac{\varepsilon}{\varepsilon + \gamma^t h} \nu\bfw\boldsymbol{P}^t, 2\bfepsilon(\bfv)\bfn \rangle_{\GammaGNBC}.
\label{eq:dual_estimate_2_last_line}
\end{align}
Collecting all terms from \eqref{eq:dual_estimate_1_last_line}--\eqref{eq:dual_estimate_2_last_line}
and defining $W:=(\bfw,r)$, the $L^2$-error can be expressed as
\begin{align}
 \| \bfu-\bfu_h \|_{0,\Omega}^2
&= a_h(\bfu-\bfu_h,\bfw) +  b_h(p-p_h, \bfw) - b_h(r, \bfu-\bfu_h)
= A_h(U-U_h,W),
\end{align}
i.e. it can be expressed in terms of the discrete bilinear operator \eqref{eq:Ah-form-def}
associated to the primal problem \eqref{eq:oseen-problem-momentum}--\eqref{eq:oseen-problem-boundary-GNBC-tangential},
where the sufficiently smooth solution $W$ of the dual problem now takes the role of the test function.

Note that under the assumption of dominating viscous forces \eqref{eq:assumption_low_RE},
it holds for the advective term occurring in the continuity estimate of $a_h$ in \eqref{eq:continuity_ah_1} that
\begin{align}
 |(\bfbeta\cdot\nabla(\bfu-\bfu_h), \bfw-\bfpiast\bfw)_{\Omega}|
&\lesssim
\nu^{-\onehalf}\|\nu^{\onehalf}\nabla(\bfu-\bfu_h) \|_{\Omega}
\max_{T\in\mcT_h}
\{
\bigg(\underbrace{\frac{\|\bfbeta\|_{0,\infty,T}h}{\nu}}_{\lesssim 1}\bigg)
\}
\cdot
\nu h^{-1} \| \bfw-\bfpiast\bfw\|_{\Omega}
\\
&
\lesssim
\nu^{-\onehalf} \tn \bfu-\bfu_h \tn  \cdot h  \nu  \|\bfw \|_{2,\Omega},
\end{align}
where the interpolation estimate \eqref{eq:interpest0-ast} for the Cl\'ement interpolant was used in the last step.
The $L^2$-velocity error can then be further estimated by using the weak Galerkin orthogonality from Lemma~\ref{lem:weak-orthogonality}
with $\Piast W\in \mcV_h\times \mcQ_h$,
the continuity of $a_h,b_h$ provided in Lemma~\ref{lem:continuity} (equations \eqref{eq:continuity_ah_1} and \eqref{eq:continuity_bh_1}) and
by applying a Cauchy Schwarz inequality to the remaining stabilization operators
\begin{align}
 \| \bfu-\bfu_h \|_{0,\Omega}^2
&= A_h(U-U_h,W-\Piast W) + S_h(U_h,\Piast W) + G_h(U_h,\Piast W))
\\
&= a_h(\bfu-\bfu_h,\bfw-\bfpiast \bfw) +  b_h(p-p_h, \bfw-\bfpiast \bfw) - b_h(r-\piast r, \bfu-\bfu_h)
+ (S_h+G_h)(U_h,\Piast W)
\\
&\lesssim
\tn \bfu-\bfu_h \tn_{\ast} \tn \bfw-\bfpiast\bfw \tn_{\ast}
+ |(\bfbeta\cdot\nabla(\bfu-\bfu_h), \bfw-\bfpiast\bfw)_{\Omega}|
+ \tn U-U_h \tn_{\ast} \tn r-\piast r \tn_{\ast,\phi_u}
\nonumber
\\
&\qquad
+ \tn p-p_h \tn_{\ast,\Phi^{-1}} (\nu\Phi)^{-\onehalf} (\| \nu^{\onehalf}\nabla(\bfw-\bfpiast\bfw) \|_{\Omega}
+ \| (\nu/h)^{\onehalf} (\bfw-\bfpiast\bfw)\cdot\bfn \|_{\Gamma} )
\nonumber\\
&\qquad
+ (S_h+G_h)(U_h,U_h)^{\onehalf} \cdot (S_h+G_h)(\Piast W,\Piast W)^{\onehalf}
\\
&\lesssim
\tn U-U_h \tn_{\ast} \cdot (\tn\bfw-\bfpiast \bfw \tn_{\ast} + \tn r-\piast r \tn_{\ast,\phi_u})
+ (S_h+G_h)(U_h,U_h)^{\onehalf} \cdot (S_h+G_h)(\Piast W,\Piast W)^{\onehalf}
\\
&\lesssim \nu^{-\onehalf}(\tn U-U_h \tn_{\ast}  + (S_h+G_h)(U_h-\Piast U,U_h-\Piast U)^{\onehalf} + (S_h+G_h)(\Piast U,\Piast U)^{\onehalf}  )
\nonumber \\
&\qquad
\cdot \nu^{\onehalf}( \tn\bfw-\bfpiast \bfw \tn_{\ast} + \tn r-\piast r \tn_{\ast,\phi_u} + (S_h+G_h)(\Piast W,\Piast W)^{\onehalf})
\\
&\lesssim \nu^{-\onehalf}(\tn U-U_h \tn_{\ast}  + \tn U_h-\Piast U \tn_h + (S_h+G_h)(\Piast U,\Piast U)^{\onehalf}  )
\cdot h (\nu \| \bfw \|_{2,\Omega} + \|r\|_{1,\Omega})
\label{eq:l2-estimate-last-2}
\\
&\lesssim (1+\omega_h)^{\onehalf}\nu^{-\onehalf}( \nu^{\onehalf} h^{r_{u}-1} \| \bfu \|_{r_{u},\Omega} + \nu^{-\onehalf} h^{s_p}\| p \|_{s_p, \Omega})
\cdot h \cdot \|\bfu-\bfu_h\|_{0,\Omega}
\label{eq:l2-estimate-last-3}
\\
&\lesssim ( h^{r_{u}} \| \bfu \|_{r_{u},\Omega} + h^{s_p+1} \nu^{-1}\| p \|_{s_p, \Omega}) \cdot \|\bfu-\bfu_h\|_{0,\Omega}.
\label{eq:l2-estimate-last-line}
\end{align}
In line \eqref{eq:l2-estimate-last-2} we use the energy-norm \apriori~error estimate from Theorem~\ref{thm:apriori-estimate}
in \eqref{eq:apriori-estimate} and 
\eqref{eq:discrete-error-est}
for $(\bfu,p)\in [H^{r_u}(\Omega)]^d\times H^{s_p}(\Omega)$ under the assumption of dominant viscous effects \eqref{eq:assumption_low_RE}.
Note that for $\phi\in\{\phi_u,\Phi^{-1}\}$ it holds that $\nu/\phi\lesssim 1$.
Thanks to the interpolation estimate Lemma~\ref{lem:interpolation-estimate},
in \eqref{eq:l2-estimate-last-2} we gain the desired additional power of $h$ for $(\bfw,r)\in [H^2(\Omega)]^d\times H^1(\Omega)$
by estimating 
\begin{align}
\nu^{\onehalf}(\tn\bfw-\bfpiast \bfw \tn_{\ast} + \tn r-\piast r \tn_{\ast,\phi} + (S_h+G_h)(\Piast W,\Piast W)^{\onehalf})
\lesssim \nu^{\onehalf} h (\nu^{\onehalf}\|\bfw\|_{2,\Omega} + \nu^{-\onehalf}\|r\|_{1,\Omega})
\lesssim h \|\bfu-\bfu_h\|_{0,\Omega},
\end{align}
together with the boundedness \eqref{eq:estimate_dual_solution}
of the solution to the dual problem \eqref{eq:oseen-problem-momentum-dual}--\eqref{eq:oseen-problem-boundary-GNBC-tangential-dual}.
Note that for $S_h$ and $G_h$ the weak consistency estimates from Lemma~\ref{lem:weak-consistency} hold.
Finally, the claim follows after dividing by $\|\bfu-\bfu_h\|_{0,\Omega}$ in \eqref{eq:l2-estimate-last-line}.
\end{proof}

\section{Numerical Example}
\label{sec:numexamples}

To ensure the validity of the proposed method and the theoretical results presented, a numerical example is conducted. 
The error estimates obtained in the \apriori~error analysis, as summarized in the Theorems~\ref{thm:apriori-estimate} and \ref{thm:apriori-estimate-L2}, are validated by the results of a 2D box-flow case.

The stabilization parameters used for the following simulations are taken by large from \citet{MassingSchottWall2016_CMAME_Arxiv_submit} and \citet{SchottWall2014} and are repeated here for completeness.
The CIP-stabilization terms \eqref{eq:cip-s_beta}--\eqref{eq:cip-s_p} are set as $\gamma_\beta = \gamma_p=0.01$ and $\gamma_u=0.05\gamma_\beta$, as suggested in \cite{Burman2007}.
For the convective, pressure and velocity ghost penalty stabilizations \eqref{eq:ghost-penalty-beta}--\eqref{eq:ghost-penalty-p} the same parameters as for the CIP-stabilization are used.
In the case of the viscous \eqref{eq:ghost-penalty-nu} and (pseudo-) reactive \eqref{eq:ghost-penalty-sigma} ghost-penalty a value of $\gamma_\nu=0.05$ and $\gamma_\sigma = 0.005$ is prescribed, respectively. 
The second order terms of the ghost-penalties (i.e. for $j=2$ in the equations \eqref{eq:ghost-penalty-sigma},\eqref{eq:ghost-penalty-nu} and \eqref{eq:ghost-penalty-p} and $j=1$ in \eqref{eq:ghost-penalty-beta} and \eqref{eq:ghost-penalty-u}) are scaled by an extra $0.05$, as without it too strong enforcement of these terms were observed, ruining the solution.
Higher order terms of the ghost-penalties, i.e.~$j>2$ are neglected as their influence on the solution and stability is negligible for linear and quadratic elements.
Note that for simulations with higher-order approximations, i.e. $k>1$,
the simplified variant $\overline{g}_{\beta}$ \eqref{eq:gbeta-simple-def}
for the convective and divergence ghost penalty terms $g_\beta,g_u$ (see \eqref{eq:ghost-penalty-beta} and \eqref{eq:ghost-penalty-u})
and the related continuous interior penalty stabilizations $s_\beta,s_u$ (see \eqref{eq:cip-s_beta} and \eqref{eq:cip-s_u})
are used.

The different flow regimes appearing in $\phi_u,\phi_{\beta},\phi_p$ (see \eqref{eq:cip-Nitsche_scaling} and \eqref{eq:cip-s_scalings}) are
weighted as $\nu + c_u (\|\bfbeta\|_{0,\infty,T}h) + c_{\sigma} (\sigma h^2)$ with
$c_u = 1/6$ and $c_\sigma= 1/12$ as suggested in \cite{SchottRasthoferGravemeierWall2015}. If nothing else is mentioned, then the simulations are conducted with an adjoint-consistent Nitsche's method (i.e. $\zeta_u=1$) and with penalty parameters of $1/\gamma^n = 1/\gamma^t = 10.0$. Furthermore, as there is a need to integrate cut elements of non-regular shapes, standard integration rules can not be applied. To overcome this issue, the integration rules proposed by \citet{SudhakarMoitinhoWall2014} are used.

All simulations presented in this publication have been performed
using the parallel finite element software environment “Bavarian
Advanced Computational Initiative” (BACI), see \cite{WalletBaciCommittee2017}.

\subsection{Problem Setup -- 2D Box Flow}
This numerical example is inspired from the example done for Stokes flow in \cite{UrquizaGaronFarinas2014}, but made somewhat more complex and extended to the Oseen equations. 
We use the technique of manufactured solution to create the example.
A divergence-free velocity field ($\div \bfu = 0$) and a suitable pressure field are chosen which, when put into the Oseen equations with $\bfbeta=\bfu$, generate an associated volumetric body force~$\bff$.
By providing the volumetric body force and appropriate boundary conditions the solution is known and error studies can be carried out for the example.
The domain~$\Omega$ used here is a square $\{ (x,y), -1 < x < 1, -1 < y < 1 \}$ and the following choice for the solution field is

\begin{align}
u_1(x,y)&=0.75 y^3 (1 - x^4) + 1.25 y (1 - x^2) ,
\label{eq:oseen-box-flow-analytic-solution_u1}
\\ 
u_2(x,y)&=-0.75 x^3 (1 - y^4) - 1.25 x (1 - y^2),
\label{eq:oseen-box-flow-analytic-solution_u2}
\\ 
p(x,y)&= \left( \sum_{i=0}^{2} \frac{(3x)^{2i+1}}{(2i+1)!} \right) \left( \sum_{j=0}^{3} \frac{(3y)^{2j}}{(2j)!} \right),
\label{eq:oseen-box-flow-analytic-solution_p}
\end{align}
as visualized in Figure~\ref{fig:analytic-solution-of-box-flow}.
The geometry here is chosen such that we have no geometric approximation error by the meshes, thus avoiding issues stemming from the Babu{\v{s}}ka paradox \cite{Babuska1963_DEA}.
It can easily be seen that the chosen analytical velocity field ($\bfu = (u_1, u_2)$) satisfies $\div \bfu = 0$.
The volume force field $\bff$ is chosen in accordance with the given fields \eqref{eq:oseen-box-flow-analytic-solution_u1}--\eqref{eq:oseen-box-flow-analytic-solution_p} such that the Oseen equations \eqref{eq:oseen-problem-momentum}--\eqref{eq:oseen-problem-compressible} are satisfied.
In the discrete case, the force field~$\bff_h$ and the advective velocity~$\bfbeta_h$ are given by the nodal interpolations of their continuous counterparts.

\begin{figure}[ht]
  \centering
  \subfloat{\includegraphics[trim=200 0 200 0, clip, width=0.48\textwidth]{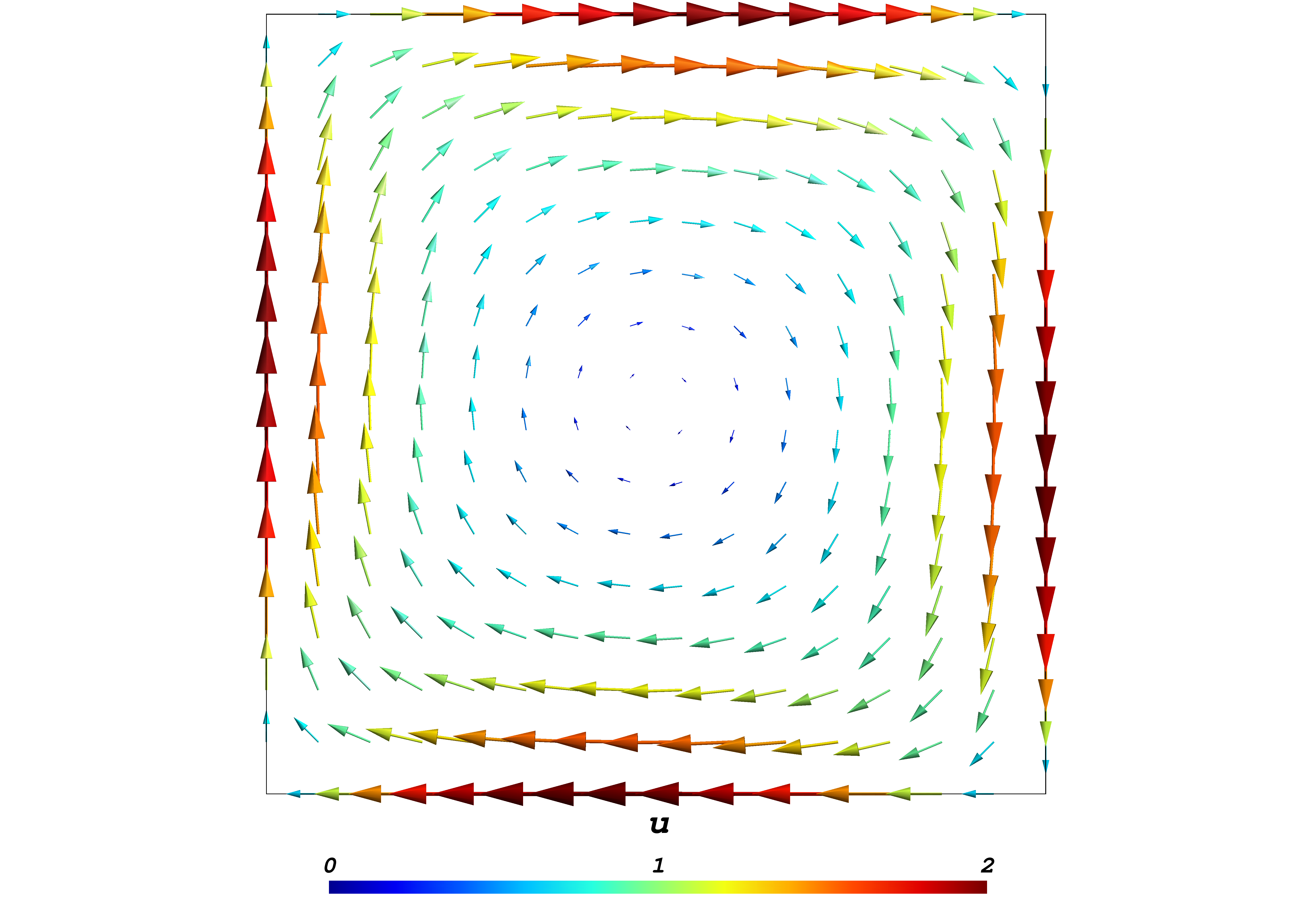}}
  \subfloat{\includegraphics[trim=200 0 200 0, clip, width=0.48\textwidth]{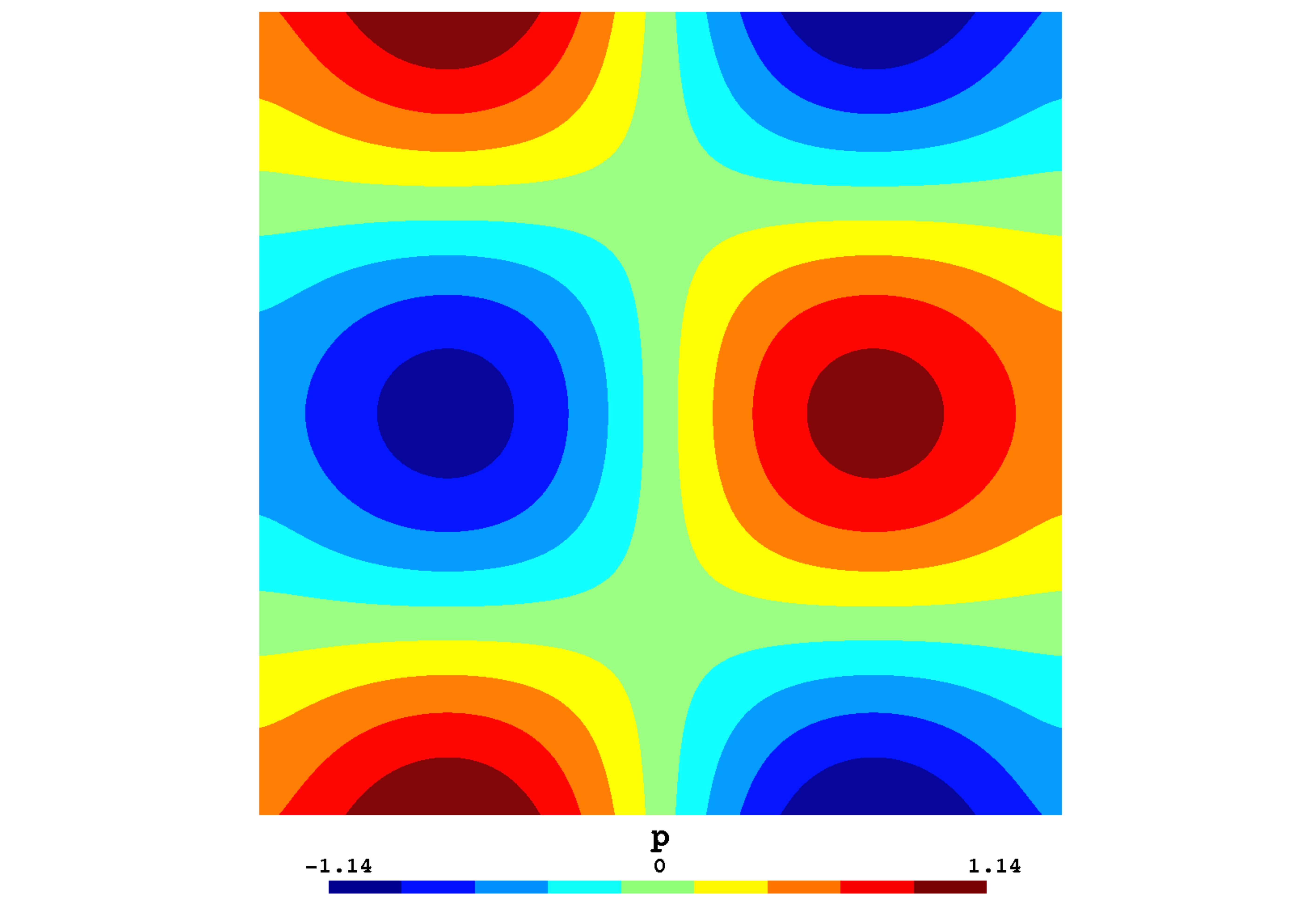}}
  \caption{The analytic velocity (left) and pressure (right) solution to the two-dimensional box-flow problem.}
  \label{fig:analytic-solution-of-box-flow}
\end{figure}

The boundary conditions applied to this problem are the same as introduced in the Oseen problem \eqref{eq:oseen-problem-boundary-GNBC-normal}--\eqref{eq:oseen-problem-boundary-GNBC-tangential}.
The velocity at the boundary is prescribed as $\bfg = \bfu$, and the traction at the boundary as $\bfh = 2 \nu \bfepsilon(\bfu)$. It should be mentioned that $\bfu \boldsymbol{P}^n = \bfzero$ on $\Gamma$, which satisfies the conditions assumed throughout the analysis.
Furthermore, with this choice of $\bfg$ and $\bfh$, the prescribed solution is independent of the choice of slip length $\varepsilon$.
The boundary condition is imposed on $\Gamma$ by the method \eqref{eq:oseen-discrete-unfitted} introduced in Section~\ref{ssec:weak_imposition_bcs_oseen}.

The background mesh $\widehat{\mcT}_h$  covers a rectangular domain $[-1.6,1.6]^2$ which is rotated by the angle $\theta = 0.25 \pi$ around the origin to ensure a non-trivial computational mesh $\mcT_h$, see Figure~\ref{fig:computational-domain-of-box-flow}.
It consists of $N \times N$ equally sized square elements equipped with either bi-linear $\mcQ^1$ or bi-quadratic $\mcQ^2$ equal-order approximations
for velocity and pressure, i.e. $\mcV^k_{h} \times \mcQ^k_h$ where $k \in \{ 1,2 \} $.
As a consequence the element length is $h=3.2/N$.
The parameters for the Oseen problem are chosen as $\nu=1.0$ and $\sigma=1.0$.
In the case of convection dominated flow (i.e. for small $\nu$), the tangential components disappear as they scale with~$\nu$,
and effectively the same formulation as presented and studied in \cite{MassingSchottWall2016_CMAME_Arxiv_submit} is retained.
For this reason, we focus on investigating viscous flows here.
As the pressure is only defined up to a constant, the constant pressure mode is filtered out by imposing $\int_{\Omega} p_h dx = 0$
during the solution process.

\begin{figure}[ht]
  \centering
  \subfloat{\includegraphics[trim=5 5 5 5, clip, width=0.4\textwidth]{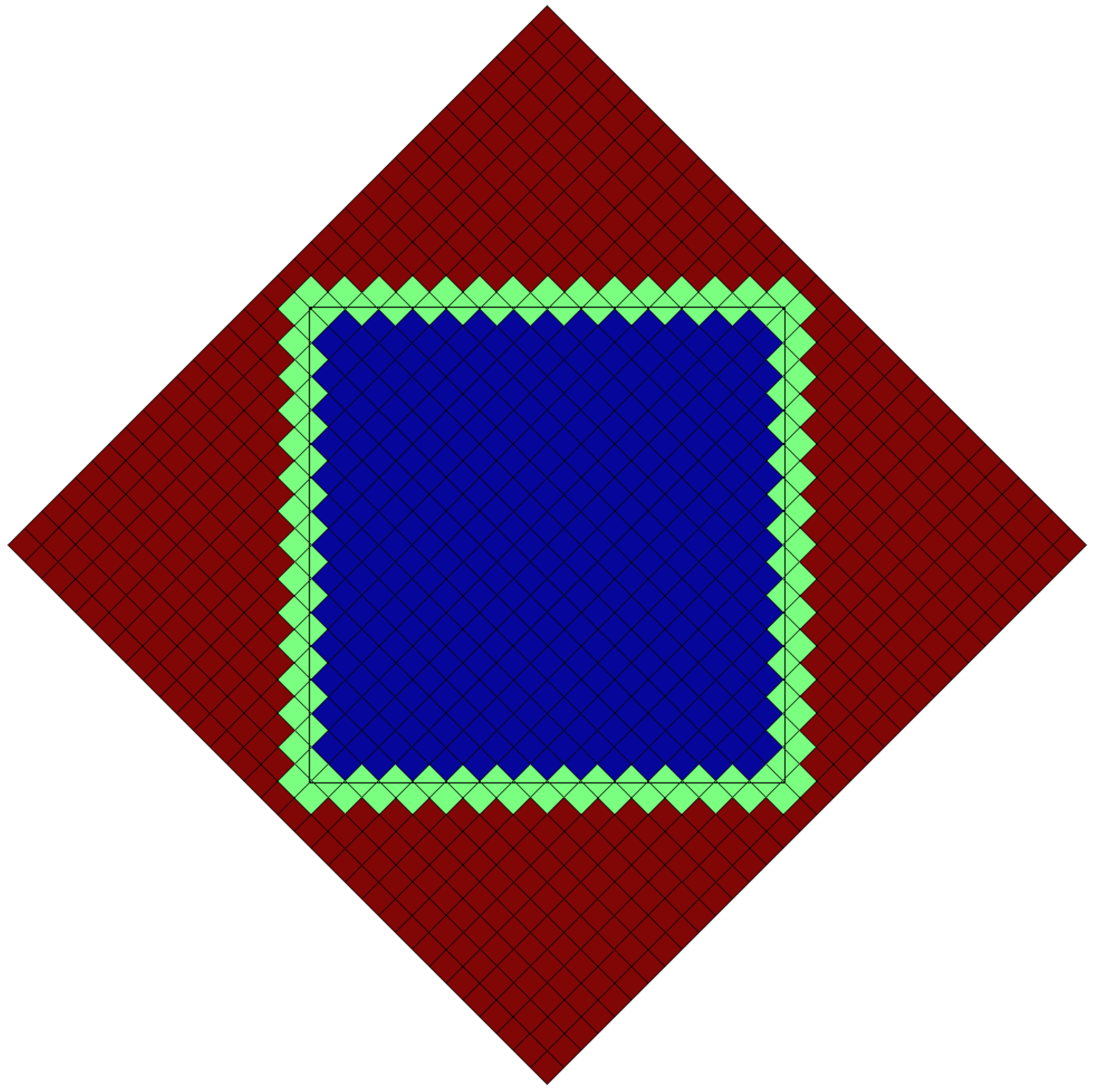}}
  \caption{This figure shows the background mesh $\widehat{\mcT}_h$ rotated at $0.25$ radians. The active computational mesh~$\mcT_h$ is covered by the green and blue domains. In the green domain, the elements are cut and ghost-penalties are applied on their facets.
The red domain indicates the inactive part of the background mesh.}
  \label{fig:computational-domain-of-box-flow}
\end{figure}

\subsection{Mesh Refinement Study}
To verify the results from the \apriori~error analysis, mesh refinement studies are conducted for both linear and quadratic interpolations.
In the case of the linear $\mcQ^1$ elements, a series of mesh sizes are chosen with $N \in [ 8; 512]$ and for the quadratic $\mcQ^2$ case $N \in [ 8; 224]$. To demonstrate the robustness to the choice of slip length, the convergence studies are conducted with a choice of three different $\varepsilon \in [10^{-10}; 1.0 ;10^{10} ]$.

In Figure~\ref{fig:mesh-convergence-study-split-hex8} the $L^2$-errors of $\bfu_h$, $\nabla \bfu_h$ and $p_h$ are presented for the linear $\mcQ^1$ approximations evaluated in $\Omega$ and on $\Gamma$.
The convergence plots verify the optimal convergence $\mathcal{O}(h^2)$ for the velocity $L^2$-error (see Theorem~\ref{thm:apriori-estimate-L2}) for all choices of $\varepsilon$.
Furthermore, the errors for the different choices of the various slip lengths remain of comparable size throughout the mesh refinement.
The convergence of the velocity gradient and pressure, as seen from the error at the boundary, converges with at least first order, in accordance with theory
stated in Theorem~\ref{thm:apriori-estimate}. 



\begin{figure}[ht!]
  \centering
  \subfloat{\includegraphics[trim=1 1 44 32, clip, width=0.33\textwidth]{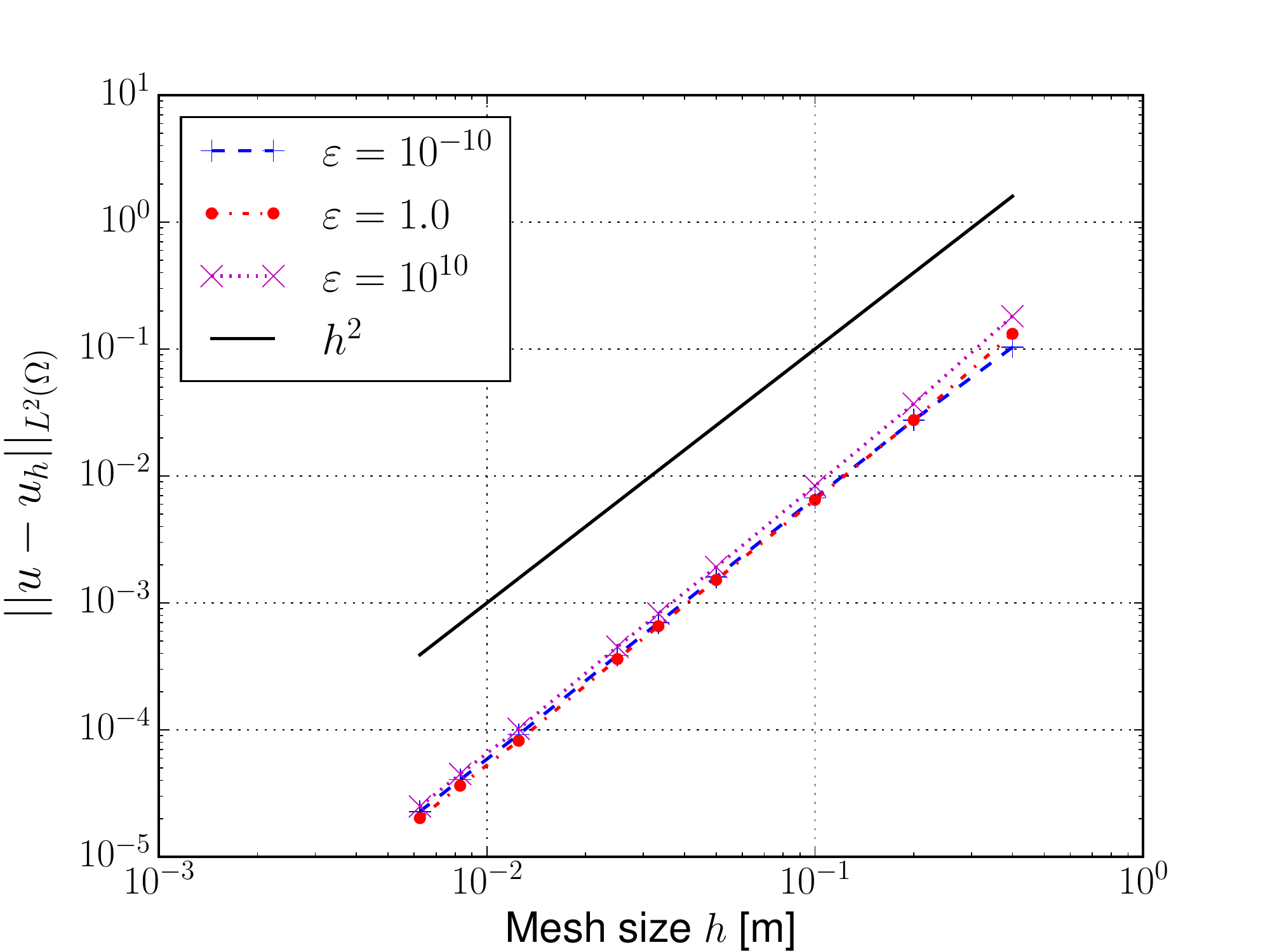}}
  \subfloat{\includegraphics[trim=1 1 44 32, clip, width=0.33\textwidth]{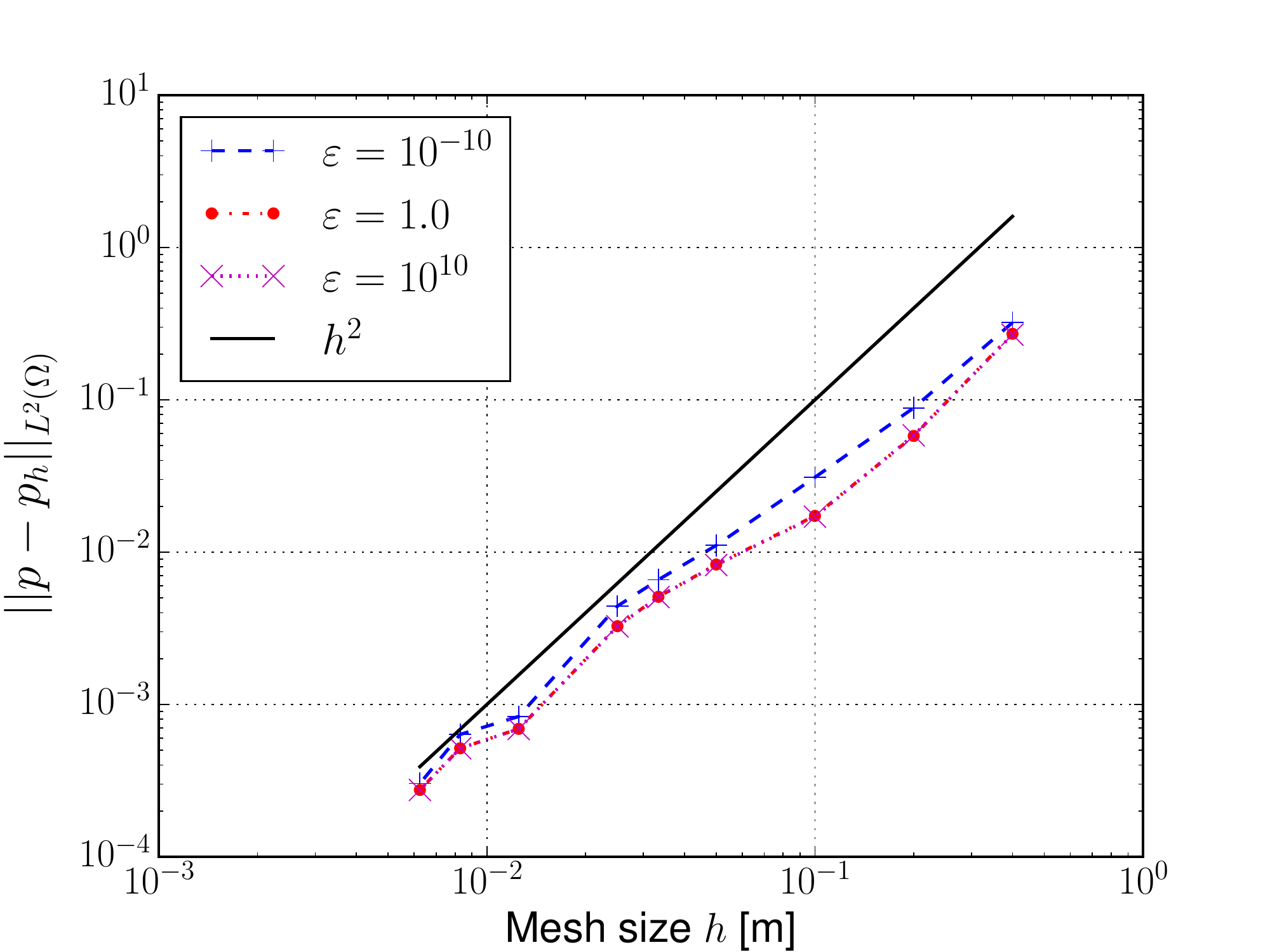}}
  \subfloat{\includegraphics[trim=1 1 44 32, clip, width=0.33\textwidth]{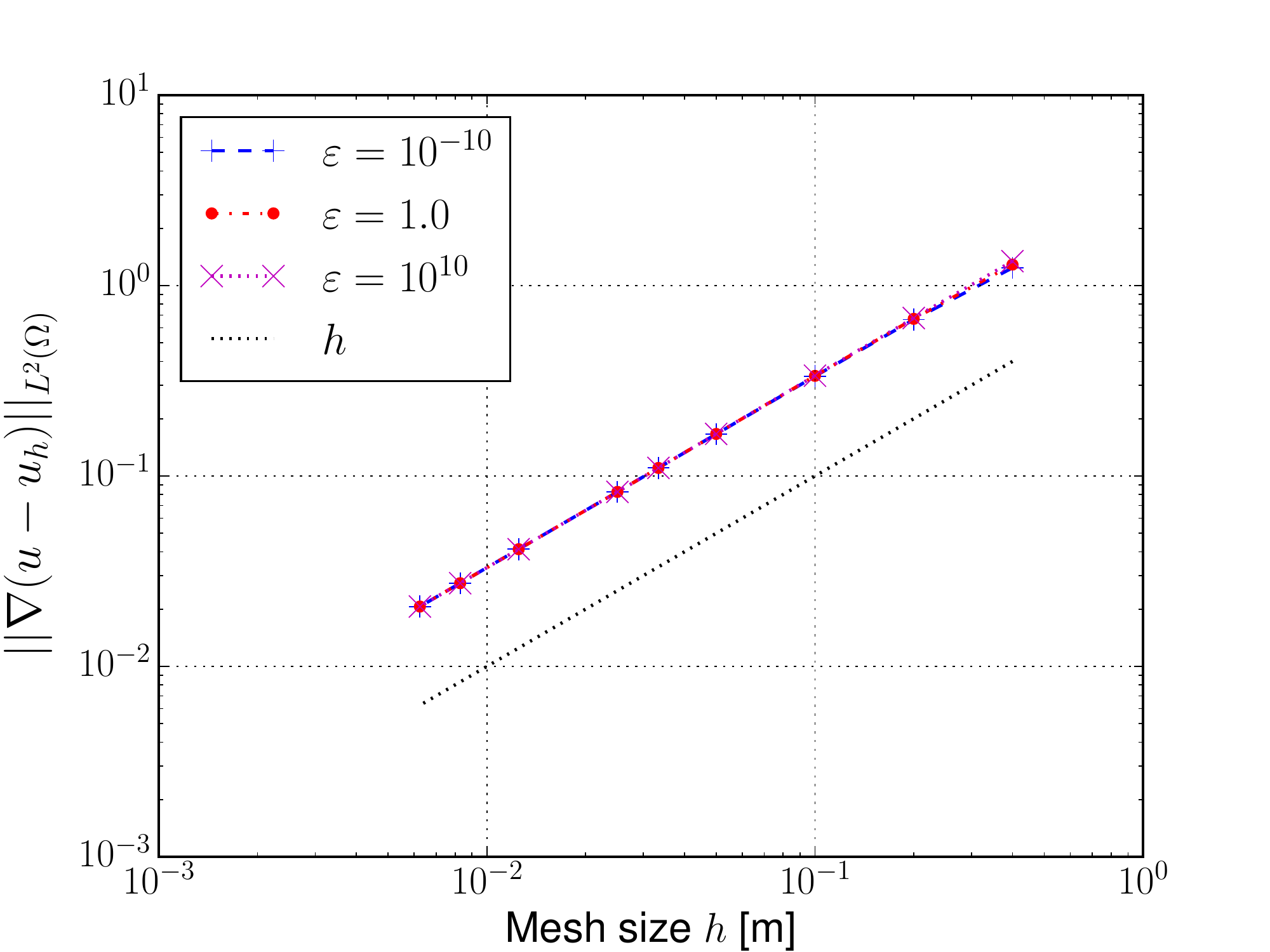}}\\
  \subfloat{\includegraphics[trim=1 1 44 32, clip, width=0.33\textwidth]{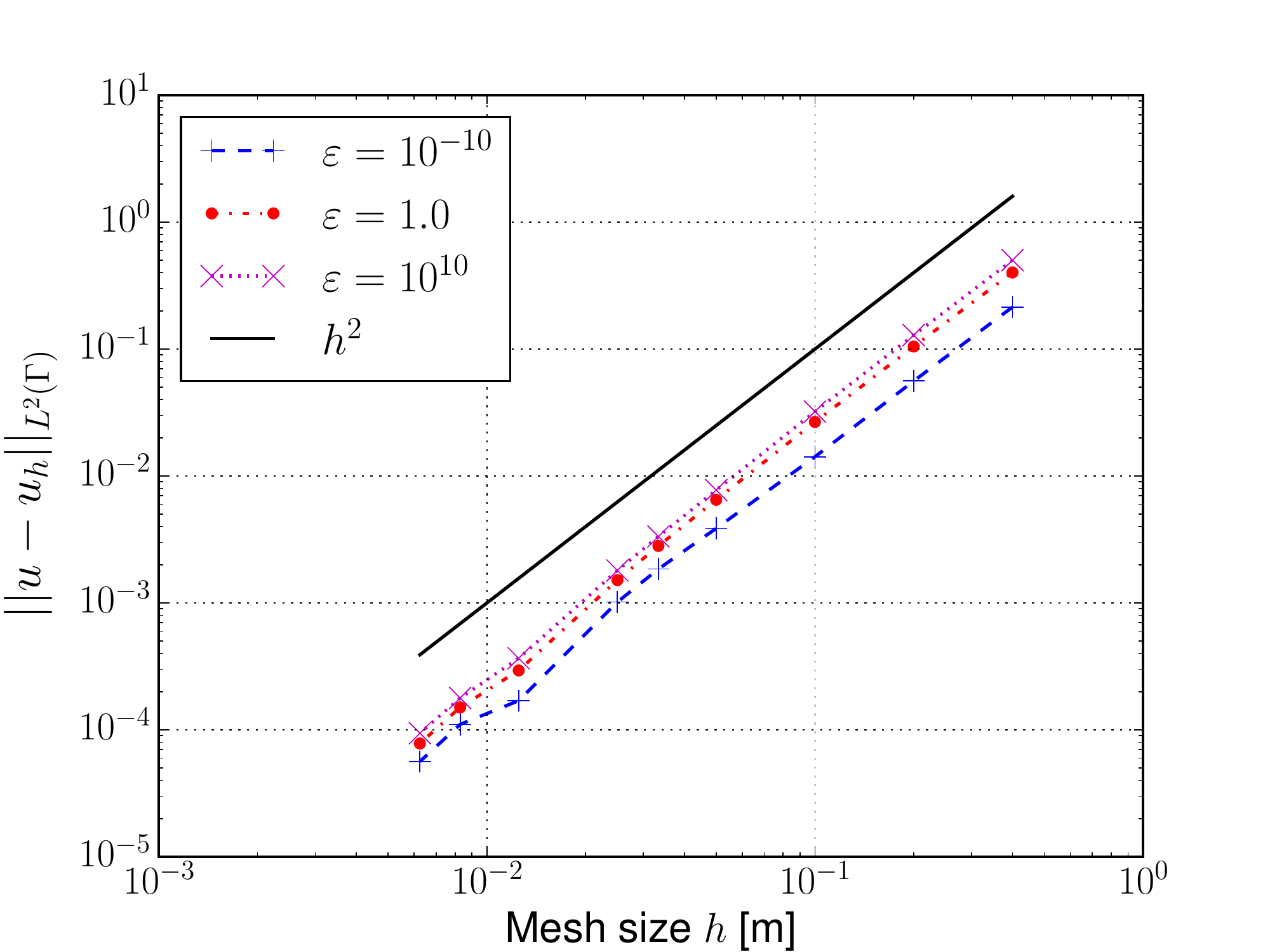}}
  \subfloat{\includegraphics[trim=1 1 44 32, clip, width=0.33\textwidth]{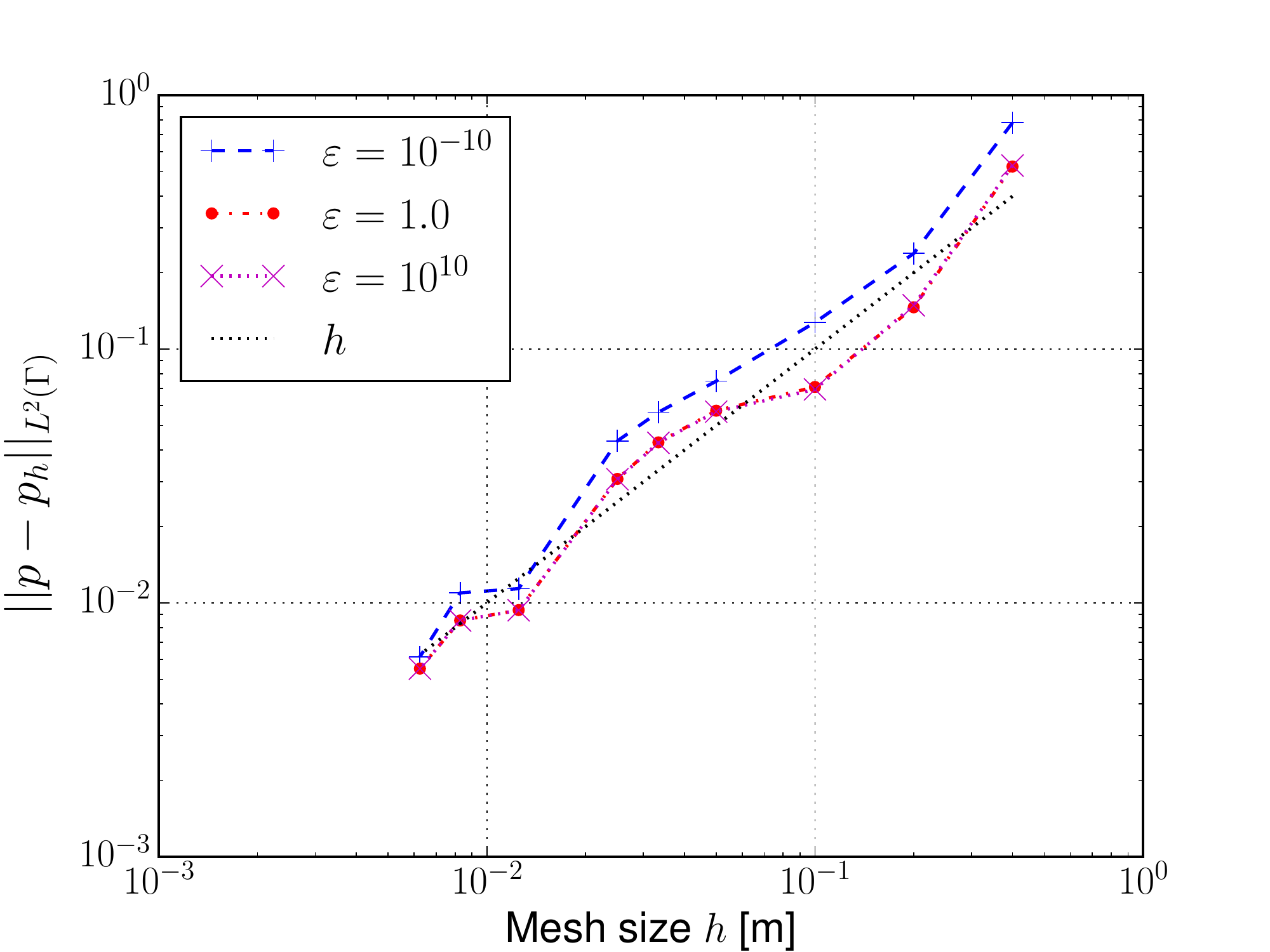}}
  \subfloat{\includegraphics[trim=1 1 44 32, clip, width=0.33\textwidth]{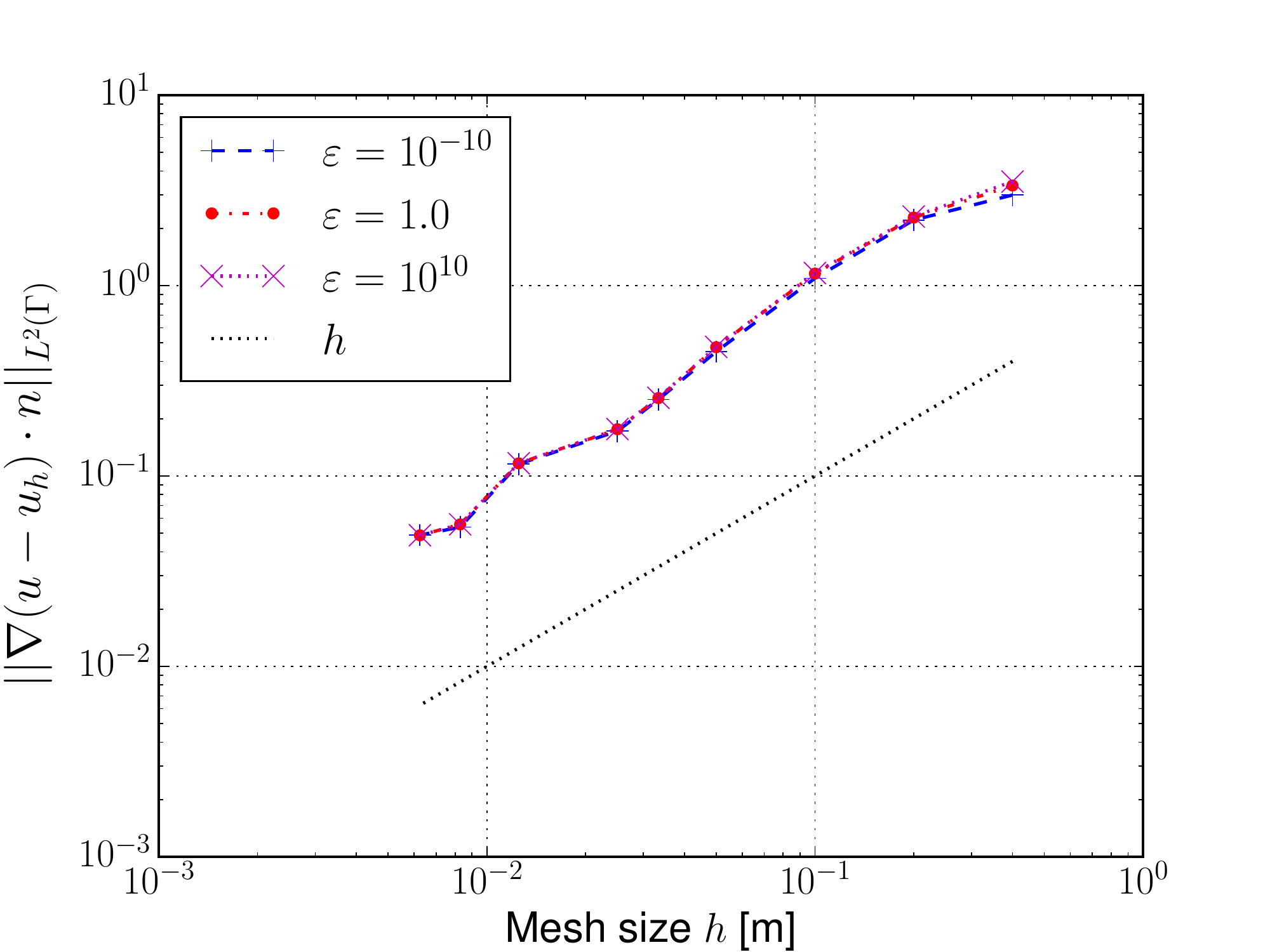}}
  \caption{Error convergence study for an adjoint-consistent Nitsche's method with $\mcQ^1$ elements: bulk errors (top row) and boundary errors (bottom row) for velocity, pressure and velocity gradient (from left to right).}
  \label{fig:mesh-convergence-study-split-hex8}
\end{figure}

For the quadratic $\mcQ^2$ interpolation case the same error norms as for the linear case are studied, as shown in Figure~\ref{fig:mesh-convergence-study-split-hex27}.
Similar conclusions can be drawn as for the linear case. An optimal convergence $\mathcal{O}(h^3)$ for the velocity $L^2$-error and an order of at least $\mathcal{O}(h^2)$ for the velocity gradient and pressure are observed for all~$\varepsilon$, in accordance with theory. This verifies the applicability of the method for higher order elements as well. Also notable is that the quadratically interpolated elements demonstrate less sensitivity to the cut position than the linear elements.




\begin{figure}[ht!]
  \centering
  \subfloat{\includegraphics[trim=1 1 44 32, clip, width=0.33\textwidth]{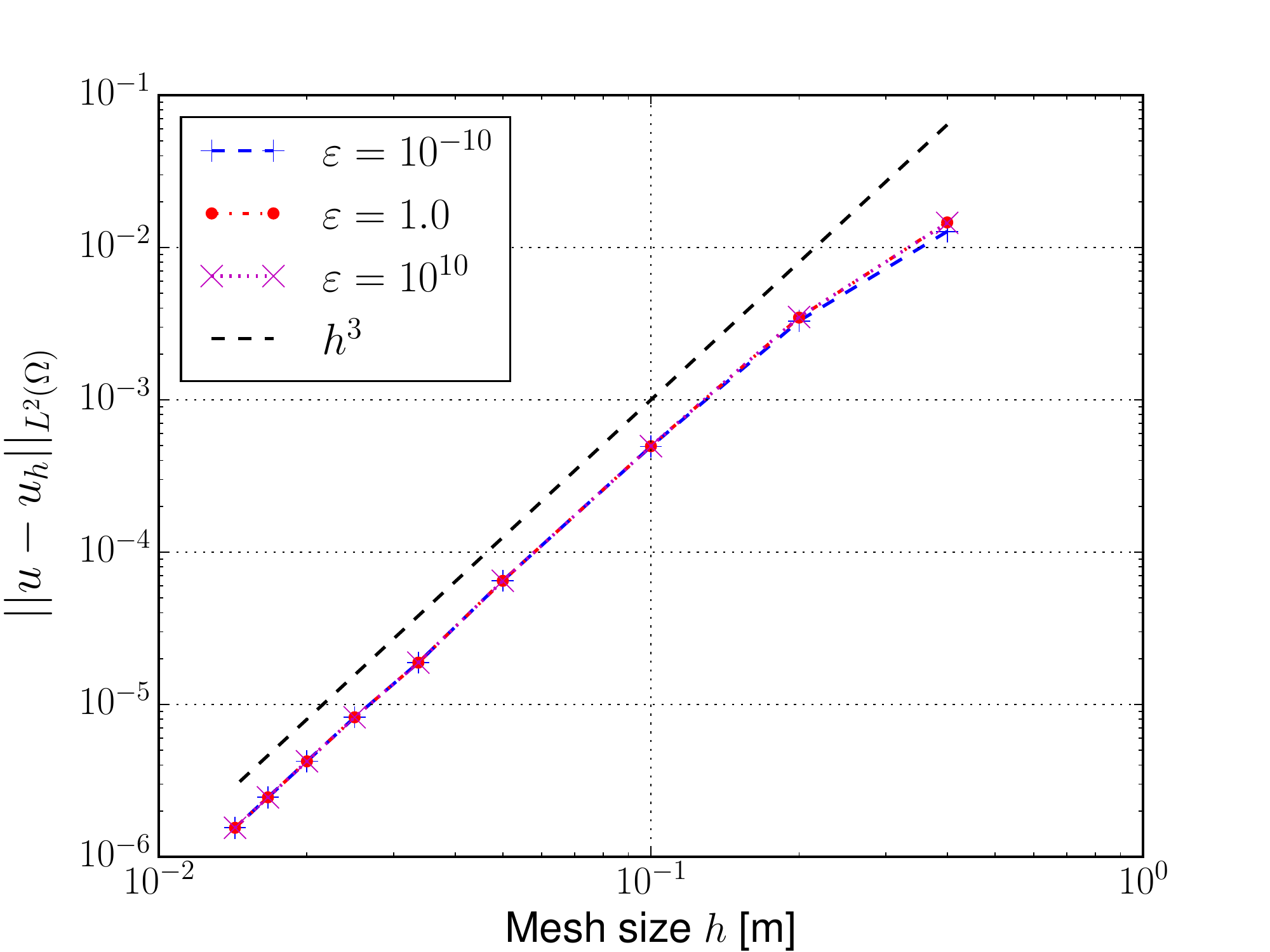}}
  \subfloat{\includegraphics[trim=1 1 44 32, clip, width=0.33\textwidth]{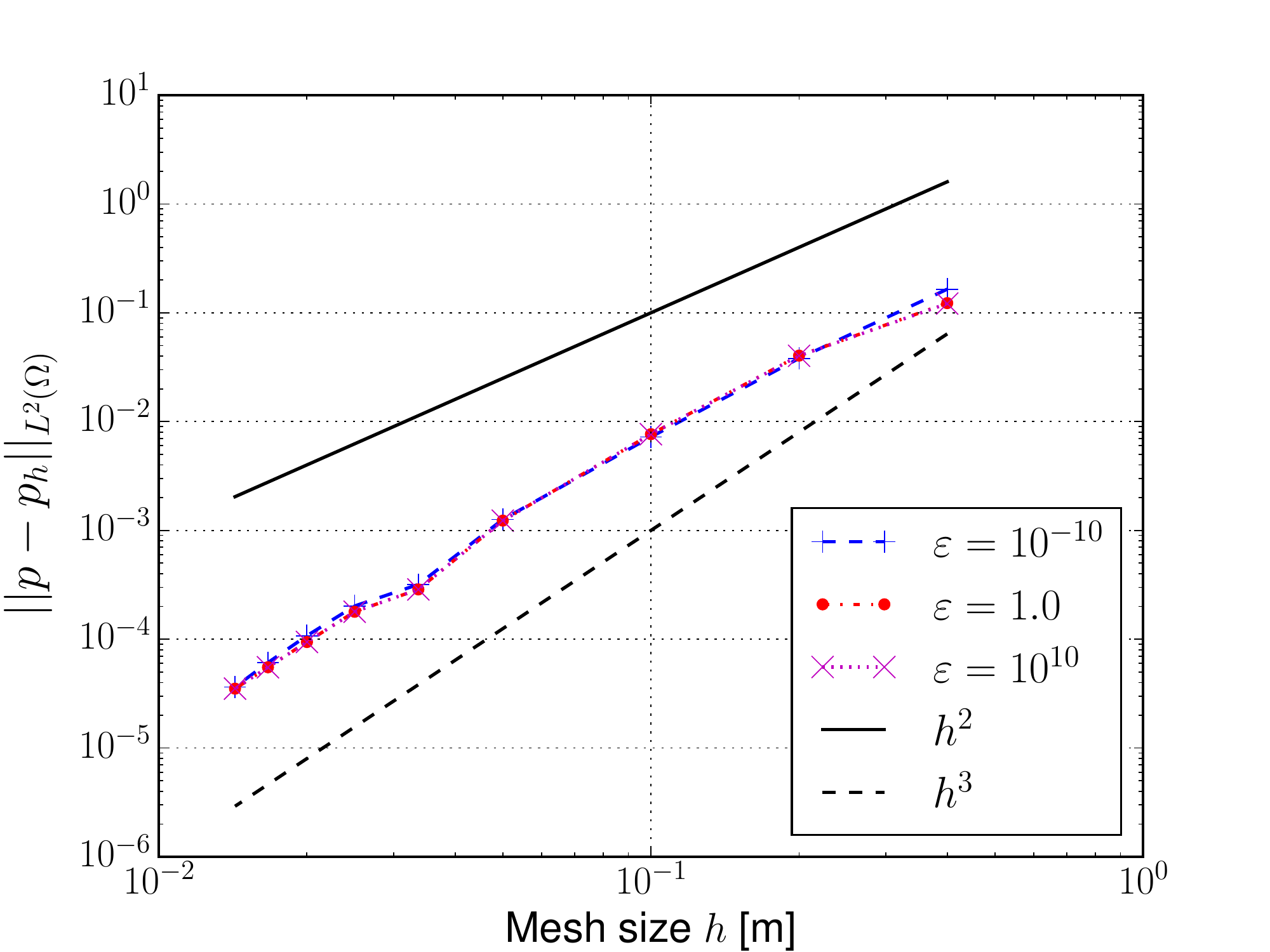}}
  \subfloat{\includegraphics[trim=1 1 44 32, clip, width=0.33\textwidth]{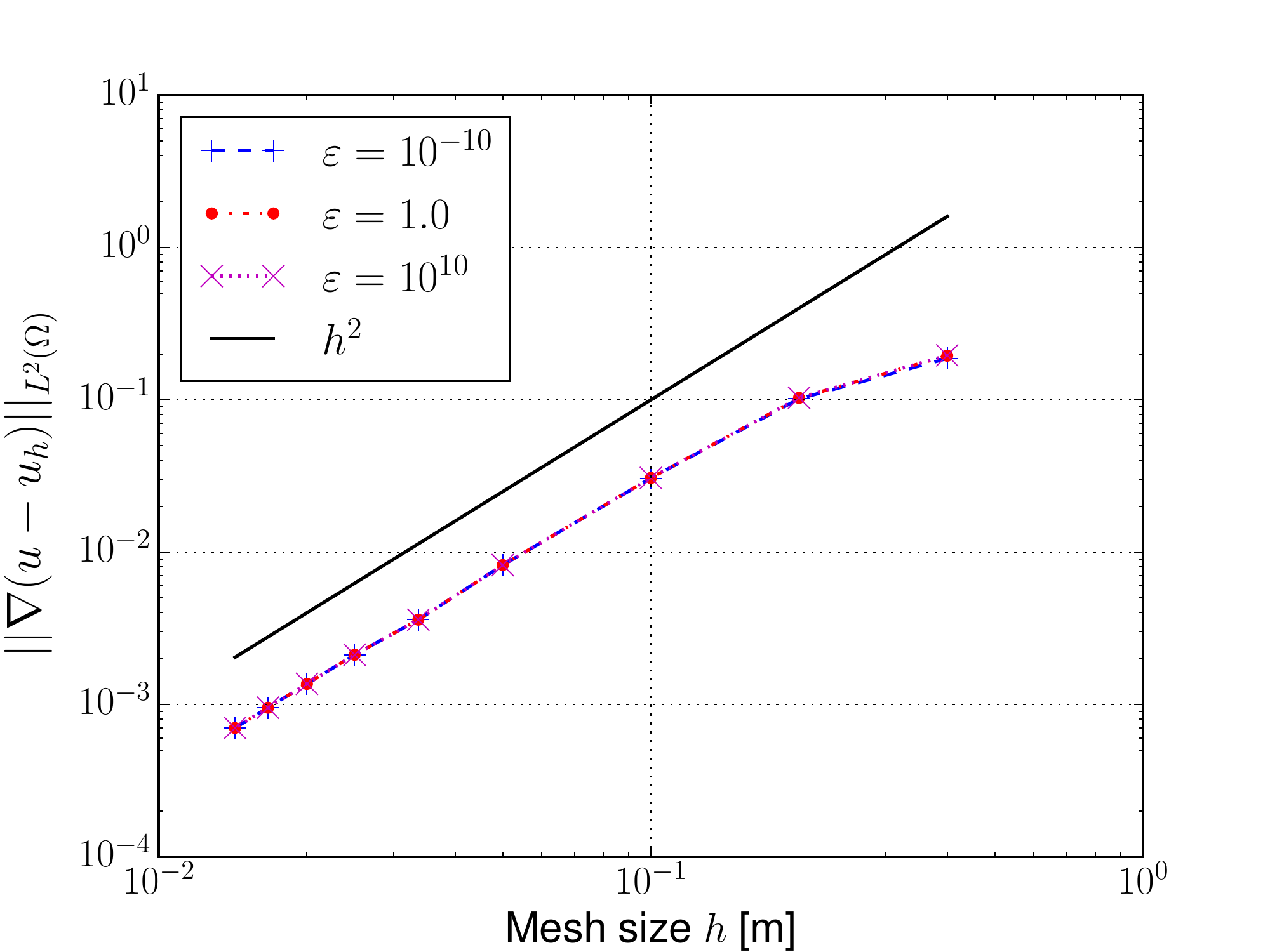}}\\
  \subfloat{\includegraphics[trim=1 1 44 32, clip, width=0.33\textwidth]{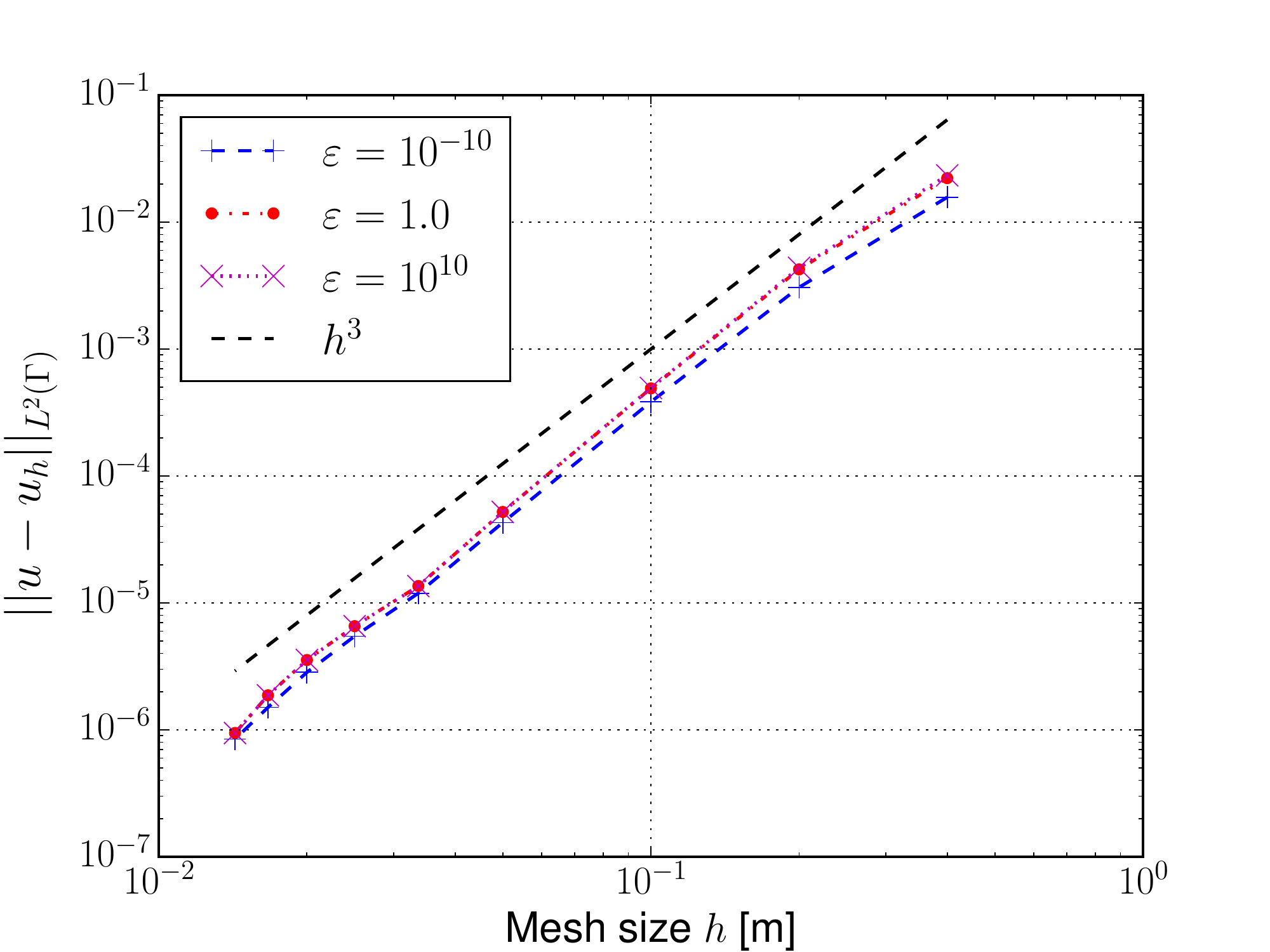}}
  \subfloat{\includegraphics[trim=1 1 44 32, clip, width=0.33\textwidth]{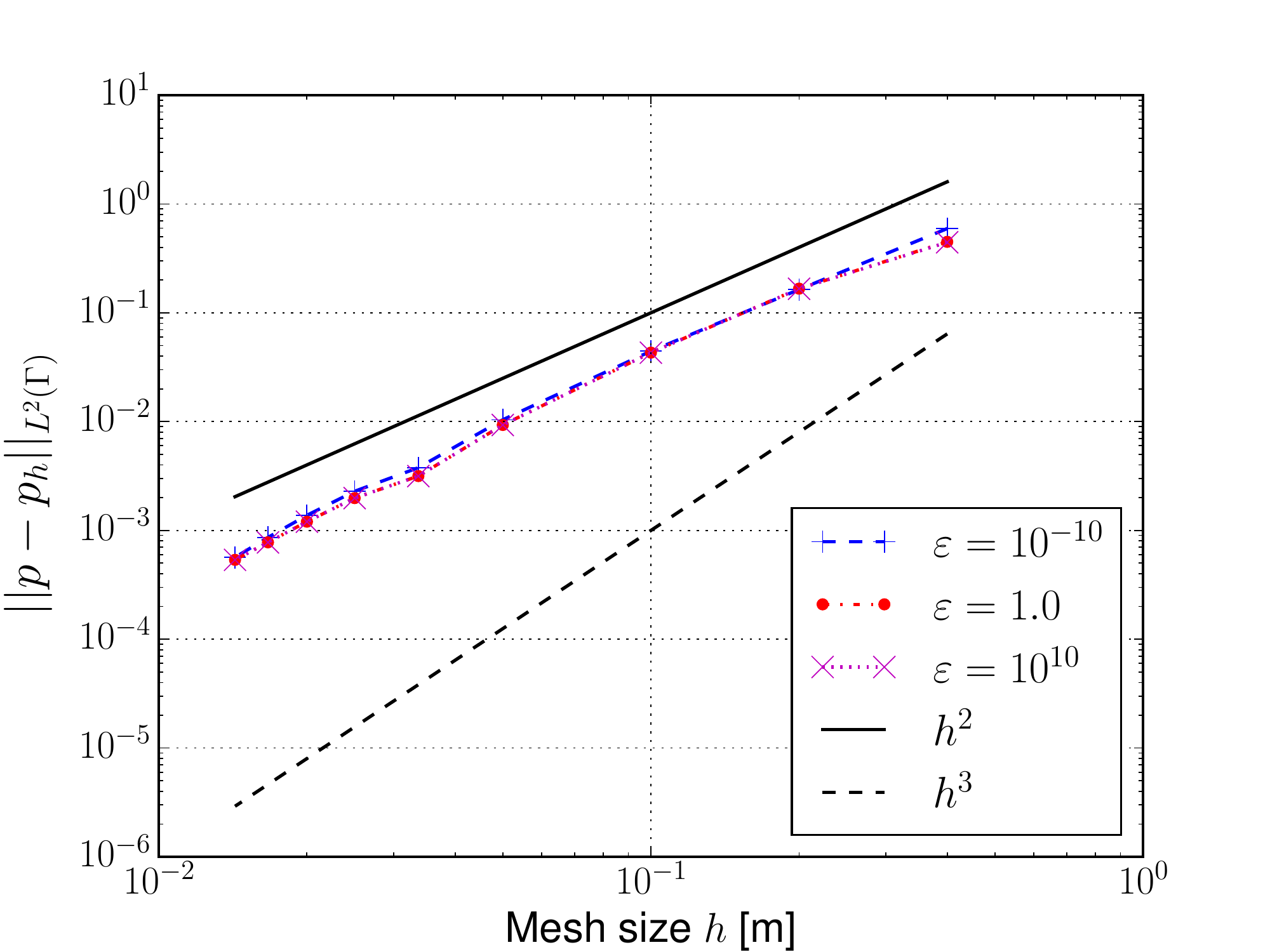}}
  \subfloat{\includegraphics[trim=1 1 44 32, clip, width=0.33\textwidth]{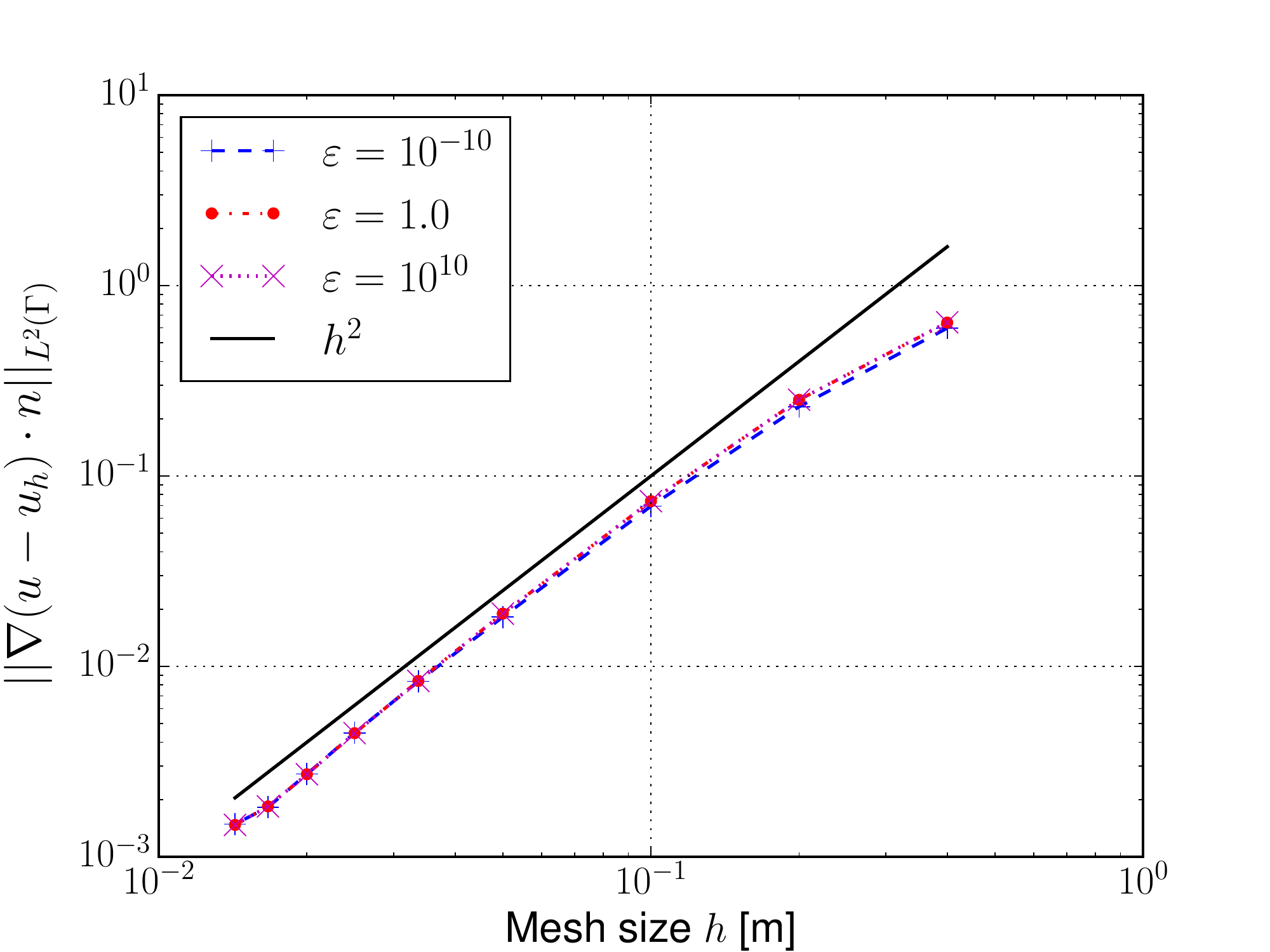}}
  \caption{Error convergence study for an adjoint-consistent Nitsche's method with $\mcQ^2$ elements: bulk errors (top row) and boundary errors (bottom row) for velocity, pressure and velocity gradient (from left to right).}
  \label{fig:mesh-convergence-study-split-hex27}
\end{figure}


%
%


\subsection{Nitsche Stabilization Parameter Study}
To observe the behavior of the method for different choices of the Nitsche parameter, a sensitivity study is conducted. 
For the adjoint consistent case the study is done with $\mcQ^1$ elements and for the adjoint inconsistent case for both $\mcQ^1$ and $\mcQ^2$ elements. In the case of linear elements a $512 \times 512$ mesh is used and for the quadratic case a $224 \times 224$ mesh.
Both the tangential and normal stabilization parameters are varied at the same time, i.e. $\gamma=\gamma^t=\gamma^n$, and studies are conducted for three different choices of slip length $\varepsilon = \{ 10^{-10}; 1.0; 10^{10} \}$.

In Figure~\ref{fig:nitstab-convergence-study-separate-hex8} the results for the adjoint-consistent formulation ($\zeta_u = 1$) can be seen. As is verified from the inf-sup stability analysis in Section~\ref{sec:stability-properties}, the error diverges as $1/\gamma$ becomes too small, and in this case the stability limit is reached at around $1/\gamma \approx  4.0$. Notable from the figure is the different behavior of the velocity error at the interface for varying~$\varepsilon$. In the case of $\varepsilon = 10^{-10}$ the error is slowly decreasing for a choice of larger $1/\gamma$, whereas for the cases with $\varepsilon = 1.0$ and $\varepsilon = 10^{10}$ the error is strongly increasing.
This discrepancy can be explained by the fact that in the latter cases the penalty effect on the normal constraint is way stronger
than for the tangential part and thus allows for worsening of the imposition of the tangential condition
while better enforcing the normal constraint for a large penalty parameter $1/\gamma$.
In contrast, in the former case the velocity is enforced in both normal and tangential direction,
thus imposing the velocity well at the boundary.
However, note that at the same time larger~$1/\gamma$ influences the velocity gradient and pressure at the boundary negatively,
as expected.

\begin{figure}[ht!]
  \centering
  \subfloat{\includegraphics[trim=1 1 44 32, clip, width=0.33\textwidth]{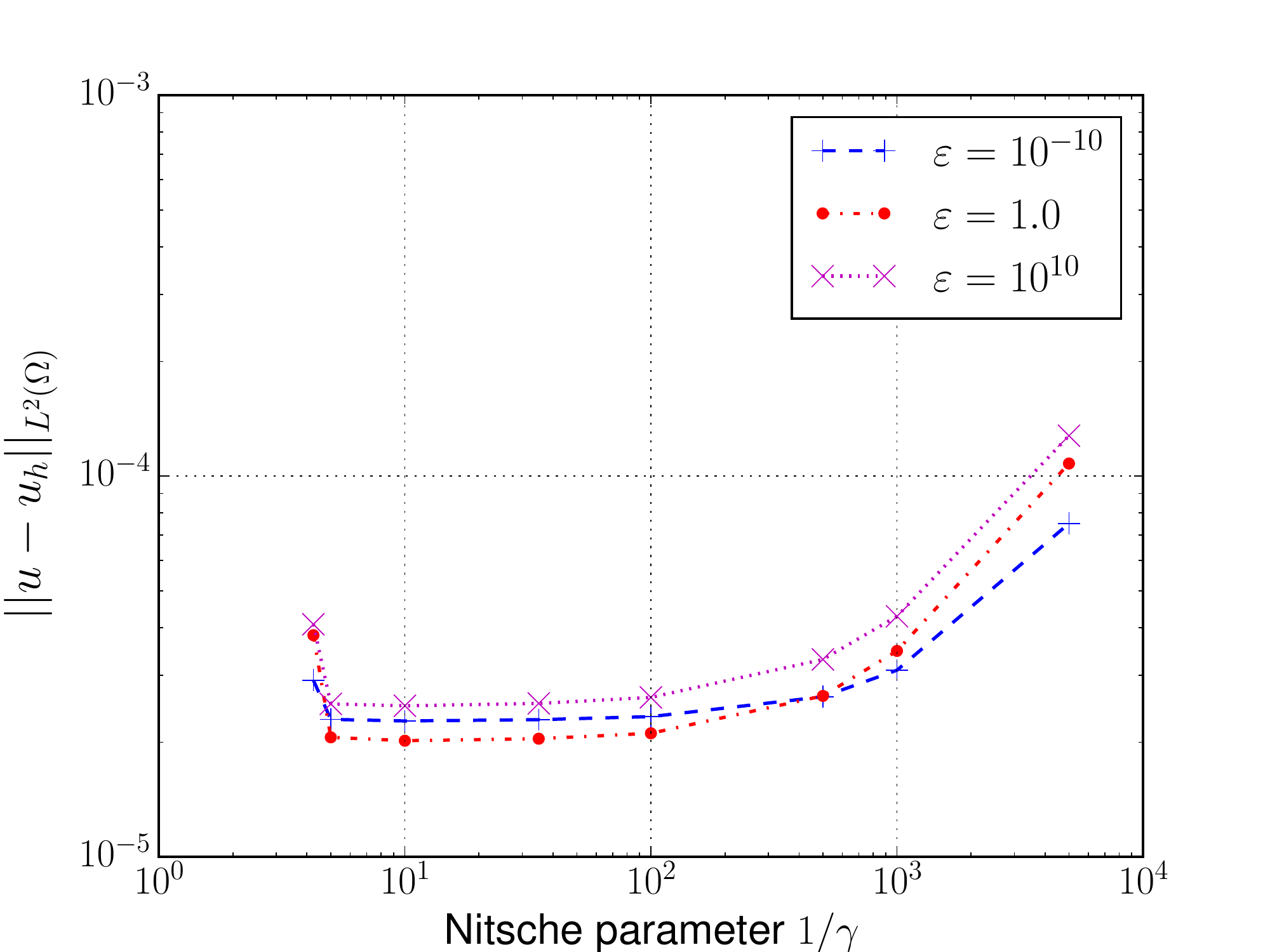}}
  \subfloat{\includegraphics[trim=1 1 44 32, clip, width=0.33\textwidth]{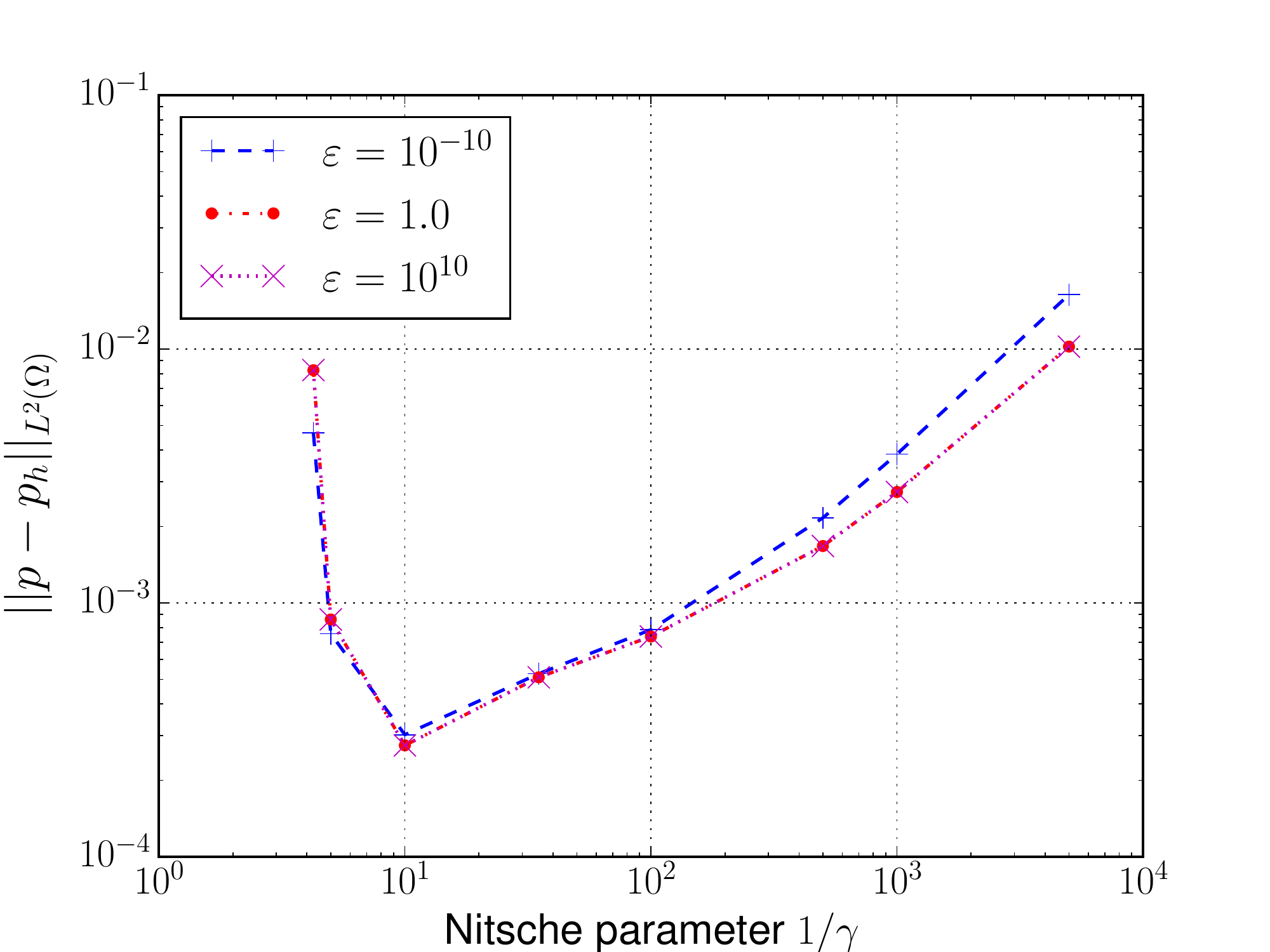}}
  \subfloat{\includegraphics[trim=1 1 44 32, clip, width=0.33\textwidth]{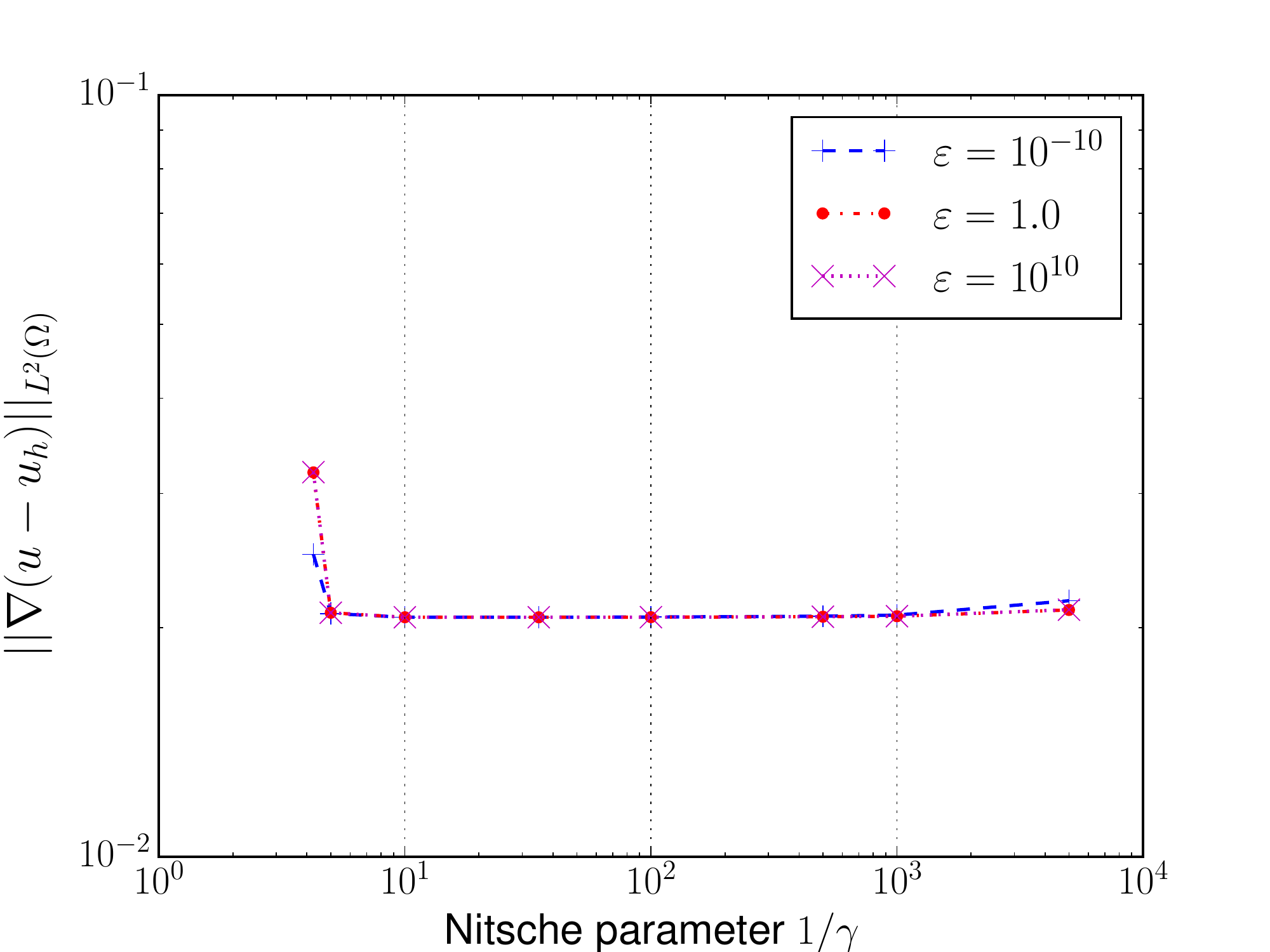}}\\
  \subfloat{\includegraphics[trim=1 1 44 32, clip, width=0.33\textwidth]{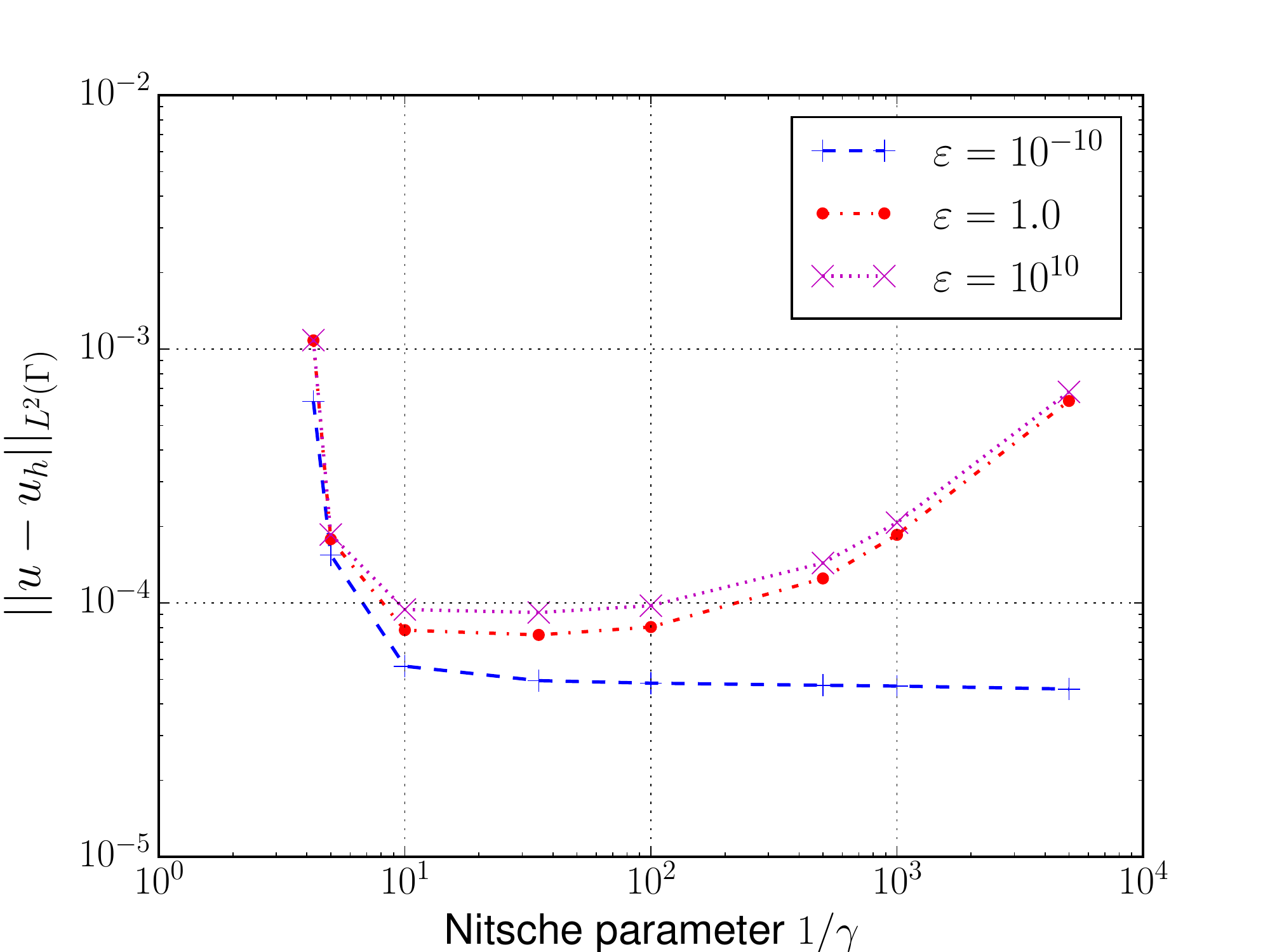}}
  \subfloat{\includegraphics[trim=1 1 44 32, clip, width=0.33\textwidth]{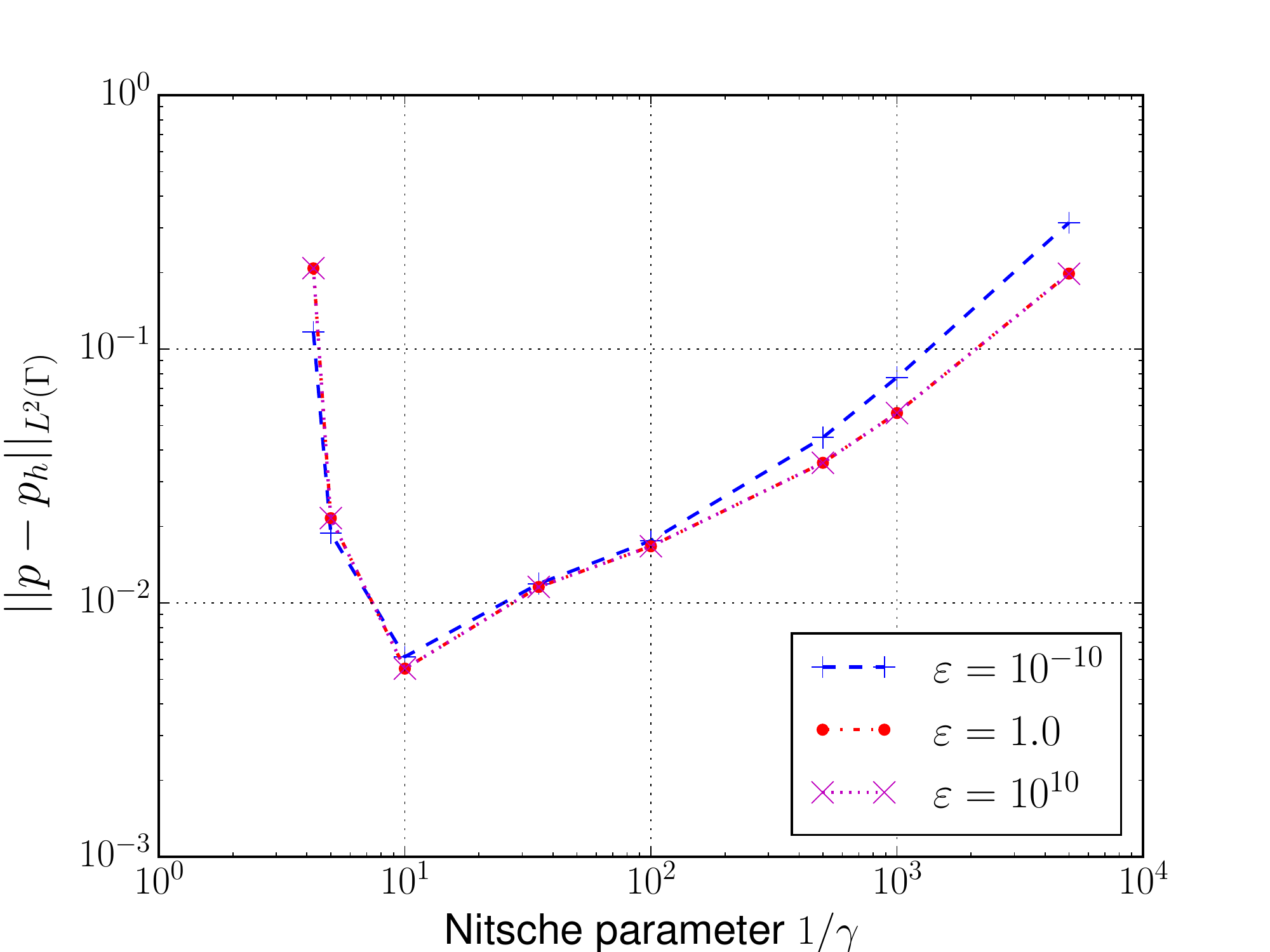}}
  \subfloat{\includegraphics[trim=1 1 44 32, clip, width=0.33\textwidth]{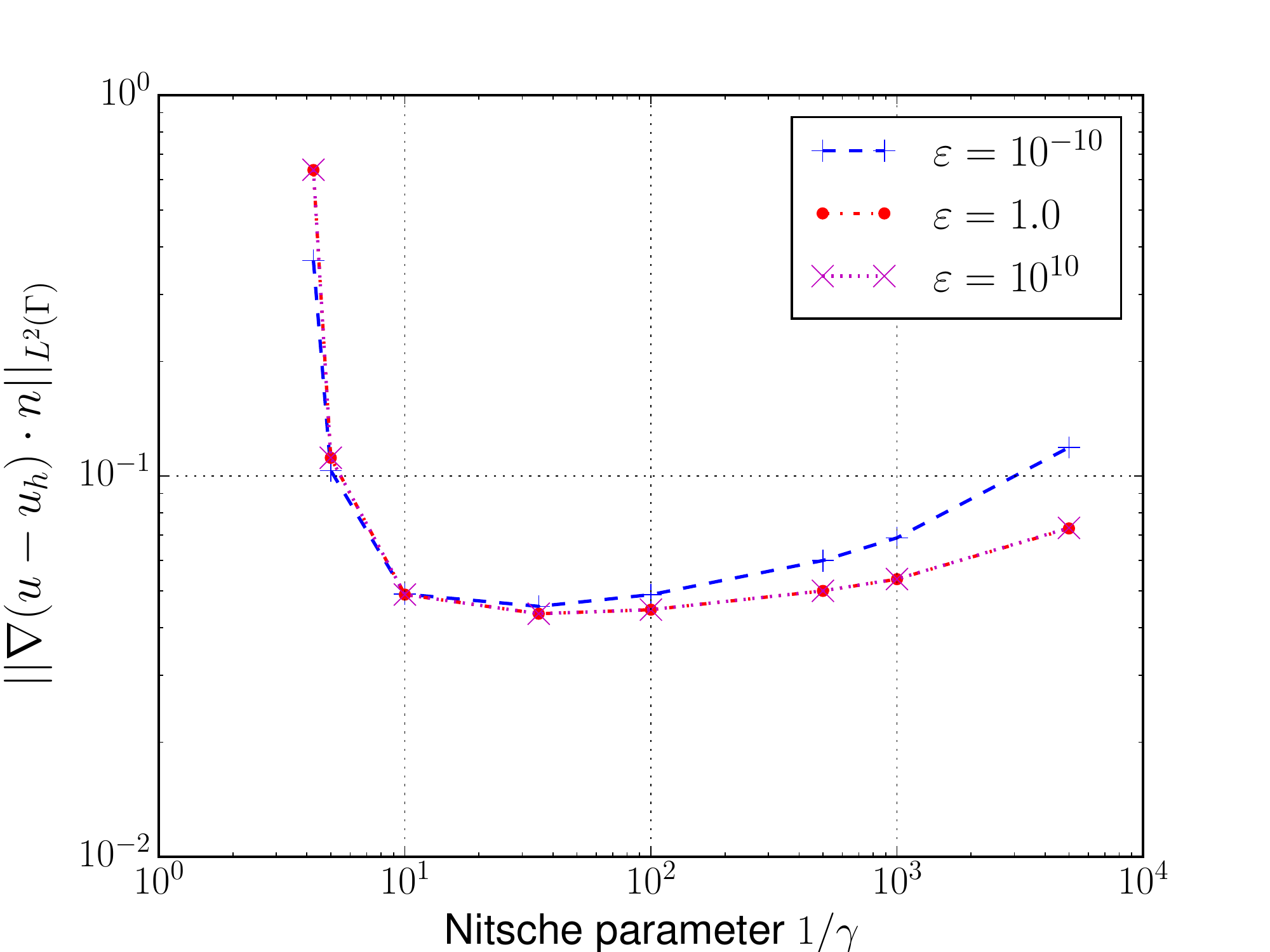}}
  \caption{Nitsche stabilization parameter study for an adjoint-consistent Nitsche's method with $\mcQ^1$ elements: bulk errors (top row) and boundary errors (bottom row) for velocity, pressure and velocity gradient (from left to right).}
  \label{fig:nitstab-convergence-study-separate-hex8}
\end{figure}

The introduced method also works for the adjoint-inconsistent case ($\zeta_u=-1$). This enables the choice of smaller values of $1/\gamma$, as predicted from theory. However, the choice of both a small $1/\gamma^t$ and a large $\varepsilon$ leads to conditioning issues, see Remark~\ref{remark:oseen_weak_formulation_large_eps}. 
This can be seen in Figures~\ref{fig:nitstab-skew-convergence-study-separate-hex8} and \ref{fig:nitstab-skew-convergence-study-separate-hex27} where for both the linear and quadratic case with $\varepsilon = 10^{10}$ small values of the stabilization parameter lead to the case where the used solver will not converge to the desired tolerance. This occurs for the linear case with $1/\gamma < 10^{-3}$ and for the quadratic case with $1/\gamma < 10^{-4}$. For the other choices of $\varepsilon$, the error remains stable for smaller choices of $1/\gamma$ confirming the results from theory which states that $1/\gamma>0$ is a sufficient choice for stability.
In the quadratic case, see Figure~\ref{fig:nitstab-skew-convergence-study-separate-hex27}, similar results as for the linear case are observed. However, the two observed minima for the pressure error are not present. This indicates that this phenomenon stems from the linear elements and is not an inherent property of the proposed method. 
Another observation is the fact that the error for the quadratic approximations remains stable longer before a large stabilization parameter $1/\gamma$ effects the solution negatively.

\begin{figure}[ht!]
  \centering
  \subfloat{\includegraphics[trim=1 1 44 32, clip, width=0.33\textwidth]{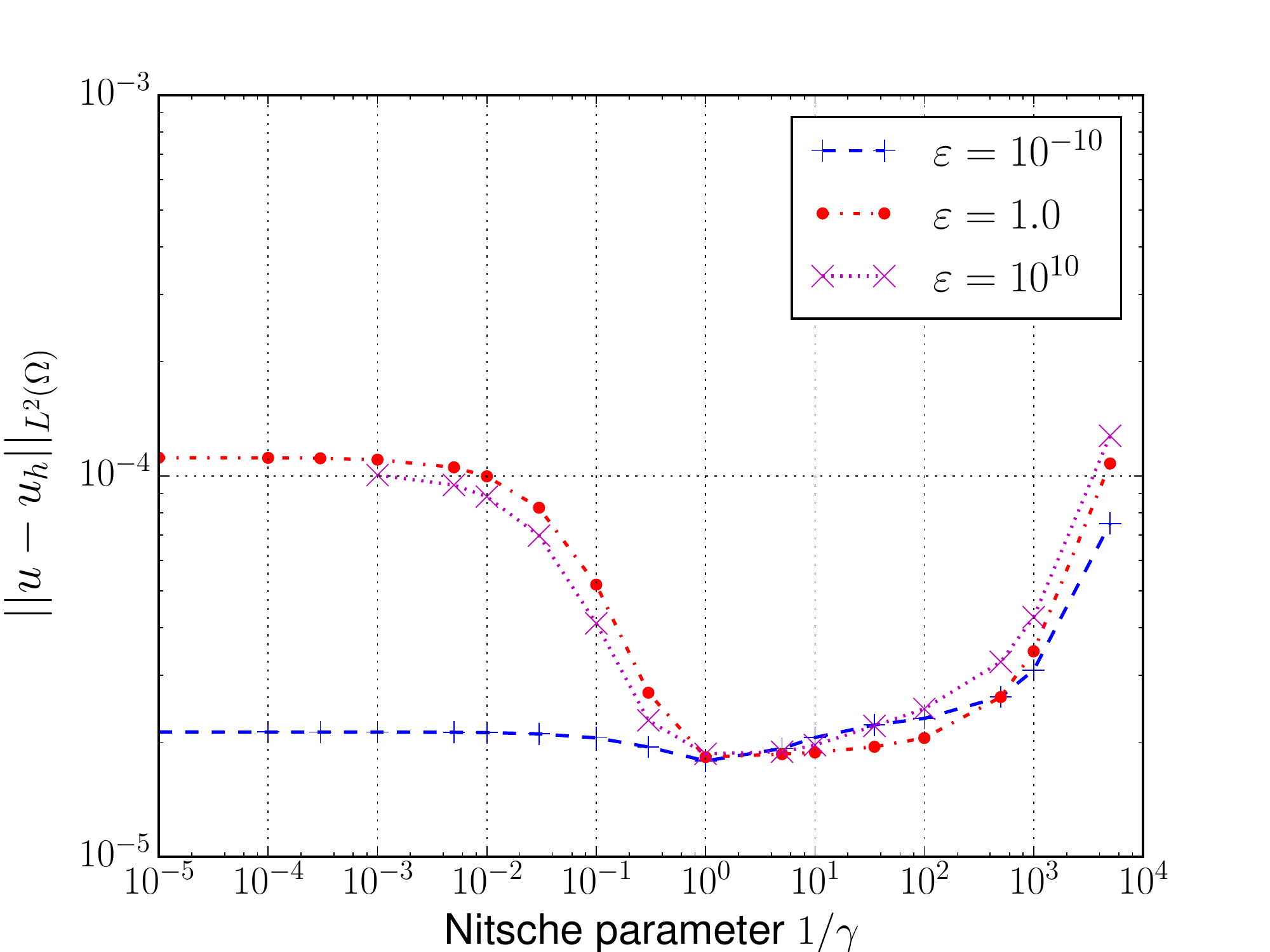}}
  \subfloat{\includegraphics[trim=1 1 44 32, clip, width=0.33\textwidth]{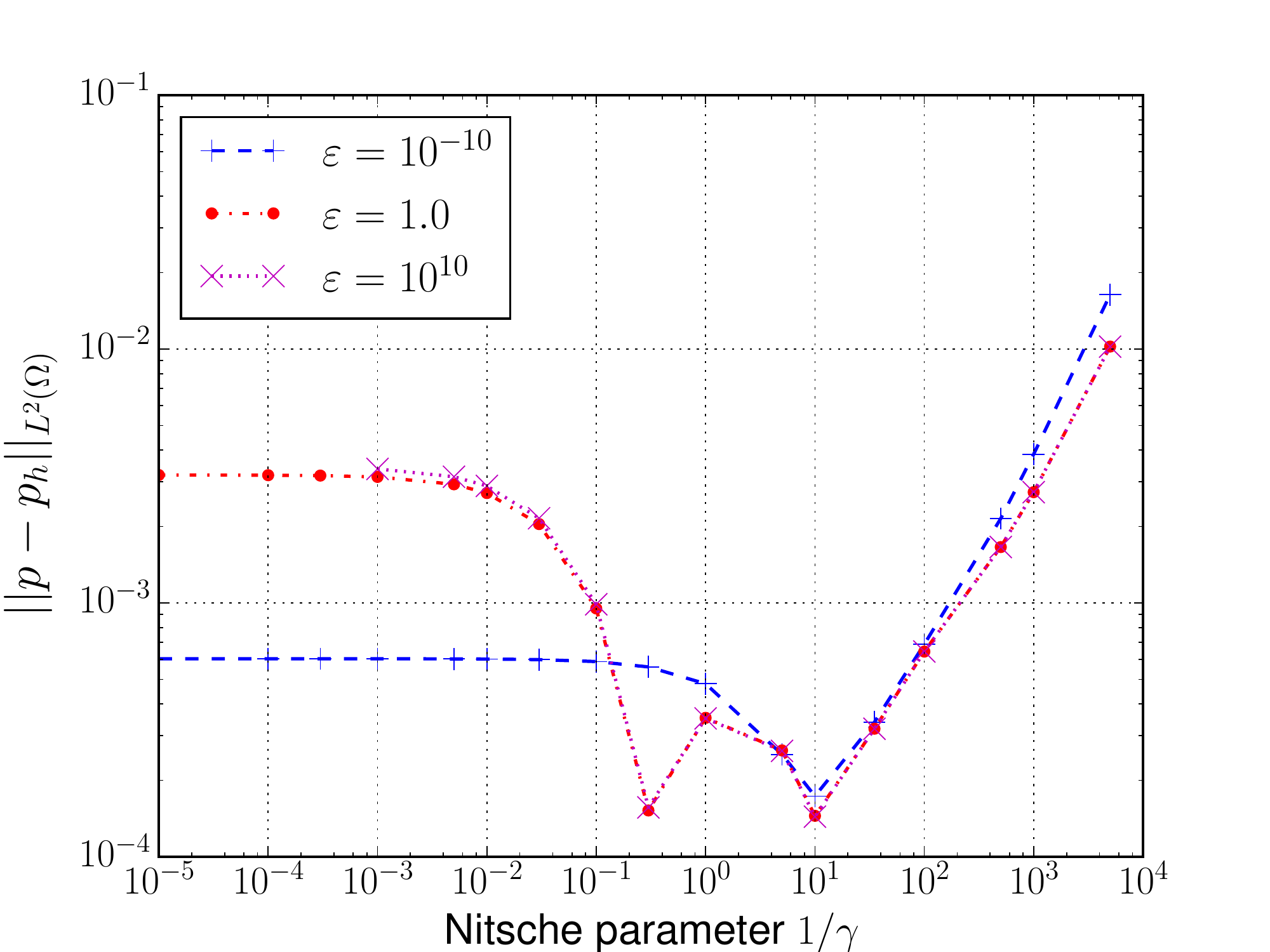}}
  \subfloat{\includegraphics[trim=1 1 44 32, clip, width=0.33\textwidth]{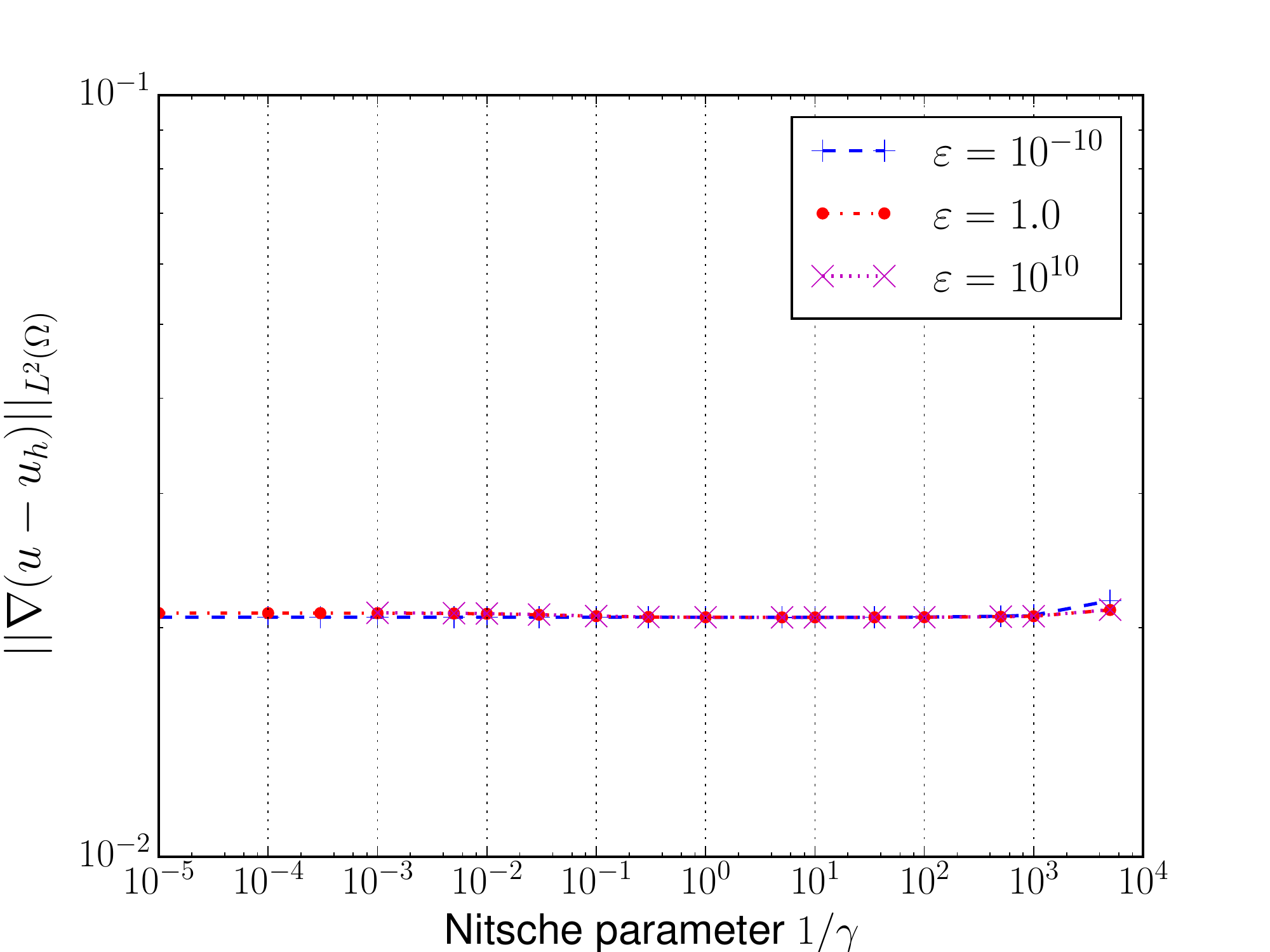}}\\
  \subfloat{\includegraphics[trim=1 1 44 32, clip, width=0.33\textwidth]{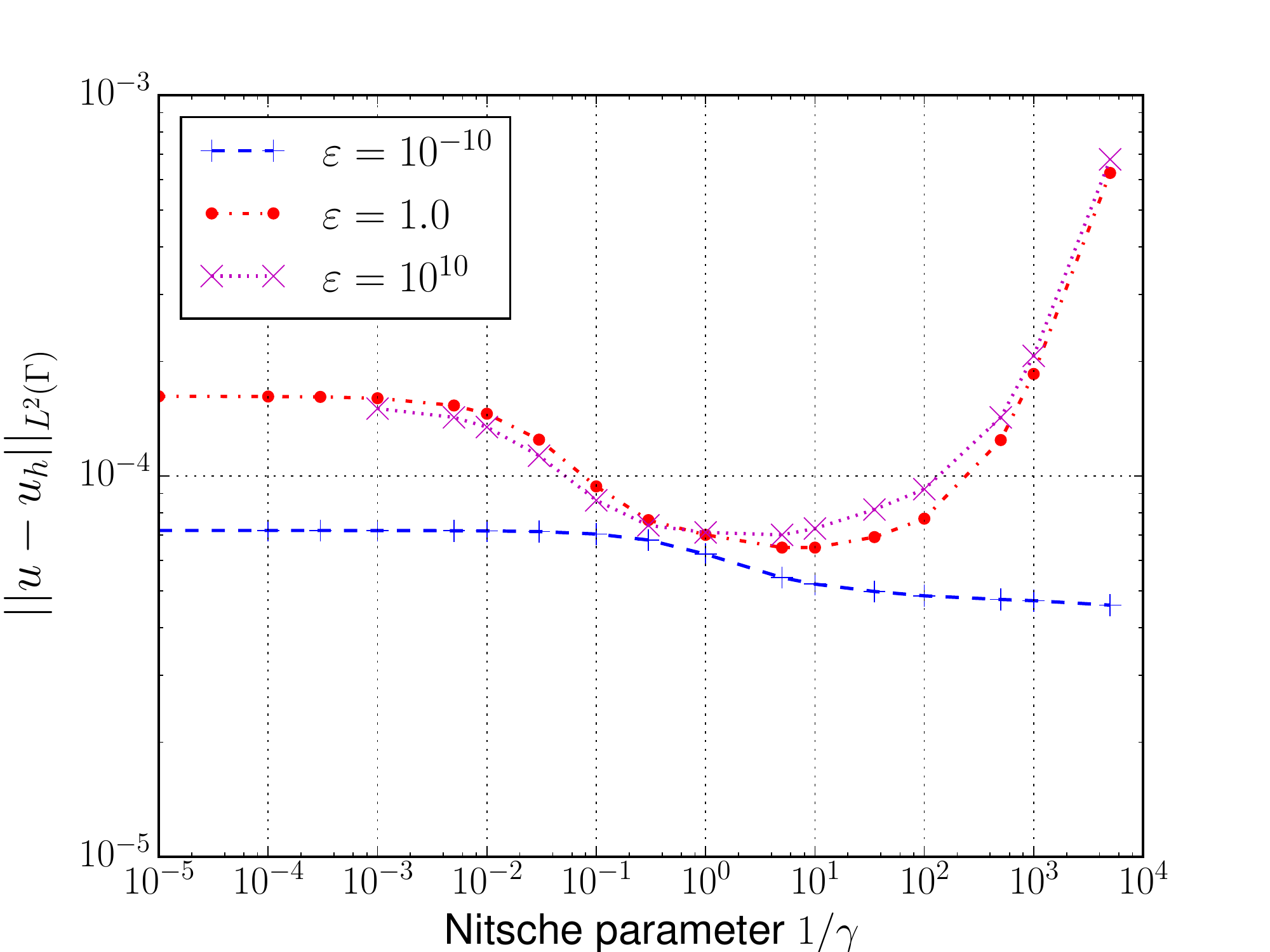}}
  \subfloat{\includegraphics[trim=1 1 44 32, clip, width=0.33\textwidth]{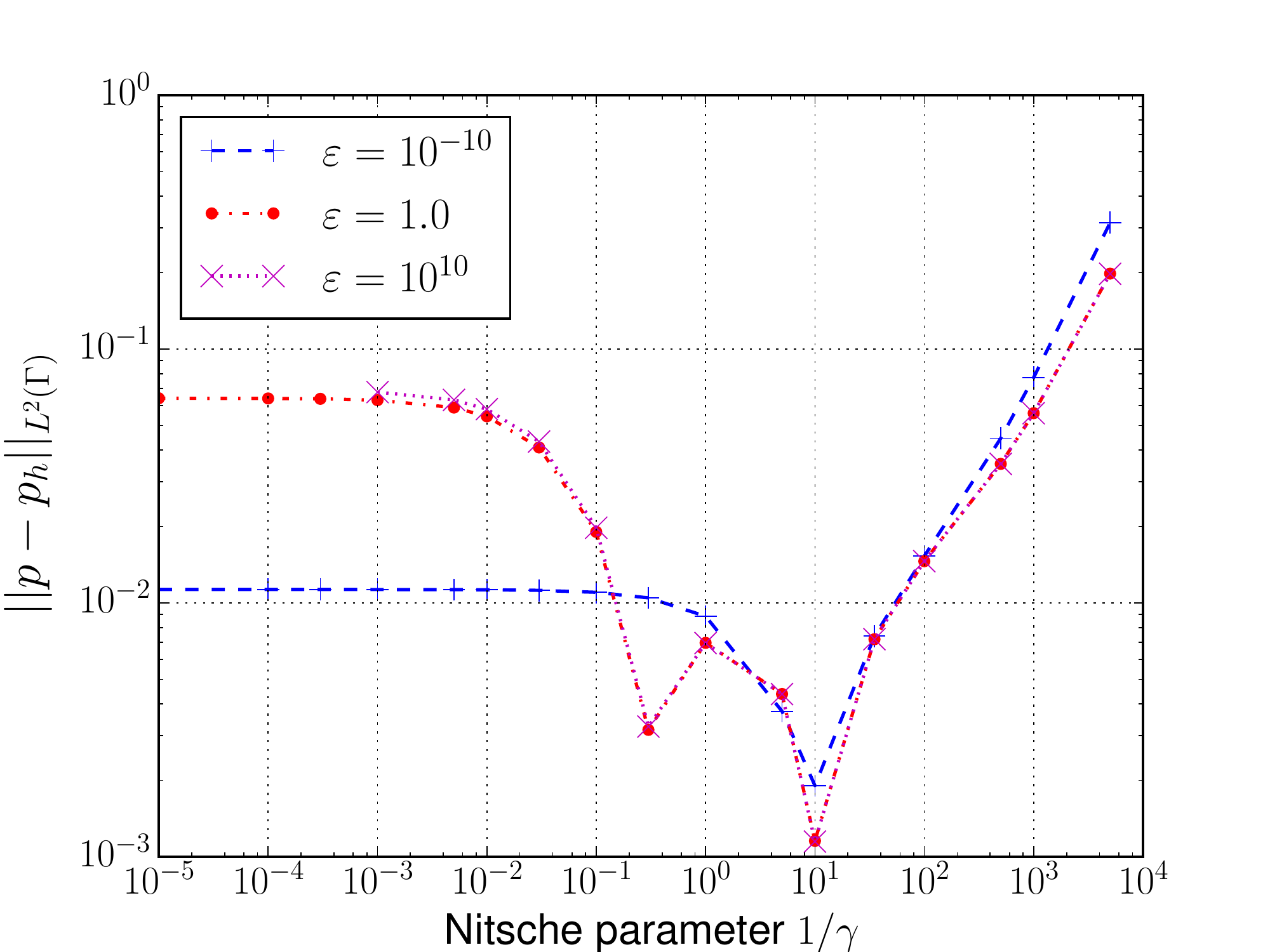}}
  \subfloat{\includegraphics[trim=1 1 44 32, clip, width=0.33\textwidth]{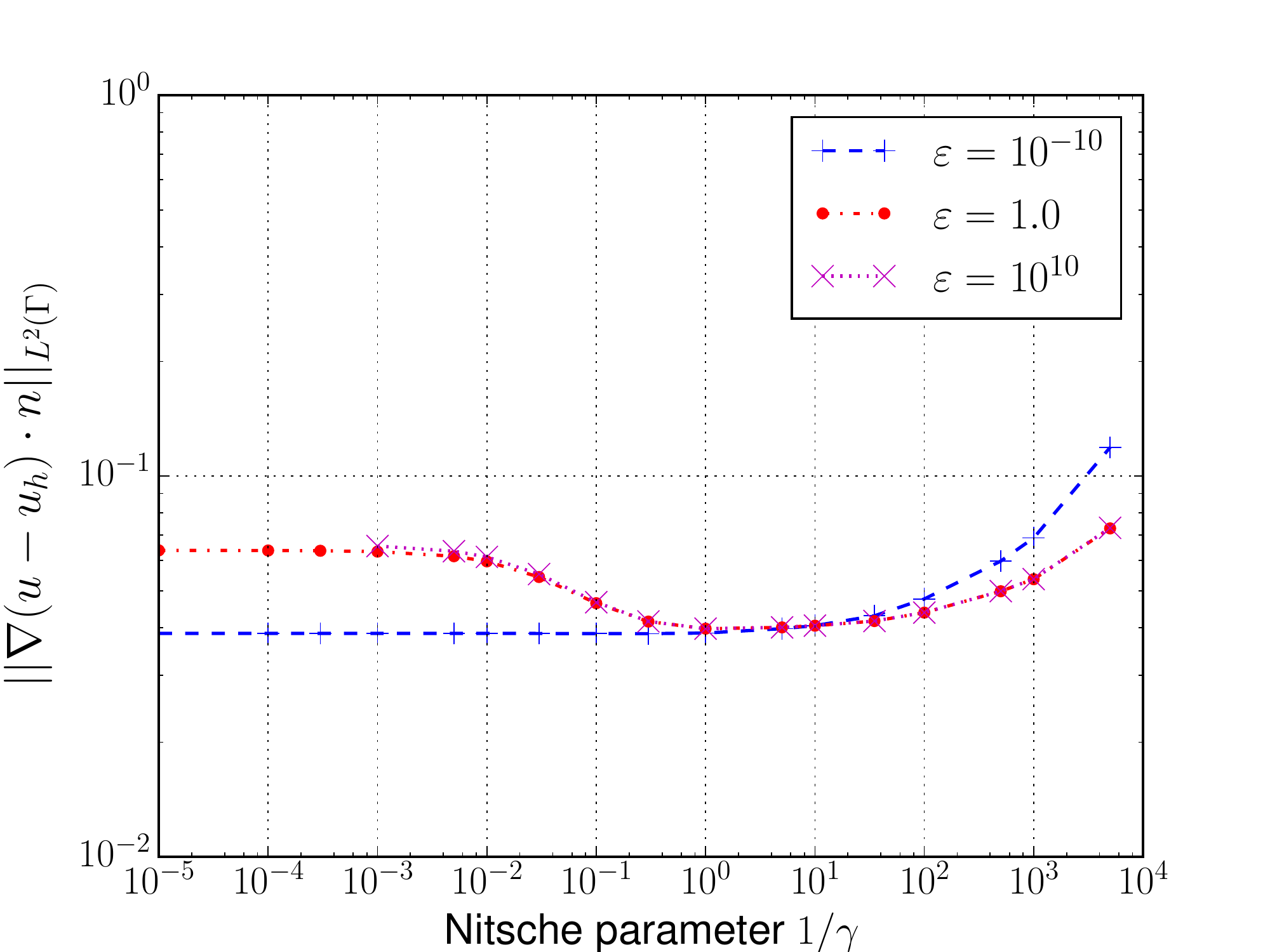}}
  \caption{Nitsche stabilization parameter study for an adjoint-inconsistent Nitsche's method with $\mcQ^1$ elements: bulk errors (top row) and boundary errors (bottom row) for velocity, pressure and velocity gradient (from left to right).}
  \label{fig:nitstab-skew-convergence-study-separate-hex8}
\end{figure}

\begin{figure}[ht!]
  \centering
  \subfloat{\includegraphics[trim=1 1 44 32, clip, width=0.33\textwidth]{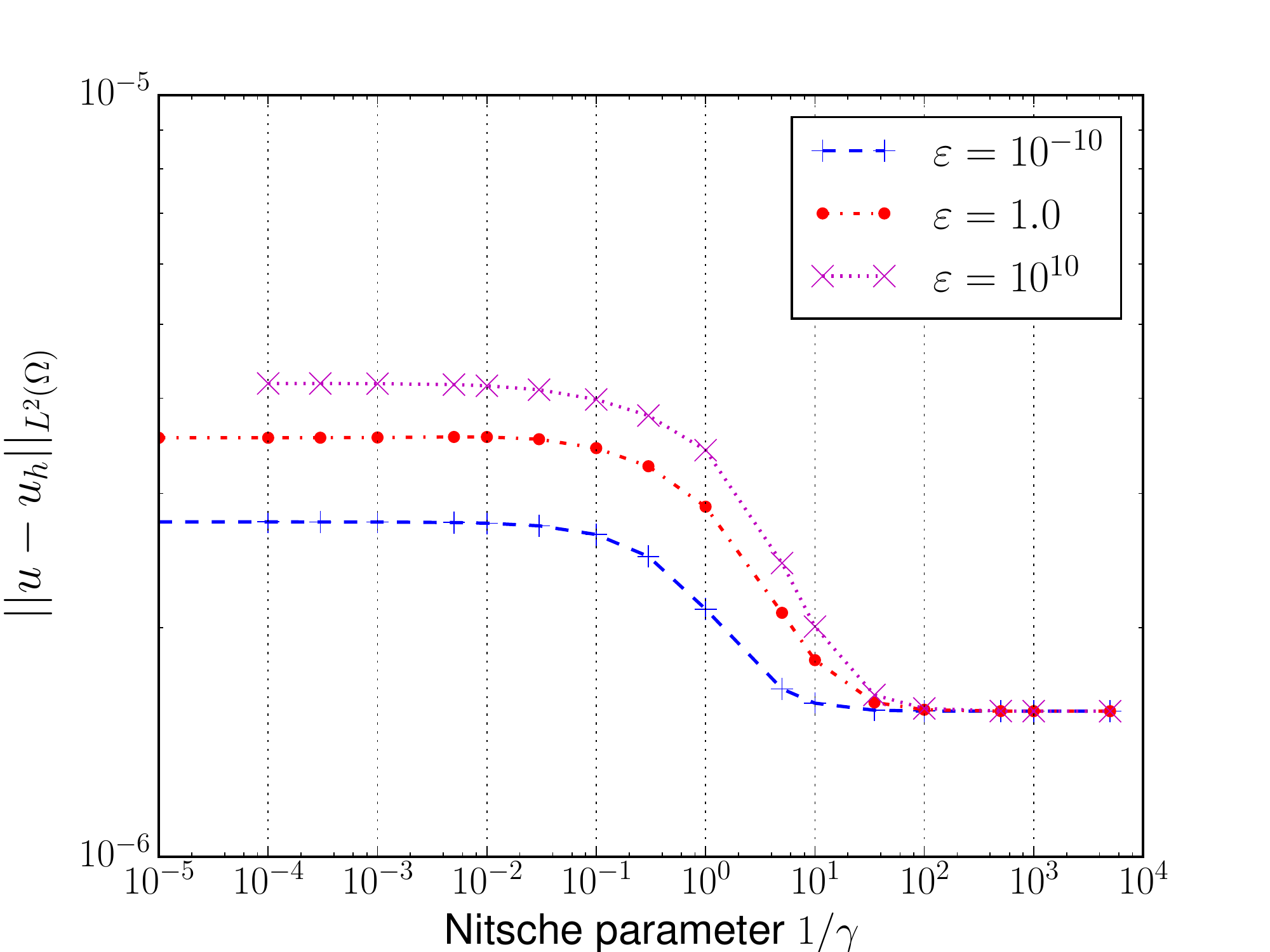}}
  \subfloat{\includegraphics[trim=1 1 44 32, clip, width=0.33\textwidth]{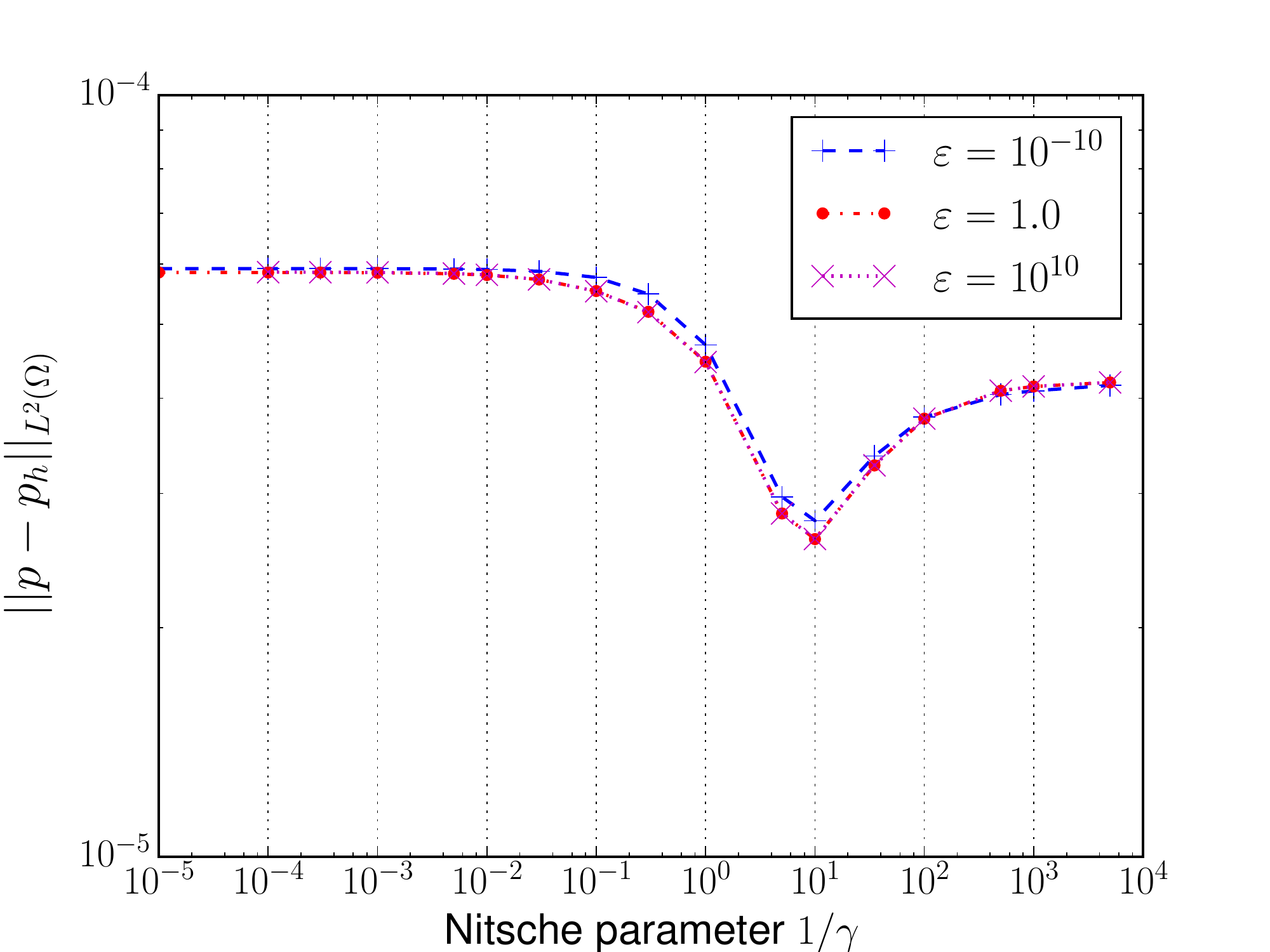}}
  \subfloat{\includegraphics[trim=1 1 44 32, clip, width=0.33\textwidth]{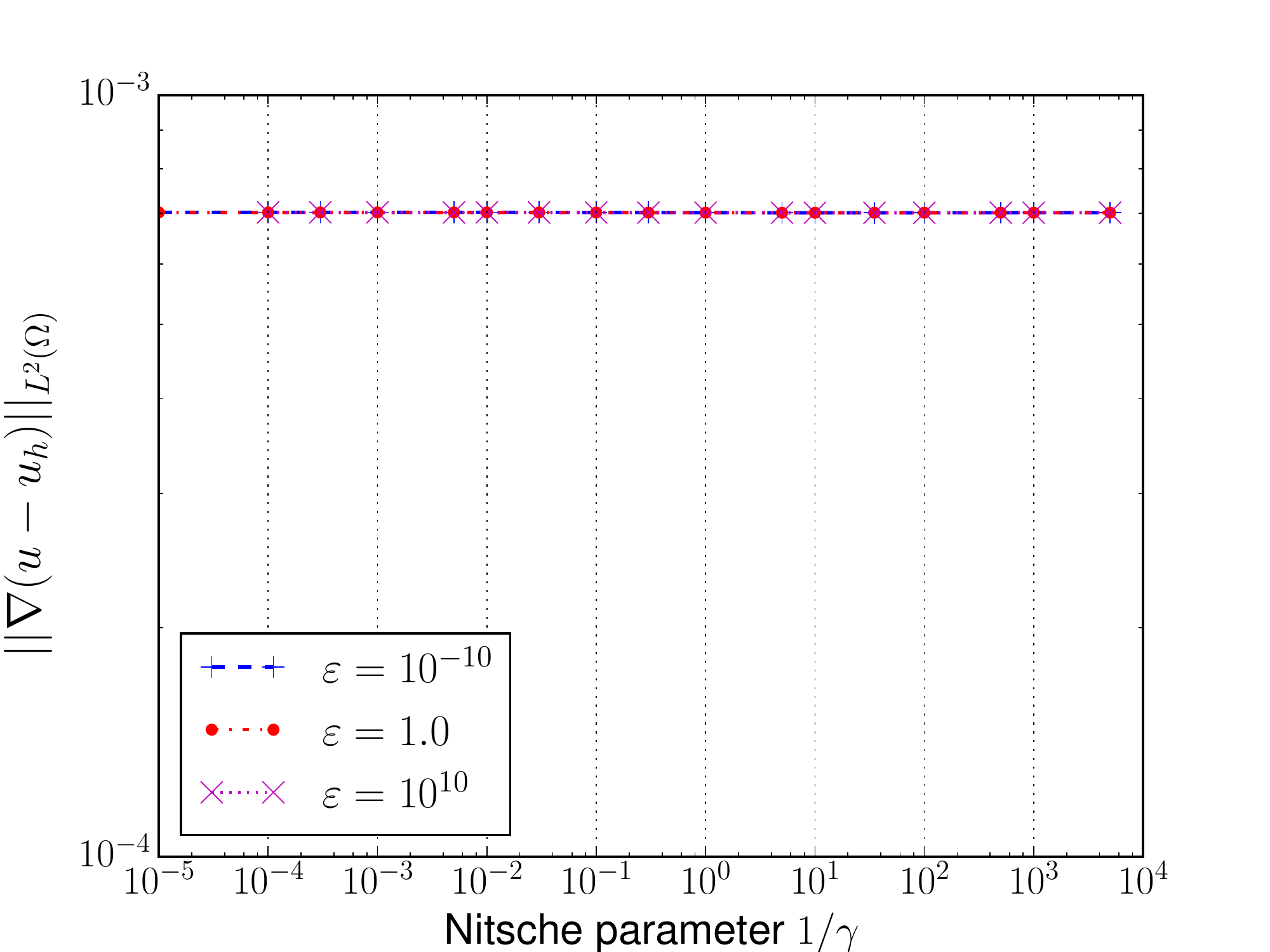}}\\
  \subfloat{\includegraphics[trim=1 1 44 32, clip, width=0.33\textwidth]{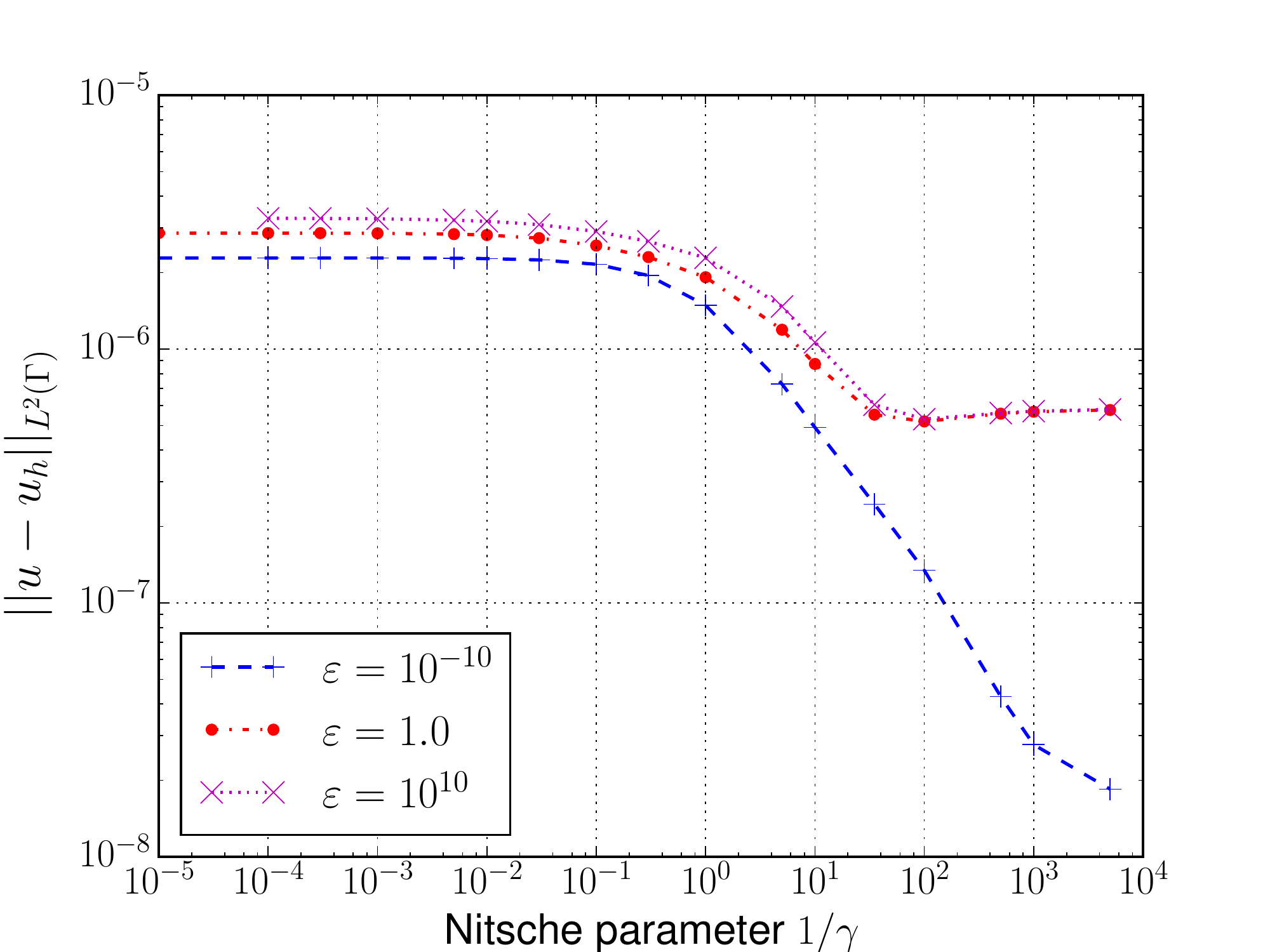}}
  \subfloat{\includegraphics[trim=1 1 44 32, clip, width=0.33\textwidth]{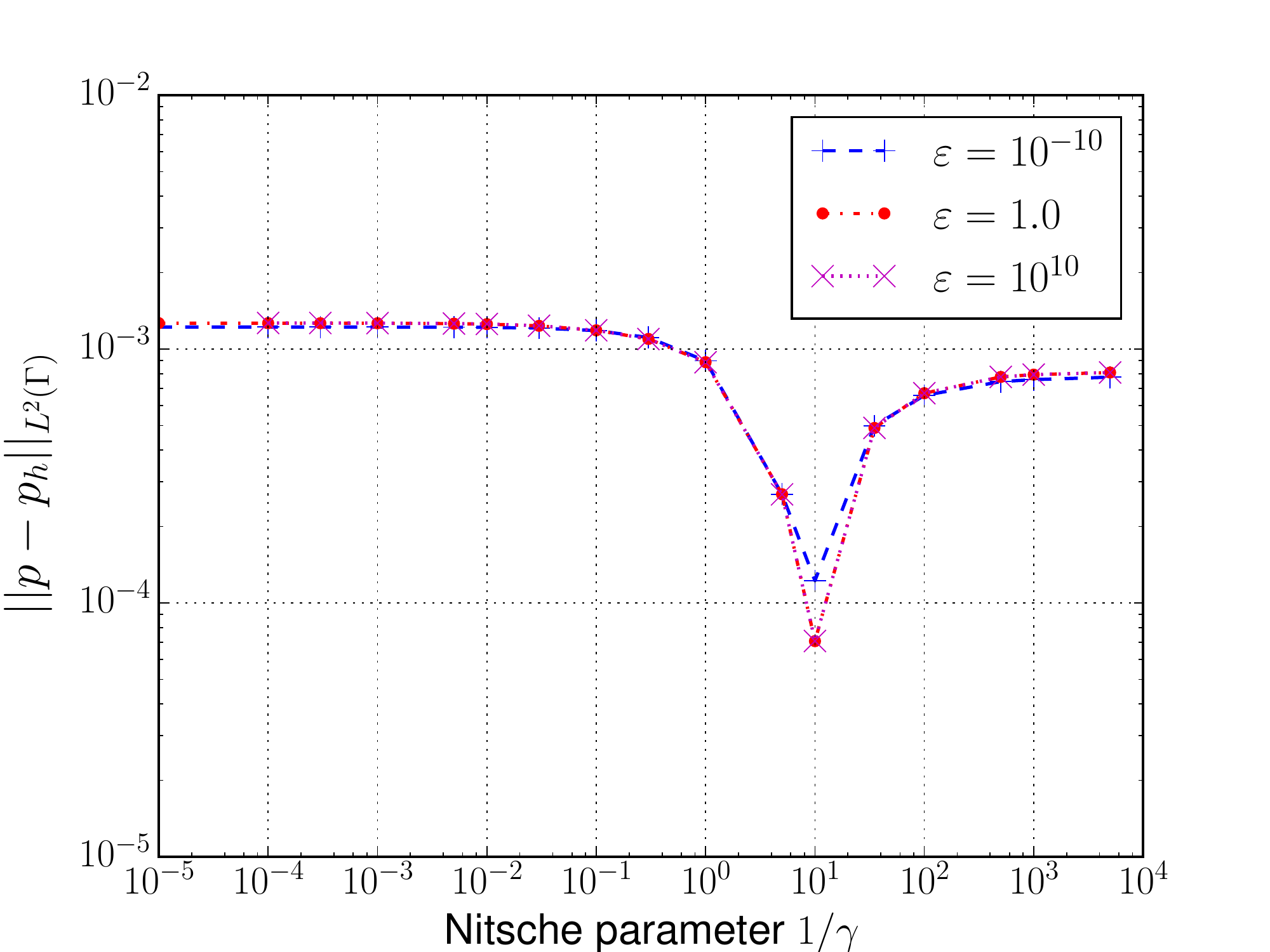}}
  \subfloat{\includegraphics[trim=1 1 44 32, clip, width=0.33\textwidth]{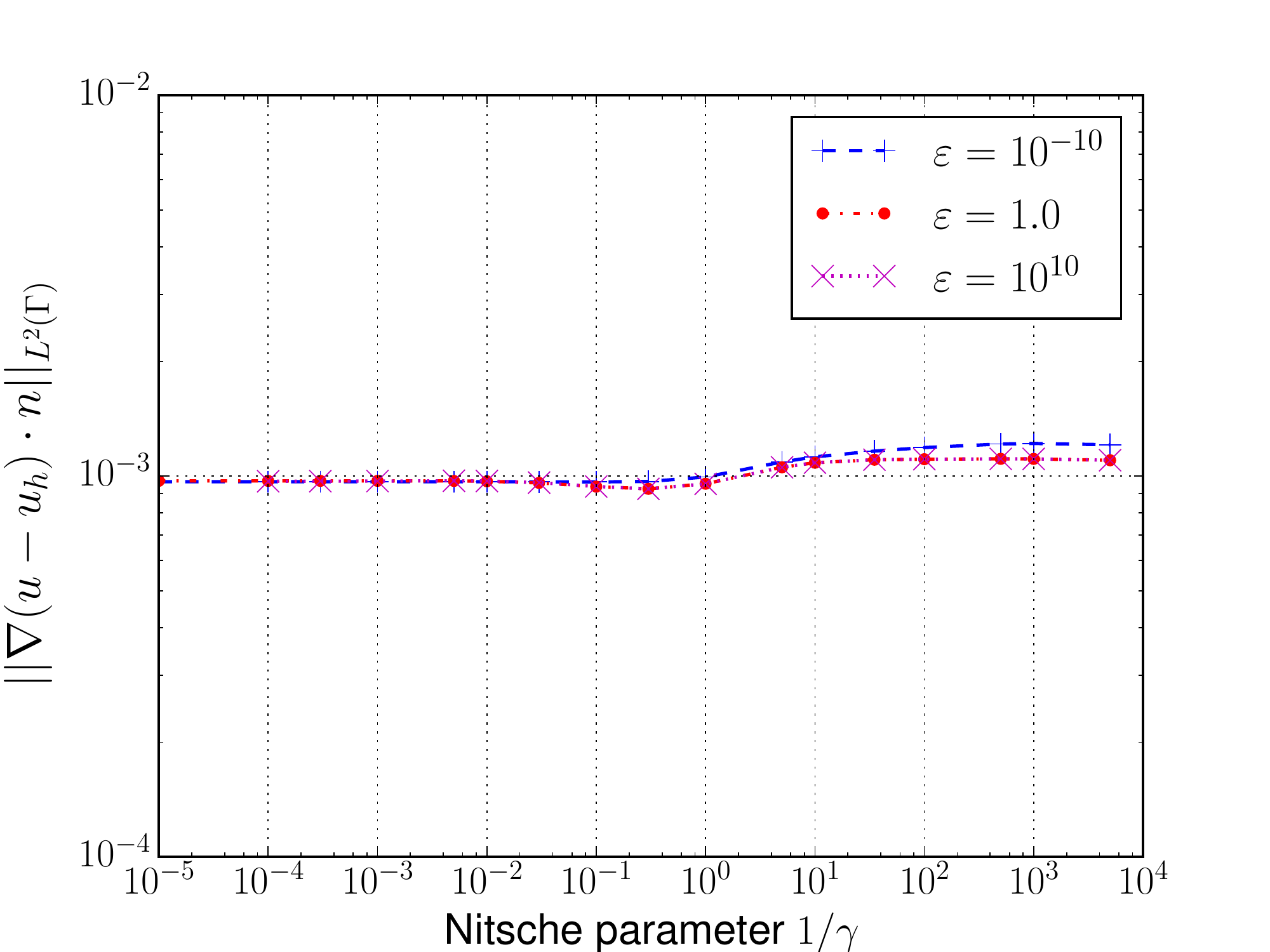}}
  \caption{Nitsche stabilization parameter study for an adjoint-inconsistent Nitsche's method with $\mcQ^2$ elements: bulk errors (top row) and boundary errors (bottom row) for velocity, pressure and velocity gradient (from left to right).}
  \label{fig:nitstab-skew-convergence-study-separate-hex27}
\end{figure}


From the graphs $1/\gamma \approx 10$ gives a clear minimum for the pressure error for both the linear and quadratic approximations. However, in the quadratic case the velocity error is smaller for a choice of $1/\gamma \approx 35$. From these observations a good choice of the stabilization parameter should be around $1/\gamma \in (10, 50)$. This proposed interval agrees well with the observed results in \citet{SchottWall2014}.

\subsection{Slip Length Sensitivity Study}
To demonstrate the robustness of the method to the choice of slip length, errors are evaluated for a series of $\varepsilon \in [10^{-10},10^{10}]$ for linear $\mcQ^1$ elements. A comparison is done with the substitution method \eqref{eq:substitution_method} explained in Section~\ref{ssec:weak_imposition_bcs_oseen} to emphasize the advantages of our method. As can be seen in Figure~\ref{fig:sliplength-convergence-study-seperate-with-urquiza-hex8}, when imposing the general Navier condition by means of our Nitsche's method the error remains almost constant and the difference between the errors of the limiting cases (i.e. $\varepsilon \rightarrow 0$ or $\varepsilon \rightarrow \infty$) are small. Even though the analytic solution is independent of $\varepsilon$ some difference between the limiting cases are expected for the numerical simulations. Furthermore, the advantage of the proposed method to the substitution method is clear from this Figure. 
The substitution method starts to produce noticeably larger errors for $\varepsilon < 10^{-5}$, and for smaller choices of slip length consecutively worse results are observed up until our linear solver could not solve the system for $\varepsilon < 10^{-8}$.
These results are expected for the substitution method \eqref{eq:substitution_method} as the conditioning of the system becomes increasingly bad when the slip length approaches zero, as discussed in Section~\ref{ssec:weak_imposition_bcs_oseen}.
Furthermore, it is worth to be noted that in the limiting case of $\varepsilon \rightarrow \infty$ the errors between the two methods are of comparable size
and the additional consistent terms \eqref{eq:ah-form-def_tangential_1_fullNeumann} and \eqref{eq:lh-form-def_tangential_1_fullNeumann} do not deteriorate the error.

\begin{figure}[ht!]
  \centering
  \subfloat{\includegraphics[trim=1 1 44 32, clip, width=0.33\textwidth]{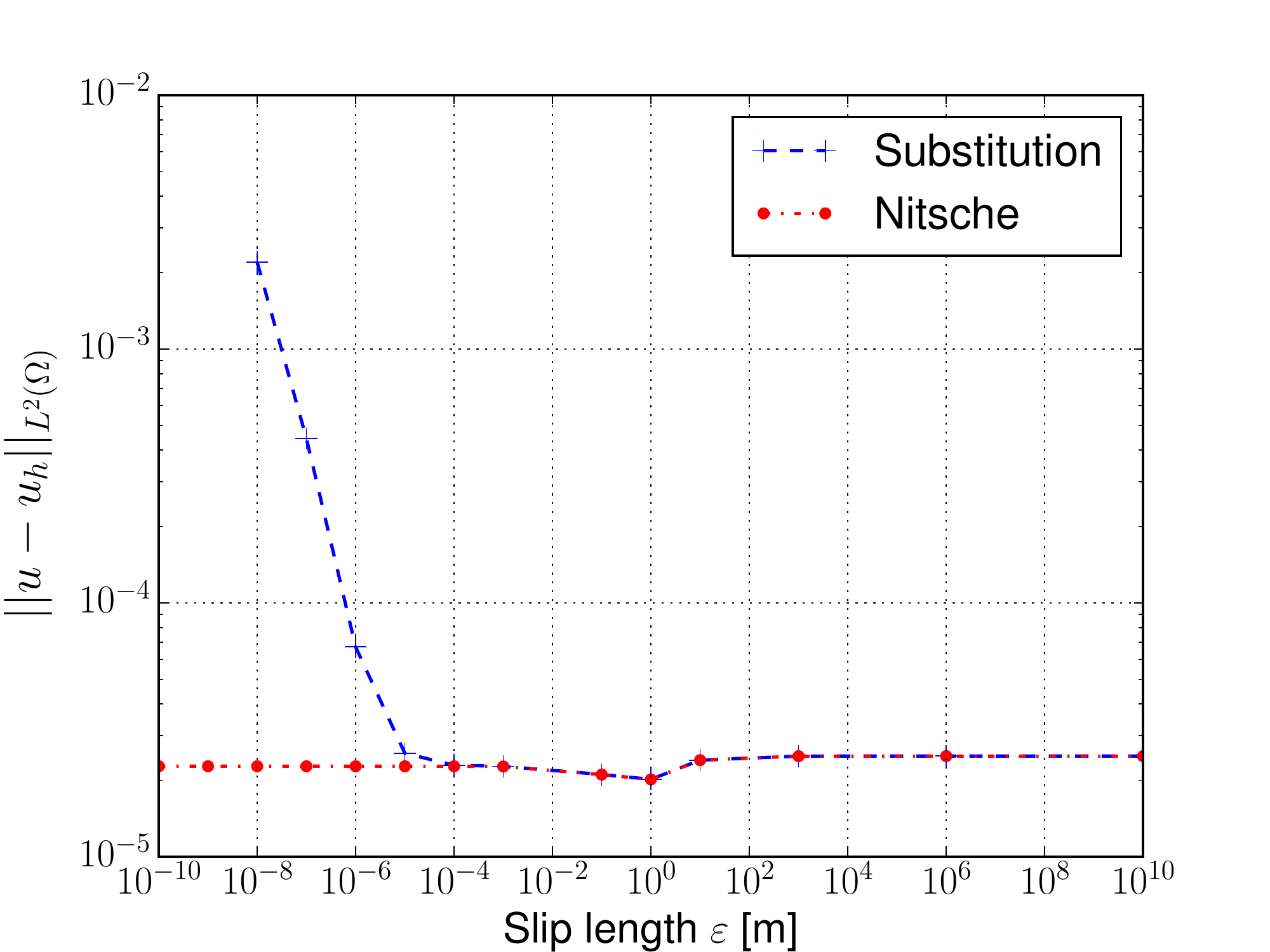}}
  \subfloat{\includegraphics[trim=1 1 44 32, clip, width=0.33\textwidth]{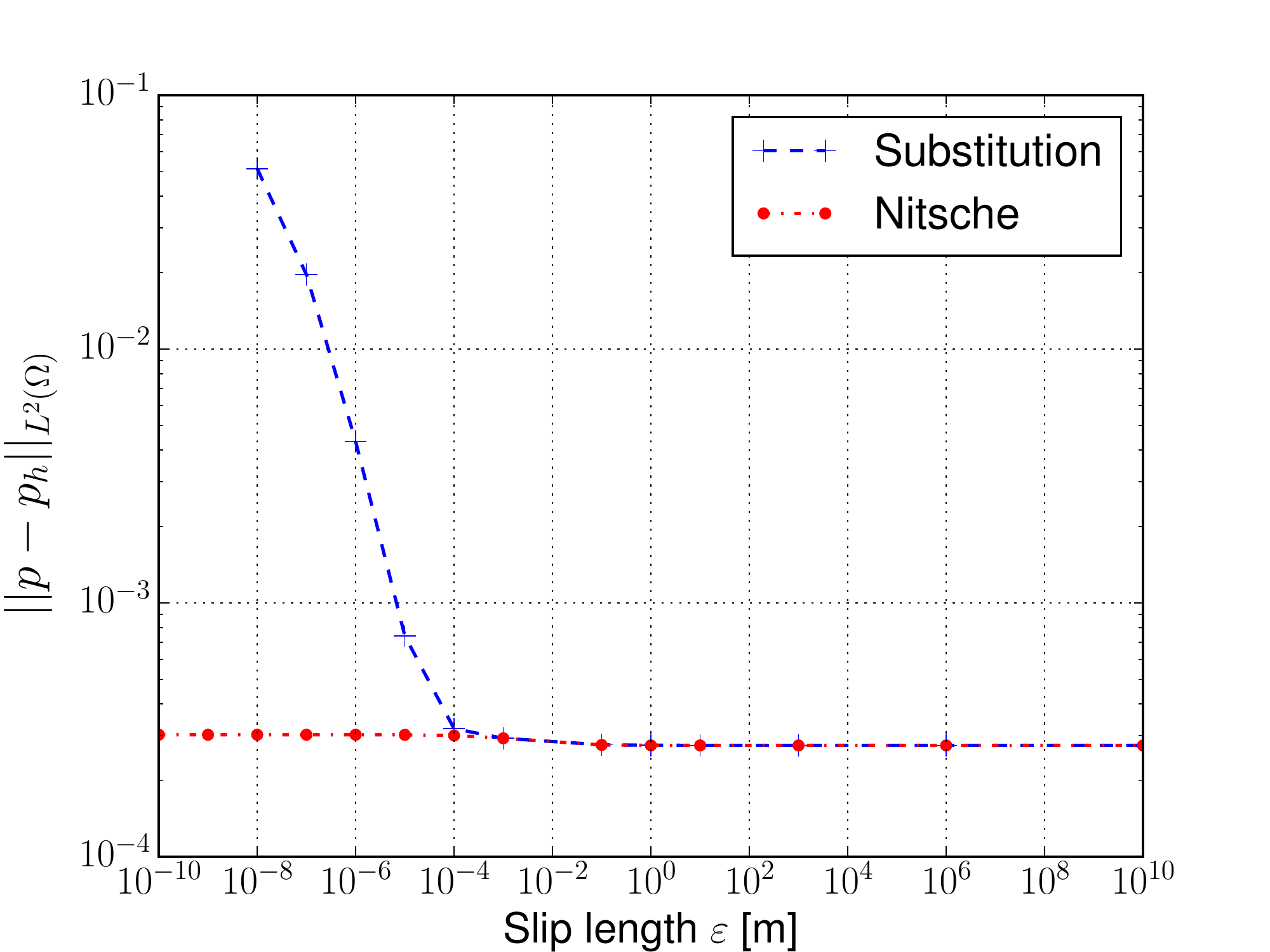}}
  \subfloat{\includegraphics[trim=1 1 44 32, clip, width=0.33\textwidth]{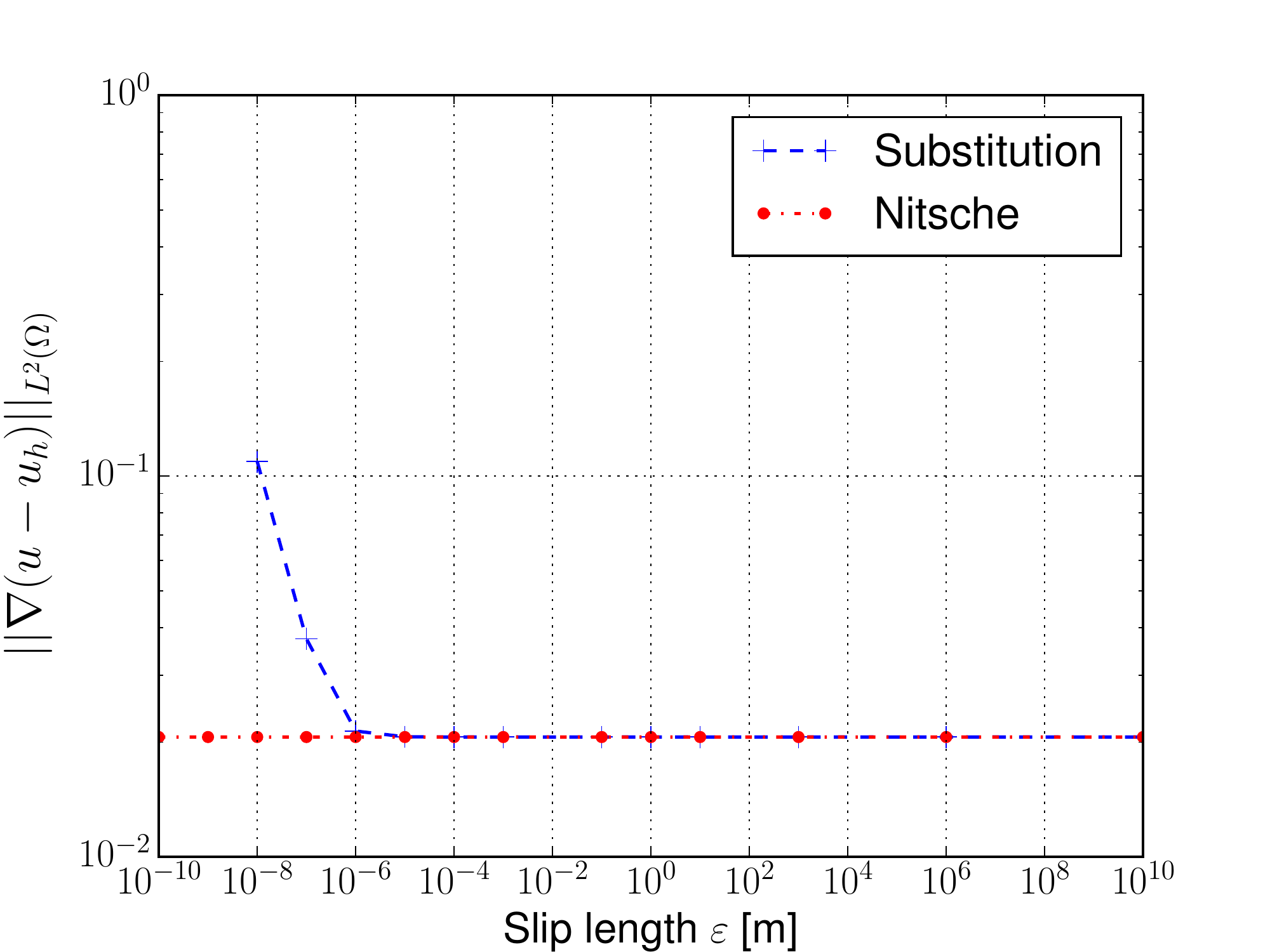}}\\
  \subfloat{\includegraphics[trim=1 1 44 32, clip, width=0.33\textwidth]{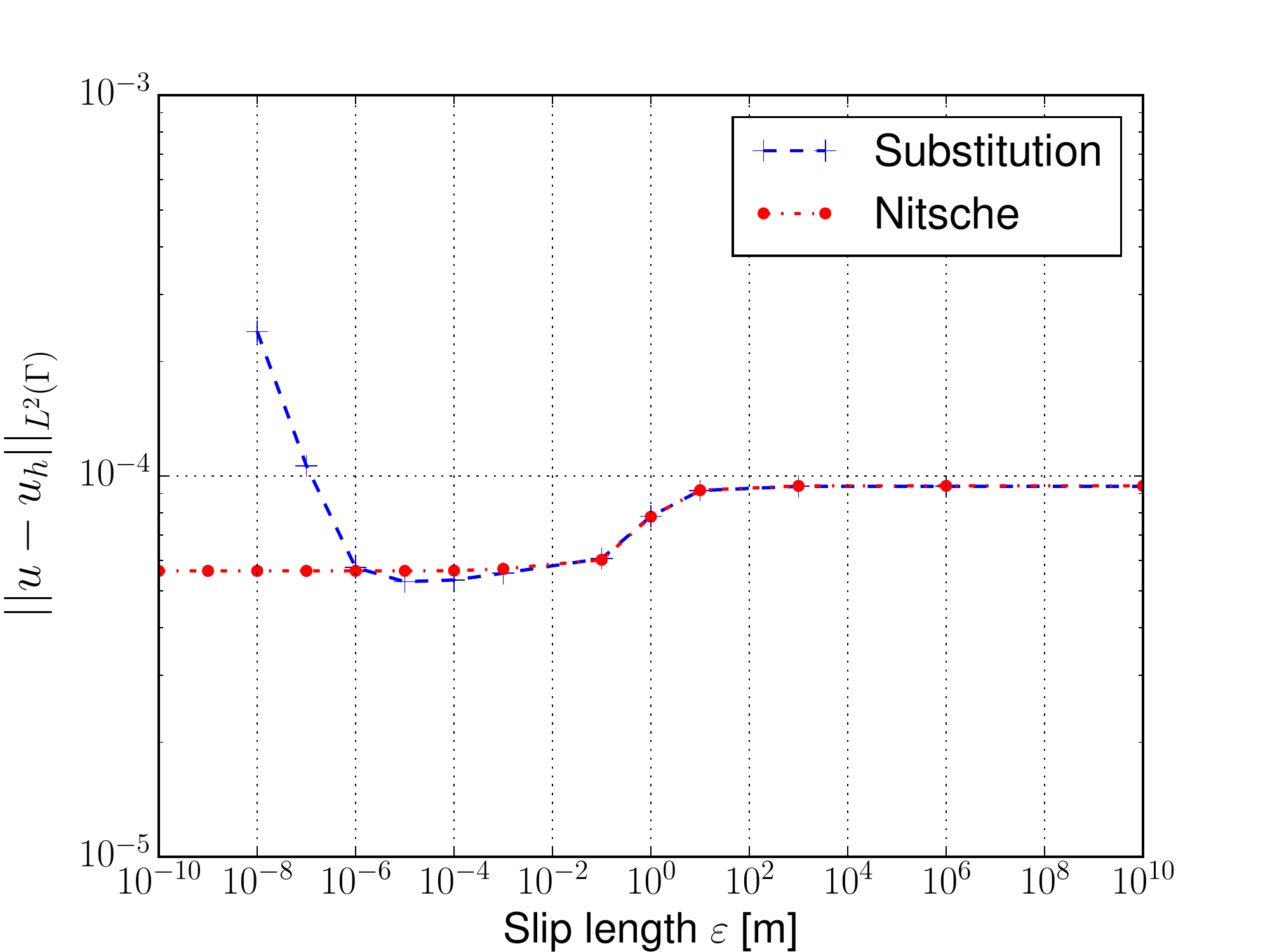}}
  \subfloat{\includegraphics[trim=1 1 44 32, clip, width=0.33\textwidth]{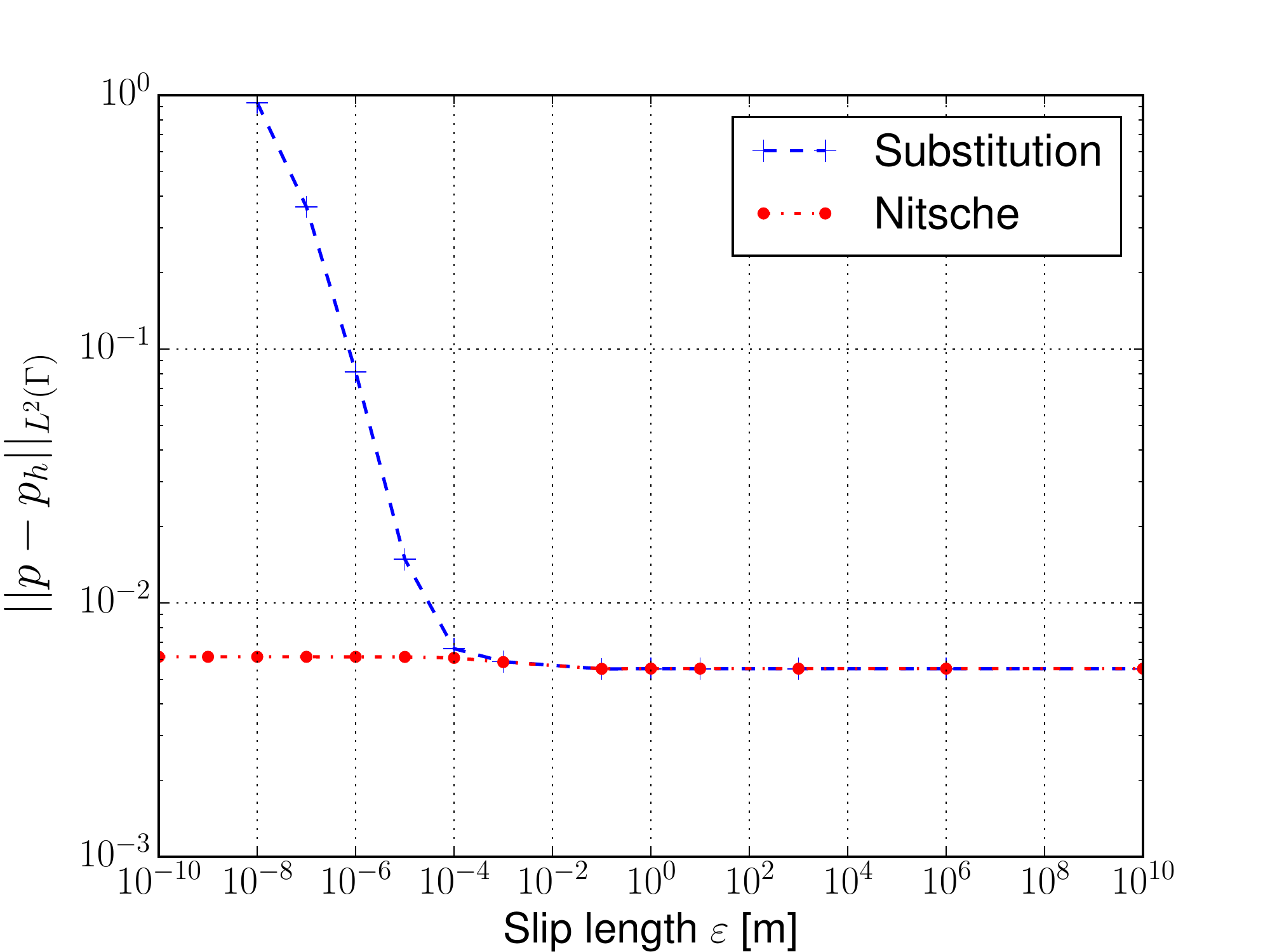}}
  \subfloat{\includegraphics[trim=1 1 44 32, clip, width=0.33\textwidth]{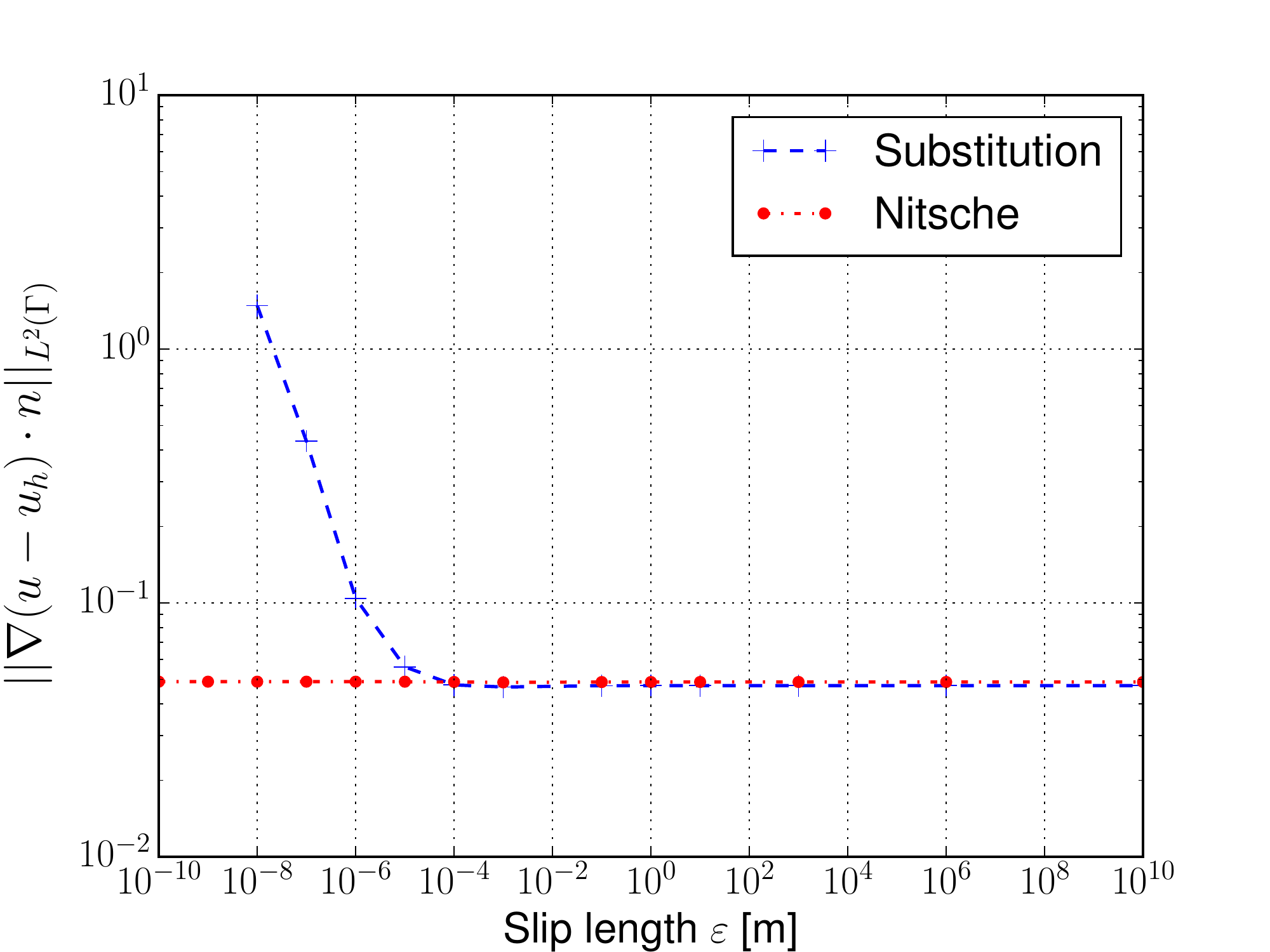}}
  \caption{Slip length parameter study for an adjoint-consistent Nitsche's method with $\mcQ^1$ elements: bulk errors (top row) and boundary errors (bottom row) for velocity, pressure and velocity gradient (from left to right).}
  \label{fig:sliplength-convergence-study-seperate-with-urquiza-hex8}
\end{figure}

\section{Conclusions}
\label{sec:conclusions}
In this work, a novel method for the Oseen problem with a general Navier slip boundary condition is introduced.
This boundary condition is best explained as a Robin condition in the tangential plane and a Dirichlet condition in the normal direction of the boundary.
The proposed method imposes the boundary condition weakly by means of a Nitsche's method for both the tangential and normal parts of the condition.
It remains well-posed and stable for all choices of slip lengths and for both low and high Reynolds numbers.
Furthermore, the presented formulation is a first step for building a more general formulation for the imposition of boundary conditions for the Oseen and Navier-Stokes equations since in the tangential direction it allows for any linear combination of Dirichlet and Neumann conditions to be set within the same framework, which previously was not possible.
This method is presented for the case of unfitted grids and an equal-order interpolated cut finite element method is used for the discretization. In the analysis, it is shown that the proposed formulation remains stable irrespective of where the boundary intersects the background mesh. 
The theoretical findings in this work remain valid also for the simpler boundary-fitted grid case and can thus readily be applied for this case as well.

To show the validity of the proposed formulation, inf-sup stability is shown. Furthermore, an \apriori~error analysis for an energy-type norm and for the $L^2$--error of the velocity are conducted for the adjoint-consistent formulation. A numerical example corroborates the findings, where optimal order of convergence was observed for the error norms of both linear and quadratic approximations.
Also observed in a numerical example is that our proposed method performs better for small slip lengths compared to the classical substitution method, and
for larger values of the slip length it performs just as well. This is in agreement with theory, as our method is well-posed in both the Dirichlet and Neumann limits for the tangential condition, whereas the classical substitution method is not defined in the Dirichlet limit, which coincides with the choice of a small slip length.



As the proposed method enables the possibility to use both no-slip and slip with the same method, it is a promising approach for a number of future applications. These include for instance fluid-structure-contact interaction as pure no-slip walls may lead to a not well-posed problem in the region where submerged bodies are close to each other. Another promising application currently investigated by the authors is the use of a localized Navier-slip condition in the modeling of contact line motion for multi-phase flows.

\section*{Acknowledgement}

This work is supported by the International Graduate School of Science and Engineering (IGSSE)
of the Technical University of Munich, Germany, under project
6.02 and the Erasmus Mundus Joint Doctorate SEED project
(European Commission). The authors would also like to thank Antonio Huerta for the discussions had about the formulation.





\section*{References}

\bibliographystyle{model1-num-names}
\bibliography{bibliography_new_3}

\begin{thebibliography}{65}
\expandafter\ifx\csname natexlab\endcsname\relax\def\natexlab#1{#1}\fi
\providecommand{\url}[1]{\texttt{#1}}
\providecommand{\href}[2]{#2}
\providecommand{\path}[1]{#1}
\providecommand{\DOIprefix}{doi:}
\providecommand{\ArXivprefix}{arXiv:}
\providecommand{\URLprefix}{URL: }
\providecommand{\Pubmedprefix}{pmid:}
\providecommand{\doi}[1]{\href{http://dx.doi.org/#1}{\path{#1}}}
\providecommand{\Pubmed}[1]{\href{pmid:#1}{\path{#1}}}
\providecommand{\bibinfo}[2]{#2}
\ifx\xfnm\relax \def\xfnm[#1]{\unskip,\space#1}\fi
\bibitem[{Juntunen and Stenberg(2009)}]{JuntunenStenberg2009a}
\bibinfo{author}{M.~Juntunen}, \bibinfo{author}{R.~Stenberg},
\newblock \bibinfo{title}{{Nitsche's method for general boundary conditions}},
\newblock \bibinfo{journal}{Mathematics of Computation} \bibinfo{volume}{78}
  (\bibinfo{year}{2009}) \bibinfo{pages}{1353--1374}.
\bibitem[{Massing et~al.(2017)Massing, Schott, and
  Wall}]{MassingSchottWall2016_CMAME_Arxiv_submit}
\bibinfo{author}{A.~Massing}, \bibinfo{author}{B.~Schott},
  \bibinfo{author}{W.~A. Wall},
\newblock \bibinfo{title}{{A stabilized Nitsche cut finite element method for
  the Oseen problem}},
\newblock \bibinfo{journal}{Computer Methods in Applied Mechanics and
  Engineering, ArXiv e-prints http://arxiv.org/abs/1611.02895v2}
  (\bibinfo{year}{2017}) \bibinfo{pages}{submitted}.
\bibitem[{Navier(1823)}]{Navier1823}
\bibinfo{author}{C.~Navier},
\newblock \bibinfo{title}{{M{\'{e}}moire sur les lois du mouvement des
  fluids}},
\newblock \bibinfo{journal}{M{\'{e}}moires de l’Acad{\'{e}}mie Royale des
  Sciences de l’Institut de France} \bibinfo{volume}{6}
  (\bibinfo{year}{1823}) \bibinfo{pages}{389--416}.
\bibitem[{Ou et~al.(2004)Ou, Perot, and Rothstein}]{OuPerotRothstein2004_PoF}
\bibinfo{author}{J.~Ou}, \bibinfo{author}{B.~Perot}, \bibinfo{author}{J.~P.
  Rothstein},
\newblock \bibinfo{title}{{Laminar drag reduction in microchannels using
  ultrahydrophobic surfaces}},
\newblock \bibinfo{journal}{Physics of Fluids} \bibinfo{volume}{16}
  (\bibinfo{year}{2004}) \bibinfo{pages}{4635--4643}.
\bibitem[{Mikeli{\'{c}}(2009)}]{Mikelic2009_QP_review}
\bibinfo{author}{A.~Mikeli{\'{c}}},
\newblock \bibinfo{title}{{Rough boundaries and wall laws}},
\newblock \bibinfo{journal}{Qualitative properties of solutions to partial
  differential equations, Lecture Notes of Necas Center for Mathematical
  Modeling, edited by E. Feireisl, P. Kaplicky and J. Malek}
  \bibinfo{volume}{5} (\bibinfo{year}{2009}) \bibinfo{pages}{103--134}.
\bibitem[{Hillairet and Takahashi(2009)}]{HillairetTakahashi2009_SIAM}
\bibinfo{author}{M.~Hillairet}, \bibinfo{author}{T.~Takahashi},
\newblock \bibinfo{title}{{Collisions in three-dimensional fluid structure
  interaction problems}},
\newblock \bibinfo{journal}{SIAM Journal on Mathematical Analysis}
  \bibinfo{volume}{40} (\bibinfo{year}{2009}) \bibinfo{pages}{2451--2477}.
\bibitem[{Neustupa and Penel(2010)}]{NeustupaPenel2010_AMFM}
\bibinfo{author}{J.~Neustupa}, \bibinfo{author}{P.~Penel},
\newblock \bibinfo{title}{{A Weak Solvability of the Navier-Stokes Equation
  with Navier’s Boundary Condition Around a Ball Striking the Wall}},
\newblock \bibinfo{journal}{Advances in mathematical fluid mechanics}
  (\bibinfo{year}{2010}) \bibinfo{pages}{385--407}.
\bibitem[{G{\'{e}}rard-Varet et~al.(2015)G{\'{e}}rard-Varet, Hillairet, and
  Wang}]{GerardHillairetWang2015_JMPA}
\bibinfo{author}{D.~G{\'{e}}rard-Varet}, \bibinfo{author}{M.~Hillairet},
  \bibinfo{author}{C.~Wang},
\newblock \bibinfo{title}{{The influence of boundary conditions on the contact
  problem in a 3D Navier--Stokes flow}},
\newblock \bibinfo{journal}{Journal de Math{\'{e}}matiques Pures et
  Appliqu{\'{e}}es} \bibinfo{volume}{103} (\bibinfo{year}{2015})
  \bibinfo{pages}{1--38}.
\bibitem[{Huh and Scriven(1971)}]{HuhScriven1971_JCIS}
\bibinfo{author}{C.~Huh}, \bibinfo{author}{L.~E. Scriven},
\newblock \bibinfo{title}{{Hydrodynamic model of steady movement of a
  solid/liquid/fluid contact line}},
\newblock \bibinfo{journal}{Journal of Colloid and Interface Science}
  \bibinfo{volume}{35} (\bibinfo{year}{1971}) \bibinfo{pages}{85--101}.
\bibitem[{Qian et~al.(2006)Qian, Wang, and Sheng}]{QianWangSheng2006_JFM}
\bibinfo{author}{T.~Qian}, \bibinfo{author}{X.-P. Wang},
  \bibinfo{author}{P.~Sheng},
\newblock \bibinfo{title}{{A variational approach to moving contact line
  hydrodynamics}},
\newblock \bibinfo{journal}{Journal of Fluid Mechanics} \bibinfo{volume}{564}
  (\bibinfo{year}{2006}) \bibinfo{pages}{333--360}.
\bibitem[{Gerbeau and Lelievre(2009)}]{GerbeauLelievre2009_CMAME}
\bibinfo{author}{J.-F. Gerbeau}, \bibinfo{author}{T.~Lelievre},
\newblock \bibinfo{title}{{Generalized Navier boundary condition and geometric
  conservation law for surface tension}},
\newblock \bibinfo{journal}{Computer Methods in Applied Mechanics and
  Engineering} \bibinfo{volume}{198} (\bibinfo{year}{2009})
  \bibinfo{pages}{644--656}.
\bibitem[{Verf{\"{u}}rth(1985)}]{Verfuerth1985_RAIRO}
\bibinfo{author}{R.~Verf{\"{u}}rth},
\newblock \bibinfo{title}{{Finite element approximation of steady Navier-Stokes
  equations with mixed boundary conditions}},
\newblock \bibinfo{journal}{RAIRO-Mod{\'{e}}lisation math{\'{e}}matique et
  analyse num{\'{e}}rique} \bibinfo{volume}{19} (\bibinfo{year}{1985})
  \bibinfo{pages}{461--475}.
\bibitem[{Verf{\"{u}}rth(1986)}]{Verfuerth1986_NM}
\bibinfo{author}{R.~Verf{\"{u}}rth},
\newblock \bibinfo{title}{{Finite element approximation on incompressible
  Navier-Stokes equations with slip boundary condition}},
\newblock \bibinfo{journal}{Numerische Mathematik} \bibinfo{volume}{50}
  (\bibinfo{year}{1986}) \bibinfo{pages}{697--721}.
\bibitem[{Verf{\"{u}}rth(1991)}]{Verfuerth1991}
\bibinfo{author}{R.~Verf{\"{u}}rth},
\newblock \bibinfo{title}{{Finite Element Approximation of incompressible
  Navier-Stokes equations with slip boundary condition II}},
\newblock \bibinfo{journal}{Numerische Mathematik} \bibinfo{volume}{59}
  (\bibinfo{year}{1991}) \bibinfo{pages}{615--636}.
\bibitem[{Urquiza et~al.(2014)Urquiza, Garon, and
  Farinas}]{UrquizaGaronFarinas2014}
\bibinfo{author}{J.~M. Urquiza}, \bibinfo{author}{A.~Garon},
  \bibinfo{author}{M.-I. Farinas},
\newblock \bibinfo{title}{{Weak imposition of the slip boundary condition on
  curved boundaries for Stokes flow}},
\newblock \bibinfo{journal}{Journal of Computational Physics}
  \bibinfo{volume}{256} (\bibinfo{year}{2014}) \bibinfo{pages}{748--767}.
\bibitem[{John(2002)}]{John2002_JCAM}
\bibinfo{author}{V.~John},
\newblock \bibinfo{title}{{Slip with friction and penetration with resistance
  boundary conditions for the Navier--Stokes equations—numerical tests and
  aspects of the implementation}},
\newblock \bibinfo{journal}{Journal of Computational and Applied Mathematics}
  \bibinfo{volume}{147} (\bibinfo{year}{2002}) \bibinfo{pages}{287--300}.
\bibitem[{Wall et~al.(2008)Wall, Gamnitzer, and Gerstenberger}]{Wall2008}
\bibinfo{author}{W.~A. Wall}, \bibinfo{author}{P.~Gamnitzer},
  \bibinfo{author}{A.~Gerstenberger},
\newblock \bibinfo{title}{{Fluid–structure interaction approaches on fixed
  grids based on two different domain decomposition ideas}},
\newblock \bibinfo{journal}{International Journal of Computational Fluid
  Dynamics} \bibinfo{volume}{22} (\bibinfo{year}{2008})
  \bibinfo{pages}{411--427}.
\bibitem[{Schott and Wall(2014)}]{SchottWall2014}
\bibinfo{author}{B.~Schott}, \bibinfo{author}{W.~A. Wall},
\newblock \bibinfo{title}{{A new face-oriented stabilized XFEM approach for 2D
  and 3D incompressible Navier-Stokes equations}},
\newblock \bibinfo{journal}{Computer Methods in Applied Mechanics and
  Engineering} \bibinfo{volume}{276} (\bibinfo{year}{2014})
  \bibinfo{pages}{233--265}.
\bibitem[{Chessa and Belytschko(2003)}]{ChessaBelytschko2003}
\bibinfo{author}{J.~Chessa}, \bibinfo{author}{T.~Belytschko},
\newblock \bibinfo{title}{{An extended finite element method for two-phase
  fluids}},
\newblock \bibinfo{journal}{Journal of Applied Mechanics} \bibinfo{volume}{70}
  (\bibinfo{year}{2003}) \bibinfo{pages}{10--17}.
\bibitem[{Gro{\ss} and Reusken(2007)}]{GrossReusken2007}
\bibinfo{author}{S.~Gro{\ss}}, \bibinfo{author}{A.~Reusken},
\newblock \bibinfo{title}{{An extended pressure finite element space for
  two-phase incompressible flows with surface tension}},
\newblock \bibinfo{journal}{Journal of Computational Physics}
  \bibinfo{volume}{224} (\bibinfo{year}{2007}) \bibinfo{pages}{40--58}.
\bibitem[{Schott et~al.(2015)Schott, Rasthofer, Gravemeier, and
  Wall}]{SchottRasthoferGravemeierWall2015}
\bibinfo{author}{B.~Schott}, \bibinfo{author}{U.~Rasthofer},
  \bibinfo{author}{V.~Gravemeier}, \bibinfo{author}{W.~A. Wall},
\newblock \bibinfo{title}{{A face-oriented stabilized Nitsche-type extended
  variational multiscale method for incompressible two-phase flow}},
\newblock \bibinfo{journal}{International Journal for Numerical Methods in
  Engineering} \bibinfo{volume}{104} (\bibinfo{year}{2015})
  \bibinfo{pages}{721--748}.
\bibitem[{Hansbo et~al.(2014)Hansbo, Larson, and
  Zahedi}]{HansboLarsonZahedi2014}
\bibinfo{author}{P.~Hansbo}, \bibinfo{author}{M.~G. Larson},
  \bibinfo{author}{S.~Zahedi},
\newblock \bibinfo{title}{{A cut finite element method for a Stokes interface
  problem}},
\newblock \bibinfo{journal}{Applied Numerical Mathematics} \bibinfo{volume}{85}
  (\bibinfo{year}{2014}) \bibinfo{pages}{90--114}.
\bibitem[{Gerstenberger and Wall(2008)}]{Gerstenberger2008}
\bibinfo{author}{A.~Gerstenberger}, \bibinfo{author}{W.~A. Wall},
\newblock \bibinfo{title}{{An eXtended Finite Element Method/Lagrange
  multiplier based approach for fluid-structure interaction}},
\newblock \bibinfo{journal}{Computer Methods in Applied Mechanics and
  Engineering} \bibinfo{volume}{197} (\bibinfo{year}{2008})
  \bibinfo{pages}{1699--1714}.
\bibitem[{Legay et~al.(2006)Legay, Chessa, and Belytschko}]{Legay2006}
\bibinfo{author}{A.~Legay}, \bibinfo{author}{J.~Chessa},
  \bibinfo{author}{T.~Belytschko},
\newblock \bibinfo{title}{{An Eulerian–Lagrangian method for
  fluid–structure interaction based on level sets}},
\newblock \bibinfo{journal}{Computer Methods in Applied Mechanics and
  Engineering} \bibinfo{volume}{195} (\bibinfo{year}{2006})
  \bibinfo{pages}{2070--2087}.
\bibitem[{Court and Fourni{\'{e}}(2015)}]{CourtFournie2015}
\bibinfo{author}{S.~Court}, \bibinfo{author}{M.~Fourni{\'{e}}},
\newblock \bibinfo{title}{{A fictitious domain finite element method for
  simulations of fluid-structure interactions: The Navier-Stokes equations
  coupled with a moving solid}},
\newblock \bibinfo{journal}{Journal of Fluids and Structures}
  \bibinfo{volume}{55} (\bibinfo{year}{2015}) \bibinfo{pages}{398--408}.
\bibitem[{Burman and Fern{\'{a}}ndez(2014)}]{Burman2014a}
\bibinfo{author}{E.~Burman}, \bibinfo{author}{M.~A. Fern{\'{a}}ndez},
\newblock \bibinfo{title}{{An unfitted Nitsche method for incompressible
  fluid–structure interaction using overlapping meshes}},
\newblock \bibinfo{journal}{Computer Methods in Applied Mechanics and
  Engineering} \bibinfo{volume}{279} (\bibinfo{year}{2014})
  \bibinfo{pages}{497--514}.
\bibitem[{Mayer et~al.(2010)Mayer, Popp, Gerstenberger, and Wall}]{Mayer2010}
\bibinfo{author}{U.~M. Mayer}, \bibinfo{author}{A.~Popp},
  \bibinfo{author}{A.~Gerstenberger}, \bibinfo{author}{W.~A. Wall},
\newblock \bibinfo{title}{{3D fluid-structure-contact interaction based on a
  combined XFEM FSI and dual mortar contact approach}},
\newblock \bibinfo{journal}{Computational Mechanics} \bibinfo{volume}{46}
  (\bibinfo{year}{2010}) \bibinfo{pages}{53--67}.
\bibitem[{Hansbo et~al.(2003)Hansbo, Hansbo, and Larson}]{Hansbo2003}
\bibinfo{author}{A.~Hansbo}, \bibinfo{author}{P.~Hansbo},
  \bibinfo{author}{M.~G. Larson},
\newblock \bibinfo{title}{{A finite element method on composite grids based on
  Nitsche's method}},
\newblock \bibinfo{journal}{ESAIM: Mathematical Modelling and Numerical
  Analysis} \bibinfo{volume}{37} (\bibinfo{year}{2003})
  \bibinfo{pages}{495--514}.
\bibitem[{Shahmiri et~al.(2011)Shahmiri, Gerstenberger, and
  Wall}]{Shahmiri2011}
\bibinfo{author}{S.~Shahmiri}, \bibinfo{author}{A.~Gerstenberger},
  \bibinfo{author}{W.~A. Wall},
\newblock \bibinfo{title}{{An XFEM-based embedding mesh technique for
  incompressible viscous flows}},
\newblock \bibinfo{journal}{International Journal for Numerical Methods in
  Fluids} \bibinfo{volume}{65} (\bibinfo{year}{2011})
  \bibinfo{pages}{166--190}.
\bibitem[{Massing et~al.(2015)Massing, Larson, Logg, and
  Rognes}]{MassingLarsonLoggEtAl2015}
\bibinfo{author}{A.~Massing}, \bibinfo{author}{M.~G. Larson},
  \bibinfo{author}{A.~Logg}, \bibinfo{author}{M.~E. Rognes},
\newblock \bibinfo{title}{{A Nitsche-based cut finite element method for a
  fluid-structure interaction problem}},
\newblock \bibinfo{journal}{Communications in Applied Mathematics and
  Computational Science} \bibinfo{volume}{10} (\bibinfo{year}{2015})
  \bibinfo{pages}{97--120}.
\bibitem[{Schott et~al.(2016)Schott, Shahmiri, Kruse, and
  Wall}]{SchottShahmiriKruseWall2015}
\bibinfo{author}{B.~Schott}, \bibinfo{author}{S.~Shahmiri},
  \bibinfo{author}{R.~Kruse}, \bibinfo{author}{W.~A. Wall},
\newblock \bibinfo{title}{{A stabilized Nitsche-type extended embedding mesh
  approach for 3D low- and high-Reynolds-number flows}},
\newblock \bibinfo{journal}{International Journal for Numerical Methods in
  Fluids} \bibinfo{volume}{82} (\bibinfo{year}{2016})
  \bibinfo{pages}{289--315}.
\bibitem[{Mo{\"{e}}s et~al.(1999)Mo{\"{e}}s, Dolbow, and Belytschko}]{Moes1999}
\bibinfo{author}{N.~Mo{\"{e}}s}, \bibinfo{author}{J.~E. Dolbow},
  \bibinfo{author}{T.~Belytschko},
\newblock \bibinfo{title}{{A finite element method for crack growth without
  remeshing}},
\newblock \bibinfo{journal}{International Journal for Numerical Methods in
  Engineering} \bibinfo{volume}{46} (\bibinfo{year}{1999})
  \bibinfo{pages}{131--150}.
\bibitem[{Belytschko and Black(1999)}]{Belytschko1999}
\bibinfo{author}{T.~Belytschko}, \bibinfo{author}{T.~Black},
\newblock \bibinfo{title}{{Elastic crack growth in finite elements with minimal
  remeshing}},
\newblock \bibinfo{journal}{International Journal for Numerical Methods in
  Engineering} \bibinfo{volume}{45} (\bibinfo{year}{1999})
  \bibinfo{pages}{601--620}.
\bibitem[{Burman et~al.(2015)Burman, Claus, Hansbo, Larson, and
  Massing}]{BurmanClausHansboEtAl2014}
\bibinfo{author}{E.~Burman}, \bibinfo{author}{S.~Claus},
  \bibinfo{author}{P.~Hansbo}, \bibinfo{author}{M.~G. Larson},
  \bibinfo{author}{A.~Massing},
\newblock \bibinfo{title}{{CutFEM: Discretizing geometry and partial
  differential equations}},
\newblock \bibinfo{journal}{International Journal for Numerical Methods in
  Engineering} \bibinfo{volume}{104} (\bibinfo{year}{2015})
  \bibinfo{pages}{472--501}.
\bibitem[{Nitsche(1971)}]{Nitsche1971}
\bibinfo{author}{J.~Nitsche},
\newblock \bibinfo{title}{{{\"{U}}ber ein Variationsprinzip zur L{\"{o}}sung
  von Dirichlet-Problemen bei Verwendung von Teilr{\"{a}}umen, die keinen
  Randbedingungen unterworfen sind}},
\newblock \bibinfo{journal}{Abhandlungen aus dem Mathematischen Seminar der
  Universit{\"{a}}t Hamburg} \bibinfo{volume}{36} (\bibinfo{year}{1971})
  \bibinfo{pages}{9--15}.
\bibitem[{Burman and Hansbo(2012)}]{BurmanHansbo2012}
\bibinfo{author}{E.~Burman}, \bibinfo{author}{P.~Hansbo},
\newblock \bibinfo{title}{{Fictitious domain finite element methods using cut
  elements: II. A stabilized Nitsche method}},
\newblock \bibinfo{journal}{Applied Numerical Mathematics} \bibinfo{volume}{62}
  (\bibinfo{year}{2012}) \bibinfo{pages}{328--341}.
\bibitem[{Burman and Hansbo(2014)}]{Burman2014b}
\bibinfo{author}{E.~Burman}, \bibinfo{author}{P.~Hansbo},
\newblock \bibinfo{title}{{Fictitious domain methods using cut elements: III. A
  stabilized Nitsche method for Stokes’ problem}},
\newblock \bibinfo{journal}{ESAIM: Mathematical Modelling and Numerical
  Analysis} \bibinfo{volume}{48} (\bibinfo{year}{2014})
  \bibinfo{pages}{859--874}.
\bibitem[{Burman et~al.(2006)Burman, Fern{\'{a}}ndez, and
  Hansbo}]{BurmanFernandezHansbo2006}
\bibinfo{author}{E.~Burman}, \bibinfo{author}{M.~A. Fern{\'{a}}ndez},
  \bibinfo{author}{P.~Hansbo},
\newblock \bibinfo{title}{{Continuous interior penalty finite element method
  for Oseen's equations}},
\newblock \bibinfo{journal}{SIAM Journal on Numerical Analysis}
  \bibinfo{volume}{44} (\bibinfo{year}{2006}) \bibinfo{pages}{1248--1274}.
\bibitem[{Braack et~al.(2007)Braack, Burman, John, and
  Lube}]{BraackBurmanJohnEtAl2007}
\bibinfo{author}{M.~Braack}, \bibinfo{author}{E.~Burman},
  \bibinfo{author}{V.~John}, \bibinfo{author}{G.~Lube},
\newblock \bibinfo{title}{{Stabilized finite element methods for the
  generalized Oseen problem}},
\newblock \bibinfo{journal}{Computer Methods in Applied Mechanics and
  Engineering} \bibinfo{volume}{196} (\bibinfo{year}{2007})
  \bibinfo{pages}{853--866}.
\bibitem[{Burman(2010)}]{Burman2010}
\bibinfo{author}{E.~Burman},
\newblock \bibinfo{title}{{Ghost penalty}},
\newblock \bibinfo{journal}{Comptes Rendus Math{\'{e}}matique}
  \bibinfo{volume}{348} (\bibinfo{year}{2010}) \bibinfo{pages}{1217--1220}.
\bibitem[{Girault and Raviart(1986)}]{RaviartGirault1986}
\bibinfo{author}{V.~Girault}, \bibinfo{author}{P.~A. Raviart},
  \bibinfo{title}{{Finite Element Methods for Navier-Stokes Equations}},
  volume~\bibinfo{volume}{5} of \textit{\bibinfo{series}{Springer Series in
  Computational Mathematics}}, \bibinfo{publisher}{Springer-Verlag},
  \bibinfo{address}{Berlin-Heidelberg}, \bibinfo{year}{1986}.
\bibitem[{Solonnikov(1983)}]{Solonnikov1983}
\bibinfo{author}{V.~Solonnikov},
\newblock \bibinfo{title}{{Solvability of a three-dimensional boundary value
  problem with a free surface for the stationary Navier-Stokes system}},
\newblock \bibinfo{journal}{Banach Center Publications 10.1}
  (\bibinfo{year}{1983}) \bibinfo{pages}{361--403}.
\bibitem[{Hansbo and Hansbo(2002)}]{HansboHansbo2002}
\bibinfo{author}{A.~Hansbo}, \bibinfo{author}{P.~Hansbo},
\newblock \bibinfo{title}{{An unfitted finite element method, based on
  Nitsche's method, for elliptic interface problems}},
\newblock \bibinfo{journal}{Computer Methods in Applied Mechanics and
  Engineering} \bibinfo{volume}{191} (\bibinfo{year}{2002})
  \bibinfo{pages}{5537--5552}.
\bibitem[{Massing et~al.(2014)Massing, Larson, Logg, and
  Rognes}]{MassingLarsonLoggEtAl2014}
\bibinfo{author}{A.~Massing}, \bibinfo{author}{M.~G. Larson},
  \bibinfo{author}{A.~Logg}, \bibinfo{author}{M.~E. Rognes},
\newblock \bibinfo{title}{{A Stabilized Nitsche Fictitious Domain Method for
  the Stokes Problem}},
\newblock \bibinfo{journal}{Journal of Scientific Computing}
  \bibinfo{volume}{61} (\bibinfo{year}{2014}) \bibinfo{pages}{1--28}.
\bibitem[{Burman et~al.(2015)Burman, Claus, and
  Massing}]{BurmanClausMassing2015}
\bibinfo{author}{E.~Burman}, \bibinfo{author}{S.~Claus},
  \bibinfo{author}{A.~Massing},
\newblock \bibinfo{title}{{A Stabilized Cut Finite Element Method for the Three
  Field Stokes Problem}},
\newblock \bibinfo{journal}{SIAM Journal on Scientific Computing}
  \bibinfo{volume}{37} (\bibinfo{year}{2015}) \bibinfo{pages}{A1705--A1726}.
\bibitem[{Burman(2012)}]{Burman2012a}
\bibinfo{author}{E.~Burman},
\newblock \bibinfo{title}{{A penalty-free nonsymmetric Nitsche-type method for
  the weak imposition of boundary conditions}},
\newblock \bibinfo{journal}{SIAM Journal on Numerical Analysis}
  \bibinfo{volume}{50} (\bibinfo{year}{2012}) \bibinfo{pages}{1959--1981}.
\bibitem[{Boiveau and Burman(2016)}]{Boiveau2014}
\bibinfo{author}{T.~Boiveau}, \bibinfo{author}{E.~Burman},
\newblock \bibinfo{title}{{A penalty-free Nitsche method for the weak
  imposition of boundary conditions in compressible and incompressible
  elasticity}},
\newblock \bibinfo{journal}{IMA Journal of Numerical Analysis}
  \bibinfo{volume}{36} (\bibinfo{year}{2016}) \bibinfo{pages}{770--795}.
\bibitem[{Brezzi and Fortin(1991)}]{BrezziFortin1991}
\bibinfo{author}{F.~Brezzi}, \bibinfo{author}{M.~Fortin},
  \bibinfo{title}{{Mixed and hybrid finite element methods}},
  volume~\bibinfo{volume}{15} of \textit{\bibinfo{series}{Springer Series in
  Computational Mathematics}}, \bibinfo{publisher}{Springer-Verlag},
  \bibinfo{address}{New York}, \bibinfo{year}{1991}.
\bibitem[{Burman et~al.(2016)Burman, Hansbo, Larson, and Massing}]{Burman2016}
\bibinfo{author}{E.~Burman}, \bibinfo{author}{P.~Hansbo},
  \bibinfo{author}{M.~G. Larson}, \bibinfo{author}{A.~Massing},
\newblock \bibinfo{title}{{Cut Finite Element Methods for Partial Differential
  Equations on Embedded Manifolds of Arbitrary Codimensions}},
\newblock \bibinfo{journal}{ArXiv e-prints}  (\bibinfo{year}{2016})
  \bibinfo{pages}{http://arxiv.org/abs/1610.01660v1}.
\bibitem[{Brenner(2003)}]{Brenner2003}
\bibinfo{author}{S.~C. Brenner},
\newblock \bibinfo{title}{{Poincar{\'{e}}--Friedrichs Inequalities for
  Piecewise H1 Functions}},
\newblock \bibinfo{journal}{SIAM Journal on Numerical Analysis}
  \bibinfo{volume}{41} (\bibinfo{year}{2003}) \bibinfo{pages}{306--324}.
\bibitem[{Mardal and Winther(2005)}]{Mardal2005}
\bibinfo{author}{K.~A. Mardal}, \bibinfo{author}{R.~Winther},
\newblock \bibinfo{title}{{An observation on Korn's inequality for
  nonconforming finite element methods}},
\newblock \bibinfo{journal}{Mathematics of Computation} \bibinfo{volume}{75}
  (\bibinfo{year}{2005}) \bibinfo{pages}{1--6}.
\bibitem[{Brenner and Scott(2008)}]{BrennerScott2008}
\bibinfo{author}{S.~C. Brenner}, \bibinfo{author}{L.~R. Scott},
  \bibinfo{title}{{The mathematical theory of finite element methods}},
  volume~\bibinfo{volume}{15} of \textit{\bibinfo{series}{Texts in Applied
  Mathematics}}, \bibinfo{edition}{3.} ed.,
  \bibinfo{publisher}{Springer-Verlag}, \bibinfo{address}{Berlin-Heidelberg},
  \bibinfo{year}{2008}.
\bibitem[{Lax(2002)}]{Lax2002functional}
\bibinfo{author}{P.~D. Lax}, \bibinfo{title}{{Functional analysis}}, Pure and
  applied mathematics, \bibinfo{publisher}{John Wiley {\&} Sons},
  \bibinfo{address}{Chichester}, \bibinfo{year}{2002}.
\bibitem[{Brenner(2004)}]{Brenner2004}
\bibinfo{author}{S.~C. Brenner},
\newblock \bibinfo{title}{{Korn's inequalities for piecewise H1 vector
  fields}},
\newblock \bibinfo{journal}{Mathematics of Computation}  (\bibinfo{year}{2004})
  \bibinfo{pages}{1067--1087}.
\bibitem[{Stein(1970)}]{Stein1970}
\bibinfo{author}{E.~Stein}, \bibinfo{title}{{Singular Integrals and
  Differentiability Properties of Functions}}, \bibinfo{publisher}{Princeton
  University Press}, \bibinfo{address}{Princeton}, \bibinfo{year}{1970}.
\bibitem[{Ern and Guermond(2004)}]{ErnGuermond2004}
\bibinfo{author}{A.~Ern}, \bibinfo{author}{J.~L. Guermond},
  \bibinfo{title}{{Theory and Practice of Finite Elements}}, volume
  \bibinfo{volume}{159} of \textit{\bibinfo{series}{Applied Mathematical
  Sciences}}, \bibinfo{publisher}{Springer-Verlag}, \bibinfo{address}{New
  York}, \bibinfo{year}{2004}.
\bibitem[{Burman and Ern(2007)}]{Burman2007a}
\bibinfo{author}{E.~Burman}, \bibinfo{author}{A.~Ern},
\newblock \bibinfo{title}{{A continuous finite element method with face penalty
  to approximate Friedrichs' systems}},
\newblock \bibinfo{journal}{ESAIM: Mathematical Modelling and Numerical
  Analysis} \bibinfo{volume}{41} (\bibinfo{year}{2007})
  \bibinfo{pages}{55--76}.
\bibitem[{Aubin(1967)}]{Aubin1967}
\bibinfo{author}{J.~Aubin},
\newblock \bibinfo{title}{{Behavior of the error of the approximate solutions
  of boundary value problems for linear elliptic operators by Galerkin's and
  finite difference methods}},
\newblock \bibinfo{journal}{Annali della Scuola Normale Superiore di Pisa}
  \bibinfo{volume}{21} (\bibinfo{year}{1967}) \bibinfo{pages}{599--637}.
\bibitem[{Nitsche(1968)}]{Nitsche1968}
\bibinfo{author}{J.~Nitsche},
\newblock \bibinfo{title}{{Ein Kriterium f{\"{u}}r die Quasi-Optimalit{\"{a}}t
  des Ritzschen Verfahrens}},
\newblock \bibinfo{journal}{Numerische Mathematik} \bibinfo{volume}{11}
  (\bibinfo{year}{1968}) \bibinfo{pages}{346--348}.
\bibitem[{Roos et~al.(2008)Roos, Stynes, and Tobiska}]{Roos2008}
\bibinfo{author}{H.~G. Roos}, \bibinfo{author}{M.~Stynes},
  \bibinfo{author}{L.~Tobiska}, \bibinfo{title}{{Robust Numerical Methods for
  Singularly Perturbed Differential Equations}}, volume~\bibinfo{volume}{24} of
  \textit{\bibinfo{series}{Springer Series in Computational Mathematics}},
  \bibinfo{publisher}{Springer}, \bibinfo{address}{Berlin-Heidelberg},
  \bibinfo{year}{2008}.
\bibitem[{Quarteroni(2009)}]{Quarteroni2009}
\bibinfo{author}{A.~Quarteroni}, \bibinfo{title}{{Numerical Models for
  Differential Problems}}, Modeling, Simulation and Applications,
  \bibinfo{publisher}{Springer-Verlag}, \bibinfo{address}{Berlin-Heidelberg},
  \bibinfo{year}{2009}.
\bibitem[{Burman(2007)}]{Burman2007}
\bibinfo{author}{E.~Burman},
\newblock \bibinfo{title}{{Interior penalty variational multiscale method for
  the incompressible Navier–Stokes equation: Monitoring artificial
  dissipation}},
\newblock \bibinfo{journal}{Computer Methods in Applied Mechanics and
  Engineering} \bibinfo{volume}{196} (\bibinfo{year}{2007})
  \bibinfo{pages}{4045--4058}.
\bibitem[{Sudhakar et~al.(2014)Sudhakar, {Moitinho de Almeida}, and
  Wall}]{SudhakarMoitinhoWall2014}
\bibinfo{author}{Y.~Sudhakar}, \bibinfo{author}{J.~{Moitinho de Almeida}},
  \bibinfo{author}{W.~A. Wall},
\newblock \bibinfo{title}{{An accurate, robust, and easy-to-implement method
  for integration over arbitrary polyhedra: Application to embedded interface
  methods}},
\newblock \bibinfo{journal}{Journal of Computational Physics}
  \bibinfo{volume}{273} (\bibinfo{year}{2014}) \bibinfo{pages}{393--415}.
\bibitem[{Wall et~al.(2017)Wall, Ager, Grill, Kronbichler, Popp, Schott, and
  Seitz}]{WalletBaciCommittee2017}
\bibinfo{author}{W.~A. Wall}, \bibinfo{author}{C.~Ager},
  \bibinfo{author}{M.~Grill}, \bibinfo{author}{M.~Kronbichler},
  \bibinfo{author}{A.~Popp}, \bibinfo{author}{B.~Schott},
  \bibinfo{author}{A.~Seitz}, \bibinfo{title}{{BACI: A multiphysics simulation
  environment}}, \bibinfo{type}{Technical Report}, Institute for Computational
  Mechanics, Technical University of Munich, \bibinfo{year}{2017}.
\bibitem[{Babu{\v{s}}ka(1963)}]{Babuska1963_DEA}
\bibinfo{author}{I.~Babu{\v{s}}ka},
\newblock \bibinfo{title}{{The theory of small changes in the domain of
  existence in the theory of partial differential equations and its
  applications}},
\newblock \bibinfo{journal}{Differential equations and their applications}
  (\bibinfo{year}{1963}) \bibinfo{pages}{13--26}.

\end{thebibliography}



\end{document}